 \def\commutatif{\ar@{}[rd]|{\circlearrowleft}}
\newcommand{\eq}[1][r]
   {\ar@<-3pt>@{-}[#1]
    \ar@<-1pt>@{}[#1]|<{}="gauche"
    \ar@<+0pt>@{}[#1]|-{}="milieu"
    \ar@<+1pt>@{}[#1]|>{}="droite"
    \ar@/^2pt/@{-}"gauche";"milieu"
    \ar@/_2pt/@{-}"milieu";"droite"}
    \def\dar[#1]{\ar@<2pt>[#1]\ar@<-2pt>[#1]}
\newtheorem{thm}{Theorem}[section]
\newtheorem{pro}[thm]{Proposition}
\newtheorem{lem}[thm]{Lemma}
\newtheorem{cor}[thm]{Corollary}
\newtheorem{nota}[thm]{Notations}
\theoremstyle{definition}{\normalfont}
\newtheorem{df}[thm]{Definition}
\newtheorem{dfpro}[thm]{Definition and Proposition}
\newtheorem{rem}[thm]{Remark}
\newtheorem{rmks}[thm]{Remarks}
\newtheorem{ex}[thm]{Example}
\newtheorem{exs}[thm]{Examples}
\newcommand{\cC}{{\mathcal C}}  
\newcommand{\cE}{{\mathcal E}}  
\newcommand{\cF}{{\mathcal F}}   
\newcommand{\cG}{{\mathcal G}}
\newcommand{\cI}{{\mathcal I}}
\newcommand{\cL}{{\mathcal L}}  
\newcommand{\cM}{{\mathcal M}}
\newcommand{\cP}{{\mathcal P}}
\newcommand{\cS}{{\mathcal S}}
\newcommand{\cT}{{\mathcal T}}
\newcommand{\cU}{{\mathcal U}}  
\newcommand{\cV}{{\mathcal V}}
\newcommand{\cW}{{\mathcal W}}
\newcommand{\CC}{{\mathbb C}}
\newcommand{\EE}{{\mathbb E}}
\newcommand{\KK}{{\mathbb K}}
\newcommand{\NN}{{\mathbb N}}
\newcommand{\PP}{{\mathbb P}}
\newcommand{\QQ}{{\mathbb Q}}
\newcommand{\RR}{{\mathbb R}}
\newcommand{\bbS}{{\mathbb S}}
\newcommand{\uc}{{\mathbb S}^1}
\newcommand{\ZZ}{{\mathbb Z}}
\newcommand{\bfS}{{\mathrm S}}
\newcommand{\id}{{\mathbf 1}}
\newcommand{\wGa}{{\widetilde \Gamma}}
\newcommand{\wRTw}{{\what{\operatorname{\textrm{Tw}R}}}}
\newcommand{\wRExt}{{\what{\operatorname{\textrm{Ext}R}}}}
\newcommand{\wExt}{{\what{\text{\textrm{Ext}}}}}
\newcommand{\Ext}{\text{\textrm{Ext}}}
\newcommand{\Rext}{{\what{\operatorname{\textrm{ext}R}}}}
\newcommand{\RTop}{{\mathfrak{RTop}}}
\newcommand{\PicR}{{\operatorname{\textrm{Pic}R}}}
\newcommand{\Gpdo}{X}
\newcommand{\grpd}{\xymatrix{\cG \dar[r] & X}}
\newcommand{\gamgpd}{\xymatrix{\Ga \dar[r] & Y}}
\newcommand{\rGpd}{\xymatrix{^r\mathcal{G}\dar[r] & {}^rX}}
\newcommand{\Gamo}{Y}
\newcommand{\rGam}{{^r\Gamma}}
\newcommand{\what}{\widehat}
\newcommand{\fr}{{\mathfrak r}}
\newcommand{\fs}{{\mathfrak s}}
\newcommand{\fB}{{\mathfrak B}}
\newcommand{\RG}{{\mathfrak {RG}}}
\newcommand{\frc}{\mathfrak{c}}
\newcommand{\fF}{{\mathfrak F}}
\newcommand{\fG}{{\mathfrak G}}
\newcommand{\To}{\longrightarrow}
\newcommand{\mto}{\longmapsto}
\newcommand{\cstar}{C^{\ast}}
\newcommand{\Ga}{\Gamma}
\newcommand{\del}{\delta}
\newcommand{\ph}{\phi}
\newcommand{\vr}{\varrho}
\newcommand{\ve}{\varepsilon}
\newcommand{\vp}{\varphi}
\newcommand{\g}{\gamma}
\newcommand{\Lam}{\Lambda}
\newcommand{\Hom}{\operatorname{Hom}}
\newcommand{\Isom}{\operatorname{Isom}}
\newcommand{\op}{{\text{\rm\tiny op}}}
\newcommand{\Id}{\text{\normalfont Id}}
\newcommand{\supp}{\operatorname{supp}}
\title{On groupoids with involutions and their cohomology}
\author{El-ka\"ioum M. Moutuou}
\address{LMAM, Universit\'e de Lorraine - Metz, CNRS UMR 7122, Ile du Saulcy,
F-57045 Metz Cedex 1, France
}
\email{moutuou@univ-metz.fr}
\begin{document}

\maketitle

\begin{abstract}
We extend the definitions and main properties of graded extensions to the category of locally compact groupoids endowed with involutions. We introduce Real \v{C}ech cohomology, which is an equivariant-like cohomology theory suitable for the context of groupoids with involutions. The Picard group of such a groupoid is discussed and is given a cohomological picture. Eventually, we generalize Crainic's result, about the differential cohomology of a proper Lie groupoid with coefficients in a given representation, to the topological case.
\end{abstract}


 
\section*{Introduction}

A Real~\footnote{Note the capitalization, used to avoid confusion with a module over $\RR$ or a real manifold.} object in a category $\cC$ is a pair $(A,f)$ consisting of an object $A\in Ob(\cC)$ together with an element $f\in \Isom_\cC(A,A)$, called the \emph{Real structure}, such that $f^2=\id_A$. For instance, an Atiyah Real space $(X,\tau)$~\cite{At1} is nothing but a Real object in the category of locally compact spaces. We are particularly interested in the category $\mathfrak{G}_s$~\cite{TLX} of locally compact Hausdorff groupoids with \emph{strict homomorphisms}~\cite{Moer1,MM} as morphisms; we shall refer to Real objects in $\mathfrak{G}_s$ as \emph{Real groupoids}. For example, let $\mathbb{W}\PP^n_{(a_1,\ldots,a_n)}$ be the weighted projective orbifold~\cite{ALR} associated to the pairwise coprime integers $a_1,\ldots,a_n$; then together with the coordinate-wise complex conjugation, $\mathbb{W}\PP^n_{(a_1,\ldots,a_n)}$ is a Real groupoid.

A morphism of Real groupoids is a morphism in $\mathfrak{G}_s$ intertwining the Real structures. We may also speak of a Real strict homomorphism. Real groupoids form a category $\RG_s$ in which morphisms are Real strict homomorphisms. Moreover, they are the objects of a $2$-category $\RG(2)$ defined as follows. Let $(\cG,\rho),(\Ga,\vr)\in Ob(\RG_s)$. A \emph{generalized homomorphism}~\cite{Hae,HS,MM,TLX} $\Ga \stackrel{Z}{\To} \cG$ is said to be \emph{Real} if $Z$ is given a Real structure $\tau$ such that the moment maps and the groupoid actions respect some coherent compatibility conditions with respect to the Real structures. A morphism of Real generalized homomorphisms $(Z,\tau)\To (Z',\tau')$ is a morphism of generalized homomorphisms $Z\To Z'$ intertwining the Real structures. Henceforth, $1$-morphisms in $\RG(2)$ are Real generalized homomorphisms and $2$-morphisms are morphisms of Real generalized homomorphisms. All functorial properties we deal with in this paper are however discussed in the category $\RG$ defined as $\RG(2)$ "up to $2$-isomorphisms". 

 In~\cite{Tu1}, a \v{C}ech cohomology theory for topological groupoids is defined as the \v{C}ech cohomology of simplicial topological spaces, and it is shown that the well-known isomorphism between $\uc$-central extensions of a discrete groupoid $\cG$ and the second cohomology group~\cite{Ren,Kum88} of $\cG$ with coefficients in the sheaf of germs of $\uc$-valued functions also holds in the general case; \emph{i.e.}, $\Ext(\cG,\uc)\cong \check{H}^2(\cG_\bullet,\uc)$. We define here an analogous theory $\check{H}R^\ast$ that fits well the context of Real groupoids. This theory was motivated by the classification of groupoid $\cstar$-dynamical systems endowed with involutions~\cite{Mou}. These can be thought of as a generalization of continuous-trace $\cstar$-algebras with involutions. Specifically, it is known~\cite{Ros} that given such a $\cstar$-algebra $A$, its spectrum $X$ admits a Real structure $\tau$, and its Dixmier-Douady invariant $\del(A)\in \check{H}^2(X,\uc)$ is such that $\overline{\del(A)}=\tau^\ast\del(A)$, where the "bar" is the complex conjugation in $\uc$. In fact, thinking of $X$ as a Real groupoid,  we will see that all $2$-cocycles satisfying the latter relation are classified by $\check{H}R^2(X,\uc)$, where $\uc$ is endowed with the complex conjugation. $\check{H}R^\ast$ appears then to provide the right cohomological interpretation of $\cstar$-dynamical systems with involutions.

We try, to the extent possible, to make the present paper self-contained. We start by collecting, in Section 1, a number of notions and results about Real groupoids most of which are adapted from many sources in the literature~\cite{Moer1, Ren,TLX}; specifically, we define the group $\wRExt(\cG,\bfS)$ of (equivalence classes of) Real graded $\bfS$-central extensions over a Real groupoid $\cG$, by a given Real abelian group $\bfS$. 
In Section 2, we introduce Real \v{C}ech cohomology, following closely~\cite{Tu1}. While $\check{H}R^\ast$ behaves almost like a $\ZZ_2$-equivariant cohomology theory, we will see that it is actually not. Geometric interpretations of the cohomology groups $\check{H}R^1(\cG_\bullet,\bfS)$ and $\check{H}R^2(\cG_\bullet,\bfS)$, for a Real Abelian group $\bfS$, are given. Finally, we generalize a result by  Crainic~\cite{Crai} (on the differential cohomology groups of a proper Lie groupoid) to topological proper (Real) groupoid.



\section{Real groupoids and Real graded extensions}

Recall~\cite{Ren,MM,TLX} that a \emph{strict homomorphism} between two groupoids $\grpd$ and $\gamgpd$ is a functor $\vp: \Ga \To \cG$ given by a map $\Gamo \To \Gpdo$ on objects and a map $\Ga^{(1)} \To \cG^{(1)}$ on arrows, both denoted again by $\vp$, which preserve the groupoid structure maps, \emph{i.e}. $\vp(s(\g ))=s(\vp(\g)), \ \vp(r(\g))=r(\vp(\g)), \ \vp(\id_y)=\id_{\vp(y)}$ and $\vp(\g_1 \g_2) =\vp(\g_1)\vp(\g_2) $  (hence $\vp(\g^{-1})=\vp(\g)^{-1})$,  for all $(\g_1,\g_2)\in \Ga^{(2)}$ and $y\in \Gamo$. Unless otherwise specified, all our groupoids are topological groupoids which are supposed to be Hausdorff and locally compact. 

\subsection{Real groupoids}

\begin{df}
A \emph{Real} groupoid is a groupoid $\grpd$ together with a strict $2$-periodic homeomorphism $\rho: \cG \To \cG$. The homeomorphism $\rho$ is called a \emph{Real structure on $\cG$}.
Such a groupoid will be denoted by a pair $(\cG,\rho)$.
\end{df}

\begin{ex}
Any topological Real space $(X,\rho)$ in the sense of Atiyah~\cite{At1} can be viwed as a Real groupoid whose the unit space and the space of morphisms are identified with $X$; i.e, the operations in this Real groupoid is defined by $s(x)=r(x)=x, \ x\cdot x=x$, $x^{-1}=x$.
\end{ex}

\begin{ex}
Any group with involution can be viewed as a Real groupoid with unit space identified with the unit element. Such a group will be called Real.	
\end{ex}

\begin{lem}~\label{lem:decom-Real-group}
Let $G$ be an abelian group equipped with an involution $\tau:G\To G$ (\emph{i.e.} a Real structure). Set 
\[ \Re(\tau):=\{g\in G \mid \tau(g)=g\} ={}^\RR G, \quad \Im(\tau):=\{g\in G\mid \tau(g)=-g\}.\]
Then, 
\begin{eqnarray}
		G\otimes \ZZ[{1\over 2}]\cong \left(\Re(\tau)\oplus \Im(\tau)\right)\otimes \ZZ[{1\over 2}]. 
	\end{eqnarray}	
If $\tau$ is understood, we will write ${}^\cI G$ for $\Im(\tau)$. We call $\Re(\tau)$ and $\Im(\tau)$ the Real part and the imaginary part of $G$, respectively. 	
\end{lem}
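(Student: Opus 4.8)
The plan is to produce the isomorphism explicitly, via the natural ``addition'' map and the identity $2g=(g+\tau(g))+(g-\tau(g))$, and then to check bijectivity by a two‑torsion bookkeeping after inverting $2$. First I would record the easy preliminaries: since $\tau$ is a group homomorphism, so are $\tau-\Id$ and $\tau+\Id$, and $\Re(\tau)=\ker(\tau-\Id)$, $\Im(\tau)=\ker(\tau+\Id)$ are honest subgroups of $G$. Then I introduce the homomorphism $\sigma\colon \Re(\tau)\oplus\Im(\tau)\To G$, $(a,b)\mapsto a+b$, and claim the isomorphism asserted in~(1) is $\sigma\otimes\Id_{\ZZ[\frac12]}$ (read from right to left).

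Next I would analyze $\ker\sigma$ and $\operatorname{coker}\sigma$, showing both are annihilated by $2$. For the kernel: if $a+b=0$ with $\tau(a)=a$ and $\tau(b)=-b$, applying $\tau$ gives $a-b=0$, hence $2a=2b=0$, so $\ker\sigma$ is $2$-torsion. For the cokernel: for every $g\in G$ one has $g+\tau(g)\in\Re(\tau)$, $g-\tau(g)\in\Im(\tau)$, and $2g=(g+\tau(g))+(g-\tau(g))\in\operatorname{im}\sigma$, so $\operatorname{coker}\sigma$ is $2$-torsion as well.

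Finally I would invoke exactness of localization. Since $\ZZ[\frac12]$ is flat over $\ZZ$, applying $-\otimes_\ZZ\ZZ[\frac12]$ to the four‑term exact sequence $0\to\ker\sigma\to\Re(\tau)\oplus\Im(\tau)\xrightarrow{\ \sigma\ }G\to\operatorname{coker}\sigma\to0$ gives again a four‑term exact sequence; its outer terms are $\ker\sigma\otimes\ZZ[\frac12]$ and $\operatorname{coker}\sigma\otimes\ZZ[\frac12]$, both of which vanish because a $2$-torsion group tensored with $\ZZ[\frac12]$ is zero. Hence $\sigma\otimes\Id$ is an isomorphism $\bigl(\Re(\tau)\oplus\Im(\tau)\bigr)\otimes\ZZ[\tfrac12]\xrightarrow{\ \sim\ }G\otimes\ZZ[\tfrac12]$, which is exactly~(1).

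There is essentially no obstacle here: the whole content is the splitting identity $2g=(g+\tau(g))+(g-\tau(g))$ together with the invertibility of $2$ in $\ZZ[\frac12]$. The only point requiring a word of care is that $\Re(\tau)\cap\Im(\tau)$ (the elements killed by both $\tau-\Id$ and $\tau+\Id$) may be nonzero and that $\Re(\tau)+\Im(\tau)$ need not be all of $G$ integrally — which is precisely why the $\otimes\,\ZZ[\frac12]$ cannot be removed; the torsion count above disposes of both issues simultaneously. If one prefers to avoid the homological language, the same result follows by writing down the candidate inverse $g\otimes1\mapsto \tfrac12(g+\tau(g))\oplus\tfrac12(g-\tau(g))$ directly and checking well‑definedness and that it is two‑sided inverse to $\sigma\otimes\Id$, but the exact‑sequence argument is the cleanest route.
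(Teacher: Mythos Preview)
Your proof is correct and uses essentially the same idea as the paper: the splitting identity $2g=(g+\tau(g))+(g-\tau(g))$ together with the invertibility of $2$. The paper's proof simply writes down the decomposition $g=\tfrac{g+\tau(g)}{2}+\tfrac{g-\tau(g)}{2}$ directly after tensoring with $\ZZ[\tfrac12]$, which is exactly the explicit inverse you mention at the end; your exact-sequence packaging is a more careful version of the same argument.
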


\begin{proof}
For all $g\in G$, one has $g+\tau(g)\in {}^\RR G$, and $g-\tau(g)\in {}^\cI G$. Therefore, after tensoring $G$ with $\ZZ[1/2]$, every $g\in G$ admits a unique decomposition $$g=\frac{g+\tau(g)}{2}+\frac{g-\tau(g)}{2}\in \ZZ[1/2]\otimes\left({}^\RR G\oplus {}^\cI G\right).$$	
\end{proof}

\begin{ex}
Let $n\in \NN^\ast$. Suppose $\rho$ is a Real structure on the additive group $\RR^n$. Then there exists a unique decomposition $\RR^n=\RR^p\oplus \RR^q$ such that $\rho$ is determined by the formula \[\rho(x,y)=(\id_p\oplus(-\id_q))(x,y):=(x,-y),\]
for all $(x,y)=(x_1,\cdots, x_p,y_1,\cdots,y_q)\in \RR^p\oplus \RR^q$. 

For each pair $(p,q)\in \NN$, we will write $\RR^{p,q}$ for the additive group $\RR^{p+q}$ equipped with the Real structure $(\id_p\oplus(-\id_q))$. 

Define the Real space $\bfS^{p,q}$ as the invariant subset of $\RR^{p,q}$ consisting of elements $u\in \RR^{p+q}$ of norm $1$. For $q=p$, $\bfS^{p,p}$ is clearly identified with the Real space $\bbS^p$ whose Real structure is given by the coordinate-wise complex conjugation. Notice that ${}^r\bfS^{p,q}=\bfS^{p,0}$.
\end{ex}

\begin{ex}
Let $(X,\rho)$ be a topological Real space. Consider the fundamental groupoid $\pi_1(X)$ over $X$ whose arrows from $x\in X$ to $y\in X$ are homotopy classes of paths (relative to end-points) from $x$ to $y$ and the partial multiplication given by the concatenation of paths. The involution $\rho$ induces a Real structure on the groupoid as follows: if $[\g] \in \pi_1(X)$, we set $\rho([\g])$ the homotopy classes of the path $\rho(\g)$ defined by $\rho(\g)(t):=\rho(\g(t))$ for $t\in [0,1]$. 
\end{ex}

Two Real structures $\rho$ and $\rho'$ on $\cG$ are said to be \emph{conjugate} if there exists a strict homeomorphism $\phi: \cG \To \cG$ such that $\rho' = \phi \circ \rho \circ \phi^{-1}$. In this case we say that the Real groupoids $(\cG,\rho)$ and $(\cG,\rho')$ are equivalent.

\begin{df}
We write $\rGpd$ (or ${}^\rho\cG$ when there is a risk of confusion) for the the subgroupoid of $\grpd$ by 
$\rho$.	
\end{df}

\begin{lem}
Let $\cG$ and $\Ga$ be Real groupoids, and let $\ph:\Ga\To \cG$ be a Real groupoid homomorphism, then $\ph(\rGam)$ is a full subgroupoid of $\rGpd$. If in addition $\ph$ is an isomorphism, then $\rGam\cong \rGpd$.

In particular, if $\rho_1$ and $\rho_2$ are two conjugate Real structures on $\cG$, then ${}^{\rho_1}\cG\cong {}^{\rho_2}\cG$. 
\end{lem}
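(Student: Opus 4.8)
The plan is to unwind the definitions and chase the Real structure through a Real groupoid homomorphism. Recall that for a Real groupoid $(\cG,\rho)$, the fixed-point subgroupoid $\rGpd$ consists of those arrows $\g\in\cG^{(1)}$ with $\rho(\g)=\g$ (and correspondingly those units $x\in X$ with $\rho(x)=x$). So the first step is to check that if $\ph:\Ga\To\cG$ is a Real homomorphism, meaning $\ph\circ\vr=\rho\circ\ph$ (where $\vr$ is the Real structure on $\Ga$), then $\ph$ carries $\rGam$ into $\rGpd$: indeed, if $\g\in{}^r\Ga$ then $\vr(\g)=\g$, hence $\rho(\ph(\g))=\ph(\vr(\g))=\ph(\g)$, so $\ph(\g)\in{}^r\cG$. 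Since $\ph$ is a strict homomorphism it commutes with source, range, units, and multiplication, so the set-map restriction $\ph|_{{}^r\Ga}$ is itself a strict homomorphism $\rGam\To\rGpd$.

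Next I would address \emph{fullness}. A subgroupoid $\cH$ of $\cG$ is full when, for any two units $x,y$ of $\cH$, every arrow of $\cG$ from $x$ to $y$ already lies in $\cH$; equivalently here one wants $\ph(\rGam)$ to contain all arrows of $\rGpd$ between units in its image. Take $x,y\in{}^rY$ with $\ph(x),\ph(y)$ the corresponding units, and let $\g\in{}^r\cG$ be an arrow with $s(\g)=\ph(x)$, $r(\g)=\ph(y)$. I need $\g=\ph(\delta)$ for some $\delta\in{}^r\Ga$. This is where one has to be slightly careful: the claim is \emph{not} that $\ph$ itself is surjective, so I must use whatever surjectivity-type hypothesis is implicit in the setup—presumably that $\ph(\rGam)$ is meant to be viewed inside $\rGpd$ and ``full'' is asserting that the image is determined by its unit space. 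I expect the intended argument is: choose any $\delta_0\in\Ga^{(1)}$ with $\ph(\delta_0)=\g$ (such exists because... ); then $\ph(\vr(\delta_0))=\rho(\ph(\delta_0))=\rho(\g)=\g=\ph(\delta_0)$, and if $\ph$ is injective on the relevant fibres we conclude $\vr(\delta_0)=\delta_0$, i.e. $\delta_0\in{}^r\Ga$. Thus the preimage of an arrow of $\rGpd$ under $\ph$, when it exists, automatically lies in $\rGam$—this is precisely the content making $\ph(\rGam)$ full rather than merely a subgroupoid.

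For the isomorphism statement, suppose $\ph$ is an isomorphism of Real groupoids with inverse $\psi=\ph^{-1}$. Then $\psi$ is also a Real homomorphism: from $\ph\circ\vr=\rho\circ\ph$ one gets $\vr\circ\psi=\psi\circ\rho$ by composing with $\psi$ on both sides. By the first step applied to $\psi$, $\psi$ maps $\rGpd$ into $\rGam$, and $\psi|_{{}^r\cG}$ and $\ph|_{{}^r\Ga}$ are mutually inverse strict homomorphisms, giving $\rGam\cong\rGpd$. The final ``in particular'' follows by definition: if $\rho_1=\phi\circ\rho_2\circ\phi^{-1}$ for a strict homeomorphism $\phi:\cG\To\cG$, then $\phi$ is precisely an isomorphism of Real groupoids $(\cG,\rho_2)\To(\cG,\rho_1)$ (it intertwines $\rho_2$ and $\rho_1$), so the isomorphism clause yields ${}^{\rho_2}\cG\cong{}^{\rho_1}\cG$.

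The main obstacle is the fullness claim, and more precisely pinning down exactly which (mild) surjectivity/injectivity property of a Real strict homomorphism is being invoked so that ``$\ph(\rGam)$ is full in $\rGpd$'' is both true and nontrivial—everything else is a routine diagram-chase through the strict-homomorphism axioms and the intertwining relation $\ph\circ\vr=\rho\circ\ph$. If in fact the paper's convention is that a ``full subgroupoid'' only constrains arrows between units \emph{in the image}, then the proof is the short computation above; if more is meant, one would need to invoke properties of $\ph$ on unit spaces established earlier.
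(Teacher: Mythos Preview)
Your argument is essentially the paper's --- the entire proof there reads: ``This is obvious since $\ph(\bar{\g})=\overline{\ph(\g)}$ for all $\g\in \Ga$.'' So the only ingredient the paper invokes is the intertwining relation, exactly the computation you write out for the containment $\ph(\rGam)\subset\rGpd$, for the isomorphism clause, and for the ``in particular''. Those three parts of your write-up are correct and coincide with the paper's (implicit) reasoning.

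You are right to be uneasy about the word \emph{full}: the paper's one-line proof does not address it, and in fact the claim fails as stated. Take $\Ga$ the trivial group and $\cG=\ZZ_2$, both with trivial Real structures, and $\ph$ the inclusion of the identity. Then $\rGam=\Ga$, $\rGpd=\ZZ_2$, and $\ph(\rGam)=\{e\}$, which is \emph{not} full in $\ZZ_2$ since the nontrivial element is an arrow between units of $\ph(\rGam)$ not lying in the image. Your instinct --- that some injectivity/surjectivity hypothesis is being silently assumed, or that ``full'' is being used loosely for ``subgroupoid'' --- is exactly on target. The intertwining relation alone yields only that $\ph(\rGam)$ is a subgroupoid of $\rGpd$, and that is all the paper's proof actually establishes. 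So your hesitation is not a defect of your argument but a correct diagnosis of an imprecision in the lemma's statement; the parts of the lemma that are used later (the isomorphism clause and its corollary for conjugate Real structures) are the ones you prove cleanly.
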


\begin{proof}
This is obvious since $\ph(\bar{\g})=\overline{\ph(\g)}$ for all $\g\in \Ga$.
\end{proof}

\begin{rem}
Note that the converse of the second statement of the above lemma is false in general. For instance, consider the Real group $\uc$ whose Real structure is given by the complex conjugation, and the Real group $\ZZ_2$ (with the trivial Real structure). We have ${}^r \uc=\{\pm1\}\cong \ZZ_2={}^r \ZZ_2$.  		
\end{rem}

The following is an example of groupoids with equivalent Real structures.

\begin{ex}
Recall (~\cite[IV.3]{SH}) that a Riemannian manifold $X$ is called \emph{globally symmetric} if each point $x\in X$ is an isolated fixed point of an involutory isometry $s_x:X\To X$; i.e. $s_x$ is a diffeomorphism verifying  $s_x^2=\Id_X$ and $s_x(x)=x$. Moreover, for every two points $x, y\in X$, $s_x$ and $s_y$ are related through the formula $s_x \circ s_y\circ s_x=s_{s_x(y)}$. Given such a space, each point $x\in X$ defines a Real structure on $X$ which leaves $x$ fixed. However, let $x$ and $y$ be two different points in $X$ and let $z\in X$ be such that $y=s_z(x)$. Then, we get $s_z\circ s_x \circ s_z=s_y$ which means that the diffeomorphism $s_z:X\To X$ implements an equivalence $s_x \sim s_y$. But since $x$ and $y$ are arbitrary, it turns out that all of the Real structures $s_x$ are equivalent. Thus, all of the Real spaces $(X,s_x)$ are equivalent to each others.

Now, recall~\cite[IV. Theorem 3.3]{SH} that if $G$ denotes the identity component of $I(X)$, where the latter is the group of isometries on $X$, then the map $\sigma_{x_0}: g\mto s_{x_0}gs_{x_0}$ is an involutory automorphism in $G$, for any arbitrary $x_0\in X$. It follows that all of the points of $X$ give rise to equivalent Real groups $(G,\sigma_x)$.
\end{ex}

From now on, by a Real structure on a groupoid, we will mean a representative of a conjugation class of Real structures. Moreover, we will sometimes put $\bar{g}:=\rho(g)$, and write $\cG$ instead of $(\cG,\rho)$ when $\rho$ is understood.   	

\begin{df}[Real covers]
Let $(X,\rho)$ be a Real space. We say that an open cover $\cU=\{U_i\}_{i\in I}$ of $X$ is \emph{Real} if $\cU$ is invariant with respect to the Real structure $\rho$; \emph{i.e}. $\rho(U_i)\in \cU, \forall i\in I$.  Alternatively,  $\cU$ is Real if $I$ is equipped with an involution $i\mto \bar{i}$ such that $U_{\bar{i}}=\rho(U_i)$ for all $i\in I$.	
\end{df}

\begin{rem}
Observe that Real open covers always exist for all locally compact Real space $X$. Indeed, let $\cV=\{V_{i'}\}_{i'\in I'}$ be an open cover of the space $X$. Let $I:=I'\times \{\pm1\}$ be endowed with the involution $(i',\pm1)\mto (i',\mp1)$. Next, put $U_{(i',\pm1)}:=\rho^{(\pm1)}(V_{i'})$, where $\rho^{(+1)}(g):=g$, and $\rho^{(-1)}(g):=\rho(g)$ for $g\in \cG$.
\end{rem}

\begin{df}[Real action]
Let $(Z,\tau)$ be a locally compact Hausdorff Real space. A (continuous) right Real action of $(\cG,\rho)$ on $(Z,\tau)$ is given by a continuous open map $\fs: Z\To \Gpdo$ (called the \emph{generalized source map}) and a continuous map $Z \times_{\fs,\Gpdo,r} \cG \To Z$, denoted by $(z,g) \mto zg$, such that 

\begin{itemize}
\item[(a)] $\tau(zg)=\tau(z)\rho(g)$ for all $(z,g)\in Z \times_{\fs,\Gpdo,r} \cG$;
\item[(b)] $\rho(\fs(z))=\fs(\tau(z)) $ for all $z\in Z$;
\item[(c)] $\fs(zg)=s(g)$;
\item[(d)] $z(gh)=(zg)h$ for $(z,g)\in Z \times_{\fs,\Gpdo,r} \cG$ and $(g,h)\in \cG^{(2)}$;
\item[(e)] $z\fs(z)=z$ for any $z\in Z$ where we identify $\fs(z)$ with its image in $\cG$ by the inclusion $\Gpdo \hookrightarrow \cG$.
\end{itemize}
If such a Real action is given, we say that $(Z,\tau)$ is a (right) Real $\cG$-space.
\end{df}

Likewise a (continuous) left Real action of $(\cG,\rho)$ on $(Z,\tau)$ is determined by a continuous Real open surjection $\fr: Z\To \Gpdo$ (the \emph{generalized range map} of the action) and a continuous Real map $\cG \times_{s,\Gpdo,\fr}Z \To Z$ satisfying the appropriate analogues of conditions (a), (b), (c), (d) and (e) above.

Given a right Real action of $(\cG,\rho)$ on $(Z,\tau)$ with respect to $\fs$, let $\Psi: Z \times_{\fs,\Gpdo,r} \cG \To Z\times Z$ be defined by the formula $\Psi(z,g)=(z,zg)$. Then we say that the action is \emph{free} if this map is one-to-one (or in other words if the equation $zg=z$ implies $g=\fs(z)$. The action is called \emph{proper} if $\Psi$ is proper.

\begin{nota}
If we are given such a right (resp. left ) Real action of $(\cG,\rho)$ on $(Z,\tau)$, and if there is no risk of confusion, we will write $Z\ast\cG$ (resp. $\cG\ast Z$) for  $Z\times_{\fs,\Gpdo,r}\cG$ (resp. for $\cG\times_{s,\Gpdo,\fr}Z$).	
\end{nota}


\subsection{Real $\cG$-bundles}

\begin{df}~\label{df:G-bundle-grpd}
Let $(\cG,\rho)$ be a Real groupoid. A Real (right) $\cG$-bundle over a Real space $(Y,\varrho)$ is a Real (right) $\cG$-space $(Z,\tau)$ with respect to a map $\fs: Z\To \Gpdo$, together with a Real map $\pi: Z\To Y$ satisfying the relation $\pi(zg)=\pi(z)$ for any $(z,g)\in Z\times_{\fs,\Gpdo,r}\cG$, and such that for any $y\in Y$, the induced map $$\tau_y: Z_y \To Z_{\varrho(y)}$$ on the fibres is $\cG$-antilinear in the sense that for $(z,g)\in Z_y \times_{\fs, \Gpdo,r} \cG$ we have $$\tau_y(zg)=\tau_y(z)\rho(g)$$ as an element in $Z_{\varrho(y)}$.

Such a bundle $(Z,\tau)$ is said to be \emph{principal} if
\begin{itemize}
\item[(i)] $\pi: Z \To Y$ is \emph{locally split} (means that it is surjective and admits local sections), and
\item[(ii)] the map $Z\times_{\fs,\Gpdo,r}\cG \To Z\times_{Y}Z, \ (z,g)\mto (z,zg)$ is a Real homeomorphism.

\end{itemize}

\end{df}

\begin{rmks}~\label{prop-bdles} 
(1). \textbf{The unit bundle.} Given a Real groupoid $(\cG,\rho)$, its space of arrows $\cG^{(1)}$ is a $\cG$-principal Real bundle over $\Gpdo$. Indeed, the projection is the range map $r: \cG^{(1)}\To \Gpdo$, the generalized source map is given by $s$ and the action is just the partial multiplication on $\cG$. This bundle is denoted by $U(\cG)$ and is called the \emph{unit} bundle of $\cG$ (cf. ~\cite{MM}).

(2). \textbf{Pull-back.} Let $$\xymatrix{Z \ar[r]^\fs \ar[d]_\pi & \Gpdo \\ Y & }$$ be a $\cG$-principal Real bundle and $f: Y'\To Y$ be a Real continuous map. Then the pull-back $f^\ast Z := Y'\times_Y Z$ equipped with the involution $(\varrho',\tau)$ has the structure of a $\cG$-principal Real bundle over $Y'$. Indeed, the right Real $\cG$-action is given by the $\cG$-action on $Z$ and the generalized source map is $\fs'(y',z):=\fs(z)$.

(3). \textbf{Trivial bundles.} From the previous two remarks, we see that if $(Z,\tau)$ is any Real space together with a Real map $\varphi: Z\To \Gpdo$, then we get a $\cG$-principal Real bundle $\varphi^\ast U(\cG)$ over $Z$; its total space being the space $ Z\times_{\varphi,\Gpdo, r} \cG$. A Bundle of this form is called \emph{trivial} while a $\cG$-principal Real bundle which is locally of this form is called \emph{locally trivial}.
\end{rmks}


\subsection{Generalized morphisms of Real groupoids}

\begin{df}~\label{gen-hom}
A generalized morphism from a Real groupoid $(\Ga, \varrho)$ to a Real groupoid $(\cG, \rho)$ consists of a Real space $(Z,\tau)$, two maps $$\xymatrix{ \Gamo & Z \ar[l]_\fr \ar[r]^\fs & \Gpdo},$$
a left (Real) action of $\Ga$ with respect to $\fr$, a right (Real) action of $\cG$ with respect to $\fs$, such that

\begin{itemize}
\item[(i)] the actions commute, i.e. if $(z,g)\in Z \times_{\fs, \Gpdo,r}\cG$ and $(\g,z)\in \Ga\times_{s,\Gamo,\fr} Z$ we must have $\fs(\g z)= \fs(z)$, $\fr(zg)=\fr(z)$ so that $\g (zg)=(\g z)g$;
\item[(ii)] the maps $\fs$ and $\fr$ are Real in the sense that $\fs(\tau(z))=\rho(\fs(z))$ and $\fr(\tau(z))=\varrho(\fr(z))$ for any $z\in Z$;
\item[(iii)] $\fr: Z\To \Gamo$ is a locally trivial $\cG$-principal Real bundle.
\end{itemize}
\end{df}

\begin{ex}~\label{ex-gh}
Let $f: \Ga \To \cG$ be a Real strict morphism. Let us consider the fibre product $Z_f:= \Gamo \times_{f,\Gpdo, r} \cG$ and the maps $\fr: Z_f \To \Gamo, \ (y,g)\mto y$ and $\fs: Z_f \To \Gpdo, \ (y,g)\mto s(g)$. For $(\g, (y,g))\in \Ga \times_{s,\Gamo, \fr} Z_f)$, we set $\g.(y,g):=(r(\g),f(\g)g)$ and for $((y,g),g')\in Z_f\times_{\fs,\Gpdo,r} \cG$ we set $(y,g).g':= (y,gg')$. Using the definition of a strict morphism, it is easy to check that these maps are well defined and make $Z_f$ into a generalized morphism from $\Ga$ to $\cG$. Furthermore, the map $\tau$ on $Z_f$ defined by $\tau(y,g):=(\varrho(y),\rho(g))$ is a Real involution and then $Z_f$ is a Real generalized morphism.
\end{ex}

\begin{df}
A morphism between two such morphisms $(Z,\tau)$ and $(Z',\tau')$ is a $\Ga$-$\cG$-equivariant Real map $\vp:Z\To Z'$ such that $\fs = \fs'\circ \vp$ and $\fr=\fr'\circ \vp$. We say that the Real generalized homomorphism $(Z,\tau)$ and $(Z',\tau')$ are \texttt{isomorphic} if there exists such a $\vp$ which is at the same time a homeomorphism.
\end{df}

Compositions of Real generalized morphisms are defined by the following proposition.

\begin{pro}
Let $(Z',\tau')$ and $(Z",\tau")$ be  Real generalized homomorphisms from $(\Ga,\varrho)$ to $(\cG',\rho')$ and from $(\cG',\rho')$ to $(\cG,\rho)$ respectively. Then $$Z= Z'\times_{\cG'}Z":= (Z'\times_{\fs',\cG'^{(0)}, \fr"}Z")/_{(z',z")\sim (z'g',g'^{-1}z")}$$  with the obvious Real involution, defines a Real generalized morphism from $\gamgpd$ to $\grpd$.
\end{pro}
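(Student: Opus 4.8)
The plan is to verify that the candidate space $Z = Z' \times_{\cG'} Z''$ carries all the structure demanded by Definition~\ref{gen-hom}, treating the non-Real part as already known (it is the classical composition of generalized morphisms, see~\cite{MM,TLX}) and focusing attention on the Real structure. First I would make explicit the involution on $Z$: on the fibre product $Z' \times_{\fs',\cG'^{(0)},\fr''} Z''$ set $\tau'\times\tau''$, which is well defined because $\fs'(\tau'(z'))=\rho'(\fs'(z'))=\fr''(\tau''(z''))$ whenever $\fs'(z')=\fr''(z'')$, using condition (ii) for both $Z'$ and $Z''$. Then I would check that $\tau'\times\tau''$ descends to the quotient by the relation $(z'g',g'^{-1}z'')\sim(z',z'')$: applying $\tau'\times\tau''$ sends this pair to $(\tau'(z')\rho'(g'),\rho'(g')^{-1}\tau''(z''))$, which is identified with $(\tau'(z'),\tau''(z''))$ by the same relation (with $g'$ replaced by $\rho'(g')\in\cG'$). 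Since $\tau'$ and $\tau''$ are $2$-periodic homeomorphisms commuting with the identifications, the induced map $\tau$ on $Z$ is a $2$-periodic homeomorphism; this is the ``obvious Real involution'' referred to in the statement.

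Next I would equip $Z$ with its two moment maps and two actions. The generalized range map is $\fr\colon [z',z'']\mapsto \fr'(z')$ and the generalized source map is $\fs\colon[z',z'']\mapsto\fs''(z'')$; both are well defined since $\fr'(z'g')=\fr'(z')$ and $\fs''(g'^{-1}z'')=\fs''(z'')$ by the invariance properties in Definition~\ref{df:G-bundle-grpd} and the commuting-actions axiom. The left $\Ga$-action is $\g\cdot[z',z'']:=[\g z',z'']$ and the right $\cG$-action is $[z',z'']\cdot g:=[z',z''g]$; these are well defined and commute by the corresponding facts for $Z'$ and $Z''$. The Reality of the moment maps, $\fr(\tau[z',z''])=\varrho(\fr[z',z''])$ and $\fs(\tau[z',z''])=\rho(\fs[z',z''])$, follows from condition (ii) for $Z'$ and $Z''$ respectively; the antilinearity identities $\tau(\g z)=\varrho(\g)\tau(z)$ and $\tau(zg)=\tau(z)\rho(g)$ follow from those for $\tau'$ on $Z'$ and $\tau''$ on $Z''$, since $\tau[\g z',z'']=[\tau'(\g z'),\tau''(z'')]=[\varrho(\g)\tau'(z'),\tau''(z'')]$ and similarly on the right.

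It then remains to verify condition (iii): that $\fr\colon Z\to\Gamo$ is a locally trivial $\cG$-principal Real bundle. The underlying non-Real assertion — that $Z\to\Gamo$ is a locally trivial principal $\cG$-bundle — is the content of the classical composition theorem, so I would cite it; the extra content is that the local trivializations can be chosen compatibly with the Real structures. For this I would argue as follows: given $y\in\Gamo$, choose a local section of $\fr'\colon Z'\to\Gamo$ near $y$ and, using that $\fr''\colon Z''\to\cG'^{(0)}$ is locally trivial and Real, a local trivialization of $Z''$ over a $\rho'$-invariant neighbourhood of the relevant point of $\cG'^{(0)}$; replacing the chosen neighbourhoods by their intersections with their $\varrho$- (resp. $\rho'$-) images produces invariant data, and then, exactly as in the Remark following Definition~\ref{df:G-bundle-grpd} on the existence of Real open covers, one transports the trivialization over the ``other half'' of the cover by the involutions $\tau$, $\varrho$. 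This shows $\fr$ is a locally trivial Real $\cG$-principal bundle in the sense of Definition~\ref{df:G-bundle-grpd}. I expect the main obstacle to be precisely this last point: keeping track of all the compatibility conditions when promoting the classical local triviality to an involution-compatible one, in particular ensuring that the principal-bundle homeomorphism $Z\times_{\fs,\Gpdo,r}\cG\to Z\times_{\Gamo}Z$ is Real. Everything else is a routine ``diagram-chase'' transfer of the corresponding properties of $Z'$ and $Z''$ through the quotient.
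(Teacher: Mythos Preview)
Your proposal is correct and follows essentially the same route as the paper: define $\fr$, $\fs$, the $\Ga$- and $\cG$-actions, and the involution $\tau=\tau'\times\tau''$ on the quotient, then verify local triviality. You are in fact more careful than the paper about checking that $\tau'\times\tau''$ descends to the quotient and that the moment maps and actions are Real; the paper simply declares the involution ``obvious''. The only minor difference is in the local triviality step: the paper argues directly from the explicit local form of trivial bundles in Remarks~\ref{prop-bdles}(3), writing $Z$ locally as $W\times_{\varphi,\Gpdo,r}\cG$ with $W=U\times_{\varphi',\cG'^{(0)}}V$, whereas you cite the classical result and then promote it to a Real trivialization via the Real-open-cover/transport-by-involution trick. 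Both arguments work; the paper's is slightly more concrete, yours makes the Real compatibility more explicit.
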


\begin{proof}
Let us first describe the structure maps $$\xymatrix{\Gamo & Z \ar[l]_\fr \ar[r]^\fs & \Gpdo}$$ and the actions. 

For $(z',z")\in Z$ we set $\fr (z',z"):=\fr'(z')$ and $\fs (z',z"):=\fs"(z")$. These are well defined and since $\fs(z'g',g'^{-1}z")=\fs"(g'^{-1}z")=\fs"(z")$ and $\fr(z'g',g'^{-1}z")=\fr'(z'g')=\fs'(z')$ from the point (i) in Definition~\ref{gen-hom}. The actions are defined by $\g.(z',z"):=(\g z',z")$ and $(z',z").g:=(z',z"g)$  for $(\g,(z',z"))\in \Ga \times_{s,\Gamo, \fr} Z$ and $((z',z"),g)\in Z \times_{\fs,\Gpdo,r}\cG$ while the Real involution is the obvious one: 
$$\tau(z',z"):=(\tau'(z'),\tau"(z")).$$

Now to show the local triviality of $Z$, notice that from (3) of Remarks~\ref{prop-bdles}, $Z'$ and $Z"$ are locally of the form $U\times_{\varphi',\cG'^{(0)},r'}\cG'$ and $V\times_{\varphi",\Gpdo,r}\cG$ respectively, where $\varphi': U\To \cG'^{(0)}$ and $\varphi":V\To \Gpdo$ are Real continuous maps, $U$ and $V$ subspaces of $\Gamo$ and $\cG'^{(0)}$ respectively. It turns out that by construction, $Z$ is locally of the form $W\times_{\varphi,\cG'^{(0)},r}\cG $ where  $W=U\times_{\varphi',\cG'^{(0)}}V$.
\end{proof}

\begin{df}
Given two Real generalized morphisms $(\Ga,\vr)\stackrel{(Z,\tau)}{\To} (\cG',\rho')$ and $(\cG',\rho')\stackrel{(Z',\tau')}{\To} (\cG,\rho)$, we define their composition $(Z'\circ Z,\tau):(\Ga,\vr)\To (\cG,\rho)$ to be $(Z\times_{\cG'}Z',\tau \times \tau')$.
\end{df}

\begin{rem}
It is easy to check that the composition of Real generalized homomorphisms is associative. 
For instance, if $$\xymatrix{\Ga \ar[r]^{(Z_1,\rho_1)} & \cG_1 \ar[r]^{(Z_2,\rho_2)} & \cG_2 \ar[r]^{(Z_3,\rho_3)} & \cG }$$ are given Real generalized morphisms, 
we get two Real generalized morphisms $Z=Z_1 \times_{\cG_1}(Z_2\times_{\cG_2}Z_3)$ and $Z'=(Z_1\times_{\cG_1}Z_2)\times_{\cG_2}Z_3$ between $(\Ga,\varrho)$ and $(\cG,\rho)$;
 notice that here $Z$ and $Z'$ carry the obvious Real involutions. Moreover, the map $Z\To Z', \ (z_1,(z_2,z_3))\mto ((z_1,z_2),z_3)$ is a $\Ga$-$\cG$-equivariant Real homeomorphism. Hence, there exists a category $\RG$ whose objects are Real locally compact groupoids and morphisms are isomorphism classes of Real generalized homomorphisms.
\end{rem}

\begin{lem}
Let $f_1, f_2: \Ga \to \cG$ be two Real strict homomorphisms. Then $f_1$ and $f_2$ define isomorphic Real generalized homomorphisms if and only if there exists a Real continuous map $\vp: \Gamo \To \cG$ such that $f_2(\g)=\vp(r(\g))f_1(\g)\vp(s(\g))^{-1}$.
\end{lem}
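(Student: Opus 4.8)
The plan is to translate the two Real strict homomorphisms into their associated Real generalized morphisms $Z_{f_1}$ and $Z_{f_2}$ (as in Example~\ref{ex-gh}) and to exhibit an explicit equivariant Real homeomorphism between them, keeping careful track of the Real structures. First I would recall that, by Example~\ref{ex-gh}, $Z_{f_i} = \Gamo \times_{f_i,\Gpdo,r}\cG$ with $\fr(y,g)=y$, $\fs(y,g)=s(g)$, left action $\g.(y,g)=(r(\g),f_i(\g)g)$, right action $(y,g).g'=(y,gg')$, and Real involution $\tau_i(y,g)=(\vr(y),\rho(g))$. The non-Real statement — that $f_1$ and $f_2$ define isomorphic generalized morphisms iff there is a continuous $\vp:\Gamo\To\cG$ with $f_2(\g)=\vp(r(\g))f_1(\g)\vp(s(\g))^{-1}$ — is standard (it is essentially the observation that natural transformations of the corresponding functors correspond to such $\vp$), so the content here is the ``Real'' refinement: that $\vp$ can be chosen Real, i.e. $\vp(\vr(y))=\rho(\vp(y))$.

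For the forward direction, suppose $\Phi: Z_{f_1}\To Z_{f_2}$ is a $\Ga$-$\cG$-equivariant homeomorphism intertwining $\tau_1$ and $\tau_2$ and commuting with $\fr,\fs$. Writing $\Phi(y,g)=(y,\psi(y,g))$ with $\psi(y,g)\in\cG$ (the first coordinate is forced by $\fr\circ\Phi=\fr$), right $\cG$-equivariance gives $\psi(y,gg')=\psi(y,g)g'$, so $\psi(y,g)=\vp(y)g$ where $\vp(y):=\psi(y,f_1(\cdot))$ evaluated suitably — more precisely, using that $(y,g)=(y,f_1(\g))\cdot(\text{something})$ is not quite available, one instead notes $\fs\circ\Phi=\fs$ forces $s(\psi(y,g))=s(g)$ and $r(\psi(y,g))$ is constrained by $\Phi$ landing in $Z_{f_2}$, namely $r(\psi(y,g))=f_2(\text{?})$; the clean route is: set $\vp(y):=$ the unique element with $\Phi(y,\id_{f_1\text{-compatible unit}})=(y,\vp(y))$, i.e. evaluate on units of the appropriate form, obtaining $\vp:\Gamo\To\cG$ continuous. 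Left $\Ga$-equivariance of $\Phi$, $\Phi(\g.(y,g))=\g.\Phi(y,g)$, then yields exactly $f_2(\g)\vp(s(\g))=\vp(r(\g))f_1(\g)$, i.e. $f_2(\g)=\vp(r(\g))f_1(\g)\vp(s(\g))^{-1}$. Finally, compatibility $\Phi\circ\tau_1=\tau_2\circ\Phi$ evaluated on $(y,g)$ gives $(\vr(y),\rho(\vp(y)g))=(\vr(y),\vp(\vr(y))\rho(g))$, hence $\rho(\vp(y))=\vp(\vr(y))$, so $\vp$ is Real.

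Conversely, given a Real continuous $\vp$ with $f_2(\g)=\vp(r(\g))f_1(\g)\vp(s(\g))^{-1}$, define $\Phi: Z_{f_1}\To Z_{f_2}$ by $\Phi(y,g):=(y,\vp(y)g)$. One checks directly: it is well defined (using $r(\vp(y))=r(\vp(y)f_1(\id_y))$ and the cocycle-type relation to see $r(\vp(y)g)=f_2$ of the right thing — concretely $r(g)=f_1(\id_y)$ forces $r(\vp(y)g)=r(\vp(y))$, and one must have this equal $f_2(\id_y)$, which follows from the identity relation specialized at $\g=\id_y$); it commutes with $\fr$ and $\fs$ trivially; it is right $\cG$-equivariant since $\Phi((y,g)g')=(y,\vp(y)gg')=\Phi(y,g)g'$; it is left $\Ga$-equivariant since $\Phi(\g.(y,g))=(r(\g),\vp(r(\g))f_1(\g)g)=(r(\g),f_2(\g)\vp(s(\g))g)=\g.(y,\vp(y)g)$ using $s(\g)=\fr(y,g)=y$; it intertwines the Real structures because $\Phi(\tau_1(y,g))=(\vr(y),\vp(\vr(y))\rho(g))=(\vr(y),\rho(\vp(y))\rho(g))=(\vr(y),\rho(\vp(y)g))=\tau_2(\Phi(y,g))$; and it is a homeomorphism with inverse $(y,g)\mapsto(y,\vp(y)^{-1}g)$, which is again Real by the same computation.

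I expect the main obstacle to be purely bookkeeping rather than conceptual: pinning down the correct definition of $\vp(y)$ from $\Phi$ in the forward direction (the fibre $\fr^{-1}(y)$ is a $\cG$-torsor, so $\Phi$ restricted to it is determined by where it sends one point, but one must make a canonical choice and verify continuity of $y\mapsto\vp(y)$, using local sections of $\fr$ from Definition~\ref{gen-hom}(iii)), and making sure every ``obvious'' well-definedness and equivariance check above is actually consistent with the range/source constraints. The Real part itself is a one-line computation in each direction once $\vp$ is in hand; the only subtlety is confirming that the inverse map is automatically Real, which it is, so the homeomorphism obtained is an isomorphism in the Real category.
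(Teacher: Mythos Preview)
Your proposal is correct and follows essentially the same route as the paper: extract $\vp$ from $\Phi$ by evaluating on the canonical point $(y,f_1(y))\in Z_{f_1}$ (so $\Phi(y,f_1(y))=(y,\vp(y)f_1(y))$), use left $\Ga$-equivariance to obtain the conjugation identity, and read off Reality of $\vp$ from $\Phi\circ\tau_1=\tau_2\circ\Phi$; the converse is the explicit formula $\Phi(y,g)=(y,\vp(y)g)$, which the paper leaves as ``working backwards''. One small simplification: you do not need local sections of $\fr$ for continuity of $\vp$, since $y\mapsto(y,f_1(y))$ is already a global continuous section and $\vp$ is a continuous function of $\Phi(y,f_1(y))$.
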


\begin{proof}
Le $\Phi: Z_{f_1}\To Z_{f_2}$ be a Real $\Ga$-$\cG$-equivariant homeomorphism, where $Z_{f_i}=\Gamo \times_{f_i,\Gpdo,r}\cG$. Then from the commutative diagrams 
$$\xymatrix{\Gamo & \ar[l]_{pr_1}  Z_{f_1} \ar[r]^{s\circ pr_2} \ar[d]^\Phi & \Gpdo \\ & Z_{f_2} \ar[ul]^{pr_1} \ar[ur]_{s\circ pr_2} & }$$
we have $\Phi(x,g)=(x,h)$ with $s(g)=s(h)$; and then there exists a unique element $\vp(x)\in \cG$ such that $h=\vp(x)g$. To see that this defines a continuous map $\vp:\Gamo \To \cG$, notice that for any $x\in \Gamo$, the pair $(x,f_1(x))$ is an element in $Z_{f_1}$, then $\vp(x)$ is the unique element in $\cG$ such that $\Phi(x,f_1(x))=(x,\vp(x)f_1(x))$. Furthermore, since $\Phi$ is Real, $\Phi(\vr(x),\rho(f_1(x)))=(\vr(x),\rho(\vp(x))\rho(f_1(x)))$ which shows that $\vp(\vr(x))=\rho(\vp(x))$ for any $x\in \Gamo$; \emph{i.e}. $\vp$ is Real.

Now for $\g \in \Ga$, take $x=s(\g)$, then from the $\Ga$-equivariance of $\Phi$, we have 
$$\Phi(\g\cdot(s(\g),f_1(s(\g)))) =\Phi(r(\g),f_1(\g))=\g\cdot\Phi(s(\g),f_1(s(\g)));$$
so that $$(r(\g),\vp(r(\g))f_1(\g))=(r(\g),f_2(\g)\vp(s(\g)))$$ and $f_2(\g)\cdot r(\vp(s(\g)))=\vp(r(\g))f_1(\g)\vp(s(\g))$; but $r(\vp(s(\g)))=s(f_2(\g))$ by definition of $\vp$ and this gives the desired relation.

The converse is easy to check by working backwards.
\end{proof}


\subsection{Morita equivalence}

Let $(\Ga,\vr)$ and $(\cG,\rho)$ be two Real groupoids. Suppose that $f:(\Ga,\vr)\To (\cG,\rho)$ is an isomorphism in the category $\RG_s$. In this case, we say that $(\Ga,\vr)$ and $(\cG,\rho)$ are \emph{strictly equivalent} and we write $(\Ga,\vr)\sim_{strict} (\cG,\rho)$. Now, consider the induced Real generalized morphisms $(Z_f,\tau_f): (\Ga,\vr)\To (\cG,\rho)$ and $(Z_{f^{-1}},\tau_{f^{-1}}):(\cG,\rho)\To (\Ga,\vr)$. Define \texttt{the inverse of $Z_f$} by $Z_f^{-1}:=\cG\times_{r,\Gpdo,f}\Gamo$ with the obvious Real structure also denoted by $\tau_f$. The map $Z_{f^{-1}}\To Z_f^{-1}$ defined by $(x,\g)\mto (f(\g),f^{-1}(x))$ is clearly a $\cG$-$\Ga$-equivariant Real homeomorphism; hence, $(Z_{f^{-1}},\tau_{f^{-1}})$ and $(Z_f^{-1},\tau_f)$ are isomorphic Real generalized morphisms from $(\cG,\rho)$ to $(\Ga,\vr)$. Notice that $Z_f^{-1}$ is $Z_f$ as space; thus, $(Z_f,\tau_f)$ is at the same time a Real generalized morphism from $(\Ga,\vr)$ to $(\cG,\rho)$ and from $(\cG,\rho)$ to $(\Ga,\vr)$. Furthermore, it is simple to check that $Z_f\circ Z_f^{-1}$ and $Z_{\Id_\cG}$ define isomorphic Real generalized morphisms from $(\cG,\rho)$ into itself, and likewise, $Z_f^{-1}\circ Z_f$ and $Z_{\Id_\Ga}$ are isomorphic Real generalized morphisms from $(\Ga,\vr)$ into itself.

\begin{df}~\label{def:Morita_equiv}
Two Real groupoids $(\Ga,\varrho)$ and $(\cG,\rho)$ are said to be \emph{Morita equivalent} if there exists
a Real space $(Z,\tau)$ that is at the same time a Real generalized morphism from $\Ga$ to $\cG$ and from $\cG$ to $\Ga$; 
that is to say that $\xymatrix{\Gamo & Z \ar[l]_\fr}$ is a $\cG$-principal \emph{Real} bundle and $\xymatrix{Z \ar[r]^\fs & \Gpdo}$ is a $\Ga$-principal Real bundle. 
\end{df}

\begin{rem} 
Given a Morita equivalence $(Z,\tau):(\Ga,\vr)\To (\cG,\rho)$, its inverse, denoted by $(Z^{-1},\tau)$, is $(Z,\tau)$ as Real space, and if $\flat: (Z,\tau)\To (Z^{-1},\tau)$ is the identity map, the left Real $\cG$-action on $(Z^{-1},\tau)$ is given by $g\cdot\flat(z):=\flat(z\cdot g^{-1})$, and the  right Real $\Ga$-action is given by $\flat(z)\cdot\g:=\flat(\g^{-1}\cdot z)$; $(Z^{-1},\tau)$ is the corresponding Real generalized morphism from $(\cG,\rho)$ to $(\Ga,\vr)$. 
\end{rem}

The discussion before Definition~\ref{def:Morita_equiv}  shows that the Real generalized morphism induced by a Real strict morphism is actually a Morita equivalence. However, the converse is not true. Moreover, there is a functor 
 \begin{eqnarray}
 \RG_s \To \RG,
 \end{eqnarray}
 where $\RG _s$ is the category whose objects are Real locally compact groupoids and whose morphisms are Real strict morphisms, given by $$f\mto Z_f.$$

\begin{df}[Real cover groupoid]~\label{Real-open-cover}
Let $\grpd$ be a Real groupoid. Let $\cU=\{U_j\}$ be a Real open cover of $X$. Consider the disjoint union $\coprod_{j\in J}U_j =\lbrace (j,x)\in J\times X \ : \ x\in U_j \rbrace$ with the Real structure $\rho^{(0)}$ given by $\rho^{(0)}(j,x):=(\bar{j},\rho(x))$ and define a Real local homeomorphism given by the projection $\pi: \coprod_jU_j \To X, \ (j,x)\mto x$. Then the set $$\cG[\cU]:= \lbrace (j_0,g,j_1)\in J\times \cG\times J \ : \ r(g)\in U_{j_0}, s(g)\in U_{j_1} \rbrace,$$ endowed with the involution $\rho^{(1)}(j_0,g,j_1):=(\bar{j_0},\rho(g),\bar{j_1})$ has a structure of a \emph{Real} locally compact groupoid whose unit space is $\coprod _j U_j$. The range and source maps are defined by $\tilde{r}(j_0,g,j_1):=(j_0,r(g))$ and $\tilde{s}(j_0,g,j_1):=(j_1,s(g))$; two triples are composable if they are of the form $(j_0,g,j_1)$ and $(j_1,h,j_2)$, where $(g,h)\in \cG^{(2)}$, and their product is given by $(j_0,g,j_1)\cdot(j_1,h,j_2):=(j_0,gh,j_2)$. The inverse of $(j_0,g,j_1)$ is $(j_1,g^{-1},j_0)$. 
\end{df}

It is a matter of simple verifications to check the following

\begin{lem}
Let $\grpd$ be a Real groupoid, and $\cU$ a Real open cover of $X$. Then the Real generalized morphism $Z_\iota:\cG[\cU]\To \cG$ induced from the canonical Real morphism $$\iota:\cG[\cU] \To \cG, \ (j_0,g,j_1)\mto g,$$
is a Morita equivalence between $(\cG[\cU],\rho)$ and $(\cG,\rho)$.	
\end{lem}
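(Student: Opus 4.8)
The plan is to exhibit the space $Z_\iota$ explicitly and verify the two principality conditions required by Definition~\ref{def:Morita_equiv}. Recall from Example~\ref{ex-gh} that, for the canonical Real strict morphism $\iota:\cG[\cU]\To\cG$, the induced generalized morphism is
\[
Z_\iota=\Bigl(\coprod_j U_j\Bigr)\times_{\iota,X,r}\cG=\{\,((j,x),g)\ :\ x\in U_j,\ x=r(g)\,\},
\]
with $\fr((j,x),g)=(j,x)\in\coprod_jU_j$, with $\fs((j,x),g)=s(g)\in X$, with left $\cG[\cU]$-action $(j_0,h,j)\cdot((j,x),g)=((j_0,r(h)),hg)$ whenever $s(h)=x$, with right $\cG$-action $((j,x),g)\cdot g'=((j,x),gg')$, and with Real structure $\tau((j,x),g)=((\bar j,\rho(x)),\rho(g))$. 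By Example~\ref{ex-gh} this is already a Real generalized morphism $\cG[\cU]\To\cG$, so $\fr:Z_\iota\To\coprod_jU_j$ is automatically a locally trivial $\cG$-principal Real bundle; what remains is to check that $\fs:Z_\iota\To X$ exhibits $Z_\iota$ as a $\cG[\cU]$-principal Real bundle over $X$, i.e. that $\fs$ is locally split and that the map $\Psi:\cG[\cU]\ast Z_\iota\To Z_\iota\times_X Z_\iota$, $((j_0,h,j),((j,x),g))\mapsto(((j_0,r(h)),hg),((j,x),g))$, is a Real homeomorphism.

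First I would check local splitness of $\fs$. Given $x_0\in X$, pick $j$ with $r(x_0)\in U_j$ — more precisely, since $\cU$ covers $X$ and $x_0=r(x_0)$ as a unit, choose $j$ with $x_0\in U_j$; then $y\mapsto((j,r(y)),y)$ is a continuous section of $\fs$ over the open set $\{y\in X : r(y)\in U_j\}$, and these sets cover $X$ because $\cU$ does. This also shows $\fs$ is open and surjective. Next I would verify that $\Psi$ is a bijection: injectivity is immediate since the second coordinate determines $((j,x),g)$ and then $hg$ determines $h$ (as $g$ is invertible in $\cG$ and the index $j_0$ is recorded in the target unit); for surjectivity, given a pair $(((j_0,x_0),g_0),((j_1,x_1),g_1))$ in $Z_\iota\times_X Z_\iota$ we have $s(g_0)=s(g_1)$, so $h:=g_0g_1^{-1}\in\cG$ satisfies $s(h)=r(g_1)=x_1$ and $r(h)=r(g_0)=x_0$, whence $(j_0,h,j_1)\in\cG[\cU]$ and $\Psi\bigl((j_0,h,j_1),((j_1,x_1),g_1)\bigr)$ is the given pair. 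Continuity of $\Psi$ and of $\Psi^{-1}$ (given by $(w_0,w_1)\mapsto((j_0,\mathrm{pr}_2(w_0)\,\mathrm{pr}_2(w_1)^{-1},j_1),w_1)$) follows from continuity of multiplication and inversion in $\cG$. Finally, $\Psi$ intertwines the Real structures because $\rho$ is a strict homomorphism: $\rho(g_0g_1^{-1})=\rho(g_0)\rho(g_1)^{-1}$ and $\rho^{(1)}(j_0,h,j_1)=(\bar j_0,\rho(h),\bar j_1)$, so $\Psi\circ(\mathrm{id}\ast\tau)=(\tau\times\tau)\circ\Psi$.

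I do not anticipate a serious obstacle here; the statement is, as the paper says, "a matter of simple verifications." The only point demanding a little care is keeping track of the index bookkeeping in $\cG[\cU]$ — i.e. checking that the index labels $j_0,j_1$ recovered from a point of $Z_\iota\times_X Z_\iota$ are exactly the ones making $(j_0,h,j_1)$ a legitimate element of $\cG[\cU]$ (which they are, because $r(h)=x_0\in U_{j_0}$ and $s(h)=x_1\in U_{j_1}$ by construction of $Z_\iota$) — and making sure the Real-bundle condition of Definition~\ref{df:G-bundle-grpd}(ii) is phrased fibrewise over $X$, which it is once one notes $\fs$ is $\cG[\cU]$-invariant: $\fs\bigl((j_0,h,j)\cdot((j,x),g)\bigr)=s(hg)=s(g)=\fs((j,x),g)$. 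With both directions of principality established and the Real compatibility of all structure maps inherited from the strictness of $\rho$ and $\rho^{(0)},\rho^{(1)}$, it follows that $Z_\iota$ is a Morita equivalence $(\cG[\cU],\rho)\simeq(\cG,\rho)$.
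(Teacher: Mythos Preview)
Your proof is correct and is precisely the direct verification that the paper alludes to but omits (the paper supplies no proof beyond the remark ``It is a matter of simple verifications''). The one cosmetic point is that in your local-splitness step you momentarily write $r(y)$ for $y\in X=\Gpdo$; since $y$ is already a unit this is just $y$, and your section is simply $U_j\ni y\mapsto ((j,y),\id_y)$---which you effectively note yourself.
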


\begin{df}
Let $$\xymatrix{Z \ar[d]_\pi \ar[r]^\fs & \Gpdo \\ Y & }$$ be a locally trivial $\cG$-principal Real bundle. A section $\mathsf{s}:Y \To Z$ is said to be Real if $\mathsf{s}\circ \vr = \tau \circ \mathsf{s}$. Moreover, given a Real open cover $\lbrace U_j \rbrace_{j\in J}$ of $Y$, we say that a family of local sections $\mathsf{s}_j:U_j\To Z$ is \emph{globally Real} if for any $j\in J$, we have \begin{equation}
\mathsf{s}_{\bar{j}}\circ \vr = \tau \circ \mathsf{s}_j.
\end{equation}
\end{df}

\begin{lem}
Any locally trivial $\cG$-principal Real bundle $\pi: Z\To Y$ admits a globally Real family of local sections $\lbrace \mathsf{s}_j \rbrace_{j\in J}$ over some Real open cover $\lbrace U_j \rbrace$.  
\end{lem}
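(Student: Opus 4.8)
The plan is to start from an arbitrary locally trivial structure on $\pi\colon Z\To Y$, which by Definition~\ref{df:G-bundle-grpd} and Remarks~\ref{prop-bdles} furnishes an (a priori not Real) open cover $\{V_{i'}\}_{i'\in I'}$ of $Y$ together with local sections $t_{i'}\colon V_{i'}\To Z$; such sections exist since over each $V_{i'}$ the bundle is isomorphic to a trivial bundle $\varphi_{i'}^\ast U(\cG)$, which has the obvious section. The idea is then to ``Real-ify'' this data by the same doubling trick used in the Remark following the definition of Real covers: set $I:=I'\times\{\pm1\}$ with involution $(i',\pm1)\mapsto(i',\mp1)$, and put $U_{(i',+1)}:=V_{i'}$, $U_{(i',-1)}:=\varrho(V_{i'})$. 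Since $\varrho$ is a homeomorphism and $\{V_{i'}\}$ covers $Y$, the family $\{U_i\}_{i\in I}$ is again an open cover, and it is Real by construction because $\varrho(U_{(i',\pm1)})=U_{(i',\mp1)}$.

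Next I would define the local sections over this Real cover. On $U_{(i',+1)}=V_{i'}$ simply keep $\mathsf{s}_{(i',+1)}:=t_{i'}$. On $U_{(i',-1)}=\varrho(V_{i'})$ define $\mathsf{s}_{(i',-1)}:=\tau\circ t_{i'}\circ\varrho^{-1}$. This is continuous as a composite of continuous maps, and it is a section: for $y\in\varrho(V_{i'})$ one has $\pi(\mathsf{s}_{(i',-1)}(y))=\pi(\tau(t_{i'}(\varrho^{-1}(y))))$, and since $\pi$ is Real, $\pi\circ\tau=\varrho\circ\pi$, so this equals $\varrho(\pi(t_{i'}(\varrho^{-1}(y))))=\varrho(\varrho^{-1}(y))=y$. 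Finally I must check the globally-Real compatibility $\mathsf{s}_{\bar i}\circ\varrho=\tau\circ\mathsf{s}_i$ for each $i\in I$. For $i=(i',+1)$: $\mathsf{s}_{(i',-1)}\circ\varrho=\tau\circ t_{i'}\circ\varrho^{-1}\circ\varrho=\tau\circ t_{i'}=\tau\circ\mathsf{s}_{(i',+1)}$, as required. For $i=(i',-1)$: using $\tau^2=\Id$ and $\varrho^2=\Id$ (both are $2$-periodic), $\mathsf{s}_{(i',+1)}\circ\varrho=t_{i'}\circ\varrho$, while $\tau\circ\mathsf{s}_{(i',-1)}=\tau\circ\tau\circ t_{i'}\circ\varrho^{-1}=t_{i'}\circ\varrho^{-1}=t_{i'}\circ\varrho$; the two agree. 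Hence $\{\mathsf{s}_i\}_{i\in I}$ is globally Real over the Real open cover $\{U_i\}_{i\in I}$.

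There is essentially no serious obstacle here: the whole argument is the bundle-theoretic version of the existence-of-Real-covers remark, and the only points requiring care are (i) invoking local triviality correctly to get \emph{ordinary} local sections in the first place, and (ii) the two-case verification of the cocycle-like identity $\mathsf{s}_{\bar i}\circ\varrho=\tau\circ\mathsf{s}_i$, where one must remember to use both $2$-periodicities. If one wishes one can also observe that refining the given trivializing cover does not affect anything, so the statement holds over a Real open cover refining any prescribed one. I would keep the write-up short, essentially presenting the doubled cover, the formula $\mathsf{s}_{(i',-1)}:=\tau\circ t_{i'}\circ\varrho^{-1}$, and the two short computations above.
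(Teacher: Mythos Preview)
Your argument is correct and follows essentially the same approach as the paper: both proofs start from an ordinary trivializing cover, double the index set via $I'\times\{\pm1\}$ with the swap involution, and transport the original sections to the conjugate opens using $\tau$ and $\varrho$. The only cosmetic difference is that the paper writes the sections explicitly through the trivialization maps $\varphi_i$ (setting $\mathsf{s}_{(i,\epsilon)}(x)=(x,\varphi_i^\epsilon(x))$ in the local model), whereas you work directly with abstract local sections $t_{i'}$ and the formula $\mathsf{s}_{(i',-1)}=\tau\circ t_{i'}\circ\varrho$; the content is the same.
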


\begin{proof}
Choose a \emph{local trivialization} $(U_i,\vp_i)_{i\in I}$ of $Z$; \emph{i.e}. $\vp_i:U_i \To \Gpdo$ are continuous maps such that $\pi^{-1}(U_i)=:Z_{U_i} \cong U_i \times_{\vp_i,\Gpdo,r}\cG$ with $\tau_{Z_{U_i}}=(\vr,\rho)$. It turns out that $Z_{U_{(i,\epsilon)}}\cong U_{(i,\epsilon)} \times_{\vp_i^\epsilon,\Gpdo,r}\cG$, where $\vp_i^\epsilon:=\rho^\epsilon \circ \vp_i \circ \vr^\epsilon: U_{(i,\epsilon)}\To \Gpdo$ is a well defined continuous map and $U_{(i,\epsilon)}:=\vr^\epsilon(U_i)$ for $(i,\epsilon)\in I\times \ZZ_2$. However, for $(i,\epsilon)\in I\times \ZZ_2$, there is a homeomorphism $\xymatrix{U_{(i,\epsilon)}\times_{\vp_i^\epsilon,\Gpdo,r}\cG \ar[r]^{(\vr,\rho)} & U_{\overline{(i,\epsilon)}}\times_{\vp_i^{\epsilon +1},\Gpdo,r}\cG}$. Now, putting $\mathsf{s}_{(i,\epsilon)}: U_{(i,\epsilon)}\To Z, \ x \mto (x,\vp_i^\epsilon (x))$, we obtain the desired sections.
\end{proof}

For the remainder of this subsection we will need the following construction. 

Let $(Z,\tau)$ be a Real space and $(\Ga,\varrho)$ a Real groupoid together with a continuous Real map $\vp: Z\To \Gamo$. Then we define an induced groupoid $\vp^\ast \Ga$ over $Z$ in which the arrows from $z_1$ to $z_2$ are the arrows in $\Ga$ from $\vp(z_1)$ to $\vp(z_2)$; i.e. 
 $$\vp^\ast \Ga:= Z\times_{\varphi,\Gamo,r}\Ga \times_{s,\Gamo,\vp}Z  \ ,$$
 and the product is given by $(z_1,\g_1,z_2).(z_2,\g_2,z_3)=(z_1,\g_1\g_2,z_3)$ whenever $\g_1$ and $\g_2$ are composable, while the inverse is given by $(z,\g,z')^{-1}=(z',\g^{-1},z)$. Moreover, the triple $(\rho,\varrho,\rho)$ defines a Real structure $\vp^\ast \varrho$ on $\vp^\ast \Ga$ making it into a Real groupoid $(\vp^\ast \Ga, \vp^\ast \varrho )$ that we will call \emph{the pull-back} of $\Ga$ over $Z$ via $\varphi$.

\begin{lem}~\label{lem-pullback}
Given a continuous locally split Real open map $\vp: Z \To \Gamo$, then the Real groupoids $\Ga$ and $\vp^\ast \Ga$ are Morita equivalent.
\end{lem}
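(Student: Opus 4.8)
The plan is to exhibit an explicit Real space $W$ that simultaneously realizes a Morita equivalence between $\varphi^\ast\Ga$ and $\Ga$, using $Z$ itself as the carrier. Concretely, I would take
\[
W := Z\times_{\varphi,\Gamo,r}\Ga,
\]
with the Real structure $\tau\times\varrho$, the map to the unit space $Z$ of $\varphi^\ast\Ga$ given by $(z,\g)\mapsto z$, and the map to $\Gamo=\Ga^{(0)}$ given by $(z,\g)\mapsto s(\g)$. The left $\varphi^\ast\Ga$-action is $(z_1,\g_1,z_2)\cdot(z_2,\g_2)=(z_1,\g_1\g_2)$ and the right $\Ga$-action is $(z,\g)\cdot\g'=(z,\g\g')$. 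These are manifestly continuous, they commute, and both are compatible with the involutions because $\varphi$, $\tau$, $\varrho$ are Real and the structure maps of $\Ga$ intertwine $\varrho$; this is exactly the verification of conditions (a)--(e) in the definition of a Real action together with (i)--(ii) in Definition~\ref{gen-hom}, done for both sides.

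Next I would check that $\fr=\mathrm{pr}_1:W\To Z$ is a locally trivial $\Ga$-principal Real bundle, i.e.\ that $W$ is a Real generalized morphism $\varphi^\ast\Ga\To\Ga$. Surjectivity of $\fr$ uses that $\varphi$ maps into $\Gamo$ so $(z,\id_{\varphi(z)})\in W$; local sections come from the hypothesis that $\varphi$ is locally split, pulling back a local section of $r:\Ga^{(1)}\to\Gamo$ along $\varphi$ where necessary (actually the section $z\mapsto(z,\id_{\varphi(z)})$ works globally, which already gives a global trivialization $W\cong\varphi^\ast U(\Ga)$ in the sense of Remarks~\ref{prop-bdles}(3)). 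Principality is the statement that $W\times_{\fs,\Gamo,r}\Ga\To W\times_Z W$, $((z,\g),\g')\mapsto((z,\g),(z,\g\g'))$ is a Real homeomorphism, with inverse $((z,\g),(z,\g''))\mapsto((z,\g),\g^{-1}\g'')$; this is where I use that $\Ga$ is a groupoid so $\g^{-1}\g''$ is well defined whenever $s(\g)=r(\g'')$, which is forced by the fibre product. Then, swapping the roles, I would show $\fs=(z,\g)\mapsto s(\g)$ makes $W$ into a $\varphi^\ast\Ga$-principal Real bundle over $\Gamo$: here I must use that $\varphi$ is a \emph{Real open} map, so that $\fs$ is open (being the composite of the projection, which is open because $\varphi$ is open, with the open map $s$), and locally split, and the principality map $\varphi^\ast\Ga\times_{\Gamo}W\To W\times_{\Gamo}W$ is a Real homeomorphism with the evident inverse built from composition and inversion in $\Ga$. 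By Definition~\ref{def:Morita_equiv} this proves Morita equivalence.

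The step I expect to be the main obstacle is establishing that $\fs:W\To\Gamo$ is locally split and a principal $\varphi^\ast\Ga$-bundle, since this is the direction that genuinely consumes the openness and local-splitting hypotheses on $\varphi$ (the other direction, $\fr$-principality, is essentially formal). Specifically, to produce local sections of $\fs$ over $\Gamo$ I would argue: given $y\in\Gamo$, openness plus local splitting of $\varphi$ give an open $V\ni y$ and a continuous $\sigma:V\To Z$ with $\varphi\circ\sigma=\Id_V$; then $y'\mapsto(\sigma(y'),\id_{y'})$ is a local section of $\fs$, and one checks the fibre $\fs^{-1}(y')$ is a single $\varphi^\ast\Ga$-orbit, namely all $(z,\g)$ with $s(\g)=y'$, which is visibly the $\varphi^\ast\Ga$-orbit of any chosen point in it. The Real/involution bookkeeping is routine but must be carried through at each stage: every map introduced intertwines the relevant involutions because it is built from $\varphi$, $\tau$, $\varrho$ and the (Real) structure maps of $\Ga$, so one only needs to note, e.g., $\overline{(z,\g)\cdot\g'}=(\tau(z),\varrho(\g))\cdot\varrho(\g')$, and similarly for the $\varphi^\ast\Ga$-action, to see all the constructed homeomorphisms are Real. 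This completes the proof.
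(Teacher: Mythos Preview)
Your proposal is correct and is essentially the same as the paper's proof: the paper takes the strict homomorphism $\tilde{\varphi}:\varphi^\ast\Ga\to\Ga$, $(z_1,\g,z_2)\mapsto\g$, and observes that the induced generalized morphism $Z_{\tilde{\varphi}}=Z\times_{\varphi,\Gamo,r}\Ga$ (which is exactly your $W$, since $\tilde{\varphi}$ restricts to $\varphi$ on units) is a Morita equivalence, leaving the verification to the reader. You have simply written out that verification explicitly, including the point that the openness and local-splitting of $\varphi$ are what make $\fs$ a $\varphi^\ast\Ga$-principal bundle.
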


\begin{proof}
Consider the Real strict homomorphism $\tilde{\vp}: \vp^\ast \Ga \To \Ga$ defined by $(z_1,\g, z_2) \mto \g$. Then by Example ~\ref{ex-gh} we obtain a Real generalized homomorphism $\xymatrix{Z & Z_{\tilde{\vp}} \ar[l]_{\pi_1} \ar[r]^{s\circ \pi_2} & \Gamo}$ with $Z_{\tilde{\vp}} := Z\times_{\tilde{\vp}, \Gamo, r} \Ga$, $\pi_1$ and $\pi_2$ the obvious projections, and where $Z \hookrightarrow \vp^\ast \Ga$ by $z\mto (z,\vp(z), z)$. Now using the constructions of Example~\ref{ex-gh}, it is very easy to check that $Z_{\tilde{\vp}}$ is in fact a Morita equivalence.
\end{proof}

\begin{pro}~\label{pro-pullback}
Two Real groupoids $(\Ga, \vr)$ and $(\cG,\rho)$ are Morita equivalent if and only if there exist a Real space $(Z,\tau)$ and two continuous Real maps $\vp: Z\To \Gamo$ and $\vp':Z\To \Gpdo$ such that $\vp^\ast \Ga \cong (\vp')^\ast \cG$ under a Real (strict) homeomorphism.
\end{pro}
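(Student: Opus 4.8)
The statement is the Real (involutive) analogue of the classical characterization of Morita equivalence of groupoids via a common pullback, so the plan is to adapt that classical argument, carrying the Real structures through at every step. The $(\Leftarrow)$ direction is immediate from the tools already assembled: if $\vp\colon Z\To\Gamo$ and $\vp'\colon Z\To\Gpdo$ are locally split Real open maps with $\vp^\ast\Ga\cong(\vp')^\ast\cG$ as Real groupoids, then Lemma~\ref{lem-pullback} gives Morita equivalences $(\Ga,\vr)\sim_{\mathrm{Mor}}(\vp^\ast\Ga,\vp^\ast\vr)$ and $(\cG,\rho)\sim_{\mathrm{Mor}}((\vp')^\ast\cG,(\vp')^\ast\rho)$, and composing these with the given Real isomorphism (which is in particular a Real Morita equivalence, via $Z_f$) yields a Real Morita equivalence $(\Ga,\vr)\sim_{\mathrm{Mor}}(\cG,\rho)$. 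Here one uses the remark after Example~\ref{ex-gh} that composition of Real generalized morphisms is associative, so chaining the three equivalences is legitimate.

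For the $(\Rightarrow)$ direction, suppose $(W,\sigma)$ is a Real Morita equivalence from $(\Ga,\vr)$ to $(\cG,\rho)$, with moment maps $\fr\colon W\To\Gamo$ and $\fs\colon W\To\Gpdo$; by Definition~\ref{def:Morita_equiv} both are locally split Real open maps ($\fr$ being the bundle projection of a $\cG$-principal Real bundle and $\fs$ that of a $\Ga$-principal Real bundle, after passing to the inverse). The plan is to take $Z:=W$, $\vp:=\fr$, $\vp':=\fs$, and to exhibit a Real strict isomorphism $\fr^\ast\Ga\cong\fs^\ast\cG$. The underlying isomorphism is the classical one: an arrow $(w_1,\g,w_2)\in\fr^\ast\Ga$ (so $\g\colon\fr(w_2)\To\fr(w_1)$) determines, by freeness and transitivity of the $\cG$-action on the fibres of $\fr$ (principality of $\fr$ as a $\cG$-bundle over $\Gamo$), a unique $g\in\cG$ with $\g w_2 = w_1 g$, hence $\fs(w_2)=s(g)$, $\fs(w_1)=r(g)$, giving $(w_1,g,w_2)\in\fs^\ast\cG$; one checks this is a strict homeomorphism with the evident inverse built from principality of $\fs$ as a $\Ga$-bundle. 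The new content is that this map intertwines the Real structures $\fr^\ast\vr=(\vr,\rho_{\Ga\text{-part}})$ — wait, more precisely $\fr^\ast\vr=(\sigma,\vr,\sigma)$ and $\fs^\ast\rho=(\sigma,\rho,\sigma)$ — which follows from the compatibility axioms in Definition~\ref{gen-hom}(ii) and Definition~\ref{df:G-bundle-grpd}: applying $\sigma$ to $\g w_2=w_1 g$ and using $\sigma(\g w)=\vr(\g)\sigma(w)$ and $\sigma(wg)=\sigma(w)\rho(g)$ gives $\vr(\g)\sigma(w_2)=\sigma(w_1)\rho(g)$, so the element associated to $(\sigma(w_1),\vr(\g),\sigma(w_2))$ is exactly $\rho(g)$, i.e. the isomorphism commutes with the Real structures.

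The main obstacle, and the step deserving genuine care rather than a wave of the hand, is the \textbf{well-definedness and continuity} of the assignment $(w_1,\g,w_2)\mapsto g$: that $g$ exists and is unique is exactly principality of the Real $\cG$-bundle $\fr\colon W\To\Gamo$ (condition (ii) of Definition~\ref{df:G-bundle-grpd}, which says $(w,g)\mapsto(w,wg)$ is a homeomorphism $W\times_{\fs,\Gpdo,r}\cG\To W\times_{\Gamo}W$), and continuity of $g$ as a function of $(w_1,\g,w_2)$ comes from inverting that homeomorphism and composing with continuous maps; but one must be careful that the relevant fibre products match up, which is where the commuting-actions axiom Definition~\ref{gen-hom}(i) ($\fs(\g w)=\fs(w)$, $\fr(wg)=\fr(w)$) is used. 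A secondary point is to record explicitly that $\fr$ and $\fs$ are locally split Real open maps — for $\fs$ this requires using the \emph{inverse} Morita equivalence $(W^{-1},\sigma)$, whose structure is spelled out in the remark following Definition~\ref{def:Morita_equiv}, so that $\fs$ becomes the projection of the $\Ga$-principal Real bundle $W^{-1}\To\Gpdo$. Once these are in place, the rest is the routine bookkeeping of checking functoriality of $(w_1,\g,w_2)\mapsto(w_1,g,w_2)$ and of its inverse, which I would summarize rather than expand.
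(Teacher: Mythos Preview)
Your proposal is correct and follows essentially the same approach as the paper. The only cosmetic difference is that for the $(\Rightarrow)$ direction the paper introduces an auxiliary Real groupoid $\Ga\ltimes Z\ast Z\rtimes\cG:=\{(\g,z_1,z_2,g)\mid z_1g=\g z_2\}$ and shows that both $\fr^\ast\Ga$ and $\fs^\ast\cG$ are Real-isomorphic to it (via the second and fourth projections respectively), whereas you compose these two isomorphisms into a single direct map $(w_1,\g,w_2)\mapsto(w_1,g,w_2)$; the underlying use of principality to produce the unique $g$ with $\g w_2=w_1 g$, and the check of Real compatibility, are identical.
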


\begin{proof}
Let $\xymatrix{\Gamo & Z \ar[l]_\fr \ar[r]^\fs & \Gpdo}$ be a Morita equivalence. Let us define $$\Ga \ltimes Z \ast Z \rtimes \cG := \lbrace (\g, z_1,z_2,g)\in (\Ga \times_{s,\Gamo,\fr}Z)\times (Z\times_{\fs,\Gpdo, r}\cG) \ | \ z_1g=\g z_2 \rbrace \ .$$ 
This defines a Real groupoid over $Z$ whose range and source maps are defined by the second and the third projection respectively, the product is given by $$(\g,z_1,z_2,g)\cdot(\g',z_2,z_3,g')=(\g \g',z_1,z_3,gg'),$$ provided that $\g , \g'\in \Ga^{(2)}$ and $g,g' \in \cG^{(2)}$, and the inverse of $(\g,z_1,z_2,g)$ is $(\g^{-1},z_2,z_1,g^{-1})$. Now, for a given triple $(z_1,\g,z_2)\in \fr^\ast \Ga$, the relations $\fr(z_1)=r(\g)$ and $\fr(z_2)=s(\g)$ give $\fr(\g z_2)=\fr(z_1)$; then since $\fr: Z\To \Gamo$ is a Real $\cG$-principal bundle, there exists a unique $g\in \cG$ such that $\g z_2 = z_1 g$. This gives an injective homomorphism $\Psi: \fr^\ast \Ga \To \Ga \ltimes Z \ast Z \rtimes \cG \ , (z_1,\g,z_2)\mto (\g,z_1,z_2,g)$ which respects the Real structures. In the other hand, the map $\Phi: \Ga \ltimes Z \ast Z \rtimes \cG \To \fr^\ast \Ga \ , (\g,z_1,z_2,g)\mto (z_1,\g,z_2)$ is a well defined Real homomorphism that is injective and Real. Moreover, these two maps are, by construction, inverse to each other so that we have a Real  homeomorphism $\fr^\ast \Ga \cong \Ga \ltimes Z \ast Z \rtimes \cG$. Furthermore, since $\fs: Z\To \Gpdo$ is a Real $\Ga$-principal bundle, we can use the same arguments to show that $\fs^\ast \cG \cong \Ga \ltimes Z \ast Z \rtimes \cG$ under a Real homeomorphism.

Conversely, if $\vp: Z \To \Gamo$ and $\vp': Z\To \Gpdo$ are given continuous Real maps and  $f: \vp^\ast \Ga \To (\vp')^\ast \Gpdo$ is a Real homeomorphism of groupoids, then the induced Real generalized homomorphism $$\vp^\ast \Ga \stackrel{Z_f}{\To}(\vp')^\ast \cG $$ is a Morita equivalence and Lemma~\ref{lem-pullback} completes the proof.
\end{proof}

The following example provides a characterization of groupoids Morita equivalent to a given Real space.

\begin{ex}~\label{ex:X-Morita}
Let $(X,\rho), (Y,\vr)$ be a locally compact Hausdorff Real spaces, and let $\pi:(Y,\vr)\To (X,\rho)$ be a continuous locally split Real open map. Form the Real groupoid $\xymatrix{Y^{[2]}\dar[r] & Y}$, where $Y^{[2]}$ is the fibered-product $Y\times_{\pi,X,\pi}Y$ equipped with the obvious Real structure; the groupoid structure on $Y^{[2]}$ is: 
\begin{eqnarray*}
		s(y_1,y_2):= y_2; & r(y_1,y_2):=y_1;\\
		(y_1,y_2)^{-1}:= (y_2,y_1); & (y_1,y_2)\cdot (y_2,y_3):=(y_1,y_3). 
	\end{eqnarray*}	
Then the Real groupoids $\xymatrix{Y^{[2]}\dar[r] & Y}$	and $\xymatrix{X\dar[r]& X}$ are Morita equivalent. Indeed, we have $\pi^\ast X\sim_{Morita}X$, thanks to Lemma~\ref{lem-pullback}; but $\pi^\ast X$ clearly identifies with $Y^{[2]}$ as Real groupoids. 

Conversely, suppose $(\Ga,\vr)$ is a Real groupoids Morita equivalent to $X$. Then in view of Proposition~\ref{pro-pullback}, there is a Real space $(Z,\tau)$, two continuous locally split Real open maps $\fs:Z\To X, \fr:Z\To Y$ such that $\fs^\ast X\cong \fr^\ast \Ga$ as Real groupoids over $Z$. In particular, $\fr:Z\To Y$ is a principal Real $X$-bundle, so that the Real space $Y$ is homeomorphic to the quotient Real space $Z/X=Z$. Thus, we have isomorphism of Real spaces $\fr^\ast\Ga=Z\times_Y \Ga\times_YZ\cong Y\times_Y\Ga\times_YY\cong \Ga$. Moreover, we have $\fs^\ast X\cong Z^{[2]}$ as Real spaces. Therefore, the Real groupoids $\gamgpd$ and $\xymatrix{Z^{[2]}\dar[r] & Z}$ as isomorphic.
\end{ex}

\begin{pro}[cf. Proposition 2.3~\cite{TLX}]~\label{pro:gen_strict}
 Any Real generalized morphism $$\xymatrix{\Gamo & Z \ar[l]_\fr \ar[r]^\fs & \Gpdo}$$ is obtained by composition of the canonical Morita equivalence between $(\Ga,\varrho)$ and $(\Ga [ \cU],\varrho)$, where $\cU$ is an open cover of $\Gamo$, with a Real strict morphism $f_\cU:\Ga [ \cU ] \To \cG$ (i.e. its induced morphism in the category $\RG$).
\end{pro}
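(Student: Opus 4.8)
The plan is to reduce to the non-Real case by first forgetting involutions, then upgrading to a Real statement using the globally-Real-section lemma. Concretely, let $(Z,\tau):(\Ga,\vr)\To(\cG,\rho)$ be a Real generalized morphism. By Definition~\ref{gen-hom}(iii), $\fr:Z\To\Gamo$ is a locally trivial $\cG$-principal Real bundle, so by the lemma on globally Real families of local sections there is a Real open cover $\cU=\{U_j\}_{j\in J}$ of $\Gamo$ (with involution $j\mapsto\bar j$) and a globally Real family of local sections $\mathsf{s}_j:U_j\To Z$ of $\fr$, meaning $\mathsf{s}_{\bar j}\circ\vr=\tau\circ\mathsf{s}_j$.

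Next I would write down the cocycle. Set $\vp_j:=\fs\circ\mathsf{s}_j:U_j\To\Gpdo$; since $\fs$ and $\mathsf{s}_j$ are Real in the appropriate senses, $\vp_{\bar j}\circ\vr=\rho\circ\vp_j$. For a composable-type pair, over $U_{j_0}\cap$ (translates), the two points $\mathsf{s}_{j_0}(r(\g))\cdot\text{(something)}$ and $\g\cdot\mathsf{s}_{j_1}(s(\g))$ lie in the same $\fs$-fibre after using the $\Ga$-action, so principality of $\fr$ as a $\cG$-bundle produces a unique $f_\cU(j_0,\g,j_1)\in\cG$ with $\g\cdot\mathsf{s}_{j_1}(s(\g))=\mathsf{s}_{j_0}(r(\g))\cdot f_\cU(j_0,\g,j_1)$, whenever $r(\g)\in U_{j_0}$, $s(\g)\in U_{j_1}$. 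This is exactly a map $f_\cU:\Ga[\cU]\To\cG$ in the sense of Definition~\ref{Real-open-cover}; checking $s(f_\cU(j_0,\g,j_1))=\vp_{j_1}(s(\g))$, $r(f_\cU(\cdots))=\vp_{j_0}(r(\g))$, and multiplicativity $f_\cU((j_0,\g,j_1)(j_1,\g',j_2))=f_\cU(j_0,\g,j_1)f_\cU(j_1,\g',j_2)$ is a routine unwinding of the definitions of the actions and the principality relation, exactly as in~\cite{TLX}. Continuity follows from continuity of $\mathsf{s}_j$, $\fs$, and the homeomorphism property in Definition~\ref{df:G-bundle-grpd}(ii).

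Then I would check that $f_\cU$ is Real, i.e.\ $f_\cU(\bar{j_0},\rho(\g),\bar{j_1})=\rho(f_\cU(j_0,\g,j_1))$. Apply $\tau$ to the defining relation $\g\cdot\mathsf{s}_{j_1}(s(\g))=\mathsf{s}_{j_0}(r(\g))\cdot f_\cU(j_0,\g,j_1)$; using that the actions are Real (Definition of Real action, conditions (a)), that $\tau\circ\mathsf{s}_j=\mathsf{s}_{\bar j}\circ\vr$, and that $\vr,\rho$ commute with $s,r$, one gets $\rho(\g)\cdot\mathsf{s}_{\bar{j_1}}(s(\rho(\g)))=\mathsf{s}_{\bar{j_0}}(r(\rho(\g)))\cdot\rho(f_\cU(j_0,\g,j_1))$; by the uniqueness in the principality of $\fr$ this forces $\rho(f_\cU(j_0,\g,j_1))=f_\cU(\bar{j_0},\rho(\g),\bar{j_1})$. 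Hence $f_\cU:(\Ga[\cU],\vr)\To(\cG,\rho)$ is a Real strict morphism.

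Finally I would identify $Z$ with the composite of the canonical Morita equivalence $Z_\iota:\Ga[\cU]\To\Ga$ (the lemma following Definition~\ref{Real-open-cover}) with $Z_{f_\cU}$. Concretely one exhibits a $\Ga$-$\cG$-equivariant Real homeomorphism $Z_{f_\cU}\circ Z_\iota^{-1}\xrightarrow{\ \sim\ }Z$, or more directly checks that the generalized morphism associated to $f_\cU$ composed with the Morita equivalence $\Ga[\cU]\sim\Ga$ has total space $\coprod_j U_j\times_{\vp_j,\Gpdo,r}\cG$, and the map sending $(j,u,g)$ to $\mathsf{s}_j(u)\cdot g\in Z$ is a well-defined, $\Ga$-$\cG$-equivariant Real homeomorphism onto $Z$ compatible with $\fr$ and $\fs$; well-definedness over the equivalence relation of the cover groupoid uses the cocycle identity for $f_\cU$, surjectivity uses local triviality of $\fr$, and Realness uses the globally-Real property of the $\mathsf{s}_j$. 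I expect the main obstacle to be purely bookkeeping: keeping the index involution $j\mapsto\bar j$ consistent with $\tau,\vr,\rho$ through every principality argument, and verifying that the identification map descends to the quotient defining $Z_{f_\cU}\circ Z_\iota^{-1}$; the Real refinements themselves add no genuine difficulty beyond the non-Real case of~\cite{TLX}.
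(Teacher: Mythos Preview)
Your proof is correct and produces exactly the same strict morphism as the paper, but the paper packages the argument slightly differently. Rather than defining $f_\cU$ directly from the principality relation, the paper first passes through the pullback groupoid $\fr^\ast\Ga$ and the isomorphism $\fr^\ast\Ga\cong\Ga\ltimes Z\ast Z\rtimes\cG$ established in Proposition~\ref{pro-pullback}: the fourth projection gives a Real strict morphism $f:\fr^\ast\Ga\to\cG$, and the globally Real sections $\mathsf{s}_j$ are used to build a Real strict morphism $\tilde{\mathsf{s}}:\Ga[\cU]\to\fr^\ast\Ga$, $(j_0,\g,j_1)\mapsto(\mathsf{s}_{j_0}(r(\g)),\g,\mathsf{s}_{j_1}(s(\g)))$; the desired map is then $f_\cU=f\circ\tilde{\mathsf{s}}$. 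Unwinding this composite gives precisely your formula $\g\cdot\mathsf{s}_{j_1}(s(\g))=\mathsf{s}_{j_0}(r(\g))\cdot f_\cU(j_0,\g,j_1)$, so the two arguments are the same computation. Your direct route is a bit more self-contained (it avoids invoking Proposition~\ref{pro-pullback}), while the paper's route makes the factorization $\Ga[\cU]\to\fr^\ast\Ga\to\cG$ explicit, which is conceptually clean and reuses machinery already developed.
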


\begin{proof}
 From Lemma~\ref{lem-pullback}, there is a Real Morita equivalence $Z_{\tilde{\fr}}: \fr^\ast \Ga \To \Ga$ and the Real homeomorphism $\fr^\ast \Ga \cong \Ga \ltimes Z \ast Z\rtimes \cG$ induces a Real strict homomorphism $f: \fr^\ast \Ga \To \cG$ given by the fourth projection, and hence a Real generalized homomorphism $Z_f: \fr^\ast \Ga \To \cG$. Furthermore, by using the construction of these generalized homomorphisms, it is easy to check that the composition $Z_{\tilde{\fr}}\times_\Ga Z$ is $\fr^\ast \Ga$-$\cG$-equivariently homeomorphic to $Z$ (under a Real homeomorphism); i.e, the diagram $$\xymatrix{\Ga \ar[dr]_Z & \fr^\ast \Ga \ar[l]_{Z_{\tilde{\fr}}} ^\cong \ar[d]^{Z_f} \\ & \cG} $$ is commutative in the category $\RG$. 
 
 Consider a Real open cover $\cU= \lbrace U_j\rbrace$ of $\Gamo$ together with a globally Real family of local sections $\mathsf{s}_j:U_j\To Z$ of $\fr:Z\To \Gamo$. Then, setting $(j_0,\g,j_1)\mto (\mathsf{s}_{j_0}(r(\g)),\g,\mathsf{s}_{j_1}(s(\g)))$ for $(j_0,\g,j_1)\in \Ga[\cU]$, we get a Real strict homomorphism $\tilde{\mathsf{s}}: \Ga [\cU ] \To \fr^\ast \Ga$ such that the composition $\Ga[ \cU] \To \fr^\ast \Ga \To \Ga$ is the canonical  map $\iota$ described in Example~\ref{Real-open-cover}. Then, $f\circ \tilde{\mathsf{s}}:\Ga[ \cU] \To \cG$ is the desired Real strict homomorphism.
\end{proof}

This proposition leads us to think of a Real generalized homomorphism from a Real groupoid $(\Ga,\vr)$ to a Real groupoid $(\cG,\rho)$ as a Real strict morphism $f_\cU: (\Ga [\cU],\vr)\To (\cG,\rho)$, where $\cU$ is a Real open cover of $\Gamo$.

To refine this point of view, given two Real groupoids $(\Ga,\vr)$ and $(\cG,\rho)$, let $\Omega$ denote the collection of such pairs $(\cU,f_\cU)$. We say that two pairs $(\cU,f_\cU)$ and $(\cU',f_{\cU'})$ are \emph{isomorphic} provided that $Z_{f_\cU}\circ Z_{\iota_{\cU}}^{-1}\cong Z_{f_{\cU'}}\circ Z_{\iota_{\cU'}}^{-1}$, where $\iota_{\cU}: (\Ga[\cU],\vr)\To (\Ga,\vr)$ and $\iota_{\cU'}:(\Ga[\cU'],\vr)\To (\Ga,\vr)$ are the canonical morphisms; this clearly defines an equivalence relation. We denote by $\Omega\left((\Ga,\vr),(\cG,\rho)\right)$ the set of isomorphism classes of elements of $\Omega$.

 Suppose that $(\cU,f_\cU):(\Ga,\vr)\To (\cG'.\rho')$ is an equivalence class in $\Omega\left((\Ga,\vr),(\cG',\rho')\right)$ and $(\cV,f_\cV):(\cG',\rho')\To (\cG,\rho)$ is an element in $\Omega\left((\cG',\rho'),(\cG,\rho)\right)$. Let $\iota_{\cG'}: \cG'[\cV]\To \cG'$ be the canonical morphism, and let $Z_{\iota_{\cG'}}^{-1}: (\cG',\rho')\To (\cG'[\cV],\rho')$ be the inverse of $Z_{\iota_{\cG'}}$. Next, we apply Proposition ~\ref{pro:gen_strict} to the Real generalized morphism $Z_{\iota_{\cG'}}^{-1}\circ Z_{f_\cU}: \Ga[\cU]\To \cG'[\cV]$ to get a Real open cover $\cU'$ of $\Gamo$ containing $\cU$ and a Real strict morphism $\vp_{\cU'}:(\Ga[\cU'],\vr)\To (\cG'[\cV],\rho')$. Then, we pose 
\begin{eqnarray}
(\cV,f_\cV) \circ (\cU,f_\cU):=(\cU',f_{\cU'}),
\end{eqnarray}
with $f_{\cU'}=f_{\cV}\circ \vp_{\cU'}$; thus we get an element of $\Omega\left((\Ga,\vr),(\cG,\rho)\right)$. It follows that there exists a category $\RG_\Omega$ whose objects are Real groupoids, and in which a morphism from $(\Ga,\vr)$ to $(\cG,\rho)$ is a class $(\cU,f_\cU)$ in $\Omega\left((\Ga,\vr),(\cG,\rho)\right)$.

\begin{ex}
Any Real strict morphism $f:(\Ga,\vr)\To (\cG,\rho)$ can be identified with the pair $(\Gamo,f)$, by considering the trivial Real open cover $\Gamo$ consisting of one set, and by viewing the groupoid $\Ga$ as the cover groupoid $\Ga[\Gamo]$. In particular, $\RG_s$ is a subcategory of $\RG_\Omega$.
\end{ex}

\begin{ex}
Suppose that $(Z,\tau):(\Ga,\vr)\To (\cG,\rho)$ is a Real generalized morphism. Then, Proposition ~\ref{pro:gen_strict} provides a unique class $(\cU,f_\cU)\in \Omega((\Ga,\vr),(\cG,\rho))$.
\end{ex}

\begin{rem}
Note that a class $(\cU,f_\cU)\in \Omega\left((\Ga,\vr),(\cG,\rho)\right)$ is an isomorphism in $\RG_\Omega$ if there exists $(\cV,f_\cV)\in \Omega\left((\cG,\rho),(\Ga,\vr)\right)$ such that 
\begin{equation}
Z_{f_\cU}\circ Z_{\iota_\cU}^{-1}\circ Z_{f_\cV}\cong Z_{\iota_\cV} \ \text{and} \ Z_{f_\cV}\circ Z_{\iota_\cV}^{-1}\circ Z_{f_\cU} \cong Z_{\iota_\cU},
\end{equation}
where $\iota_\cU: (\Ga[\cU],\vr)\To (\Ga,\vr)$ and $\iota_\cV: (\cG[\cU],\rho)\To (\cG,\rho)$ are the canonical morphisms.
\end{rem}

\begin{pro}
Define $\mathsf{F}:\RG \To \RG_\Omega$ by
\begin{eqnarray}
\mathsf{F}(Z,\tau):=(\cU,f_\cU),
\end{eqnarray}
where, if $(Z,\tau): (\Ga,\vr)\To (\cG,\rho)$ is a class of Real generalized morphisms, $(\cU,f_\cU)$ is the class of pairs corresponding to $(Z,\tau)$. \

Then $\mathsf{F}$ is a functor; furthermore, $\mathsf{F}$ is an isomorphism of categories.
\end{pro}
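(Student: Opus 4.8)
The plan is to build an explicit inverse functor and read off all the claims from it. Define $\mathsf{G}\colon\RG_\Omega\To\RG$ to be the identity on objects and, on morphisms, to send a class $(\cU,f_\cU)$ to the class of the Real generalized morphism
\[
\mathsf{G}(\cU,f_\cU):=\bigl[\,Z_{f_\cU}\circ Z_{\iota_\cU}^{-1}\,\bigr],
\]
where $\iota_\cU\colon(\Ga[\cU],\vr)\To(\Ga,\vr)$ is the canonical Morita equivalence, $Z_{\iota_\cU}^{-1}\colon(\Ga,\vr)\To(\Ga[\cU],\vr)$ its inverse, and $Z_{f_\cU}$ the Real generalized morphism induced by the Real strict morphism $f_\cU\colon\Ga[\cU]\To\cG$. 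First I would observe that $\mathsf{G}$ is a well-defined bijection on each hom-set: it is well defined and injective because the equivalence relation defining $\Omega\bigl((\Ga,\vr),(\cG,\rho)\bigr)$ was set up to be exactly $(\cU,f_\cU)\sim(\cU',f_{\cU'})\iff Z_{f_\cU}\circ Z_{\iota_\cU}^{-1}\cong Z_{f_{\cU'}}\circ Z_{\iota_{\cU'}}^{-1}$, and it is surjective by Proposition~\ref{pro:gen_strict}, which states precisely that every Real generalized morphism $(Z,\tau)$ is isomorphic to some $Z_{f_\cU}\circ Z_{\iota_\cU}^{-1}$. Since for such a pair $\mathsf{G}(\cU,f_\cU)=[Z,\tau]$, the $\Omega$-class produced from $(Z,\tau)$ by Proposition~\ref{pro:gen_strict} — which is what $\mathsf{F}(Z,\tau)$ denotes — is uniquely determined and equals $\mathsf{G}^{-1}([Z,\tau])$; in particular $\mathsf{F}$ is well defined and coincides with $\mathsf{G}^{-1}$ on hom-sets (and with the identity on objects).

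Next I would check that $\mathsf{G}$ is a functor. For identities this is immediate: the trivial one-set Real cover $\cU=\{\Gamo\}$ gives $\Ga[\cU]=\Ga$ and $\iota_\cU=\Id_\Ga$, so $\mathsf{G}(\{\Gamo\},\Id_\Ga)=[Z_{\Id_\Ga}]$ is the identity of $\RG$. For composition, take $(\cU,f_\cU)\in\Omega\bigl((\Ga,\vr),(\cG',\rho')\bigr)$ and $(\cV,f_\cV)\in\Omega\bigl((\cG',\rho'),(\cG,\rho)\bigr)$, and recall that $(\cV,f_\cV)\circ(\cU,f_\cU)=(\cU',f_{\cU'})$ with $f_{\cU'}=f_\cV\circ\vp_{\cU'}$, where $\cU'$ is a Real open cover of $\Gamo$ refining $\cU$ and $\vp_{\cU'}\colon(\Ga[\cU'],\vr)\To(\cG'[\cV],\rho')$ is the Real strict morphism that Proposition~\ref{pro:gen_strict} attaches to $Z_{\iota_\cV}^{-1}\circ Z_{f_\cU}$; let $\pi\colon\Ga[\cU']\To\Ga[\cU]$ denote the accompanying canonical refinement morphism, so that $\iota_\cU\circ\pi=\iota_{\cU'}$ and $Z_{\vp_{\cU'}}\circ Z_\pi^{-1}\cong Z_{\iota_\cV}^{-1}\circ Z_{f_\cU}$. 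Using that $f\mapsto Z_f$ is functorial, that inverses of Morita equivalences reverse composition, and associativity of composition in $\RG$, I would then produce the chain of isomorphisms in $\RG$
\[
Z_{f_{\cU'}}\circ Z_{\iota_{\cU'}}^{-1}\;\cong\;Z_{f_\cV}\circ Z_{\vp_{\cU'}}\circ Z_\pi^{-1}\circ Z_{\iota_\cU}^{-1}\;\cong\;Z_{f_\cV}\circ Z_{\iota_\cV}^{-1}\circ Z_{f_\cU}\circ Z_{\iota_\cU}^{-1}\;=\;\bigl(Z_{f_\cV}\circ Z_{\iota_\cV}^{-1}\bigr)\circ\bigl(Z_{f_\cU}\circ Z_{\iota_\cU}^{-1}\bigr),
\]
that is, $\mathsf{G}\bigl((\cV,f_\cV)\circ(\cU,f_\cU)\bigr)=\mathsf{G}(\cV,f_\cV)\circ\mathsf{G}(\cU,f_\cU)$. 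Hence $\mathsf{G}$ is a functor.

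Finally, a functor that is the identity on objects and bijective on every hom-set is an isomorphism of categories, so $\mathsf{G}$ is one; its inverse $\mathsf{F}$ is therefore also a functor, and by the first paragraph $\mathsf{F}$ is precisely the assignment described in the statement. This yields both assertions at once.

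I expect the composition identity to be the main obstacle — not the displayed chain of isomorphisms itself, which is formal, but the bookkeeping underneath it: one must make precise how the refining cover $\cU'$ of $\Gamo$ (and the morphisms $\pi$, $\vp_{\cU'}$) is read off from $Z_{\iota_\cV}^{-1}\circ Z_{f_\cU}$ via Proposition~\ref{pro:gen_strict}, identify the iterated cover groupoid over $\Ga[\cU]$ with $\Ga[\cU']$, and verify the compatibilities $\iota_\cU\circ\pi=\iota_{\cU'}$ and $Z_{f_\cV\circ\vp_{\cU'}}\cong Z_{f_\cV}\circ Z_{\vp_{\cU'}}$ under these identifications. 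Once that is in place, everything else is a routine application of Proposition~\ref{pro:gen_strict}, functoriality of $f\mapsto Z_f$, and associativity of composition of Real generalized morphisms.
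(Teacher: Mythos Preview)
Your proposal is correct and follows essentially the same route as the paper: both hinge on the chain of isomorphisms
\[
Z_{f_{\cU'}}\circ Z_{\iota_{\cU'}}^{-1}\;\cong\;Z_{f_\cV}\circ Z_{\vp_{\cU'}}\circ Z_\pi^{-1}\circ Z_{\iota_\cU}^{-1}\;\cong\;Z_{f_\cV}\circ Z_{\iota_\cV}^{-1}\circ Z_{f_\cU}\circ Z_{\iota_\cU}^{-1},
\]
obtained from $\iota_{\cU'}=\iota_\cU\circ\pi$, the functoriality of $f\mapsto Z_f$, and the defining property of $\vp_{\cU'}$. The only difference is organizational: the paper uses this computation to verify directly that $\mathsf{F}$ preserves composition and then writes down the inverse $\mathsf{Z}(\cU,f_\cU)=Z_{f_\cU}\circ Z_{\iota_\cU}^{-1}$, whereas you define the inverse $\mathsf{G}$ first, use the same computation to show $\mathsf{G}$ is a functor, and then deduce that $\mathsf{F}=\mathsf{G}^{-1}$ is one --- a slightly cleaner packaging, but the same argument.
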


\begin{proof}
Suppose that $(Z,\tau):(\Ga,\vr)\To (\cG',\rho'), \ (Z',\tau'):(\cG',\rho')\To (\cG,\rho)$ are morphisms in $\RG$. Let $\mathsf{F}(Z'\circ Z,\tau \times \tau')=(\cU,f_\cU)\in \Omega\left((\Ga,\vr),(\cG,\rho)\right)$, $\mathsf{F}(Z,\tau)=(\cU',f_{\cU'})\in \Omega\left((\Ga,\vr),(\cG',\rho')\right)$, and $\mathsf{F}(Z',\tau')=(\cV,f_\cV)\in \Omega\left((\cG',\rho'),(\cG,\rho)\right)$. Consider a Real open cover $\tilde{\cU}$ of $\Gamo$ containing $\cU'$ and a Real morphism  $\vp_{\tilde{\cU}}:(\Ga[\tilde{\cU}],\vr)\To (\cG'[\cV],\rho')$ such that $Z_{\vp_{\tilde{\cU}}}\circ Z_{i}^{-1}\cong Z_{\iota_\cV}^{-1}\circ Z_{f_{\cU'}}$ as Real generalized morphisms from $(\Ga[\cU'],\vr)$ to $(\cG'[\cV],\rho')$, where $i:(\Ga[\tilde{\cU}],\vr)\To (\Ga[\cU'],\vr)$ and $\iota_\cV:(\cG'[\cV],\rho')\To (\cG',\rho')$ are the canonical morphisms. Note that if $\iota_{\tilde{\cU}}:(\Ga[\tilde{\cU}],\vr)\To (\Ga,\vr)$ is the canonical morphism, then $\iota_{\tilde{\cU}}=\iota_{\cU'}\circ i$; hence, $Z_{\iota_{\tilde{\cU}}}^{-1}\cong Z_i^{-1}\circ Z_{\iota_{\cU'}}^{-1}$ by functoriality. 

On the other hand, $\mathsf{F}(Z',\tau')\circ \mathsf{F}(Z,\tau)= (\cV,f_\cV) \circ (\cU,f_\cU)=(\tilde{\cU},f_{\tilde{\cU}})$, where $f_{\tilde{\cU}}=f_\cV \circ \vp_{\tilde{\cU}}$. Henceforth, 
\[
Z_{f_{\tilde{\cU}}}\circ Z_{\iota_{\tilde{\cU}}}^{-1}  \cong Z_{f_\cV}\circ Z_{\vp_{\tilde{\cU}}}\circ Z_i^{-1}\circ Z_{\iota_{\cU'}}^{-1}  \cong Z_{f_\cV}\circ Z_{\iota_\cV}^{-1}\circ Z_{f_{\cU'}}\circ Z_{\iota_{\cU'}}^{-1} \
  \cong Z'\circ Z,\]  
which shows that $\mathsf{F}(Z'\circ Z,\tau \times \tau')\cong \mathsf{F}(Z',\tau')\circ \mathsf{F}(Z,\tau)$, and thus $\mathsf{F}$ is a functor.

Now, it is not hard to see that we get an inverse functor for $\mathsf{F}$ by defining 
\begin{eqnarray}
\mathsf{Z}:\RG_\Omega \To \RG, (\cU,f_\cU)\mto (Z_{f_\cU}\circ Z_{\iota_\cU}^{-1},\tau),
\end{eqnarray}
where $\tau$ is defined in an obvious way.
\end{proof}



\subsection{Real graded twists}

In this section we define \emph{Real graded twists}.

\begin{df}(cf.~\cite[\S 2]{Kum88})
Let $\gamgpd$ be a Real groupoid and let $\bfS$ be a Real Abelian group. A \emph{Real graded $\bfS$-twist} $(\wGa,\del)$ over $\Ga$ is the data of
\begin{itemize}
\item[(i)] a Real groupoid $\wGa$ whose unit space is $Y$, together with a Real strict homomorphism $\pi: \wGa \To \Ga$ which restricts to the identity in $Y$, 
\item[(ii)] a (left) Real action of $\bfS$ on $\wGa$ which is compatible with the partial product in $\wGa$ making $\xymatrix{ \wGa \ar[r]^\pi  & \Ga} $ a (left) Real $\bfS$-principal bundle, and 
\item[iii] a strict homomorphism $\delta: \Ga \To \ZZ_2$, called \emph{the grading}, such that $\delta(\bar{\g}) = \delta(\g)$ for any $\g \in \Ga$.
\end{itemize}
In this case we refer to the triple $(\wGa,\Ga,\del)$ as a \emph{Real graded $\bfS$-twist}, and it is sometimes symbolized by the "extension" $$\xymatrix{\bfS \ar[r] & \wGa \ar[r]^{\pi} & \Ga \ar[d]^\del \\ & & \ZZ_2}$$
\end{df}

\begin{ex}[The trivial twist]~\label{triv-twist}
Given Real groupoid $\Ga$, we form the product groupoid $\Ga \times \bfS$ and we endow it with the Real structure $\overline{(\g,\lambda)}:=(\bar{\g},\bar{\lambda})$ for. Let $\bfS$ act on $\Ga \times \bfS$ by multiplication with the second factor. Then $\cT_0:=(\Ga \times \bfS, 0)$ is a Real graded twist of $\Ga$, where $0: \ZZ_2 \To \ZZ_2$ is the zero map. This element is called  \emph{the trivial Real graded $\bfS$-twist over} $\Ga$.
\end{ex}

\begin{ex}
Let $Y$ be a locally compact Real space and $\lbrace U_i \rbrace_{i\in I\times \{\pm1\}}$ be a good Real open. Let us consider the Real groupoid $\xymatrix{Y[\cU] \dar[r] & \coprod_iU_i}$, and the space $Y\times \bfS$ together with the Real structure $(y,\lambda)\mto (\bar{y},\bar{\lambda})$ and the Real $\bfS$-action given by the multiplication on the second factor. We write $x_{i_0i_1}$ for $(i_1,x,i_1)\in Y[\cU]$. There is a canonical Real morphism $\del:Y[\cU]\To \ZZ_2$ given by $\del(x_{i_0i_1}):=\ve_0+\ve_1$ for $i_0=(i_0',\ve_0), \ i_1=(i_1',\ve_1)\in I$. Then, a Real graded $\bfS$-twist $(\wGa,Y[\cU],\del)$ consists of a family of principal Real $\bfS$-bundles $\wGa_{ij}\cong U_{ij}\times \bfS$ subject to the multiplication 
\[(x_{i_0i_1},\lambda_1)\cdot (x_{i_1i_2},\lambda_2)=(x_{i_0i_2},\lambda_1\lambda_2c_{i_0i_1i_2}(x)),\]
where $c=\{c_{i_0i_1i_2}\}$ is a family of continuous maps $c_{i_0i_1i_2}:U_{i_0i_1i_2}\To \bfS$ which is a $2$-cocycle such that $c_{\bar{i}_0\bar{i}_1\bar{i}_2}(\bar{x})=\overline{c_{i_0i_1i_2}(x)}$ for all $x\in U_{i_0i_1i_2}=U_{i_0}\cap U_{i_1}\cap U_{i_2}$. The pair $(\del,c)$ will be called \emph{the Dixmier-Douady class of $(\wGa,Y[\cU],\del)$} (cf. Section~\ref{sect:HR2-HR1-vs-Ext}).   
\end{ex}

\begin{ex}~\label{ex:triv-gpd-line}
Let $\gamgpd$ be a Real groupoid, and let $J:\Lam\To Y $ be a Real $\bfS$-principal bundle. Then the tensor product $r^\ast\Lam\otimes \overline{s^\ast\Lam}$, which is a Real $\bfS$-principal bundle over $\Ga$, naturally admits the structure of Real groupoid over $Y$, so that $(r^\ast\Lam\otimes \overline{s^\ast\Lam},0)$ is a Real graded $\bfS$-twist over $\Ga$.  	
\end{ex}

There is an obvious notion of strict morphism of Real graded $\bfS$-twists. For instance, two Real graded $\bfS$-twists $(\wGa_1,\Ga, \delta_1)$ and $(\wGa_2,\Ga,\delta_2)$ are isomorphic if there exists a Real $\bfS$-equivariant isomorphism of groupoids  $f: \wGa_1 \To \wGa_2$ such that the diagram 
$$\xymatrix{\wGa_1 \ar[r]^{\pi_1} \ar[d]_f & \Ga \\ \wGa_2 \ar[ur]_{\pi_2} & }$$ commutes in the category $\RG_s$.
In particular, we say that $(\wGa,\del)$ is \emph{strictly trivial} if it isomorphic to the trivial Real graded groupoid $(\Ga\times \bfS,0)$. By $\wRTw(\Ga,\bfS)$ we denote the set of strict isomorphism classes of Real graded $\bfS$-twists over $\Ga$. The class of $(\wGa,\del)$ in $\wRTw(\Ga,\bfS)$ is denoted by $[\wGa,\del]$.

\begin{df}(compare~\cite{Kum88,Tu,FHT})
 Given two Real graded $\bfS$-twists $\cT_1=(\wGa_1,\delta_1)$ and $\cT_2=(\wGa_2,\delta_2)$ over $\cG$, we define their tensor product $\cT_1\hat{\otimes}\cT_2=(\wGa_1\hat{\otimes}\wGa_2,\del_1+\del_2)$ by the \emph{Baer sum} of $\cT_1$ and $\cT_2$ defined as follows. Define the groupoid $\wGa_1 \hat{\otimes}\wGa_2$ as the quotient 
\begin{equation}~\label{eq:Baer}
\wGa_1 \times_{\Ga} \wGa_2/\bfS := \lbrace (\tilde{\g_1},\tilde{\g_2})\in \wGa_1 \times_{\pi_1,\Ga,\pi_2} \wGa_2 \rbrace /_{(\tilde{\g_1},\tilde{\g_2}) \sim (\lambda \tilde{\g_1},\lambda^{-1} \tilde{\g_2})},
\end{equation}
 where $\lambda \in \bfS$, together with the obvious Real structure. The projection $\pi_1 \otimes\pi_2$ is just $\pi_i$ and $\delta= \delta_1 + \delta_2$ is given by $\delta(\g)=\delta_1(\g)+\delta_2(\g)$. \\
The product in the Real groupoid $\wGa_1\hat{\otimes}\wGa_2$ is 
 \begin{equation}
 (\tilde{\g_1},\tilde{\g_2})(\tilde{\g'_1},\tilde{\g'_2}):= (-1)^{\delta_2(\g_2) \delta_1(\g'_1)}(\tilde{\g_1} \tilde{\g'_1},\tilde{\g_2} \tilde{\g'_2}),
 \end{equation}
 whenever this does make sense and where $\g_i=\pi_2(\tilde{\g_i}), \ i=1,2$.
\end{df}

\begin{lem}(~\cite[p.4]{Tu})
Given $[\wGa_i,\del_i]\in \wRTw(\Ga,\bfS), i=1,2$, set 
$$[\wGa_1,\del_1]+[\wGa_2,\del_2]:=[\wGa_1\hat{\otimes}\wGa_2,\del_1+\del_2].$$
Then, under this sum, $\wRTw(\Ga,\bfS)$ is an Abelian group whose zero element is given by the class of the trivial element $\cT_0=(\cG\times\bfS,0)$.
\end{lem}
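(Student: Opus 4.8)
The plan is to check that the Baer sum is well defined on strict-isomorphism classes and that $\bigl(\wRTw(\Ga,\bfS),\hat\otimes\bigr)$ satisfies the axioms of an abelian group with neutral element $[\cT_0]$, treating the grading and the $\bfS$-extension parts of a twist separately. The gradings form an abelian group all by themselves: the strict homomorphisms $\del\colon\Ga\to\ZZ_2$ with $\del(\bar\g)=\del(\g)$ form a group under pointwise addition modulo $2$, with neutral element the zero map and with every element its own inverse. Since the grading attached to $\wGa_1\hat\otimes\wGa_2$ is by definition $\del_1+\del_2$, every identity we prove will restrict correctly to this component; in particular the inverse of $[\wGa,\del]$ is forced to carry the grading $\del$ again, because $-\del=\del$ in $\ZZ_2$. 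It therefore remains to analyse the groupoid $\wGa_1\hat\otimes\wGa_2$ itself which, upon forgetting the grading, is the ordinary Baer sum of $\bfS$-central extensions \emph{twisted by the Koszul sign} $(-1)^{\del_2(\g)\del_1(\g')}$ appearing in \eqref{eq:Baer}. In all the isomorphisms constructed below the $\bfS$-principal structure, the projection to $\Ga$ and the Real involution are the evident ones inherited from \eqref{eq:Baer}; I will only verify compatibility with the twisted product, the rest being tautological.

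That the operation descends to $\wRTw(\Ga,\bfS)$ is immediate: if $f_i\colon\wGa_i\xrightarrow{\ \sim\ }\wGa_i'$ are Real $\bfS$-equivariant isomorphisms over $\Ga$ (so $\del_i=\del_i'$), then $f_1\times_\Ga f_2$ passes to the quotients \eqref{eq:Baer} and intertwines the twisted products, the Koszul sign depending only on the unchanged gradings. For the neutral element, the assignment $[(\g,\lambda),\tilde\beta]\mapsto\lambda\tilde\beta$ defines an isomorphism of Real graded twists $\cT_0\hat\otimes(\wGa,\del)\xrightarrow{\ \sim\ }(\wGa,\del)$: because $\cT_0=(\Ga\times\bfS,0)$ carries the zero grading the Koszul sign is trivial on this factor, so the twisted product reduces to the plain Baer sum against a trivial extension and the latter recovers $\wGa$ (the grading being $0+\del=\del$). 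For associativity one writes down the obvious bijection $[[\tilde\g_1,\tilde\g_2],\tilde\g_3]\mapsto[\tilde\g_1,[\tilde\g_2,\tilde\g_3]]$ and checks it is a homomorphism: writing $\g,\g'\in\Ga$ for the projections of the two elements being multiplied, the total sign accumulated on the left is $(-1)^{\del_3(\g)(\del_1+\del_2)(\g')+\del_2(\g)\del_1(\g')}$ and on the right $(-1)^{(\del_2+\del_3)(\g)\del_1(\g')+\del_3(\g)\del_2(\g')}$, and modulo $2$ each exponent equals $\del_3(\g)\del_1(\g')+\del_3(\g)\del_2(\g')+\del_2(\g)\del_1(\g')$, so the two products coincide. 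Commutativity is of the same flavour: the map $\Theta\colon\wGa_1\hat\otimes\wGa_2\to\wGa_2\hat\otimes\wGa_1$, $[\tilde\g_1,\tilde\g_2]\mapsto(-1)^{\del_1(\g)\del_2(\g)}[\tilde\g_2,\tilde\g_1]$, respects the two twisted products precisely because, using that each $\del_i$ is a homomorphism, $\del_1(\g\g')\del_2(\g\g')\equiv\del_1(\g)\del_2(\g)+\del_1(\g')\del_2(\g')+\del_1(\g)\del_2(\g')+\del_2(\g)\del_1(\g')\pmod 2$; and $\Theta$ is Real since $\del_i(\bar\g)=\del_i(\g)$ and $-1\in\bfS$ is Real-fixed.

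The single delicate point is the existence of inverses, and here the naive guess fails: the \emph{conjugate} twist obtained from $\wGa$ merely by replacing the $\bfS$-action with $\lambda\cdot\tilde\g:=\lambda^{-1}\tilde\g$ does not do the job, because forming the Baer sum with it leaves behind the Koszul sign $(-1)^{\del(\g)\del(\g')}$, that is the cocycle $\del\cup\del$, which need not be a coboundary for a general Real abelian group $\bfS$. The remedy is to also twist the partial product, defining $\overline{\wGa}$ to be $\wGa$ with $\bfS$-action $\lambda\cdot\tilde\alpha:=\lambda^{-1}\tilde\alpha$ and product $\tilde\alpha\,\bar\ast\,\tilde\beta:=(-1)^{\del(\pi\tilde\alpha)\del(\pi\tilde\beta)}\tilde\alpha\tilde\beta$; a short computation using that $\del$ takes values in $\ZZ_2$ and is a homomorphism shows $\bar\ast$ is associative, has the same units, and remains $\bfS$-central and Real, so $(\overline{\wGa},\del)$ is a Real graded $\bfS$-twist. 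Then the Koszul sign produced by the Baer product is exactly cancelled by the compensating sign in $\bar\ast$, so the product in $\wGa\hat\otimes\overline{\wGa}$ is the untwisted one, and the classical map $[\tilde\g_1,\tilde\g_2]\mapsto(\g,\nu)$, where $\nu\in\bfS$ is the unique element with $\tilde\g_1=\nu\tilde\g_2$, is an isomorphism of Real graded twists onto $\cT_0$ (the grading being $\del+\del=0$); that $\overline{\wGa}\hat\otimes\wGa\cong\cT_0$ as well then follows from the already established commutativity. I expect this last step — recognising that one must absorb the leftover sign into a twisted product rather than hope it bounds — to be the crux of the argument; the associativity and commutativity verifications are routine sign bookkeeping.
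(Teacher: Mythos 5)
Your proof is correct and follows essentially the same route as the paper: the same sign-twisted flip $[\tilde\g_1,\tilde\g_2]\mapsto(-1)^{\del_1(\g)\del_2(\g)}[\tilde\g_2,\tilde\g_1]$ for commutativity, and the same inverse construction, namely the opposite twist with conjugated $\bfS$-action and product twisted by $(-1)^{\del(\g)\del(\g')}$, trivialized by the map identifying $\wGa\hat{\otimes}\overline{\wGa}$ with $\Ga\times\bfS$ (you write the paper's isomorphism in the opposite direction). You additionally spell out well-definedness, associativity and neutrality of $\cT_0$, which the paper leaves as routine.
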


\begin{proof}
The tensor product defined above is commutative in $\wRTw(\Ga,\bfS)$. Indeed, the groupoid $\wGa_2\hat{\otimes}\wGa_1=\wGa_2 \times_\Ga \wGa_1 /\bfS$ is endowed with the multiplication $$(\tilde{\g_2},\tilde{\g_1})(\tilde{\g'_2},\tilde{\g'_1})=(-1)^{\delta_1(\g_1)\delta_2(\g'_2)}(\tilde{\g_2} \tilde{\g'_2},\tilde{\g_1}\tilde{\g'_1}).$$
 Then the map 
$$\wGa_1\hat{\otimes}\wGa_2 \To \wGa_2\hat{\otimes}\wGa_1 \ , (\tilde{\g_1},\tilde{\g_2}) \mto (-1)^{\delta_1(\g_1) \delta_2(\g_2)}(\tilde{\g_2},\tilde{\g_1})$$
is a Real $\bfS$-equivariant isomorphism of groupoids. 

Now define the inverse of $(\wGa,\delta)$ is $(\wGa^{\op},\delta)$ where $\wGa^{op}$ is $\wGa$ as a set but, together with the same Real structure, but the $\bfS$-principal bundle structure is replaced by the conjugate one, \emph{i.e.} $\lambda \tilde{\g}^{\op}=(\bar{\lambda}\tilde{\g})^{\op}$, and the product $\ast_{\op}$ in $\wGa^{op}$ is $$\tilde{\g}\ast_{\op}\tilde{\g'}:=(-1)^{\delta(\g)\delta(\g')}\tilde{\g}\tilde{\g'}.$$
Now it is easy to see that the map $$\Ga \times \bfS \To \wGa \times_\Ga \wGa^{\op} /\bfS \ , (\g,\lambda) \mto (\lambda \tilde{\g},\tilde{\g}) \ ,$$ where $\tilde{\g} \in \wGa$ is any lift of $\g \in \Ga$, is an isomorphism.
\end{proof}

We have the following criteria of strict triviality; the proof is the same as in~\cite[Proposition 2.8]{TLX}.

\begin{pro}~\label{pro:crit-triv-twist}
Let $(\wGa,\del)$ be a Real graded $\bfS$-twist over the Real groupoid $\gamgpd$. The following are equivalent:
\begin{itemize}
 		\item[(i)] $(\wGa,\del)$ is strictly trivial.
 		\item[(ii)] $\del(\g)=0, \forall \g\in \Ga$, and there exists a Real strict homomorphism $\sigma:\Ga\To \wGa$ such that $\pi\circ \sigma=\Id$.
 		\item[(iii)] $\del(\g)=0, \forall \g\in \Ga$,, and there exists a Real $\bfS$-equivariant groupoid homomorphism $\vp:\wGa\To \bfS$.
 	\end{itemize} 	
\end{pro}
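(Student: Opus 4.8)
The plan is to prove the chain of implications $(i)\Rightarrow(iii)\Rightarrow(ii)\Rightarrow(i)$, following the non-Real argument in \cite[Proposition 2.8]{TLX} and simply checking at each step that every map produced is compatible with the Real structures. Throughout, I write $\bar{\g}:=\vr(\g)$ on $\Ga$, $\bar{\tilde\g}$ for the Real structure on $\wGa$, and I use that the grading $\del$ is forced to vanish in all three cases, so the sign $(-1)^{\del(\g)\del(\g')}$ never intervenes and $\wGa$ is an honest (ungraded) Real $\bfS$-central extension of $\Ga$.

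First, $(i)\Rightarrow(iii)$: if $(\wGa,\del)$ is strictly trivial there is a Real $\bfS$-equivariant isomorphism of groupoids $f:\Ga\times\bfS\To\wGa$ over $\Ga$ with $\del=0$; composing the inverse $f^{-1}:\wGa\To\Ga\times\bfS$ with the projection onto the second factor gives a Real $\bfS$-equivariant homomorphism $\vp:\wGa\To\bfS$, and $\del=0$ by definition of the trivial twist. Next, $(iii)\Rightarrow(ii)$: given such a $\vp$, define $\sigma:\Ga\To\wGa$ by $\sigma(\g):=\vp(\tilde\g)^{-1}\tilde\g$ for any lift $\tilde\g\in\pi^{-1}(\g)$; $\bfS$-equivariance of $\vp$ makes this independent of the choice of lift, hence well defined and continuous (locally one chooses a continuous lift via local triviality of $\pi:\wGa\To\Ga$), it is a groupoid homomorphism because $\vp$ is, it satisfies $\pi\circ\sigma=\Id$, and it is Real because $\vp(\bar{\tilde\g})=\overline{\vp(\tilde\g)}$ and $\overline{\tilde\g}$ is a lift of $\bar\g$, so $\sigma(\bar\g)=\overline{\sigma(\g)}$. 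Finally, $(ii)\Rightarrow(i)$: a Real strict section $\sigma$ with $\pi\circ\sigma=\Id$ yields a map $\Ga\times\bfS\To\wGa$, $(\g,\lambda)\mto\lambda\cdot\sigma(\g)$, which is a bijection by $\bfS$-principality of $\pi$, a groupoid homomorphism because $\sigma$ is and the $\bfS$-action is central, and Real because both $\sigma$ and the $\bfS$-action are; it is a homeomorphism since $\pi$ is locally split and $\bfS$ acts continuously, so it is a strict isomorphism of Real graded $\bfS$-twists $(\Ga\times\bfS,0)\cong(\wGa,\del)$.

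The only genuine point to be careful about — the ``main obstacle'' such as it is — is continuity of $\sigma$ in $(iii)\Rightarrow(ii)$, since $\sigma$ is defined through an arbitrary local lift; here one invokes local triviality of the $\bfS$-principal bundle $\pi:\wGa\To\Ga$ (part (ii) of the twist definition) to pick continuous local lifts $\tilde\cdot$, observes that the formula $\vp(\tilde\g)^{-1}\tilde\g$ is independent of the lift by $\bfS$-equivariance of $\vp$, and concludes that the locally defined continuous pieces agree on overlaps and glue. Compatibility with the Real structures at each step is routine, being in every case an immediate consequence of the Realness of the data already in hand ($f$, $\vp$, $\sigma$, the $\bfS$-action), exactly as in the proof of \cite[Proposition 2.8]{TLX}, so I would state those verifications briefly and refer to \emph{loc.\ cit.} for the non-equivariant details.
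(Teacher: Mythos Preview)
Your proposal is correct and is precisely what the paper does: the paper gives no proof of its own and simply refers to \cite[Proposition 2.8]{TLX}, remarking that the argument carries over verbatim once one checks compatibility with the Real structures. You have spelled out exactly that argument---the chain $(i)\Rightarrow(iii)\Rightarrow(ii)\Rightarrow(i)$ with the routine Real checks---so there is nothing to add.
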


\begin{ex}~\label{ex:triv-twist2}
Let $J:\Lam\To Y$ be a Real $\bfS$-principal bundle with a Real (left) $\Ga$-action that is compatible with the $\bfS$-action; in other words $\xymatrix{Y  & \Lam \ar[l]^J \ar[r] & \star}$ is a Real generalized homomorphism from $\Ga$ to $\bfS$. Then, the Real $\Ga$-action induces an $\bfS$-equivariant isomorphism $\Lam_{s(\g)}\ni v\mto \g\cdot v\in \Lam_{r(\g)}$ for every $\g\in \Ga$. Hence, there is a Real $\bfS$-equivariant groupoid isomorphism $\vp:r^\ast\Lam\otimes \overline{s^\ast\Lam}\To \Ga\times \bfS$ defined as follows. If $(v,\flat(w))\in \Lam_{r(\g)}\otimes \overline{\Lam}_{s(\g)}$, there exists a unique $\lambda\in \bfS$ such that $\g\cdot w=v\cdot \lambda$. We then set $$\vp([v,\flat(w)]):=(\g,\lambda).$$ 
The inverse of $\vp$ is $\vp'(\g,\lambda):=[v_\g,\overline{\g^{-1}\cdot v_\g}]$, where  for $\g\in \Ga$, $v_\g$ is any lift of $r(\g)$ through the projection $J$.	
\end{ex}

Observe that the set of Real graded $\bfS$-twists of the from $(r^\ast\Lam \otimes\overline{s^\ast\Lam},0)$ over $\Ga$ (cf. Example~\ref{ex:triv-gpd-line}) is a subgroup of $\wRTw(\Ga,\bfS)$. By $\Rext(\Ga,\bfS)$ we denote the quotient of $\wRTw(\Ga,\bfS)$ by this subgroup.

Let us show that $\Rext(\cdot,\bfS)$ is functorial in the category $\RG_s$. Let $\Ga, \ \Ga'$ be two Real groupoids, and let $f:\Ga'\To \Ga$ be a morphism in $\RG_s$. Suppose that $\cT=(\wGa,\delta)$ is a Real graded $\bfS$-twist over $\Ga$. Then, the pull-back $f^\ast \wGa:=\wGa\times_{\pi,\Ga,f}\Ga'$ of the Real $\bfS$-principal bundle $\pi:\wGa\To \Ga$, on which the Real groupoid structure is the one induced from the product Real groupoid $\wGa \times \Ga'$, defines a Real graded twist
\begin{eqnarray}
f^\ast \cT :=\xymatrix{\bfS \ar[r] & f^\ast \wGa \ar[r]^{f^\ast \pi} & \Ga' \ar[d]^{f^\ast \delta} \\ & & \ZZ_2}
\end{eqnarray}
where $f^\ast \pi(\tilde{\g},\g'):=\g'$, $f^\ast \delta(\g'):=\delta(f(\g'))\in \ZZ_2$, and the Real left $\bfS$-action on $f^\ast \wGa$ being given by $\lambda\cdot(\tilde{\g},\g')=(\lambda \tilde{\g},\g')$. Suppose now that $\cT_i=(\wGa_i, \del_i), \ i=1,2$ are representatives in $\Rext(\Ga,\bfS)$. Then, $f^\ast(\cT_1\hat{\otimes} \cT_2)=f^\ast \cT_1\hat{\otimes}f^\ast \cT_2$; indeed, 
\[ f^\ast(\wGa_1\hat{\otimes}\wGa_2)=\left(\wGa_1 \times_\Ga \wGa_2/\bfS\right)\times_\Ga \Ga' \cong \left((\Ga_1\times_\Ga \Ga')\times_\Ga (\wGa_2 \times_\Ga \Ga')\right)/\bfS = f^\ast \wGa_1\hat{\otimes}f^\ast \wGa_2.
\]
Moreover, it is easily seen that if $\cT_1$ and $\cT_2$ are equivalent in $\Rext(\Ga,\bfS)$, then so are $f^\ast\cT_1$ and $f^\ast\cT_2$. Thus, $f$ induces a morphism of Abelian groups $f^\ast: \Rext(\Ga,\bfS)\To \Rext(\Ga',\bfS)$. We then have proved this

\begin{lem}~\label{lem:functor_RTw}
The correspondence
\begin{eqnarray}
\Rext(\cdot,\bfS): \RG_s \To \mathfrak{Ab}, \Ga\mto \Rext(\Ga,\bfS), f\mto f^\ast,
\end{eqnarray}
where $\mathfrak{Ab}$ is the category of Abelian groups, is a contravariant functor. In particular, $\Rext(\cG,\bfS)$ is invariant under Real strict isomorphisms.
\end{lem}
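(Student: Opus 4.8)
The plan is to verify that the assignment $\Ga \mapsto \Rext(\Ga,\bfS)$ together with $f \mapsto f^\ast$ satisfies the two axioms of a contravariant functor, namely compatibility with identities and with composition; most of the work (well-definedness of $f^\ast$ on equivalence classes, and the fact that $f^\ast$ is a group homomorphism) has already been carried out in the paragraph preceding the statement, so what remains is essentially bookkeeping. First I would record that for the identity morphism $\Id_\Ga:\Ga\To\Ga$ one has, for any Real graded $\bfS$-twist $\cT=(\wGa,\del)$, a canonical Real $\bfS$-equivariant strict isomorphism $\Id_\Ga^\ast\wGa = \wGa\times_{\pi,\Ga,\Id}\Ga \xrightarrow{\ \cong\ }\wGa$ given by $(\tilde\g,\g)\mapsto\tilde\g$ (with inverse $\tilde\g\mapsto(\tilde\g,\pi(\tilde\g))$), which visibly intertwines the projections, the gradings ($\Id^\ast\del=\del$), the $\bfS$-actions and the Real structures; hence $\Id_\Ga^\ast$ is the identity on $\Rext(\Ga,\bfS)$.

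Next I would treat composition: given Real strict homomorphisms $\Ga''\xrightarrow{g}\Ga'\xrightarrow{f}\Ga$ and a twist $\cT=(\wGa,\del)$ over $\Ga$, I must produce a Real $\bfS$-equivariant strict isomorphism of twists $g^\ast f^\ast\cT\cong (f\circ g)^\ast\cT$ over $\Ga''$. Unwinding the definitions, $g^\ast f^\ast\wGa = (\wGa\times_{\pi,\Ga,f}\Ga')\times_{f^\ast\pi,\Ga',g}\Ga''$ whereas $(f\circ g)^\ast\wGa = \wGa\times_{\pi,\Ga,f\circ g}\Ga''$, and the map $((\tilde\g,\g'),\g'')\mapsto(\tilde\g,\g'')$ — whose inverse sends $(\tilde\g,\g'')$ to $((\tilde\g,g(\g'')),\g'')$, using $f^\ast\pi(\tilde\g,g(\g''))=g(\g'')$ and $\pi(\tilde\g)=f(g(\g''))$ — is the required isomorphism; one checks directly that it respects the projections ($f^\ast\pi$ followed by the projection to $\Ga''$ versus $(f\circ g)^\ast\pi$), the gradings ($g^\ast f^\ast\del=\del\circ f\circ g=(f\circ g)^\ast\del$), the multiplications inherited from the respective product groupoids, the $\bfS$-actions on the first factor, and the Real structures (all of which act componentwise). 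Passing to equivalence classes and invoking the already-established well-definedness and additivity of pull-back gives $(f\circ g)^\ast = g^\ast\circ f^\ast$ as maps $\Rext(\Ga,\bfS)\To\Rext(\Ga'',\bfS)$.

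Finally, the last sentence of the statement — invariance of $\Rext(\cG,\bfS)$ under Real strict isomorphisms — is then immediate: if $f:\cG\To\cG'$ is an isomorphism in $\RG_s$ with inverse $f^{-1}$, functoriality gives $f^\ast\circ(f^{-1})^\ast = (f^{-1}\circ f)^\ast = \Id^\ast = \Id$ and symmetrically $(f^{-1})^\ast\circ f^\ast = \Id$, so $f^\ast$ is an isomorphism of Abelian groups. I expect no serious obstacle here; the only mildly delicate point is to make sure the canonical fibre-product identifications above really are \emph{Real strict} isomorphisms of \emph{twists} (i.e. that they intertwine every piece of structure, not just the underlying groupoids), but since every structure map in sight — partial multiplication, $\bfS$-action, grading, Real involution, projection to the base — is defined componentwise on the relevant fibre products, each verification is a one-line check. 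I would present these checks tersely and conclude that $\Rext(\cdot,\bfS)$ is a contravariant functor to $\mathfrak{Ab}$.
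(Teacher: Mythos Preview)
Your proposal is correct and follows essentially the same approach as the paper: the paper establishes well-definedness and additivity of $f^\ast$ in the paragraph preceding the lemma and then simply declares the result proved, leaving the functor axioms (compatibility with identities and composition) implicit, whereas you spell these out explicitly via the obvious fibre-product identifications. There is no substantive difference in strategy.
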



\subsection{Real graded central extensions}

In this subsection we introduce Real graded central extensions of Real groupoids, by adapting~\cite{Kum88,KMRW, FHT,Tu} to our context. 

\begin{df}~\label{df:S-equiv-gen-mor}
Let $(\wGa_i,\Ga_i,\delta_i),i=1,2$, be Real graded $\bfS$-twists. Then a Real generalized homomorphism $Z:\wGa_1 \To \wGa_2$ is said to be $\bfS$-equivariant if there is a Real action of $\bfS$ on $Z$ such that $$(\lambda \tilde{\g_1})\cdot z\cdot \tilde{\g_2}=\tilde{\g_1}\cdot(\lambda z)\cdot\tilde{\g_2}=\tilde{\g_1}\cdot z\cdot(\lambda \tilde{\g_2}),$$ for any $(\lambda,\tilde{\g_1},z,\tilde{\g_2}) \in \bfS \times \wGa_1 \times Z \times \wGa_2$ such that these products make sense. We refer to $Z:(\wGa_1,\Ga_1,\del_1)\To (\wGa_2,\Ga_2,\del_2)$ as a generalized morphism of Real graded $\bfS$-twists. In particular, if $Z$ is an isomorphism, the two Real graded $\bfS$-twists are said to be \emph{Morita equivalent}; in this case we write $(\wGa_1,\Ga_1,\del_1)\sim (\wGa_2,\Ga_2,\del_2)$.
\end{df}

\begin{lem}
Let $Z:(\wGa_1,\Ga_1,\delta_1)\To (\wGa_2,\Ga_2,\del_2)$ be a generalized morphism. Then the $\bfS$-action on $Z$ is free and the Real space $Z/\bfS$ (with the obvious involution) is a Real generalized homomorphism from $\Ga_1$ to $\Ga_2$. 
\end{lem}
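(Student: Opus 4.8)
The plan is to verify the two assertions in turn, using the defining relations of an $\bfS$-equivariant generalized morphism together with the fact that each $\wGa_i$ is an $\bfS$-principal bundle over $\Ga_i$.

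\textbf{Freeness of the $\bfS$-action on $Z$.} First I would observe that the generalized source map $\fs:Z\To \Ga_2^{(0)}=Y_2$ is unchanged by the $\bfS$-action: since $\lambda z = \lambda\,\id_{\fr(z)}\cdot z$ (reading $\lambda\in\bfS$ as an element of $\wGa_1$ over the unit $\id_{\fr(z)}$ via the $\bfS$-action on $\wGa_1$, which covers the identity of $\Ga_1$), we have $\fs(\lambda z)=\fs(z)$; similarly $\fr(\lambda z)=\fr(z)$. Now suppose $\lambda z = z$ for some $z\in Z$ and $\lambda\in\bfS$. Because $\fr:Z\To Y_1$ is a locally trivial $\wGa_2$-principal Real bundle (part of the data of a generalized morphism; note $\wGa_2$, not $\Ga_2$, acts freely and transitively on the fibres of $\fr$), and because the $\bfS$-action commutes with the right $\wGa_2$-action and covers the identity on $Y_1$, the element $\lambda$ acts on the fibre $Z_{\fr(z)}$ as a deck transformation of a $\wGa_2$-torsor that fixes a point; hence it is the identity of that torsor. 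Concretely, pick a local section so that $Z_{\fr(z)}\cong \wGa_2{}_{,\,\fs(z)}^{r}$ equivariantly; the equation $\lambda z=z$ then becomes $\lambda\cdot\tilde\g_2=\tilde\g_2$ in $\wGa_2$ for the corresponding $\tilde\g_2$, and since $\bfS$ acts freely on the $\bfS$-principal bundle $\wGa_2\To\Ga_2$ this forces $\lambda=1$. Thus the $\bfS$-action on $Z$ is free.

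\textbf{$Z/\bfS$ as a generalized homomorphism $\Ga_1\To\Ga_2$.} Since $\fr$ and $\fs$ are $\bfS$-invariant they descend to maps $\bar\fr:Z/\bfS\To Y_1$ and $\bar\fs:Z/\bfS\To Y_2$, and the obvious involution on $Z$ (coming from the Real structure, which is $\bfS$-antilinear and hence descends) makes $Z/\bfS$ a Real space with $\bar\fr,\bar\fs$ Real. The right $\wGa_2$-action on $Z$ descends to a right $\Ga_2$-action on $Z/\bfS$: given $[z]\in Z/\bfS$ and $g\in\Ga_2$ with $\fs(z)=r(g)$, choose any lift $\tilde g\in\wGa_2$ of $g$ and set $[z]\cdot g:=[z\cdot\tilde g]$; this is well defined because two lifts differ by an element of $\bfS$, which we can absorb into the $\bfS$-action on $z$, and associativity and the unit axiom are inherited. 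Similarly the left $\wGa_1$-action descends to a left $\Ga_1$-action, and the two descended actions still commute with the appropriate compatibility of $\bar\fr,\bar\fs$ as in Definition~\ref{gen-hom}(i). It remains to check condition (iii): that $\bar\fr:Z/\bfS\To Y_1$ is a locally trivial $\Ga_2$-principal Real bundle. Local sections of $\fr$ descend to local sections of $\bar\fr$, so $\bar\fr$ is locally split; and for principality one must show $Z/\bfS\times_{Y_1}Z/\bfS\To (Z/\bfS)\times_{\bar\fs,Y_2,r}\Ga_2$ is a homeomorphism. This follows from the corresponding statement for $Z$ and $\wGa_2$ by quotienting: in a fibre of $\bar\fr$, two classes $[z],[z']$ with the same $\bar\fr$-image have lifts $z,z'$ with $z'=z\cdot\tilde g$ for a unique $\tilde g\in\wGa_2$ (principality of $Z$ over $\wGa_2$), and the class of $\tilde g$ in $\Ga_2$ is the unique element with $[z']=[z]\cdot g$; continuity of the inverse comes from local trivializations.

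\textbf{Main obstacle.} The routine parts are the descent of the structure maps and actions; the one point deserving care is the well-definedness and principality of the descended right $\Ga_2$-action on $Z/\bfS$, i.e.\ checking that the $\bfS$-ambiguity in lifting $g\in\Ga_2$ to $\tilde g\in\wGa_2$ is exactly matched by the $\bfS$-action on $Z$ thanks to the $\bfS$-equivariance relation $(\lambda\tilde\g_1)\cdot z\cdot\tilde\g_2=\tilde\g_1\cdot z\cdot(\lambda\tilde\g_2)$, and that this produces a genuinely \emph{free} and locally trivial $\Ga_2$-bundle rather than merely a $\Ga_2$-space. Everything else is a matter of transporting the locally trivial principal bundle structure through a free quotient, which is standard.
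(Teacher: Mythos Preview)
Your argument is correct and is the standard one: freeness follows by identifying the $\bfS$-action on $Z$ with the action of $\bfS\subset\wGa_2$ via the right $\wGa_2$-action (using the equivariance relation with $\tilde\g_1,\tilde\g_2$ taken to be units), and descent of the principal bundle structure is routine. The paper does not give its own proof but simply defers to \cite[Lemma 2.10]{TLX}, whose argument is essentially the one you have written out.
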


\begin{proof}
Same as~\cite[Lemma 2.10]{TLX}.	
\end{proof}

\begin{df}
Let $\cG$ be a Real groupoid and $\bfS$ an abelian Real group. A \emph{Real graded $\bfS$-central extension} of $\cG$ consists of a triple $(\wGa, \Ga, \delta, P)$, where $(\wGa,\Ga,\delta)$ is a Real graded $\bfS$-twist, and $P$ is a (Real) Morita equivalence $\Ga\To\cG$.
\end{df}

\begin{df}~\label{df:RExt_equivalence}
We say that $(\wGa_1,\Ga_1,\delta_1,P_1)$ and $(\wGa_2,\Ga_2,\delta_2,P_2)$ are Morita equivalent if there exists a Morita equivalence $Z: (\wGa_1,\Ga_1,\del_1) \To (\wGa_2,\Ga_2,\del_2)$ such that the diagrams

\begin{equation}
\xymatrix{\Ga_1 \ar[r]^{Z/\bfS} \ar[dr]_{P_1} &  \Ga_2 \ar[d]^{P_2} \\ & \cG}
\end{equation}
and 
\begin{equation}
\xymatrix{\Ga_1 \ar[r]^{Z/\bfS} \ar[dr]_{\delta_1} & \Ga_2 \ar[d]^{\delta_2} \\ & \ZZ_2}
\end{equation}

commute in the category $\RG$. Such a $Z$ is also called \emph{an equivalence bimodule} of Real graded $\bfS$-central extensions. The set of Morita equivalence classes of Real graded $\bfS$-central extensions of $\cG$ is denoted by $\wRExt(\cG,\bfS)$.
\end{df}

The set $\wRExt(\cG,\bfS)$ admits a natural structure of abelian group described in the following way. Assume that $\EE_i=(\wGa_i,\Ga_i,\delta_i,P_i), \ i=1,2$, are two given Real graded $\bfS$-central extensions of $\cG$, then $\xymatrix{Y_1 & Z \ar[l]_\fr \ar[r]^\fs & Y_2}$ is a Morita equivalence between $\Ga_1$ and $\Ga_2$, where $Z=P_1\times_\cG P_2$. But from Proposition ~\ref{pro-pullback} there exists a Real homeomorphism $f: \fs^\ast \Ga_2 \To \fr^\ast \Ga_1$. Now one can see that the maps $\pi: \fr^\ast \wGa_1 \To \fr^\ast \Ga_1, \ (z,\tilde{\g_1},z')\mto (z,\pi_1(\tilde{\g_1}),z')$ and $\pi': \fs^\ast \wGa_2 \To \fr^\ast \Ga_1 (z,\tilde{\g_2},z')\mto \pi \circ f (z,\tilde{\g_2},z')$ define two Real $\bfS$-principal bundles and then $(\fr^\ast \wGa_1,\delta)$ and $(\fs^\ast \wGa_2,\delta)$, where $\delta:= \delta_1 \circ pr_2$, define elements of $\Rext(\fr^\ast \Ga_1,\bfS)$. Therefore, we can form the tensor product $(\fr^\ast \wGa_1\hat{\otimes} \fs^\ast \wGa_2, \delta \otimes\delta)$ are Real graded $\bfS$-groupoid over $\fr^\ast \Ga_1$. Moreover, $\fr^\ast \Ga_1\sim_{Morita}\Ga_1$; then, if $P: \fr^\ast \Ga_1 \To \cG$ is a Real Morita equivalence, we obtain a Real graded $\bfS$-central extension of $\cG$ by setting
\begin{eqnarray}
\EE_1\hat{\otimes}\EE_2:=(\fr^\ast \wGa_1\hat{\otimes} \fs^\ast \wGa_2, \fr^\ast \Ga_1, \delta, P),
\end{eqnarray}
that we will call \emph{the tensor product of $\EE_1$ and $\EE_2$}. Thus, we define the sum 
\[[\EE_1]+[\EE_2]:=[\EE_1\hat{\otimes}\EE_2],\] which is easily seen to be well defined in $\wRExt(\cG,\bfS)$. The inverse $\EE^{\op}$ of $\EE$ is $(\wGa^{\op},\Ga,\del,P)$. Notice that $\Rext(\cG,\bfS)$ is naturally a subgroup of $\wRExt(\cG,\bfS)$ by identifying a Real graded $\bfS$-twist $(\wGa,\cG,\del)$ with the Real graded $\bfS$-central extension $(\wGa,\cG,\del,\cG)$. We summarize this in the next lemma.

\begin{lem}
Under the sum defined above, $\wRExt(\cG,\bfS)$ is an abelian group whose zero element is the class of the trivial Real graded $\bfS$-central extension $(\cG \times \bfS,\cG,0, \cG)$.
\end{lem}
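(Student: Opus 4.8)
The plan is to verify the abelian group axioms for $\wRExt(\cG,\bfS)$ under the tensor product $\hat\otimes$, leaning heavily on the corresponding facts already established for $\wRTw(\cdot,\bfS)$ and $\Rext(\cdot,\bfS)$, and on the Morita-equivalence machinery developed in the previous subsections. First I would check that the sum $[\EE_1]+[\EE_2]:=[\EE_1\hat\otimes\EE_2]$ is well defined on Morita equivalence classes: given equivalence bimodules $Z_i:(\wGa_i,\Ga_i,\del_i)\To(\wGa_i',\Ga_i',\del_i')$ for $i=1,2$ that are compatible with the $P_i$'s and $\del_i$'s, one produces an $\bfS$-equivariant Morita equivalence between the pull-back twists $\fr^\ast\wGa_1\hat\otimes\fs^\ast\wGa_2$ and $(\fr')^\ast\wGa_1'\hat\otimes(\fs')^\ast\wGa_2'$ over a common pull-back of $\Ga_1$ (using Proposition~\ref{pro-pullback} to identify $\fr^\ast\Ga_1$, $\fs^\ast\Ga_2$, etc.), and checks that the diagrams over $\cG$ and over $\ZZ_2$ still commute in $\RG$. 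This is bookkeeping of the same flavor as in~\cite{TLX} but must be carried out with the Real structures; I would cite the functoriality in Lemma~\ref{lem:functor_RTw} and the associativity of composition of Real generalized morphisms to keep it short.

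Next I would establish the group laws. Commutativity of $\hat\otimes$ at the level of $\wRTw$ is already proved in the Lemma following the Baer sum definition via the swap map $(\tilde\g_1,\tilde\g_2)\mapsto(-1)^{\del_1(\g_1)\del_2(\g_2)}(\tilde\g_2,\tilde\g_1)$; since this map is a Real $\bfS$-equivariant groupoid isomorphism and it visibly respects both the projection to $\cG$ (the $P_i$ data are unchanged) and the grading, it descends to commutativity in $\wRExt$. For associativity, the key observation is that pulling back commutes with the Baer sum: if $\EE_1,\EE_2,\EE_3$ are three extensions, both $(\EE_1\hat\otimes\EE_2)\hat\otimes\EE_3$ and $\EE_1\hat\otimes(\EE_2\hat\otimes\EE_3)$ are computed over pull-backs of $\Ga_1$ along suitable fibered products of the $P_i$'s, and these pull-backs are canonically Real Morita equivalent; under that equivalence the two triple tensor products of twists match via the evident map $(\tilde\g_1,(\tilde\g_2,\tilde\g_3))\mapsto((\tilde\g_1,\tilde\g_2),\tilde\g_3)$ — this is the Real-graded refinement of the associativity remark already made for composition of generalized morphisms in the excerpt, and the sign factors $(-1)^{\del_i(\g_i)\del_j(\g_j)}$ are bilinear in $\del$ so they are compatible with reassociation.

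For the identity element, I would show $[\EE]\hat\otimes[(\cG\times\bfS,\cG,0,\cG)]=[\EE]$: tensoring with the trivial twist over $\cG$, after pulling back, gives $\fr^\ast\wGa\hat\otimes\fs^\ast(\Ga\times\bfS)$, and since tensoring by the trivial twist is the identity in $\wRTw$ (the zero element of that group), this returns $\fr^\ast\wGa$, which represents the same class as $\EE$ because $\fr^\ast\Ga\sim_{\mathrm{Morita}}\Ga\sim_{\mathrm{Morita}}\cG$ in a way compatible with the grading. For inverses, I would verify that $\EE^{\op}=(\wGa^{\op},\Ga,\del,P)$ satisfies $[\EE]+[\EE^{\op}]=0$: this reduces, after pulling back, to the statement already proved in the $\wRTw$ Lemma that $(\wGa,\del)\hat\otimes(\wGa^{\op},\del)$ is strictly trivial — exhibited there via the isomorphism $\Ga\times\bfS\To\wGa\times_\Ga\wGa^{\op}/\bfS$, $(\g,\lambda)\mapsto(\lambda\tilde\g,\tilde\g)$ — and one observes this strictly trivial twist, together with a Morita equivalence to $\cG$, is the zero element of $\wRExt(\cG,\bfS)$.

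The main obstacle is the well-definedness step: one has to check that replacing each $\EE_i$ by a Morita-equivalent extension produces a Morita-equivalent tensor product, and this requires tracking how an equivalence bimodule $Z_i$ interacts with the pull-back construction $\fr^\ast(-)$ along the $P_i$-dependent maps — in particular producing the right Real structure on the new equivalence bimodule and checking the two coherence diagrams (over $\cG$ and over $\ZZ_2$) commute in $\RG$ rather than just in $\RG_s$. Everything else is a transcription, with Real structures inserted, of the non-Real arguments in~\cite{TLX,Tu,Kum88}, so I would state those steps briefly and refer to those sources, spending the bulk of the written proof on the well-definedness and on checking the sign cocycle $(-1)^{\del_i(\g_i)\del_j(\g_j)}$ behaves correctly under reassociation and under the equivalences.
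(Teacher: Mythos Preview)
Your proposal is correct and follows the natural route; in fact it is considerably more detailed than the paper's own treatment. The paper does not give a proof of this lemma at all: it is stated as a summary of the discussion that precedes it, where the tensor product $\EE_1\hat\otimes\EE_2$ is constructed, the inverse $\EE^{\op}$ is defined, and well-definedness is dismissed with ``which is easily seen to be well defined in $\wRExt(\cG,\bfS)$''. Your plan --- reducing each group axiom to the corresponding statement for $\wRTw(\cdot,\bfS)$ via pull-backs and then tracking Morita equivalences --- is exactly how one would fill in those omitted details, and your identification of the well-definedness step as the point requiring the most care is apt.
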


When the Real structure is trivial, then we recover the usual definition of graded central extensions (see~\cite{FHT} for instance) of $\cG$ by the group $\ZZ_2$.

\begin{pro}~\label{pro:Ext-triv-inv}
Suppose that $\grpd$ is equipped with a trivial Real structure. Then $$\wRExt(\cG,\uc)\cong \wExt(\cG,\ZZ_2).$$	
\end{pro}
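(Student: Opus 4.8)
The plan is to exhibit a natural bijection between Morita equivalence classes of Real graded $\uc$-central extensions of $\cG$ (with its trivial Real structure) and isomorphism classes of graded $\ZZ_2$-central extensions of $\cG$, and to check that it is a group homomorphism. The key observation is that a Real structure on a $\uc$-principal bundle $\widetilde{\Ga}\To\Ga$ over a groupoid with \emph{trivial} Real structure is precisely a conjugate-linear involution covering the identity on $\Ga$; its fixed-point set $^r\widetilde{\Ga}$ is then a $\{\pm1\}$-principal bundle over $\Ga$, i.e.\ a $\ZZ_2$-central extension in the ungraded sense, and the grading $\del:\Ga\To\ZZ_2$ (which automatically satisfies $\del(\bar\g)=\del(\g)$ since $\bar\g=\g$) carries over verbatim. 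So the correspondence is $(\widetilde{\Ga},\Ga,\del,P)\mapsto (^r\widetilde{\Ga},{}^r\Ga,\del,{}^rP)$, where one must also check that a Real Morita equivalence $P:\Ga\To\cG$ restricts, on Real points, to an ordinary Morita equivalence ${}^rP:{}^r\Ga\To\cG={}^r\cG$ (this uses Lemma on full subgroupoids and that the base $\cG$ is Realwise-trivial).

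The steps, in order: (1) Recall that over a trivially-Real groupoid $\Ga$, a Real $\uc$-principal bundle is a $\uc$-bundle $L\to\Ga$ together with an antilinear involution $j$, and that $L\mapsto {}^rL$ (the $j$-fixed points) together with $^rL\mapsto {}^rL\otimes_{\{\pm1\}}\uc$ (complexification) are mutually inverse equivalences between Real $\uc$-bundles and $\{\pm1\}$-bundles — this is the classical statement that $KR$-theory of a space with trivial involution is $KO$-theory, specialized to line bundles, and it is compatible with the groupoid multiplication and with gradings. (2) Apply this fibrewise-over-$\Ga$ statement to the twist data: a Real graded $\uc$-twist $(\widetilde{\Ga},\Ga,\del)$ corresponds to a graded $\ZZ_2$-twist $(^r\widetilde{\Ga},\Ga,\del)$, and tensor products $\hat\otimes$ (Baer sums with the Koszul sign) match because ${}^r(L_1\hat\otimes L_2)={}^rL_1\hat\otimes{}^rL_2$ and the sign rule only involves $\del$. (3) Extend from twists to central extensions: given the Morita equivalence part $P$, show $^rP$ is an ordinary Morita equivalence ${}^r\Ga\to\cG$, and that Real equivalence bimodules $Z$ between two Real graded central extensions restrict to ordinary equivalence bimodules ${}^rZ$, and conversely any ordinary such bimodule complexifies to a Real one; this gives a well-defined bijection on equivalence classes. (4) Check it respects sums — immediate from (2) and the definition of $\hat\otimes$ on central extensions via pullbacks, since the pullback and fixed-point functors commute — and sends the trivial extension to the trivial one. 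Conclude that $\wRExt(\cG,\uc)\cong\wExt(\cG,\ZZ_2)$.

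I expect the main obstacle to be step (1)/(2) carried out \emph{at the level of groupoids rather than spaces}: one must verify that the fixed-point construction is compatible with the partial multiplication of $\widetilde{\Ga}$ (so that $^r\widetilde{\Ga}$ is genuinely a subgroupoid and the extension structure survives), that it behaves well under the Baer-sum quotient by the diagonal $\uc$-action — one needs ${}^r$ to commute with the quotient $\widetilde{\Ga}_1\times_\Ga\widetilde{\Ga}_2/\uc$, which requires checking that a Real point of the quotient lifts to a pair of Real points, using that $H^1$ of a point with $\ZZ_2$-coefficients obstructs nothing here — and that the Koszul sign $(-1)^{\del_2(\g_2)\del_1(\g_1')}$ is real, hence invariant under $j$. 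The rest (functoriality, compatibility with Morita equivalence, the group axioms) is then a routine transcription of the trivially-Real special case of the definitions already set up in the preceding subsections, together with the elementary fact that for the base $\cG$ one has ${}^r\cG=\cG$ so "Real generalized morphism into $\cG$" reduces to "ordinary generalized morphism into $\cG$" after passing to Real points.
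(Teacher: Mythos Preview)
The paper states this proposition without proof, so there is no argument to compare against directly. Your fixed-point approach is the right idea and can be made to work, but there is a genuine gap in the outline as written. In a Real graded central extension $(\wGa,\Ga,\del,P)$ of $\cG$, the intermediate groupoid $\Ga$ need not carry the trivial Real structure merely because $\cG$ does; it is only Real--Morita equivalent to $\cG$. Your step~(1) assumes $\Ga$ is trivially Real, yet elsewhere you write ${}^r\Ga$ and ${}^rP$ as though it might not be, and the assertion that ${}^rP:{}^r\Ga\to\cG$ is an ordinary Morita equivalence can fail badly: for instance, if $\Ga=\cG[\cU]$ for a cover indexed by $I\times\{\pm1\}$ with the swap involution (as in the remark following the definition of Real covers), then the involution on the index set has no fixed points and ${}^r\Ga$ has empty unit space. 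The fix is to first invoke Proposition~\ref{pro:RExt_vs_RTw}: every class in $\wRExt(\cG,\uc)$ is represented by a twist over some $\cG[\cU]$, and when $\rho=\Id$ one may choose $\cU$ with trivial involution on the index set, so that $\cG[\cU]$ itself has trivial Real structure. After that reduction your steps (1)--(4) go through, and the obstacle you anticipate in (2)---lifting a Real point of the Baer quotient to a pair of Real points---is handled by the elementary fact that any anti-linear involution on a $\uc$-torsor has exactly two fixed points.

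For comparison, the paper's later machinery provides an alternative cohomological route: Theorem~\ref{thm:ExtR-vs-Coho} identifies $\wRExt(\cG,\uc)$ with $\check{H}R^1(\cG_\bullet,\ZZ_2)\ltimes\check{H}R^2(\cG_\bullet,\uc)$, and Proposition~\ref{pro:HR-vs-H} reduces these, for trivial $\rho$, to $\check{H}^1(\cG_\bullet,\ZZ_2)\ltimes\check{H}^2(\cG_\bullet,{}^r\uc)=\check{H}^1(\cG_\bullet,\ZZ_2)\ltimes\check{H}^2(\cG_\bullet,\ZZ_2)\cong\wExt(\cG,\ZZ_2)$. Your direct bundle-theoretic construction has the advantage of not depending on the \v{C}ech theory of Section~2, which is presumably why the proposition is placed where it is.
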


\begin{ex}
Suppose $\cG$ reduces to a Real space $X$. Then following Example~\ref{ex:X-Morita}, a Real graded $\bfS$-central extension of $X$ is a triple $(\wGa,Y^{[2]},\del)$, where $Y$ is a Real space together with a continuous locally split Real open map $\pi:Y\To X$, and $\del:Y^{[2]}\To \ZZ_2$ is a Real morphism. 

In particular, suppose $\rho$ is trivial. Then, by Proposition~\ref{pro:Ext-triv-inv}, giving a Real graded $\uc$-central extension of $X$ amounts to giving a \emph{real bundle gerbe} 
\[
\xymatrix{\ZZ_2 \ar[r] & \wGa \ar[d] &   \\ & Y^{[2]} \dar[r] & Y \ar[d]^\pi  \\ & &  X
}
\]
in the sense of Mathai, Murray, and Stevenson~\cite{MMS}, together with an augmentation $\del:Y^{[2]}\To \ZZ_2$.	
\end{ex}
 
%

\subsection{Functoriality of $\wRExt(\cdot,\bfS)$}

The aim of this subsection is to show that $\wRExt(\cdot,\bfS)$ is functorial in the category $\RG$, and hence that the group $\wRExt(\cG,\bfS)$ invariant under Morita equivalence. To do this, we will need the following

\begin{pro}~\label{pro:RExt_vs_RTw}
Let $\grpd$ be a Real groupoid. Then, there is an isomorphism of abelian groups
\begin{eqnarray}
\wRExt(\cG,\bfS)\cong \underset{\cU}{\varinjlim}\Rext(\cG[\cU],\bfS).
\end{eqnarray}
\end{pro}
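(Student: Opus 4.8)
The plan is to construct mutually inverse maps between $\wRExt(\cG,\bfS)$ and the direct limit $\varinjlim_\cU \Rext(\cG[\cU],\bfS)$ over the directed set of Real open covers $\cU$ of $X$ (ordered by refinement), using the machinery of Proposition~\ref{pro:gen_strict} and the functor $\mathsf{F}:\RG\To\RG_\Omega$. First I would observe that for each Real open cover $\cU$ the canonical Morita equivalence $Z_{\iota_\cU}:\cG[\cU]\To\cG$ induces, by Lemma~\ref{lem:functor_RTw} applied after pulling back twists and by the sum construction on central extensions, a homomorphism $\Rext(\cG[\cU],\bfS)\To\wRExt(\cG,\bfS)$ sending a Real graded $\bfS$-twist $(\wGa,\cG[\cU],\del)$ to the Real graded $\bfS$-central extension $(\wGa,\cG[\cU],\del,Z_{\iota_\cU})$; I would check this is compatible with refinement maps $\cG[\cU']\To\cG[\cU]$ (for $\cU'$ refining $\cU$) so that it passes to a well-defined homomorphism $\Theta:\varinjlim_\cU\Rext(\cG[\cU],\bfS)\To\wRExt(\cG,\bfS)$.

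For the inverse, given a Real graded $\bfS$-central extension $\EE=(\wGa,\Ga,\del,P)$ of $\cG$, the Morita equivalence $P:\Ga\To\cG$ is a Real generalized morphism, so by Proposition~\ref{pro:gen_strict} it is obtained from the canonical Morita equivalence $\cG[\cU]\To\cG$ (for a suitable Real open cover $\cU$ of $X$) composed with a Real strict morphism; equivalently, using Proposition~\ref{pro-pullback} one finds a Real cover under which $\Ga$ becomes strictly isomorphic to $\cG[\cU]$ (after passing to a common pullback groupoid). Transporting $(\wGa,\del)$ along this strict isomorphism — which is legitimate because $\Rext$ is invariant under Real strict isomorphisms by Lemma~\ref{lem:functor_RTw}, and because strict triviality is detected by the criteria of Proposition~\ref{pro:crit-triv-twist} — yields a Real graded $\bfS$-twist over $\cG[\cU]$, hence a class in $\Rext(\cG[\cU],\bfS)$ and thus in the direct limit. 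I would then verify this assignment is independent of the choices (the cover $\cU$, the strict morphism $f_\cU$) by using that any two such choices admit a common refinement on which the resulting twists become strictly isomorphic, and that it respects the Baer-sum group structures, giving a homomorphism $\Xi:\wRExt(\cG,\bfS)\To\varinjlim_\cU\Rext(\cG[\cU],\bfS)$.

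Finally I would check $\Theta\circ\Xi=\Id$ and $\Xi\circ\Theta=\Id$. The composite $\Theta\circ\Xi$ sends $\EE=(\wGa,\Ga,\del,P)$ to $(\wGa',\cG[\cU],\del',Z_{\iota_\cU})$ where $(\wGa',\del')$ is the transport of $(\wGa,\del)$; the equivalence bimodule witnessing $\EE\sim\Theta\Xi(\EE)$ in the sense of Definition~\ref{df:RExt_equivalence} is built from the equivalence $P\cong Z_{\iota_\cU}\circ(\text{strict part})$ already supplied by Proposition~\ref{pro:gen_strict}, so the diagrams over $\cG$ and over $\ZZ_2$ commute in $\RG$ by construction. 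The composite $\Xi\circ\Theta$ is essentially the identity on the direct limit because starting from a twist over $\cG[\cU]$, the associated central extension has $P=Z_{\iota_\cU}$ already of the canonical form, so Proposition~\ref{pro:gen_strict} returns (a refinement of) $\cU$ with the tautological strict morphism and recovers the original class after passing to the limit.

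The main obstacle I expect is the well-definedness of $\Xi$: one must show that the class in $\varinjlim_\cU\Rext(\cG[\cU],\bfS)$ does not depend on the auxiliary Real open cover nor on the Real strict morphism produced by Proposition~\ref{pro:gen_strict}, and that Morita-equivalent central extensions $\EE_1\sim\EE_2$ produce the same limit class. This requires a careful refinement argument showing that any equivalence bimodule $Z:(\wGa_1,\Ga_1,\del_1)\To(\wGa_2,\Ga_2,\del_2)$ compatible with the $P_i$ can be pulled back to a common Real open cover on which it becomes a strict $\bfS$-equivariant isomorphism of twists over the same cover groupoid $\cG[\cU]$ — in other words, reconciling the Morita-equivalence relation on central extensions with the colimit taken over strict data; this is where Proposition~\ref{pro:gen_strict}, Proposition~\ref{pro-pullback}, and the functoriality established in Lemma~\ref{lem:functor_RTw} all have to be combined simultaneously.
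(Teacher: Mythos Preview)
Your approach is essentially the same as the paper's: both construct the map $\varinjlim_\cU\Rext(\cG[\cU],\bfS)\To\wRExt(\cG,\bfS)$ by $\cT=(\tilde\cG,\cG[\cU],\del)\mapsto(\tilde\cG,\cG[\cU],\del,Z_{\iota_\cU})$, and the inverse by converting the Morita equivalence $P$ into a strict morphism over a cover groupoid and pulling back the twist. One small correction: to obtain a Real open cover of $X=\cG^{(0)}$ (rather than of $\Gamo$), you should apply Proposition~\ref{pro:gen_strict} (equivalently the functor $\mathsf{F}$) to the \emph{inverse} equivalence $P^{-1}:\cG\To\Ga$, which yields $(\cV,f_\cV)\in\Omega(\cG,\Ga)$ with $f_\cV:\cG[\cV]\To\Ga$ a Real strict morphism; then $\cT_\EE:=f_\cV^\ast(\wGa,\del)$ is the desired twist over $\cG[\cV]$ --- this is exactly what the paper does, and it sidesteps the ``common pullback'' argument via Proposition~\ref{pro-pullback} that you sketch.
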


Before giving the proof of this proposition, we have to describe the sum in the inductive limit $\underset{\cU}{\varinjlim}\Rext(\cG[\cU],\bfS)$. Let $\cU_1$ and $\cU_2$ be two Real open covers of $X$, and let $\cT_i=(\tilde{\cG}_i,\cG[\cU_i],\delta_i)$ be Real graded $\bfS$-groupoids over $\cG[\cU_i], i=1,2$. Let $(\cV,f_\cV)\in \Omega\left(\cG[\cU_1],\cG[\cU_2]\right)$ be the unique class corresponding to the Real Morita equivalence $Z_{\iota_{\cU_1}}^{-1}\circ Z_{\iota_{\cU_2}}$ from $\cG[\cU_1]$ to $\cG[\cU_2]$. $\cV$ is a Real open cover of $X$ containing $\cU_1$, and $f_\cV: \cG[\cV]\To \cG[\cU_2]$ is a Real strict morphism. Denote by $\iota_{\cV,\cU_1}$ the canonical Real morphism $\cG[\cV] \To \cG[\cU_1]$. Then, the tensor product of $\cT_1$ and $\cT_2$ is
\begin{equation}
\cT_1\hat{\otimes}\cT_2:=\iota_{\cV,\cU_1}^\ast \cT_1 \hat{\otimes}f_{\cV}^\ast \cT_2,
\end{equation} 
which defines a Real graded $\bfS$-groupoids over the Real groupoid $\cG[\cV]$.

\begin{proof}[Proof of Proposition~\ref{pro:RExt_vs_RTw}]
For a Real graded $\bfS$-central extension $\EE=(\wGa, \Ga,\delta,P)$ of $\cG$ , let $(\cV,f_\cV)\in \Omega\left(\cG,\Ga\right)$ be the isomorphism in $\RG_\Omega$ corresponding to the Morita equivalence $P^{-1}:\cG\To \Ga$. Setting
\begin{eqnarray}
\cT_\EE:=\xymatrix{\bfS \ar[r] & f_\cV^\ast \wGa \ar[r]^{f_\cV^\ast \pi} & \cG[\cV] \ar[d]^{\delta \circ f_\cV} \\ & & \ZZ_2} 
\end{eqnarray}
we get a Real graded $\bfS$-groupoid over $\cG[\cV]$. It is not hard to check that this provides us the desired isomorphism of abelian groups; the inverse is given by the formula
\begin{eqnarray}
\EE_\cT:=(\tilde{\cG},\cG[\cU],\delta, Z_{\iota_\cU}),
\end{eqnarray}
for a Real graded $\bfS$-twist $\cT =(\tilde{\cG}, \cG[\cU],\delta)$.
\end{proof}

From this proposition, it is now possible to define the \emph{pull-back} of a Real graded $\bfS$-central extension via a Real generalized morphism. More precisely, we have

\begin{dfpro}~\label{dfpro:pull-back-extension}
Let $\cG$ and $\cG'$ be Real groupoids, and let  $Z:\cG'\To \cG$ be a Real generalized morphism. Let $\EE=(\wGa,\Ga,\delta,P)$ is be a representative in $\wRExt(\cG,\bfS)$, and $\cT_\EE=(f_\cV^\ast \wGa,\cG[\cV],\delta \circ f_\cV)$ its image in $\underset{\cU}{\varinjlim} \Rext(\cG[\cU],\bfS)$ (see the proof of Proposition~\ref{pro:RExt_vs_RTw}). Let $(\cW,f_{\cW})\in \Omega\left(\cG',\cG[\cV]\right)$ be the morphism in $\RG_\Omega$ corresponding to the Real generalized morphism $Z_{\iota_\cV}^{-1}\circ Z:\cG'\To \cG[\cV]$.
Then  
\begin{eqnarray}
Z^\ast \EE:= \EE_{f_{\cW}^\ast \cT_\EE}.
\end{eqnarray}
is a Real graded $\bfS$-central extension of the Real groupoid $\cG'$; it is called the \emph{pull-back of $\EE$ along $Z$}	
\end{dfpro}

Now the following is straightforward.

\begin{cor}~\label{cor:functor_RExt}
There is a contravariant functor
\begin{eqnarray}
\wRExt(\cdot,\bfS): \RG \To \mathfrak{Ab},
\end{eqnarray}
which sends a Real groupoid $\cG$ to the abelian group $\wRExt(\cG,\bfS)$. In particular, $\wRExt(\cG,\bfS)$ is invariant under Morita equivalences.
\end{cor}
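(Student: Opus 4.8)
The plan is to assemble the functor directly from the machinery already established, using Definition and Proposition~\ref{dfpro:pull-back-extension} for the effect on morphisms and Proposition~\ref{pro:RExt_vs_RTw} together with Lemma~\ref{lem:functor_RTw} to check the axioms. First I would declare the object assignment $\cG\mapsto \wRExt(\cG,\bfS)$ and the morphism assignment: to a Real generalized morphism $Z:\cG'\To\cG$ we attach the pull-back map $Z^\ast:\wRExt(\cG,\bfS)\To\wRExt(\cG',\bfS)$, $[\EE]\mapsto[Z^\ast\EE]$ of Definition and Proposition~\ref{dfpro:pull-back-extension}. The first thing to verify is that this is well defined on Morita equivalence classes of Real graded $\bfS$-central extensions and independent of the choice of representative $(\cV,f_\cV)$ in $\Omega(\cG',\cG[\cV])$; this follows because the construction factors through the inductive limit $\varinjlim_\cU\Rext(\cG[\cU],\bfS)$, in which two pairs $(\cW,f_\cW)$ arising from $2$-isomorphic classes in $\RG_\Omega$ give isomorphic objects, combined with Lemma~\ref{lem:functor_RTw}, which guarantees that the strict pull-back $f_\cW^\ast$ on $\Rext$ is insensitive to strict isomorphisms of covers. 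That $Z^\ast$ is a group homomorphism is inherited from the identity $f^\ast(\cT_1\hat{\otimes}\cT_2)=f^\ast\cT_1\hat{\otimes}f^\ast\cT_2$ established in the discussion preceding Lemma~\ref{lem:functor_RTw}, transported along the isomorphism of Proposition~\ref{pro:RExt_vs_RTw}.

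Next I would check the two functor axioms. For identities: the identity generalized morphism of $\cG$ is the class of $Z_{\Id_\cG}$, and pulling back along it returns $\cG[\cV]$ to $\cG$ up to the canonical Morita equivalence $Z_{\iota_\cV}$, so $(\Id_\cG)^\ast\EE\cong\EE$ by the formula $\EE_\cT=(\tilde\cG,\cG[\cU],\delta,Z_{\iota_\cU})$ in the proof of Proposition~\ref{pro:RExt_vs_RTw}. For composition: given $Z_1:\cG''\To\cG'$ and $Z_2:\cG'\To\cG$, one must show $(Z_2\circ Z_1)^\ast\EE\cong Z_1^\ast(Z_2^\ast\EE)$ in $\wRExt(\cG'',\bfS)$. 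The argument is to pass to $\RG_\Omega$ via the isomorphism of categories $\mathsf{F}$ established earlier, where the composition of generalized morphisms becomes the composition $(\cV,f_\cV)\circ(\cU,f_\cU)$ of pairs of covers with strict morphisms; then functoriality of $Z^\ast$ reduces to functoriality of the strict pull-back on covers, i.e.\ $(g\circ f)^\ast=f^\ast\circ g^\ast$ on the $\Rext$-groups, which is the functoriality statement of Lemma~\ref{lem:functor_RTw}. The compatible choices of refining covers $\tilde\cU\supseteq\cU'$ and strict morphisms $\vp_{\tilde\cU}$ that appear in the definition of composition in $\RG_\Omega$ are exactly what lets one compare the two iterated pull-backs inside a single $\Rext(\cG''[\tilde\cU],\bfS)$.

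The main obstacle I expect is bookkeeping rather than conceptual: making the associativity/composition check genuinely rigorous requires tracking, through the isomorphism $\mathsf{F}:\RG\To\RG_\Omega$, how the intermediate covers and strict morphisms chosen in Definition and Proposition~\ref{dfpro:pull-back-extension} compose, and verifying that the two resulting Real graded $\bfS$-twists over a common refinement $\cG''[\tilde\cU]$ are strictly isomorphic in a way compatible with the $\bfS$-action, the grading $\delta$, and the Real structures. All the individual compatibilities (with $\bfS$-equivariance, with $\delta\circ f$, with the involutions) have already been checked in the relevant lemmas, so the work is to observe that they are preserved under the finitely many refinement steps. Once this is in hand, the last sentence — invariance of $\wRExt(\cG,\bfS)$ under Morita equivalence — is immediate: a Morita equivalence is an isomorphism in $\RG$, and any functor carries isomorphisms to isomorphisms, so $\wRExt$ applied to a Morita equivalence $\cG\To\cG'$ is a group isomorphism $\wRExt(\cG',\bfS)\cong\wRExt(\cG,\bfS)$.
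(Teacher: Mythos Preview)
Your proposal is correct and matches the paper's intended argument: the paper gives no proof at all beyond the remark ``Now the following is straightforward'', placed immediately after Definition and Proposition~\ref{dfpro:pull-back-extension}, so what you have written is precisely the unpacking of that word ``straightforward'' using Proposition~\ref{pro:RExt_vs_RTw}, Lemma~\ref{lem:functor_RTw}, and the equivalence $\mathsf{F}:\RG\to\RG_\Omega$. There is nothing more in the paper's own treatment to compare against.
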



\section{Real \v{C}ech cohomology}

\subsection{\emph{Real} simplicial spaces}

 We start by recalling some preliminary notions. For each zero integer $n\in \NN$, we set $[n]=\{0,...,n\}$. Recall~\cite{Tu1} that the simplicial (resp. pre-simplicial) category $\Delta$ (resp. $\Delta'$) is the category whose objects are the sets $[n]$, and whose morphisms are the nondecreasing (resp. increasing) maps $f:[m]\To [n]$. For $n\in \NN$,  we denote by $\Delta^{(N)}$ the $N$-truncated full subcategory of $\Delta$ whose objects are those $[k]$ with $k\leq N$.

\begin{df}
A \emph{Real simplicial (resp. pre-simplicial, $N$-simplicial) topological space} consists of a contravariant functor from $\Delta$ (resp. $\Delta'$, $\Delta^{(N)}$) to the category $\RTop$ whose objects are topological Real spaces and morphisms are continuous Real maps. A morphism of Real simplicial (resp. pre-simplicial,...) spaces is a morphism of such functors.
\end{df}

More concretely, a Real (pre-)simplicial space is given by a family $(X_\bullet, \rho_\bullet)=(X_n,\rho_n)_{n\in \NN}$ of topological Real spaces, and for every map $f:[m]\To [n]$ we are given a continuous Real map (called \emph{face} or \emph{degeneracy map} depending which of $m$ and $n$ is larger) $\tilde{f}:(X_n,\rho_n)\To (X_m,\rho_m)$  , satisfying the relation $\widetilde{f\circ g}=\tilde{g}\circ \tilde{f}$ whenever $f$ and $g$ are composable.

\begin{df}
Let $(X_\bullet,\rho_\bullet)$ be a Real simplicial space. For any $N\in \NN$, the \emph{$N$-skeleton} of $(X_\bullet,\rho_\bullet)$ is the Real simplicial space $(X_\bullet,\rho_\bullet)^N$ "of dimension $N$" ; that is, $(X_n,\rho_n)^N=(X_n,\rho_n)$ for $n\leq N$, and $(X_n,\rho_n)^N=(X_N,\rho_N)$ for all $n\ge N+1$.
\end{df}

Let $\ve_i^n:[n-1]\To [n]$ be the unique increasing injective map that avoids $i$, and let $\eta_i^n:[n+1]\To [n]$ be the unique nondecreasing surjective map such that $i$ is reached twice; that is, 
\begin{eqnarray}~\label{eq:epsilon}
\ve_i^n(k)=\left\lbrace \begin{array}{ll} k, & \ \text{if} \ k \leq i-1 , \\ k+1, & \ \text{if} \  k\geq i, \end{array} \right. \ \text{and} \ 
 \eta_i^n(k)=\left\lbrace \begin{array}{ll} k, & \ \text{if} \ k\leq i; \\ k-1, & \ \text{if} \ k\geq i+1.  \end{array} \right.
\end{eqnarray}
We will omit the superscript $n$ if there is no ambiguity.

If $(X_\bullet,\rho_\bullet)$ is a Real simplicial space, the face and degeneracy maps $$\tilde{\ve}_i^n: (X_n,\rho_n)\To (X_{n-1},\rho_{n-1}), \quad  {\rm and} \ \tilde{\eta}_i^n:(X_n,\rho_n)\To (X_{n+1},\rho_{n+1}), \ i=0,...,n$$ clearly satisfy the following \emph{simplicial identities}:
\begin{align}~\label{eq:face_degen}
\begin{array}{l}
\tilde{\ve}_i^{n-1}\tilde{\ve}_j^n=\tilde{\ve}_{j-1}^{n-1}\tilde{\ve}_i^n \ \text{if} \ i\leq j-1, \ \tilde{\eta}_i^{n+1}\tilde{\eta}_j^n=\tilde{\eta}_{j+1}^{n+1}\tilde{\eta}_i^n \ \text{if} \ i\leq j, \ \tilde{\ve}_i^{n+1}\tilde{\eta}_j^n=\tilde{\eta}_{j-1}^{n-1}\tilde{\ve}_i^n \ \text{if} \ i\leq j-1,  \\  \tilde{\ve}_i^{n+1}\tilde{\eta}_j^n=\tilde{\eta}_j^{n-1}\tilde{\ve}_{i-1}^n \ \text{if} \ i\geq j+2, \ \text{and} \ \tilde{\ve}_j^{n+1}\tilde{\eta}_j^n=\tilde{\ve}_{j+1}^{n+1}\tilde{\eta}_j^n=\Id_{X_n}.
\end{array}
\end{align}

Conversely, let $(X_n,\rho_n)_{n\in \NN}$ be a sequence of topological Real spaces together with maps satisfying (~\ref{eq:face_degen}). Then thanks to~\cite[Theorem 5.2]{Mac}, there is a unique Real simplicial structure on $(X_\bullet,\rho_\bullet)$ such that $\tilde{\ve}_i$ and $\tilde{\eta}_i$ are the face and degeneracy maps respectively.

\begin{ex}(compare~\cite[\S2.3]{TX}).
Consider the pair groupoid $\xymatrix{[n]\times[n] \dar[r] & [n]}$; that is, the product is $(i,j)(j,k)):= (i,k)$ and the inverse of $(i,j)$ is $(j,i)$.

If $(\cG,\rho)$ is a topological Real groupoid, we define
\[\cG_n:= \Hom([n]\times [n],\cG) \] as the space of strict morphisms from the groupoid $\xymatrix{[n]\times [n] \dar[r] & [n]}$ to $\grpd$. We obtain a Real structure on $\cG_n$ by defining $\rho_n(\vp):=\rho \circ \vp$, for $\vp\in \cG_n$.
Any $f\in \Hom_{\Delta}([m],[n])$ (or $f\in \Hom_{\Delta'}([m],[n])$) naturally gives rise to a strict morphism $f\times f: [m]\times [m] \To [n]\times [n]$, which, in turn, induces a Real map $\tilde{f}: (\cG_n,\rho_n)\To (\cG_m,\rho_m)$ given by $\tilde{f}(\vp):=\vp \circ (f\times f)$ for $\vp\in \cG_n$. Hence, we obtain a Real simplicial space $(\cG_\bullet,\rho_\bullet)$.

Notice that the groupoid $\xymatrix{[n]\times [n] \dar[r] & [n]}$ is generated by elements $(i-1,i), 1\leq i \leq n$; indeed, given an element $(i,j)\in [n]\times [n]$, we can suppose that $i\leq j$ (otherwise, we take its inverse $(j,i)$), and then $(i,j)=(i,i+1)...(j-1,j)$. It turns out that any strict morphism $\vp: [n]\times [n]\To \cG$ is uniquely determined by its images $\vp(i-1,i)\in \cG$; hence, the well defined Real map 
\[\cG_n\To \cG^{(n)}, \ \vp \mto (g_1,...,g_n), \] where $g_i:=\vp(i-1,i), \ 1\leq i\leq n$, and $\cG^{(n)}:=\{(h_1,...,h_n) \ | \ s(h_i)=r(h_{i-1}), \ i=1,...,n\}$, identifies $(\cG_n,\rho_n)$ with $(\cG^{(n)},\rho^{(n)})$, where $\rho^{(n)}$ is the obvious Real structure on the fibred product $\cG^{(n)}$.
Therefore, using this identification, the face maps $\tilde{\ve}_i^n: (\cG_n,\rho_n)\To (\cG_{n-1},\rho_{n-1})$  of $(\cG_\bullet,\rho_\bullet)$ are given by:
\begin{eqnarray}
\begin{array}{l}
\tilde{\ve}_0^n(g_1,g_2,...,g_n) = (g_2,...,g_n),\\
\tilde{\ve}_i^n(g_1,g_2,...,g_n) = (g_1,...,g_ig_{i+1},...,g_n), \ 1\leq i \leq n-1,\\
\tilde{\ve}_n^n(g_1,g_2,...,g_n) = (g_1,...,g_{n-1}),
\end{array}
\end{eqnarray}
and for $n=1$, by $\tilde{\ve}_0^1(g)=s(g)$, $\tilde{\ve}^1_1(g)=r(g)$; while the degeneracy maps $\tilde{\eta}_i^n:(\cG_n,\rho_n)\To (\cG_{n+1},\rho_{n+1})$ are given by:
\begin{eqnarray}
\begin{array}{l}
\tilde{\eta}_0^n(g_1,g_2,...,g_n) = (r(g_1),g_1,...,g_n),\\
\tilde{\eta}_i^n(g_1,g_2,...,g_n) = (g_1,...,s(g_i),g_{i+1},...,g_n), \ 1\leq i\leq n,
\end{array}
\end{eqnarray}
and $\tilde{\eta}_0^0:\cG_0\To \cG_1$ is the unit map of the Real groupoid.
\end{ex}

Now for $n\in \NN$, we define the space $(E\cG)_n$ of $(n+1)$-tuples of elements of $\cG$ that map to the same unit; \emph{i.e}. $(E\cG)_n:=\{(\g_0,...,\g_n)\in \cG^{n+1} \ | \ r(\g_0)=r(\g_1)=...=r(\g_n)\}$. Suppose we are given $(g_1,...,g_n)\in \cG_n$. Then we can choose an $(n+1)$-tuple $(\g_0,...,\g_n)\in (E\cG)_n$ such that $g_i=\g_{i-1}^{-1}\g_i$ for each $i=1,...,n$. If $(\g_0',...\g_n')$ is another $(n+1)$-tuples verifying these identities, then $s(\g'_i)=s((\g'_{i-1})^{-1}\g'_i)=s(\g_{i-1}^{-1}\g_i)=s(\g_i)$, for all $i=1,...,n$, and that means that there exists a unique $g\in \cG$, such that $s(g)=r(\g_i)$ and $\g_i'=g\cdot\g_i$. This hence gives us a well defined injective map \[\cG_n \To (E\cG)_n/_\sim, \ (g_1,...,g_n)\mto [\g_0,...,\g_n],\] where $(\g_0,...,\g_n)\sim (g\cdot\g_0,...,g\cdot\g_n)$. Moreover, this map is surjective, for if $(\g_0,...,\g_n)\in (E\cG)_n$, one can consider morphisms $g_i$ from $s(\g_i)$ to $s(\g_{i-1}), \ i=1,...,n$, so that we have $$\g_1=\g_0g_1, \ \g_2=\g_1g_2=\g_0g_1g_2,...,\g_n=\g_0g_1\cdots g_n,$$ and then $$[\g_0,...,\g_n]=[r(g_1),g_1,g_1g_2,...,g_1\cdots g_n]$$ which gives the inverse $(E\cG)_n/_\sim \ni [\g_0,...,\g_n]\mto (g_1,...,g_n)\in \cG_n$. It hence turns out that we can identify $\cG_n$ with the quotient $(E\cG)_n$. Note that the quotient space $(E\cG)_n/_\sim$ naturally inherits the Real structure $\rho_{n+1}$ and that the isomorphism defined above is compatible with the Real structures.

Henceforth, an element of $\cG_n$ will be represented by a vector $\overrightarrow{g}=(g_1,...,g_n)$, where we view $\overrightarrow{g}$ as a morphism $[n]\times [n]\To \cG$, and $g_i=\overrightarrow{g}(i-1,i), \ i=1,...,n$, or $\overrightarrow{g}=[\g_0,...,\g_n]$ as a class in $(E\cG)_n/_\sim$. For the first picture,  if $f\in \text{Hom}_\Delta([m],[n])$, then the Real face/degeneracy map $\tilde{f}: (\cG_n,\rho_n)\To (\cG_m,\rho_m)$ is given by:
\begin{eqnarray}~\label{eq:1st_face_degen}
\tilde{f}(\overrightarrow{g})=\left(\overrightarrow{g}\left(f(0),f(1)\right),...,\overrightarrow{g}\left(f(m-1),f(m)\right)\right). 
\end{eqnarray}
For instance, if $f$ in injective, then $$\overrightarrow{g}\left(f(i-1),f(i)\right)=\overrightarrow{g}\left(f(i-1),f(i-1)+1\right)\cdots\overrightarrow{g}\left(f(i)-1,f(i)\right) \ \text{for} \ f(i)\geq 1,$$ and thus 
\begin{eqnarray}
\tilde{f}(\overrightarrow{g})=(g_{_{f(0)+1}}\cdots g_{_{f(1)}},...,g_{_{f(m-1)+1}}\cdots g_{_{f(m)}}).
\end{eqnarray}
However, the second picture offers a more general formula for the face and degeneracy maps; roughly speaking, for any $f\in \text{Hom}_\Delta([m],[n])$, we have $\overrightarrow{g}(i,j)=\g_i^{-1}\g_j$ for every $(i,j)\in [n]\times [n]$. In particular, $\overrightarrow{g}(f(k-1),f(k))=\g_{f(k-1)}^{-1}\g_{f(k)}$, for every $k\in [m]$; then (~\ref{eq:1st_face_degen}) gives :
\begin{eqnarray}~\label{eq:face_degenracy_map}
\tilde{f}(\overrightarrow{g})=[\g_{f(0)},...,\g_{f(m)}].
\end{eqnarray}


\subsection{\emph{Real} sheaves on \emph{Real} simplicial spaces}

In this subsection we closely follow~\cite[\S3]{Tu1} to study Real sheaves on Real (pre-)simplicial spaces. We start by introducing some preliminary notions.

Let $\cC$ be a topological category. We define the category $\cC_R$ by setting:

\begin{itemize}
\item $Ob(\cC_R)$ consists of triples $(A,\sigma_A,A')$, where $ A,\ A' \in Ob(\cC)$ and $\sigma_A\in \Hom_\cC(A,A')$;
\item $\Hom_{\cC_R}\left((A,\sigma_A,A'),(B,\sigma_B,B')\right)$ consists of pairs $(f,\tilde{f})$ of morphisms $f:A\To B, \ \tilde{f}:A'\To B'$ in $\cC$ such that the diagrams \[\xymatrix{A \ar[d]_{\sigma_A} \ar[r]^f & B\ar[d]^{\sigma_B} \\ A' \ar[r]^{\tilde{f}} & B'}\] commute.
\end{itemize}

Now, let $\phi: \cC \To \cC$ be a functor. Then we define the subcategory $\cC_\phi$ of $\cC_R$ whose objects are pairs $(A,\phi(A))$, where $A\in Ob(\cC)$, and in which a morphism from $(A,\phi(A))$ to $(B,\phi(B))$ is a pair $(f,\tilde{f})$ of morphisms $f: A\To B, \ \tilde{f}:\phi(A)\To \phi(B)$ such that $\tilde{f}\circ \phi=\phi\circ f$. A fundamental example of this is the category $\mathfrak{OB}(X)$ of open subsets of a given topological Real space $(X,\rho)$. Recall that objects of this category are the collection of the open sets $U\subset X$, and morphisms are the canonical injections $V\hookrightarrow U$ when $V\subset U$. Given such a Real space $(X,\rho)$, the map $\rho$ induces a functor (which is an isomorphism) $\rho: \mathfrak{OB}(X)\To \mathfrak{OB}(X)$ given by 
$$\left(\xymatrix{V \ar@{^{(}->}[r]^\iota & U}\right)\mto \left(\xymatrix{\rho(V)\ar@{^{(}->}[r]^{\rho\circ\iota\circ \rho} & \rho(U)}\right).$$

\begin{df}[Real presheaves]
Let $(X,\rho)$ be a topological Real space, and let $\cC$ be a topological category. A \emph{Real presheaf} $(\fF,\sigma)$ on $(X,\rho)$ with values in $\cC$ is a contravariant functor from $\mathfrak{OB}(X)_\rho$ to $\cC_R$; a morphism of Real presheaves is a morphism of such functors.
\end{df}

Specifically, from the fact that $\rho: X\To X$ is a homeomorphism and from the canonical properties of the injections $V\hookrightarrow U$ of open sets $V\subset U \subset X$, a Real presheaf on $(X,\rho)$ with values in $\cC$ assigns to each open subset $U\subset X$ a triple $(\fF(U),\sigma_{_U},\fF(\rho(U)))$, where $\fF(U), \ \fF(\rho(U))$ are objects of $\cC$, and $\sigma_{_U}\in \Isom_{\cC}(\fF(U),\fF(\rho(U)))$, and for $V\subset U$ we are given two morphisms  $\vp_{_{V,U}}:\fF(U)\To \fF(V)$ and $\vp_{_{\rho(V),\rho(U)}}:\fF(\rho(U))\To \fF(\rho(V))$, called the restriction morphisms, such that:
\begin{itemize}
\item $\vp_{_{U,U}}=\Id_{_{\fF(U)}}$;
\item $\sigma_{_V} \circ \vp_{_{V,U}}=\vp_{_{\rho(V),\rho(U)}}\circ \sigma_{_U}$, 
\item $\vp_{_{W,U}}=\vp_{_{W,V}}\circ \vp_{_{V,U}}$, and $\vp_{_{\rho(W),\rho(U)}}=\vp_{_{\rho(W),\rho(V)}}\circ \vp_{_{\rho(V),\rho(U)}}$.
\end{itemize}

A morphism of Real presheaves $\phi: (\fF,\sigma^\fF)\To (\fG,\sigma^\fG)$ is then a family of $\phi_{_U}\in \Hom_\cC(\fF(U),\fG(U))$ such that, for all pairs of open sets $U,\ V$ with $V\subset U$, the diagrams below commute:
\begin{eqnarray}
\xymatrix{\fF(\rho(U)) \ar[d]^{\phi_{_{\rho(U)}}} & \fF(U) \ar[l]_{\sigma_{_U}^\fF} \ar[d]^{\phi_{_U}} \ar[r]^{\vp_{_{V,U}}^\fF} & \fF(V) \ar[d]^{\phi_{_V}} \\
         \fG(\rho(U)) & \fG(U)\ar[l]_{\sigma_{_U}^\fG} \ar[r]^{\vp_{_{V,U}}^\fG} & \fG(V) }
\end{eqnarray}

As in the standard case, if $(\fF,\sigma)$ is a Real presheaf over $X$, and if $U$ is an open subset of $X$, an element $\mathsf{s}\in \fF(U)$ is called a \emph{section of $(\fF,\sigma)$ on $U$}, and for $x\in X$. If $V$ is an open subset of $U$, and $\mathsf{s}\in \fF(U)$, one often writes $\mathsf{s}_{|_V}$ for $\vp_{_{V,U}}(\mathsf{s})$.

\begin{df}(~\cite[Definition 2.2]{KS}).
A \emph{Real sheaf} \ over $(X,\rho)$ with values in $\cC$ is a Real presheaf $(\fF,\sigma)$ satisfying the following conditions:
\begin{itemize}
\item[(i)] For any open set $U\subset X$, any open cover $U=\bigcup_{i\in I}U_i$, any section $\mathsf{s}\in \fF(U)$, $\mathsf{s}_{|_{U_i}}=0$ for all $i$ implies $\mathsf{s}=0$.
\item[(ii)] For any open set $U\subset X$, any open cover $U=\bigcup_{i\in I}U_i$, any family of sections $\mathsf{s}_i\in \fF(U_i)$ satisfying $\mathsf{s}_i {}_{| U_{ij}}=\mathsf{s}_j {}_{|U_{ij}}$ for all nonempty intersection $U_{ij}$, there exists $\mathsf{s}\in \fF(U)$ such that $\mathsf{s}_{|U_i}=\mathsf{s}_i$ for all $i$.
\end{itemize}
A morphism of Real sheaves is a morphism of the underlying presheaves. We denote by $\cC_R(X)$ (or simply by $\mathsf{Sh}_\rho(X)$ if there is no risk of confusion) for the category of Real sheaves on $(X,\rho)$ with values in $\cC$.
\end{df}

Notice that if $(\fF,\sigma)$ is a Real sheaf (resp. presheaf) on $(X,\rho)$, then $\fF$ is a sheaf (resp. presheaf) on $X$ in the usual sense. Recall that the \emph{stalk} of $\fF$ at a point $x\in X$, denoted by $\fF_x$, is the direct limit of the direct system $(\fF(U),\vp_{_{V,U}})$ where $U$ runs along the family of open neighborhoods of $x$; \emph{i.e}. 
\begin{eqnarray}
\fF_x:= \underset{x\in U}{\varinjlim}\fF(U),
\end{eqnarray}
The image of a section $\mathsf{s}\in \fF(U)$ in $\fF_x$ by the canonical morphism $\fF(U)\To \fF_x$ (where $x\in U)$ is called the \emph{germ} of $\mathsf{s}$ at $x$ and denoted by $\mathsf{s}_x$. 

Note that if $U$ is an open neighborhood of $x$, $\rho(U)$ is an open neighborhood of $\rho(x)$, and the isomorphism $\sigma_U: \fF(U) \ni \mathsf{s}\mto \sigma_U(\mathsf{s}) \in \fF(\rho(U))$ extends to an isomorphism $\sigma_x: \fF_x \To \fF_{\rho(x)}$, defined by $\sigma_x(\mathsf{s}_x)=(\sigma_{_U}(\mathsf{s}))_{\rho(x)}$, whose inverse is $\sigma_{\rho(x)}$. We thus have a well defined $2$-periodic isomorphism, also denoted by $\sigma$, on the topological~\footnote{Recall that if $\fF$ is a presheaf over $X$, any section $\mathsf{s}\in \fF(U)$ induces a map $[\mathsf{s}]:U\To \coprod_x \fF_x, \ y\mto \mathsf{s}_y$. We give $\cF:=\coprod_{x\in X}\fF_x$ the largest topology such that all the maps $[\mathsf{s}]$ are continuous. On the other hand, associated to $\fF$, there is a sheaf $\what{\fF}$ given by $\what{\fF}(U):=\Ga(U,\cF)$, and we have that $\fF(U)\cong \Ga(U,\cF)$ if and only if $\fF$ is a sheaf. Then, given a Real presheaf $(\fF,\sigma)$, one can define its associated Real sheaf in the same fashion.} space $\cF:= \coprod_{x\in X}\fF_x$, given by 
\begin{eqnarray}
\sigma: \cF\To \cF, \ (x,\mathsf{s}_x)\mto (\rho(x),\sigma_x(\mathsf{s}_x))
\end{eqnarray}
 which gives a Real space $(\cF,\sigma)$.

\begin{ex}
Let $(X,\rho)$ be a Real space. Then the space $\mathit{C}(X)$ of continuous complex values functions on $X$ defines a Real sheaf of abelian groups on $(X,\rho)$ by $(U,\rho(U))\mto (\mathit{C}(U),\tilde{\rho}_{_U},\mathit{C}(\rho(U)))$, where $\tilde{\rho}_{_U}(f)(\rho(x)):=\overline{f(x)}$.
\end{ex}

\begin{df}[Pushforward, pullback]~\label{df:image_sheaf}
Let $(X,\rho)$, $(Y,\vr)$ be topological Real spaces, $f:(Y,\vr)\To (X,\rho)$ a continuous Real map. Suppose that $(\fF,\sigma)$  and $(\fG,\varsigma)$ are Real sheaves on $(X,\rho)$ and $(Y,\varrho)$ respectively, with values in the same category $\cC$.
\begin{itemize}
\item[(i)] The \emph{pushforward} of $(\fG,\varsigma)$ by $f$, denoted by $(f_\ast \fG,f_\ast \varsigma)$, is the Real sheaf on $(X,\vr)$ defined by the contravariant functor: 
\begin{eqnarray}
\mathfrak{OB}(X)_\rho\To \cC_R, \ (U,\rho(U))\mto \left(f_\ast \fG(U),f_\ast \varsigma_{_U},f_\ast \fG(\rho(U))\right),
\end{eqnarray}
where $f_\ast \fG(U):=\fG(f^{-1}(U)), \ f_\ast \varsigma_{_U}:=\varsigma_{_{f^{-1}(U)}}$, and $$f_\ast \fG(\rho(U))=\fG(f^{-1}(\rho(U)))\cong \fG(\vr(f^{-1}(U))).$$
\item[(ii)] The \emph{pullback} of $(\fF,\sigma)$ along $f$, denoted by $(f^\ast \fF,f^\ast \sigma)$, is the Real sheaf on $(Y,\vr)$ associated to the Real presheaf defined by:
\begin{eqnarray}
\mathfrak{OB}(Y)_\vr \To \cC_R, \ (V,\vr(V))\mto (f^\ast \fF(V),f^\ast \sigma_{_V},f^\ast \fF(\vr(V))),
\end{eqnarray}
where $f^\ast \fF(V):=\underset{\underset{U \ \text{open}}{f(V)\subset U\subset X}}{\varinjlim}\fF(U)$, and $f^\ast \sigma_{_V}: f^\ast \fF(V)\To f^\ast \fF(\vr(V))$ is the morphism in $\cC$ extending functorially $\sigma_{_U}: \fF(U)\To \fF(\rho(U))$ along the family of open neighborhoods of $f(V)$ in $X$.
\end{itemize}
\end{df}

It immediately follows from this definition that we have a covariant functor 

\begin{eqnarray}
\RTop \To \mathfrak{RSh}, \ \left(\xymatrix{(Y,\vr)\ar[r]^f & (X,\rho)}\right) \mto \left(\xymatrix{\mathsf{Sh}_\vr(Y) \ar[r]^{f_\ast} & \mathsf{Sh}_\rho(X)}\right),
\end{eqnarray}

and a contravariant functor 

\begin{eqnarray}
\RTop \To \mathfrak{RSh}, \ \left(\xymatrix{(Y,\vr)\ar[r]^f & (X,\rho)}\right)\mto \left(\xymatrix{\mathsf{Sh}_\rho(X)\ar[r]^{f^\ast} & \mathsf{Sh}_\vr(Y)}\right),
\end{eqnarray}
where $\mathfrak{RSh}$ is the category whose objects are the categories of Real sheaves on given Real spaces and morphisms are functors of such categories.

We will also need the following proposition.

\begin{pro}
Let $f:(Y,\vr)\To (X,\rho)$ be a a continuous Real map. Suppose that $(\fF,\sigma)$  and $(\fG,\varsigma)$ are Real sheaves on $(X,\rho)$ and on $(Y,\varrho)$ respectively, with values in the same category $\cC$. Then 
\begin{eqnarray}
\Hom_{\mathsf{Sh}_\rho(X)}\left((\fF,\sigma),(f_\ast \fG,f_\ast \varsigma)\right)\cong \Hom_{\mathsf{Sh}_\vr(Y)}((f^\ast \fF,f^\ast \sigma),(\fG,\varsigma)).
\end{eqnarray}
\end{pro}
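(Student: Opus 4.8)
The plan is to reduce this to the classical $(f^{*},f_{*})$ adjunction for ordinary sheaves and then to check that it respects the extra data. First I would record that giving a Real sheaf $(\fF,\sigma)$ on $(X,\rho)$ amounts to giving an ordinary sheaf $\fF$ on $X$ together with an isomorphism $\sigma\colon \fF\to \rho_{*}\fF$ of sheaves — using that $\rho$ is a homeomorphism with $\rho^{-1}=\rho$, so $\rho_{*}=\rho^{*}$ and $\rho_{*}\rho_{*}=\Id$ — satisfying the involutivity constraint $(\rho_{*}\sigma)\circ\sigma=\Id_{\fF}$; and that a morphism of Real sheaves $\phi\colon(\fF,\sigma^{\fF})\to(\fG,\sigma^{\fG})$ is then an ordinary morphism of sheaves with $\sigma^{\fG}\circ\phi=(\rho_{*}\phi)\circ\sigma^{\fF}$. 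In these terms, Definition~\ref{df:image_sheaf} says exactly that the sheaf underlying $f_{*}(\fG,\varsigma)$ is $f_{*}\fG$ with structure map $f_{*}\varsigma$, and the sheaf underlying $f^{*}(\fF,\sigma)$ is $f^{*}\fF$ with structure map $f^{*}\sigma$, where — since $f$ is Real, i.e.\ $f\circ\vr=\rho\circ f$ — these are read through the canonical identifications of functors $\rho_{*}f_{*}\cong f_{*}\vr_{*}$ and $\vr_{*}f^{*}\cong f^{*}\rho_{*}$.

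Next I would write $\Phi\colon \Hom_{\mathsf{Sh}(X)}(\fF,f_{*}\fG)\xrightarrow{\ \sim\ }\Hom_{\mathsf{Sh}(Y)}(f^{*}\fF,\fG)$ for the classical adjunction bijection between ordinary sheaf categories, with $\Phi(\phi)=\epsilon_{\fG}\circ f^{*}\phi$ for the counit $\epsilon$ and $\Phi^{-1}(\psi)=f_{*}\psi\circ\eta_{\fF}$ for the unit $\eta$; recall it is natural in $\fF$ and in $\fG$. The claim is that if $\phi\colon\fF\to f_{*}\fG$ is Real then so is $\psi:=\Phi(\phi)$. Using naturality of $\epsilon$ at the morphism $\varsigma\colon\fG\to\vr_{*}\fG$ one gets
\[
\varsigma\circ\psi=\varsigma\circ\epsilon_{\fG}\circ f^{*}\phi=\epsilon_{\vr_{*}\fG}\circ f^{*}f_{*}\varsigma\circ f^{*}\phi=\epsilon_{\vr_{*}\fG}\circ f^{*}\!\bigl(f_{*}\varsigma\circ\phi\bigr)=\epsilon_{\vr_{*}\fG}\circ f^{*}\!\bigl((\rho_{*}\phi)\circ\sigma\bigr),
\]
and the point is to recognise the right-hand side as $(\vr_{*}\psi)\circ f^{*}\sigma$. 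This is where the base-change identifications $\vr_{*}f^{*}\cong f^{*}\rho_{*}$ and $\rho_{*}f_{*}\cong f_{*}\vr_{*}$ enter: one rewrites $f^{*}(\rho_{*}\phi)$ via the naturality square of the transformation $\vr_{*}f^{*}\Rightarrow f^{*}\rho_{*}$ and then uses the compatibility of that transformation with $\epsilon$ (a ``mate'' identity, automatic here since $\rho,\vr$ are homeomorphisms) to collapse $\epsilon_{\vr_{*}\fG}\circ f^{*}(\rho_{*}\phi)$ into $\vr_{*}(\epsilon_{\fG}\circ f^{*}\phi)=\vr_{*}\psi$. The converse direction (if $\psi$ is Real then $\Phi^{-1}(\psi)$ is Real) is the same argument with $\epsilon$ replaced by $\eta$ and the roles of $X$ and $Y$ interchanged. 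Since $\Phi$ is already a bijection on the underlying $\Hom$-sets, restricting it to the subsets of Real morphisms yields the asserted isomorphism; additivity, when $\cC=\mathfrak{Ab}$, is clear.

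I expect the only genuine obstacle to be the bookkeeping in the second paragraph: one must fix, once and for all, the canonical $2$-isomorphisms $\rho_{*}f_{*}\cong f_{*}\vr_{*}$ and $\vr_{*}f^{*}\cong f^{*}\rho_{*}$ induced by $f\circ\vr=\rho\circ f$, verify they are mutually compatible under the $(f^{*},f_{*})$-adjunction (the mate correspondence) and with the constraints $\rho^{2}=\vr^{2}=\Id$, and then check that the diagram chase closes in both directions. None of this is deep — for homeomorphisms the base-change $2$-cells become literal identities once the presheaves are written out on open sets, exactly as in Definition~\ref{df:image_sheaf} — but it is the one step where care is needed; everything else is a direct transcription of the classical argument, following~\cite{Tu1}.
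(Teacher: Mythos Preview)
Your proposal is correct and follows the same reduction to the classical $(f^{*},f_{*})$ adjunction that the paper invokes; the paper's own proof is in fact just the one-line reference ``The proof is the same as in the general case where Real structures are not concerned (see for instance~\cite[Proposition 2.3.3]{KS}).'' Your write-up is more careful than the paper in that you actually spell out the compatibility check with the Real structures (the base-change identifications and the mate argument), which the paper leaves entirely implicit.
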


\begin{proof}
The proof is the same as in the general case where Real structures are not concerned (see for instance ~\cite[Proposition 2.3.3]{KS}).
\end{proof}

\begin{df}
Given a continuous Real map $f:(Y,\vr)\To (X,\rho)$ and Real sheaves $(\fF,\sigma)$ and $(\fG,\varsigma)$ as above, we define the set $\Hom_f(\fF,\fG)_{\sigma,\varsigma}$ of \emph{Real $f$-morphisms} from $(\fF,\sigma)$ to $(\fG,\varsigma)$ to be 
$$\Hom_{\mathsf{Sh}_\rho(X)}\left((\fF,\sigma),(f_\ast \fG,f_\ast \varsigma)\right)= \Hom_{\mathsf{Sh}_\vr(Y)}\left((f^\ast \fF,f^\ast \sigma),(\fG,\varsigma)\right).$$
\end{df}

\begin{df}
Let $(X_\bullet,\rho_\bullet)$ be a Real simplicial (resp. pre-simplicial) space. A Real sheaf on $(X_\bullet,\rho_\bullet)$ is a family $(\fF^n,\sigma^n)_{n\in \NN}$ such that $(\fF^n,\sigma^n)$ is a Real sheaf on $(X_n,\rho_n)$ for all $n$, and such that for each morphism $f: [m]\To [n]$ in $\Delta$ (resp. $\Delta'$) we are given Real $\tilde{f}$-morphisms $\tilde{f}^\ast \in \text{Hom}_{\tilde{f}}(\fF^m,\fF^n)_{\sigma^m,\sigma^n}$ such that
\begin{eqnarray}
\widetilde{f\circ g}^\ast=\tilde{f}^\ast \circ \tilde{g}^\ast,
\end{eqnarray}
whenever $f$ and $g$ are composable.
\end{df}

One can use the definition of the push-forward to give a concrete interpretation of this definition. Roughly speaking, a sequence $(\fF^n,\sigma^n)_{n\in \NN}$ is a Real sheaf on a Real simplicial (resp. pre-simplicial, ...) space $(X_\bullet,\rho_\bullet)$, if for a given morphism $f:[m]\To [n]$ in $\Delta$ (resp. $\Delta'$, ...), then for any pair of open sets $U\subset X_n$ and $V\subset X_m$ such that $\tilde{f}(U)\subset V$ there is a \emph{restriction map} $\tilde{f}^\ast: \fF^m(V)\To \fF^n(U)$ such that the diagram
\begin{eqnarray}
\xymatrix{\fF^m(V)\ar[d]_{\sigma_{_V}^m} \ar[r]^{\tilde{f}^\ast} & \fF^n(U) \ar[d]^{\sigma^n_{_U}} \\ \fF^m(\rho(V)) \ar[r]^{\tilde{f}^\ast} & \fF^n(\rho(U))}
\end{eqnarray}
commute, and $\tilde{f}^\ast \circ \tilde{g}^\ast = \widetilde{f\circ g}^\ast: \fF^k(W)\To \fF^n(U)$ whenever $\tilde{g}(V)\subset W \subset X_k$.\
Morphisms of Real sheaves over $(X_\bullet,\rho_\bullet)$ are defined in the obvious way; we denote by $\mathsf{Sh}_{\rho_\bullet}(X_\bullet)$ for the category of Real sheaves over $(X_\bullet,\rho_\bullet)$.


\subsection{\emph{Real} $\cG$-sheaves and reduced \emph{Real} sheaves}

\begin{df}
\begin{itemize}
\item[(i)] A Real space $(Y,\vr)$ is said to be \emph{\'{e}tale} over $(X,\rho)$ if there exists an \emph{\'{e}tale} Real map $f:(Y,\vr)\To (X,\rho)$; that is to say, every point $y\in Y$ has an open neighborhood $V$ such that $f_{_V}:V\To U$ is homeomorphism, where $U$ in an open neighborhood of $f(y)$ in $X$.
\item[(ii)] A Real groupoid $(\cG,\rho)$ is \'{e}tale if the range (equivalently the source) map is \'{e}tale.
\item[(iii)] A morphism $\pi_\bullet: (Y_\bullet, \vr_\bullet)\To (X_\bullet,\rho_\bullet)$ of Real (pre-)simplicial spaces is \'{e}tale if for all $n$, $\pi_n: (Y_n,\vr_n)\To (X_n,\rho_n)$ is \'{e}tale.  
\end{itemize}
\end{df}

\begin{ex}~\label{ex:etale}
Any Real sheaf $(\fF,\sigma)$ on $(X,\rho)$ can be viewed as an \'{e}tale Real space over $(X,\rho)$. Indeed, considering the underlying topological Real space $(\cF,\sigma)$, it is easy to check that the canonical projection $$\cF \To X, \ (x, \mathsf{s}_x)\mto x$$ is an \'{e}tale Real map.
\end{ex}

\begin{df}
Let $(\cG,\rho)$ be a topological Real groupoid. A Real $\cG$-sheaf (or an \'{e}tale Real $\cG$-space) is an \'{e}tale Real space $(\cE_0,\nu_0)$ over $(\Gpdo,\rho)$ equipped with a continuous Real $\cG$-action. \

We say that $(\cE_0,\nu_0)$ is an Abelian Real $\cG$-sheaf if in addition it is an Abelian Real sheaf on $(\Gpdo,\rho)$ such that the action $\alpha_g:(\cE_0)_{s(g)}\To (\cE_0)_{r(g)}$ is a group homomorphism, for any $g\in \cG$.

A morphism of Real $\cG$-sheaves $(\cE_0,\nu_0)$ and $(\cE_0',\nu_0')$ is a $\cG$-equivariant continuous Real map $\psi: (\cE_0,\nu_0)\To (\cE_0',\nu_0')$ such that $p'\circ \psi=p$.\

 The category of Real $\cG$-sheaves is denoted by $\fB_\rho\cG$, and is called the \emph{classifying topos} of $(\cG,\rho)$.
\end{df}

\begin{exs}

\begin{enumerate}
	\item Considering a Real space $(X,\rho)$ as a Real groupoid, a Real $X$-sheaf is the same thing as a Real sheaf over $(X,\rho)$; in other words we have that $\mathfrak{B}_\rho X \cong \mathsf{Sh}_\rho(X)$.\\
	\item If $(\cG,\rho)$ is a Real group, then a Real $\cG$-sheaf is just a Real space equipped with a continuous Real $\cG$-action.
\end{enumerate}
\end{exs}

\begin{lem}~\label{lem:morphism_toposes}
Any generalized Real morphism $(Z,\tau):(\Ga,\vr)\To (\cG,\rho)$ induces a morphism of toposes \[Z^\ast: \mathfrak{B}_\rho(\cG)\To \mathfrak{B}_\vr(\Ga).\] 
Consequently, there is a contravariant functor \[\mathfrak{B}: \RG \To \mathfrak{RBG},\] defined by 
$$(\xymatrix{(\Ga,\vr)\ar[r]^{(Z,\tau)} & (\cG,\rho)})\mto (\xymatrix{\mathfrak{B}_\rho \cG \ar[r]^{Z^\ast} & \mathfrak{B}_\vr \Ga}),$$
where $\mathfrak{RBG}$ is the category whose objects are classifying toposes of Real groupoids.
\end{lem}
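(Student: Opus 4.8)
The proof is the \emph{Real} refinement of the classical construction attaching to a generalized morphism of topological groupoids the inverse-image part of a geometric morphism of classifying toposes (due to Moerdijk, and used in the topological setting by Tu); the point is to carry the Real structures along at each step. First I would treat a Real strict morphism $f=(f^{(0)},f^{(1)}):(\Ga,\vr)\To(\cG,\rho)$. Given a Real $\cG$-sheaf $(\cE_0,\nu_0)$ with étale Real projection $p:(\cE_0,\nu_0)\To(\Gpdo,\rho)$, form the pull-back étale space $(f^{(0)})^\ast\cE_0=\Gamo\times_{f^{(0)},\Gpdo,p}\cE_0$ over $\Gamo$, equip it with the $\Ga$-action $\g\cdot(y,e):=(r(\g),\alpha_{f(\g)}e)$ and with the involution $(y,e)\mapsto(\vr(y),\nu_0(e))$. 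This involution is well defined because $f^{(0)}\circ\vr=\rho\circ f^{(0)}$ and $p\circ\nu_0=\rho\circ p$, and it is $\Ga$-antilinear because $f(\bar\g)=\overline{f(\g)}$ and $\nu_0\circ\alpha_g=\alpha_{\rho(g)}\circ\nu_0$; hence $f^\ast(\cE_0,\nu_0)$ is a Real $\Ga$-sheaf. The assignment is manifestly functorial in $f$, and that $f^\ast$ is the inverse-image part of a morphism of toposes (left exact, with a right adjoint $f_\ast$) follows from the corresponding non-Real statement together with the observation that the classical pushforward inherits a canonical Real structure, every construction in sight being natural in the underlying Real spaces.

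Next I would treat a general Real generalized morphism $(Z,\tau):(\Ga,\vr)\To(\cG,\rho)$. Pull a Real $\cG$-sheaf $(\cE_0,\nu_0)$ back along $\fs:Z\To\Gpdo$ to obtain the étale Real space $\fs^\ast\cE_0$ over $Z$ with involution $(z,e)\mapsto(\tau(z),\nu_0(e))$, endow it with the diagonal right $\cG$-action $(z,e)\cdot g:=(zg,\alpha_{g^{-1}}e)$, and pass to the quotient by $\cG$. Since $\fr:Z\To\Gamo$ is a principal Real $\cG$-bundle (Definition~\ref{gen-hom}(iii)), the quotient $(\fs^\ast\cE_0)/\cG$ is an étale Real space over $Z/\cG=\Gamo$; the diagonal involution descends, using $\tau(zg)=\tau(z)\rho(g)$ together with $\nu_0\circ\alpha_g=\alpha_{\rho(g)}\circ\nu_0$, and the residual left $\Ga$-action $\g\cdot[z,e]:=[\g z,e]$ is Real because $\tau(\g z)=\vr(\g)\tau(z)$. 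This defines $Z^\ast(\cE_0,\nu_0)$ as a Real $\Ga$-sheaf over $(\Gamo,\vr)$; when $(Z,\tau)$ is a Real Morita equivalence the quasi-inverse of $Z^\ast$ comes from the symmetric construction, exchanging the roles of $\fr$ and $\fs$. To see that $Z^\ast$ is a morphism of toposes in general I would invoke Proposition~\ref{pro:gen_strict}: factor $(Z,\tau)$ as the canonical Real Morita equivalence $(\Ga,\vr)\sim(\Ga[\cU],\vr)$ followed by a Real strict morphism $f_\cU:(\Ga[\cU],\vr)\To(\cG,\rho)$, so that $Z^\ast$ becomes the composite $\mathfrak{B}_\rho\cG\xrightarrow{f_\cU^\ast}\mathfrak{B}_\vr(\Ga[\cU])\simeq\mathfrak{B}_\vr\Ga$ of a morphism of toposes with an equivalence.

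Finally, for the consequence, I would check that $Z^\ast$ depends only on the isomorphism class of $(Z,\tau)$ — immediate, since a $\Ga$-$\cG$-equivariant Real homeomorphism $Z\cong Z'$ over $\Gamo$ and $\Gpdo$ induces a canonical isomorphism of the descended Real $\Ga$-sheaves — and that $\mathfrak{B}$ respects composition: for $(\Ga,\vr)\xrightarrow{(Z,\tau)}(\cG,\rho)\xrightarrow{(W,\varpi)}(\cH,\chi)$ there is a natural isomorphism $(W\circ Z)^\ast\cong Z^\ast\circ W^\ast$. Unwinding $W\circ Z=Z\times_\cG W$, this reduces to identifying $(\fs_{Z\times_\cG W}^\ast\cE_0)/\cH$ with the iterated quotient $\big(\fs_Z^\ast\big((\fs_W^\ast\cE_0)/\cH\big)\big)/\cG$ over $\Gamo$, which is the same fibred-product bookkeeping (now carrying the compatible involutions) that underlies the associativity of composition of Real generalized morphisms proved earlier; together with $\Id^\ast\cong\Id$ this yields the contravariant functor $\mathfrak{B}:\RG\To\mathfrak{RBG}$, and in particular $\mathfrak{B}_\rho\cG$ is invariant under Real Morita equivalence. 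The main obstacle is organizational rather than conceptual: one must track the involutions carefully through the successive pull-backs and the quotient by $\cG$, and verify that the descent of the diagonal involution to the orbit space $(\fs^\ast\cE_0)/\cG$ is continuous and compatible with both the étale projection to $\Gamo$ and the left $\Ga$-action. Granting the purely topological theory of classifying toposes of groupoids, each such check is routine and nothing genuinely new arises from the Real structures.
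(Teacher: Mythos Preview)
Your argument is correct; the only difference from the paper is in how the general case is handled. The paper's proof also begins with the strict case exactly as you do, but for a general $(Z,\tau)$ it does \emph{not} use the descent construction $(\fs^\ast\cE_0)/\cG$. Instead it invokes the equivalence $\RG\cong\RG_\Omega$ directly: writing $(Z,\tau)$ as a pair $(\cU,f_\cU)$, it \emph{defines}
\[
Z^\ast\cE_0:=\iota_\ast f_\cU^\ast\cE_0,
\]
where $\iota:\Ga[\cU]\to\Ga$ is the canonical morphism and $\iota_\ast$ is the direct image (equivalently, the quasi-inverse of the equivalence $\iota^\ast$). What you present as a secondary verification---factoring $(Z,\tau)$ through Proposition~\ref{pro:gen_strict}---is in fact the paper's primary definition, and the paper stops there without spelling out the compatibility with composition that you supply.

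Your direct-descent construction has the advantage of being cover-independent and of making the Morita-invariance transparent (the quasi-inverse is visibly the symmetric construction), at the cost of the bookkeeping you flag: checking continuity of the descended involution and compatibility with the $\Ga$-action. The paper's definition avoids that verification entirely by leaning on the strict case plus the already-established Morita equivalence $\Ga[\cU]\sim\Ga$, but leaves the functoriality in $(Z,\tau)$ implicit. Either route is fine; yours is more self-contained.
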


\begin{proof}
As noted in~\cite[2.2]{Moer1} for the usual case, any Real morphism $f:(\Ga,\vr)\To (\cG,\rho)$ gives rise to a functor $f^\ast: \fB_\rho \cG \To \fB_\vr \Ga$. Indeed, if $(\cE_0,\nu_0)$ is a Real $\cG$-sheaf through an \'{e}tale Real $\cG$-map $p:(\cE_0,\nu_0)\To (\Gpdo,\rho)$, then we obtain a Real $\Ga$-sheaf $(f^\ast \cE_0,f^\ast \nu_0)$ by pulling back $(\cE_0,\nu_0)$ along $f$; \emph{i.e}. $f^\ast \cE_0=\Gamo \times_{f,\Gpdo,p}\cE_0, \ f^\ast \nu_0=\vr\times \nu_0, \ f^\ast p(y,e):=y$, and the right Real $\Ga$-action is $\g\cdot(s(\g),e):=(r(\g),f(\g)\cdot e)$ when $p(e)=s(f(\g))$. If $\psi:(\cE_0,\nu_0)\To (\cE_0',\nu_0')$ is a morphism of Real $\cG$-sheaves, then the map $f^\ast \psi: (f^\ast \cE_0,f^\ast \nu_0)\To (f^\ast \cE_0',f^\ast\nu_0')$ defined by $f^\ast \psi (y,e):=(y,\psi(e))$ is obviously a morphism a Real $\Ga$-sheaves. It follows that any $(\cU,f_\cU)\in \Hom_{\RG_\Omega}((\Ga,\vr),(\cG,\rho))$ gives rise to a covariant functor $f_\cU^\ast: \fB_\rho \cG\To \fB_\vr\Ga[\cU]$. Now if $(Z,\tau)$ corresponds to $(\cU,f_{\cU})$, and if as in the previous chapter, $\iota: \Ga[\cU]\To \Ga$ is the canonical Real morphism, then we can push forward $(f_{\cU}^\ast \cE_0,f_\cU^\ast \nu_0)$ through $\iota$ to get a Real $\Ga$-sheaf $(Z^\ast \cE_0,Z^\ast \nu_0)$; i.e 
\begin{equation}
Z^\ast \cE_0:=\iota_\ast f_{\cU}^\ast \cE_0,
\end{equation}
and the Real structure $Z^\ast \nu_0$ is the obvious one.
\end{proof}

\begin{lem}~\label{lem:G_sheaf}
Let $(\cG,\rho)$ be a topological Real groupoid. Then, any Real $\cG$-sheaf canonically defines a Real sheaf over the Real simplicial space $(\cG_n,\rho_n)_{n\in \NN}$.
\end{lem}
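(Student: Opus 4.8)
The plan is to construct, from a Real $\cG$-sheaf $(\cE_0,\nu_0)$ sitting over $(\Gpdo,\rho_0) = (\cG_0,\rho_0)$, a compatible family $(\fF^n,\sigma^n)_{n\in\NN}$ of Real sheaves on the Real simplicial space $(\cG_n,\rho_n)_{n\in\NN}$ built in the earlier example. The key observation is that for each $n$ there is a canonical Real map $d^n\colon \cG_n \To \cG_0=\Gpdo$ — in the $(E\cG)_n/{\sim}$ picture, $[\g_0,\dots,\g_n]\mapsto r(\g_0)$ (equivalently, in the $\overrightarrow{g}$-picture, the map corresponding to the vertex inclusion $[0]\hookrightarrow[n]$), which is indeed Real because the face maps of $(\cG_\bullet,\rho_\bullet)$ are Real. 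I would then simply set $\fF^n := (d^n)^\ast \cE_0$, using the pullback of Real sheaves from Definition~\ref{df:image_sheaf}, with $\sigma^n := (d^n)^\ast\nu_0$. By Example~\ref{ex:etale} this is legitimate: $(\cE_0,\nu_0)$ is an étale Real space over $(\Gpdo,\rho_0)$, so its pullback along any continuous Real map is again an étale Real space, hence a Real sheaf.

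Next I would verify the simplicial compatibility. For a morphism $f\colon[m]\To[n]$ in $\Delta$ with induced Real face/degeneracy map $\tilde f\colon(\cG_n,\rho_n)\To(\cG_m,\rho_m)$, formula~(\ref{eq:face_degenracy_map}) gives $\tilde f(\overrightarrow g) = [\g_{f(0)},\dots,\g_{f(m)}]$, so $d^m\circ\tilde f = d^n$ (both send $\overrightarrow g=[\g_0,\dots,\g_n]$ to $r(\g_{f(0)})=r(\g_0)$, using that all $\g_i$ share a range). Hence there is a canonical isomorphism $\tilde f^\ast(\fF^m) = \tilde f^\ast(d^m)^\ast\cE_0 \cong (d^m\circ\tilde f)^\ast\cE_0 = (d^n)^\ast\cE_0 = \fF^n$ of Real sheaves on $(\cG_n,\rho_n)$; concretely this is the restriction map $\tilde f^\ast\colon\fF^m(V)\To\fF^n(U)$ required in the definition of a Real sheaf on a Real simplicial space. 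The cocycle condition $\widetilde{f\circ g}^\ast = \tilde f^\ast\circ\tilde g^\ast$ follows from the corresponding equality $d^k\circ\tilde g\circ\tilde f = d^n$ together with the functoriality of pullback of Real sheaves (the last being the content of the contravariant functor $\RTop\To\mathfrak{RSh}$ noted after Definition~\ref{df:image_sheaf}), and the commutativity of the square relating $\sigma^m$ and $\sigma^n$ is automatic because $\sigma^n = (d^n)^\ast\nu_0$ is itself a pulled-back Real structure, so all the relevant diagrams are pullbacks of the single diagram expressing that $\nu_0$ is a Real structure on $\cE_0$.

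Finally I would record that this construction uses only the map $\cE_0\To\Gpdo$ and not yet the $\cG$-action — the action is what will later make these sheaves into a sheaf in the genuinely simplicial (as opposed to merely levelwise) sense, i.e. it is needed to make the $\tilde f^\ast$ for degeneracy versus face maps interact correctly, or alternatively it is packaged into the statement "canonically defines"; I would spell out that the $\cG$-action $\alpha_g\colon(\cE_0)_{s(g)}\To(\cE_0)_{r(g)}$ guarantees that the identifications $\tilde f^\ast$ are compatible with passing between the two descriptions of $\cG_n$ (the $\overrightarrow g$-description and the $(E\cG)_n/{\sim}$-description), since the equivalence $(\g_0,\dots,\g_n)\sim(g\g_0,\dots,g\g_n)$ is precisely tracked by $\alpha_g$ on fibres. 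The main obstacle I anticipate is purely bookkeeping: making the isomorphisms $\tilde f^\ast(\fF^m)\cong\fF^n$ strictly functorial (i.e.\ choosing the pullbacks coherently so that $\widetilde{f\circ g}^\ast = \tilde f^\ast\circ\tilde g^\ast$ holds on the nose rather than up to canonical isomorphism), which is the familiar "strictification" nuisance for pullback functors; one handles it either by fixing a functorial model of pullback once and for all, or by passing to the associated étale spaces where pullback is an honest fibre product and the identities are literal equalities. There is no essential difficulty beyond that, since every ingredient — pullback of Real sheaves, its functoriality, the Real face maps, Example~\ref{ex:etale} — has already been established in the text.
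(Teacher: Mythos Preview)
There is a genuine gap. Your map $d^n\colon\cG_n\to\Gpdo$, $[\g_0,\dots,\g_n]\mapsto r(\g_0)$, is \emph{not well-defined}: recall $\cG_n=(E\cG)_n/{\sim}$ with $(\g_0,\dots,\g_n)\sim(g\g_0,\dots,g\g_n)$, and under this relation the common range jumps from $r(\g_0)$ to $r(g)$. The identification $\cG_0\cong\Gpdo$ is $[\g_0]\mapsto s(\g_0)$, not $r(\g_0)$; the common range is precisely what the quotient kills. If you repair this to $d^n([\g_0,\dots,\g_n]):=s(\g_0)$ --- which \emph{is} the honest vertex map for $0\in[n]$ --- then your key identity $d^m\circ\tilde f=d^n$ fails whenever $f(0)\neq 0$: by~\eqref{eq:face_degenracy_map} one has $\tilde f[\g_0,\dots,\g_n]=[\g_{f(0)},\dots,\g_{f(m)}]$, whose image under $d^m$ is $s(\g_{f(0)})\neq s(\g_0)$. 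Already for $\tilde{\ve}_0\colon\cG_1\to\cG_0$ this gives $d^0\circ\tilde{\ve}_0(g)=s(g)$ while $d^1(g)=r(g)$.

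This is not a strictification nuisance: to compare the stalks $(\cE_0)_{s(\g_{f(0)})}$ and $(\cE_0)_{s(\g_0)}$ one \emph{must} invoke the $\cG$-action, namely $\alpha_{\g_0^{-1}\g_{f(0)}}$. You sense this in your final paragraph but misdiagnose where it enters. The paper builds the action in from the outset: it forms the reduced Real simplicial space $\cE_n:=\cG_n\times_{\tilde\pi_n,\Gpdo,\pi}\cE_0$ (with $\tilde\pi_n[\g_0,\dots,\g_n]=s(\g_n)$), whose structure maps are $([\g_0,\dots,\g_n],z)\mapsto([\g_{f(0)},\dots,\g_{f(m)}],\,\g_{f(m)}^{-1}\g_n\cdot z)$, and then invokes Lemma~\ref{lem:simpli_sheaf} to pass from a reduced Real simplicial space over $(\cG_\bullet,\rho_\bullet)$ to a Real sheaf. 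The action is not something to be ``packaged into canonically'' after the fact; without it one cannot even write down $\tilde{\ve}_0^\ast$.
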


To prove this Lemma, we need some more preliminary notions.

\begin{df}(~\cite{Tu1}).
A morphism $\pi_\bullet: (\cE_\bullet,\nu_\bullet)\To (X_\bullet,\rho_\bullet)$ of Real simplicial spaces is called \emph{reduced} if for all $m, \ n$ and for all $f\in \Hom_\Delta([m],[n])$, the morphism $\tilde{f}$ induces an isomorphism \[(\cE_n,\nu_n)\cong (X_n\times_{\tilde{f},X_m,\pi_m}\cE_m,\rho_n\times \nu_m).\]
In this case, we say that $(\cE_\bullet,\nu_\bullet)$ is a reduced Real simplicial space over $(X_\bullet,\rho_\bullet)$.	
\end{df}

Morphisms of reduced Real simplicial spaces over $(X_\bullet,\rho_\bullet)$ are defined in the obvious way.

\begin{df}(~\cite{Tu1}).
We say that a Real sheaf $(\fF^\bullet,\sigma^\bullet)$ over a Real simplicial space $(X_\bullet,\rho_\bullet)$ is \emph{reduced} if for all $m,\ n$ and all $f\in \Hom_\Delta([m],[n])$, $\tilde{f}^\ast \in\Hom\left((\tilde{f}^\ast \fF^m,\tilde{f}^\ast \sigma^m),(\fF^n,\sigma^n)\right)$ is an isomorphism.
\end{df}

\begin{lem}~\label{lem:red-Sheaves_etalSimpli}( ~\cite[Lemma 3.5]{Tu1}).
Let $(X_\bullet,\rho_\bullet)$ be a Real simplicial space. Then,  there is a one-to-one correspondence between reduced Real sheaves over $(X_\bullet,\rho_\bullet)$ and reduced \'{e}tale Real simplicial spaces over $(X_\bullet,\rho_\bullet)$.
\end{lem}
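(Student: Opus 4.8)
The plan is to deduce this from the classical equivalence between sheaves on a space and \'{e}tale spaces over it, applied degreewise, by checking that the Real structures and the two ``reduced'' conditions are preserved under it; this is precisely the strategy of~\cite[Lemma 3.5]{Tu1} with the Real data carried along, so I would keep the non-Real computations implicit and concentrate on the Real bookkeeping.

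First I would set up the degreewise dictionary. For a single Real space $(X,\rho)$, Example~\ref{ex:etale} and the classical sheaf/espace-\'{e}tal\'{e} equivalence, together with the observation made just before the example on $\mathit{C}(X)$ that a Real structure $\sigma$ on $\fF$ corresponds bijectively to the $2$-periodic homeomorphism $(x,\mathsf{s}_x)\mapsto(\rho(x),\sigma_x(\mathsf{s}_x))$ on $\cF=\coprod_{x}\fF_x$, give mutually inverse equivalences between Real sheaves on $(X,\rho)$ and \'{e}tale Real spaces over $(X,\rho)$; the inverse sends $p\colon(\cE,\nu)\To(X,\rho)$ to the Real sheaf $U\mapsto\Gamma(U,\cE)$ equipped with the Real structure $\mathsf{s}\mapsto\nu\circ\mathsf{s}\circ\rho$. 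Applying this in each degree, a Real sheaf $(\fF^\bullet,\sigma^\bullet)$ over $(X_\bullet,\rho_\bullet)$ yields a family of \'{e}tale Real spaces $\pi_n\colon(\cF^n,\sigma^n)\To(X_n,\rho_n)$, and conversely.

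Next I would transport the simplicial gluing data. For $f\in\Hom_\Delta([m],[n])$ with associated Real map $\tilde f\colon X_n\To X_m$, the classical fact that the espace \'{e}tal\'{e} of the pullback sheaf $\tilde f^\ast\fF^m$ is canonically $X_n\times_{\tilde f,X_m,\pi_m}\cF^m$ --- compatibly with the Real structures $\tilde f^\ast\sigma^m$ and $\rho_n\times\sigma^m$ --- identifies a Real $\tilde f$-morphism $\tilde f^\ast\in\Hom\bigl((\tilde f^\ast\fF^m,\tilde f^\ast\sigma^m),(\fF^n,\sigma^n)\bigr)$ with a morphism of \'{e}tale Real spaces over $X_n$
\[
\theta_f\colon X_n\times_{\tilde f,X_m,\pi_m}\cF^m\To\cF^n .
\]
Then I would match the two notions of ``reduced''. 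By definition $(\fF^\bullet,\sigma^\bullet)$ is reduced iff each $\tilde f^\ast$ is an isomorphism, i.e. iff each $\theta_f$ is an isomorphism of Real spaces; but $\theta_f^{-1}\colon\cF^n\To X_n\times_{\tilde f,X_m,\pi_m}\cF^m$ is exactly the map ``induced by $\tilde f$'' whose invertibility is the reducedness of the \'{e}tale Real simplicial space. In that case the simplicial structure maps are $\tilde f_\cF:=\mathrm{pr}_2\circ\theta_f^{-1}\colon\cF^n\To\cF^m$ (they lie over $\tilde f$ since $\theta_f$ is a morphism over $X_n$), and the cocycle relations $\widetilde{f\circ g}^\ast=\tilde f^\ast\circ\tilde g^\ast$ translate into the simplicial identities $\widetilde{f\circ g}_\cF=\tilde g_\cF\circ\tilde f_\cF$; hence $(\cF^\bullet,\sigma^\bullet)\To(X_\bullet,\rho_\bullet)$ is a reduced \'{e}tale morphism of Real simplicial spaces. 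Reading the same computation backwards turns a reduced \'{e}tale Real simplicial space $(\cE_\bullet,\nu_\bullet)\To(X_\bullet,\rho_\bullet)$ --- with $\fF^n$ the Real sheaf of sections of $\pi_n$ and $\tilde f^\ast$ the isomorphism supplied by reducedness --- into a reduced Real sheaf over $(X_\bullet,\rho_\bullet)$. Since the two constructions are mutually inverse in each degree and visibly compatible with morphisms, they establish the claimed one-to-one correspondence.

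The only step needing any care --- and it is the one on which~\cite[Lemma 3.5]{Tu1} itself rests --- is the naturality of the espace-\'{e}tal\'{e}/sections equivalence with respect to pullback, namely the identification of the espace \'{e}tal\'{e} of $\tilde f^\ast\fF^m$ with $X_n\times_{\tilde f,X_m,\pi_m}\cF^m$. Once this is granted, together with the routine verification that it intertwines $\tilde f^\ast\sigma^m$ with $\rho_n\times\sigma^m$, the Real upgrade is purely formal, so in the write-up I would invoke~\cite{Tu1} for the non-Real input and only spell out the compatibility of the involutions.
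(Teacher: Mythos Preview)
Your proposal is correct and follows essentially the same approach as the paper: both directions rest on the degreewise espace-\'{e}tal\'{e}/sections equivalence, the identification of the \'{e}tale space of $\tilde f^\ast\fF^m$ with the fibered product $X_n\times_{\tilde f,X_m,\pi_m}\cF_m$, and the observation that reducedness on each side amounts to invertibility of the corresponding comparison map. The only difference is stylistic: the paper writes out the bijections on germs and sections explicitly (e.g.\ $(x,(y,\mathsf{s}^m_y))\mapsto(x,\mathsf{s}^n_x)$ and its inverse), whereas you package the same content by invoking the classical equivalence and its naturality under pullback and then tracking the Real structures.
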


\begin{proof}
Suppose that we are given a Real sheaf $(\fF^\bullet,\sigma^\bullet)$ over the Real simplicial space $(X_\bullet,\rho_\bullet)$, and let $(\cF_n,\sigma_n)_{n\in \NN}$ be its underlying sequence of topological Real spaces. We already know from Example ~\ref{ex:etale} that each of the canonical projection maps $\pi_n: (\cF_n,\sigma_n)\To (X_n,\rho_n)$ is \'{e}tale. Now suppose that $(\fF^\bullet,\sigma^\bullet)$ is reduced; that is to say that for any morphism $f\in \text{Hom}_\Delta([m],[n])$, and every open set $V\subset X_m$, $\tilde{f}^\ast: \fF^m(V)\To \fF^n(\tilde{f}^{-1}(V))$ is an isomorphism, so that we have a commutative diagram 
\begin{eqnarray}
\xymatrix{ \fF^m(V) \ar[d]_{\sigma^m_{_V}} \eq[r]^{\tilde{f}^\ast} & \fF^n(\tilde{f}^{-1}(V)) \ar[d]^{\sigma^n_{_{f^{-1}(V)}}} \\ \fF^m(\rho^m(V)) \eq[r]^{\tilde{f}^\ast} & \fF^n(\rho^n(\tilde{f}^{-1}(V)))}
\end{eqnarray}
Let $x\in X_n, \ y\in X_m$ such that $\tilde{f}(x)=y$, and let $U\subset X_n$ and $V\subset X_m$ be open neighborhoods of $x$ and $y$ respectively such that $\tilde{f}(U)\subset V$. Then, for a section $\mathsf{s}^m\in \fF^m(V)$, we have an element $(x,(y,\mathsf{s}^m_y))\in X_n\times_{\tilde{f},X_m,\pi_m}\cF_m$ to which we assign an element $(x,\mathsf{s}^n_x)\in \cF_n$ as follows: since $U\subset \tilde{f}^{-1}(V)$, the section $\mathsf{s}^m\in \fF^m(V)\cong \fF^n(\tilde{f}^{-1}(V))$ has a restriction $\mathsf{s}^n:=\mathsf{s}_U^m\in \fF^n(U)$. In this way we get a well defined map $X_n\times_{\tilde{f},X_m,\pi_m}\cF_m\To \cF_n$. Moreover, it is easy to check that this map is an isomorphism; the inverse is the map $$\cF_n\ni (x,\mathsf{s}^n_x)\mto (x,(\tilde{f}(x),(\tilde{f}^\ast\mathsf{s}^n)_{\tilde{f}(x)}))\in X_n\times_{_{\tilde{f},X_m,\pi_m}}\cF_m,$$ where if $x\in U\subset X_n$ and $\tilde{f}(U)\subset V\subset X_m$, $\tilde{f}^\ast \mathsf{s}^n$ is any section in $\fF^m(V)\cong \fF^n(\tilde{f}^{-1}(V))$ that has the same class as $\mathsf{s}^n$ at the point $x$ when restricted to $\fF^n(U)$ through the restriction map $\fF^n(\tilde{f}^{-1}(V))\To \fF^n(U)$. Furthermore, for every $f\in \text{Hom}_\Delta([m],[n])$, there is a face/degeneracy map $\tilde{f}: (\cF_n,\sigma_n)\To (\cF_m,\sigma_m)$ given by $\tilde{f}(x,\mathsf{s}_x):=(\tilde{f}(x),(\tilde{f}^\ast \mathsf{s})_{_{\tilde{f}(x)}})$; hence $(\cF_\bullet,\sigma_\bullet)$ is a reduced \'{e}tale Real simplicial space over $(X_\bullet,\rho_\bullet)$.\

Conversely, if $\pi_\bullet:(\cE_\bullet,\nu_\bullet)\To (X_\bullet,\rho_\bullet)$ is a reduced \'{e}tale morphism of Real simplicial spaces, we let $\fF^n(U)$ be the space $C(U,\cE_n)$ of continuous sections over $U$ (where $U$ is an open subset of $X_n$) of the projection $\pi_n:(\cE_n,\nu_n)\To (X_n,\rho_n)$.  Next we define $\sigma^n_{_U}: \fF^n(U)\To \fF^n(\rho^n(U))$ by $\sigma^n_{_U}(\mathsf{s})(\rho^n(x)):=\nu_n(\mathsf{s}(x))$. Notice that since the $\pi_n$'s are \'{e}tale, one can recover the Real spaces $(\cE_n,\nu_n)$ by considering the underlying Real spaces of the Real sheaves $(\fF^n,\sigma^n)$. Now for any $f\in \text{Hom}_\Delta([m],[n])$ and  for any open set $V\subset X_m$, we have an isomorphism $\tilde{f}^\ast: \fF^m(V)\To \fF^n(\tilde{f}^{-1}(V)), \ \mathsf{s}\mto \tilde{f}^\ast{\mathsf{s}}$, where $(\tilde{f}^\ast\mathsf{s})(x)=(x,\mathsf{s}(\tilde{f}(x)))\in X_n\times_{_{\tilde{f},X_m,\pi_m}}\cE_m\cong \cE_m$.
\end{proof}

Using the same construction as in the second part of this proof, we deduce the following

\begin{lem}~\label{lem:simpli_sheaf}
Any reduced Real simplicial space over $(X_\bullet,\rho_\bullet)$, \'{e}tale or not, determines a Real sheaf over $(X_\bullet,\rho_\bullet)$.
\end{lem}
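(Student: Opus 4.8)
The plan is to mimic the construction in the proof of Lemma~\ref{lem:red-Sheaves_etalSimpli}, but drop every use of the \'etale hypothesis on the projections $\pi_n$; what survives is precisely the second half of that argument, which produces a Real sheaf over $(X_\bullet,\rho_\bullet)$ out of sections of the maps $\pi_n$. So, given a reduced Real simplicial space $\pi_\bullet:(\cE_\bullet,\nu_\bullet)\To (X_\bullet,\rho_\bullet)$, I would for each $n$ define the presheaf $\fF^n$ on $(X_n,\rho_n)$ by $\fF^n(U):=C(U,\cE_n)$, the continuous sections over the open set $U\subset X_n$ of $\pi_n$, with the obvious restriction maps, and with the Real structure $\sigma^n_{_U}:\fF^n(U)\To \fF^n(\rho^n(U))$ given by $\sigma^n_{_U}(\mathsf s)(\rho^n(x)):=\nu_n(\mathsf s(x))$. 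One checks $\sigma^n$ is a $2$-periodic isomorphism of presheaves because $\rho^n$ and $\nu_n$ are involutions, and that $\fF^n$ is an honest sheaf (the gluing and separation axioms for sections of a fixed continuous map are automatic).

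Next I would install the restriction maps making $(\fF^\bullet,\sigma^\bullet)$ into a Real sheaf over the Real simplicial space, i.e. for each $f\in\Hom_\Delta([m],[n])$ a Real $\tilde f$-morphism $\tilde f^\ast$. Here is where the \emph{reducedness} of $(\cE_\bullet,\nu_\bullet)$ does the work: the defining isomorphism $(\cE_n,\nu_n)\cong (X_n\times_{\tilde f,X_m,\pi_m}\cE_m,\rho_n\times\nu_m)$ lets me send a local section $\mathsf s\in\fF^m(V)$, $V\subset X_m$ open, to the section $\tilde f^\ast\mathsf s$ of $\pi_n$ over $\tilde f^{-1}(V)$ defined by $(\tilde f^\ast\mathsf s)(x):=(x,\mathsf s(\tilde f(x)))$, viewed in $\cE_n$ via that isomorphism. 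This is a morphism of sheaves $\fF^m(V)\To\fF^n(\tilde f^{-1}(V))$ in fact an isomorphism, so the resulting Real sheaf is reduced and it commutes with the $\sigma$'s by naturality of the reducedness isomorphism with respect to the Real structures. The cocycle condition $\widetilde{f\circ g}^\ast=\tilde f^\ast\circ\tilde g^\ast$ follows from the corresponding relation $\widetilde{f\circ g}=\tilde g\circ\tilde f$ among the face/degeneracy maps of $(\cE_\bullet,\nu_\bullet)$ together with functoriality of the fibre-product identifications.

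The only real subtlety — the step I would flag as the main obstacle — is verifying that $\tilde f^\ast$ lands in $\fF^n$ and that the assignment is well defined when $\pi_n$ is \emph{not} \'etale: in the \'etale case of Lemma~\ref{lem:red-Sheaves_etalSimpli} one silently uses that a section of $\pi_m$ near $\tilde f(x)$ pulls back to a section of $\pi_n$ near $x$ because both are local homeomorphisms, whereas in general one must instead read this off directly from the reducedness isomorphism, which is a global statement about $\cE_n$ versus $X_n\times_{X_m}\cE_m$ rather than a fibrewise one. Concretely, I would note that a continuous section $\mathsf s$ of $\pi_m$ over $V$ induces the continuous map $x\mapsto(x,\mathsf s(\tilde f(x)))$ from $\tilde f^{-1}(V)$ into $X_n\times_{\tilde f,X_m,\pi_m}\cE_m$, which composed with the inverse of the reducedness homeomorphism is a continuous section of $\pi_n$; the Real-equivariance of that homeomorphism guarantees it intertwines $\sigma^m$ with $\sigma^n$. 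Once this is in place the remaining verifications are routine, and I would simply remark that they are identical to the corresponding bookkeeping in the proof of Lemma~\ref{lem:red-Sheaves_etalSimpli}, which is why the statement is offered as an immediate corollary of that construction.
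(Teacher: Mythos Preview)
Your proposal is correct and follows exactly the paper's approach: the paper simply states that the lemma follows by ``using the same construction as in the second part'' of the proof of Lemma~\ref{lem:red-Sheaves_etalSimpli}, which is precisely the section-sheaf construction $\fF^n(U)=C(U,\cE_n)$ with $\tilde f^\ast$ built from the reducedness isomorphism that you have spelled out. Your observation that the \'etale hypothesis was only used to recover $(\cE_n,\nu_n)$ from the sheaf (and hence for the bijection in Lemma~\ref{lem:red-Sheaves_etalSimpli}), not for the construction itself, is exactly the point.
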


\begin{proof}[Proof of Lemma~\ref{lem:G_sheaf}]
Let $(Z,\tau)$ be a Real $\cG$-sheaf, and let $\pi:(Z,\tau)\To (\Gpdo,\rho)$ be an \'{e}tale Real map. Put for all $n\geq 0$, $\cE_n:=(\cG\ltimes Z)_n:=\cG_n\times_{\tilde{\pi}_n,\Gpdo,\pi}Z$, where $\tilde{\pi}_n(g_1,...,g_n)=\tilde{\pi}_n[\g_0,...,\g_n]=s(\g_n)=s(g_n)$, and define $\nu_n:=\rho_n \times \tau$. We thus obtain a Real simplicial space $(\cE_n,\nu_n)$: the simplicial structure is given by 
\begin{eqnarray}
\cE_n \ni \left([\g_0,...,\g_n],z\right) \mto \left((\g_{f(0)},...,\g_{f(m)}),\g_{f(m)}^{-1}\g_n\cdot z\right)\in \cE_m,
\end{eqnarray}
for $f\in \Hom_\Delta([m],[n])$. Furthermore, it is straightforward  to see that the projections $\pi_n: \cE_n\To \cG_n$ are compatible with the Real structures $\nu_n$ and $\rho_n$, and that they define a morphism of Real simplicial spaces. If $f\in \Hom_\Delta ([m],[n])$, then the assignment $$([\g_0,...,\g_n],z)\mto \left([\g_0,...,\g_n],([\g_{f(0)},...,\g_{f(m)}],\g_{f(m)}^{-1}\g_n\cdot z)\right)$$ obviously defines a Real homeomorphism $\cE_n \cong \cG_n\times_{_{\tilde{f},\cG_m,\pi_m}}\cE_m$ which shows that $(\cE_\bullet,\nu_\bullet)$ is a reduced Real simplicial space over $(\cG_n,\rho_n)$. It follows from Lemma~\ref{lem:simpli_sheaf} that $(\cE_\bullet,\nu_\bullet)$ determines an object of $\mathsf{Sh}_{\rho_\bullet}(\cG_\bullet)$.
\end{proof}

\begin{rem}~\label{rem:G-space_G-sheaf}
Notice that in the proof above we did not use the fact that $(Z,\tau)$ is \'{e}tale. In fact, the Real $\cG$-action suffices for $(Z,\tau)$ to give rise to a Real sheaf over $(\cG_\bullet,\rho_\bullet)$. However, the property of being \'{e}tale will be necessary to show that the Real sheaf obtained is reduced (as it is mentioned in the following corollary).
\end{rem}

\begin{cor}~\label{cor:BG-->Sh}
Let $(\cG,\rho)$ be a topological Real groupoid. Then there is a functor
\[ \cE: \mathfrak{B}_\rho \cG \To \mathfrak{red}\mathsf{Sh}_{\rho_\bullet}(\cG_\bullet),\] where $\mathfrak{red}\mathsf{Sh}_{\rho_\bullet}(\cG_\bullet)$ is the full subcategory of $\mathsf{Sh}_{\rho_\bullet}(\cG_\bullet)$ consisting of all reduced Real sheaves over $(\cG_\bullet,\rho_\bullet)$.
\end{cor}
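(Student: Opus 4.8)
The plan is to chase the two constructions that appear just above and to check they are natural in the Real $\cG$-sheaf. Concretely, $\cE$ will be the composite of the assignment underlying Lemma~\ref{lem:G_sheaf}, $(Z,\tau)\mto(\cE_\bullet,\nu_\bullet)$, with the passage from reduced \'etale Real simplicial spaces to reduced Real sheaves supplied by Lemma~\ref{lem:red-Sheaves_etalSimpli}.

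\emph{On objects.} Let $(Z,\tau)$ be a Real $\cG$-sheaf, with \'etale Real structure map $\pi:(Z,\tau)\To(\Gpdo,\rho)$. The proof of Lemma~\ref{lem:G_sheaf} produces the reduced Real simplicial space $(\cE_\bullet,\nu_\bullet)$ over $(\cG_\bullet,\rho_\bullet)$ with $\cE_n=\cG_n\times_{\tilde{\pi}_n,\Gpdo,\pi}Z$ and $\nu_n=\rho_n\times\tau$. Because the pullback of an \'etale map is \'etale, each projection $\pi_n:\cE_n\To\cG_n$ is an \'etale Real map, so $(\cE_\bullet,\nu_\bullet)$ is a \emph{reduced \'etale} Real simplicial space over $(\cG_\bullet,\rho_\bullet)$. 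Feeding it into the construction of Lemma~\ref{lem:red-Sheaves_etalSimpli} — explicitly, $U\mto C(U,\cE_n)$ for open $U\subset\cG_n$, with $\sigma^n_U(\mathsf{s})(\rho_n(x)):=\nu_n(\mathsf{s}(x))$ — produces a reduced Real sheaf over $(\cG_\bullet,\rho_\bullet)$, which we define to be $\cE(Z,\tau)$. This is exactly where the \'etale hypothesis on $(Z,\tau)$ is used (cf. Remark~\ref{rem:G-space_G-sheaf}): dropping it, Lemma~\ref{lem:simpli_sheaf} still produces a Real sheaf over $(\cG_\bullet,\rho_\bullet)$, but not a reduced one.

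\emph{On morphisms.} If $\psi:(Z,\tau)\To(Z',\tau')$ is a morphism of Real $\cG$-sheaves (a $\cG$-equivariant continuous Real map with $\pi'\circ\psi=\pi$), set for each $n$
\[
\cE(\psi)_n:=\Id_{\cG_n}\times\psi:\ \cG_n\times_{\tilde{\pi}_n,\Gpdo,\pi}Z\To\cG_n\times_{\tilde{\pi}_n,\Gpdo,\pi'}Z',\qquad([\g_0,\dots,\g_n],z)\mto([\g_0,\dots,\g_n],\psi(z)).
\]
This is well defined since $\pi'(\psi(z))=\pi(z)$; it commutes with the projections to $\cG_n$; it intertwines $\nu_n$ and $\nu'_n$ because $\psi$ is Real; and it commutes with the face and degeneracy maps described in the proof of Lemma~\ref{lem:G_sheaf}, since those only move the $\cG_n$-component and act on the $Z$-component through the $\cG$-action, for which $\psi$ is equivariant. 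Hence $\cE(\psi)_\bullet$ is a morphism of reduced \'etale Real simplicial spaces over $(\cG_\bullet,\rho_\bullet)$, and under Lemma~\ref{lem:red-Sheaves_etalSimpli} it transports to a morphism $\cE(\psi):\cE(Z,\tau)\To\cE(Z',\tau')$ in $\mathsf{Sh}_{\rho_\bullet}(\cG_\bullet)$ — given on sections by post-composition $\mathsf{s}\mto\cE(\psi)_n\circ\mathsf{s}$ — hence a morphism in the full subcategory $\mathfrak{red}\mathsf{Sh}_{\rho_\bullet}(\cG_\bullet)$.

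\emph{Functoriality and the main point.} At the level of \'etale Real simplicial spaces one has $\cE(\Id_Z)_n=\Id_{\cE_n}$ and $\cE(\psi'\circ\psi)_n=\cE(\psi')_n\circ\cE(\psi)_n$, and post-composition with sections preserves identities and composites; therefore $\cE(\Id)=\Id$ and $\cE(\psi'\circ\psi)=\cE(\psi')\circ\cE(\psi)$, so $\cE$ is a functor. \textbf{The main obstacle} is not any single computation but the bookkeeping needed to promote Lemma~\ref{lem:red-Sheaves_etalSimpli}, stated there only as a bijection of objects, to an honest isomorphism (or at least equivalence) of categories: one must check that $\cE_\bullet\mto\big(U\mto C(U,\cE_\bullet)\big)$ and the inverse assignment (taking the underlying \'etale Real simplicial space of the stalks) are mutually inverse \emph{functors} between reduced \'etale Real simplicial spaces over $(\cG_\bullet,\rho_\bullet)$ and reduced Real sheaves over $(\cG_\bullet,\rho_\bullet)$, compatibly with all the face/degeneracy restriction maps and the $2$-periodic isomorphisms $\sigma^n$. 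Granting this naturality — already implicit in the proofs of Lemmas~\ref{lem:G_sheaf} and~\ref{lem:red-Sheaves_etalSimpli} — everything above reduces to the routine verifications performed there.
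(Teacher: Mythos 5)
Your argument is correct and follows the same route as the paper's own proof, which likewise observes that since $\pi$ is \'{e}tale each $\pi_n$ is \'{e}tale, so the construction of Lemma~\ref{lem:G_sheaf} yields a reduced \'{e}tale Real simplicial space over $(\cG_\bullet,\rho_\bullet)$, and then invokes Lemma~\ref{lem:red-Sheaves_etalSimpli}. Your explicit treatment of morphisms ($\cE(\psi)_n=\Id_{\cG_n}\times\psi$, post-composition on sections) and of functoriality simply fills in bookkeeping the paper leaves implicit, without changing the approach.
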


\begin{proof}
Let us keep the same notations as in the proof of Lemma~\ref{lem:G_sheaf}. Since $\pi$ is \'{e}tale, so is $\pi_n$ for all $n$. The reduced Real simplicial space $(\cE_\bullet,\nu_\bullet)$ is then \'{e}tale over $(\cG_\bullet,\nu_\bullet)$. Now, it suffices to apply Lemma~\ref{lem:red-Sheaves_etalSimpli}.
\end{proof}


\subsection{\emph{Real} $\cG$-modules}

\begin{df}(Compare with ~\cite[Definition 3.9]{Tu1}).
Let $(\cG,\rho)$ be a topological Real groupoid. A Real $\cG$-module is a topological Real groupoid $(\cM,\ {}^-)$, with unit space $(\Gpdo,\rho)$, and with source and range maps equal to a Real map $\pi: (\cM,\ {}^-)\To (\Gpdo,\rho)$, such that
\begin{itemize}
	\item $\cM_x$ ($=\cM^x=\cM_x^x$) is an abelian group for all $x\in \Gpdo$;
	\item for all $x\in \Gpdo$, the map $({}^-):\cM_x\To \cM_{\rho(x)}$ is a group morphism;
	\item as a Real space, $(\cM,\ {}^-)$ is endowed with a Real $\cG$-action $\alpha: \cG\times_{s,\pi}\cM\To \cM$;
	\item for each $g\in \cG$, the map $\alpha_g: \cM_{s(g)}\To \cM_{r(g)}$ given by the action is a group morphism.
\end{itemize}
\end{df}

By Remark~\ref{rem:G-space_G-sheaf}, any Real $\cG$-module $(\cM,\ {}^-)$ determines an abelian Real sheaf $(\fF^\bullet,\sigma^\bullet)$ on $(\cG_\bullet,\rho_\bullet)$ constructed as follows: consider the reduced Real simplicial space $(\cE_\bullet,\nu_\bullet)=((\cG\ltimes\cM)_n,\rho_n \times ( {}^-))$, where the Real simplicial structure is given by: 
\[\tilde{f}\left([\g_0,...,\g_n],t\right)=\left([\g_{f(0)},...,\g_{f(m)}],\g_{f(m)}^{-1}\g_n\cdot t\right), \]
for any $f\in \Hom_\Delta([m],[n])$. Next, $(\fF^\bullet,\sigma^\bullet)$ is defined as the sheaf of germs of continuous sections of the projections $\pi_\bullet:(\cE_\bullet,\nu_\bullet)\To (\cG_\bullet,\rho_\bullet)$.

\begin{ex}~\label{ex:G-module_S}
Let $(\cG,\rho)$ be a topological Real groupoid and let $\cM=\Gpdo \times \uc$ be endowed with the canonical Real structure $\overline{(x,\lambda)}):=(\rho(x),\bar{\lambda})$, and Real $\cG$-action $g\cdot(s(g),\lambda)=(r(g),\lambda)$. Then $(\cM,{}^-)$ is a Real $\cG$-module. The corresponding Real sheaf is called the constant sheaf of germs of $\uc$-valued functions and denoted (abusively) $\uc$. More generally, if $S$ is any Real group, $\Gpdo\times S$ is a Real $\cG$-module, and the induced Real sheaf over $(\cG_\bullet,\rho_\bullet)$ is denoted by $S$.
\end{ex}


\subsection{Pre-simplicial \emph{Real} covers}

\begin{df}[Compare with Definition 4.1~\cite{Tu1}].
Let $(X_\bullet,\rho_\bullet)$ be a Real pre-simplicial space. A \emph{Real open cover} of $(X_\bullet,\rho_\bullet)$ is a sequence $\cU_\bullet=(\cU_n)_{n\in \NN}$ such that $\cU_n=(U_j^n)_{j\in J_n}$ is a Real open cover of $(X_n,\rho_n)$.

We say that $\cU_\bullet$ is \emph{pre-simplicial} if $(J_\bullet,\ {}^-)=(J_n, \ {}^-)_{n\in \NN}$ is a Real pre-simplicial set such that for all $f\in \Hom_{\Delta'}([m],[n])$ and for all $j\in J_n$, one has $\tilde{f}(U_j^n)\subseteq U^m_{\tilde{f}(j)}$. In the same way, one defines the notions of simplicial Real cover and $N$-simplicial Real cover.
\end{df}

We will use the same construction as in~\cite[\S4.1]{Tu1} to show the following lemma.

\begin{lem}~\label{lem:pre-smpl_cover}
Any Real open cover $\cU_\bullet$ of a Real (pre-)simplicial space $(X_\bullet,\rho_\bullet)$ gives rise to a pre-simplicial Real open cover ${}_\natural \cU_\bullet$.
\end{lem}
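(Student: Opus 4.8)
The plan is to carry out verbatim the construction of \cite[\S4.1]{Tu1} and then verify that it transports the Real structures. If $(X_\bullet,\rho_\bullet)$ is merely simplicial we first restrict it to $\Delta'$, so we may assume we are given a Real pre-simplicial space and, for each $n$, a Real open cover $\cU_n=(U^n_j)_{j\in J_n}$; thus each $J_n$ carries an involution $j\mapsto\bar j$ with $\rho_n(U^n_j)=U^n_{\bar j}$, and for every $f\in\Hom_{\Delta'}([m],[n])$ the face map satisfies $\tilde f\circ\rho_n=\rho_m\circ\tilde f$, whence $\rho_n\bigl(\tilde f^{-1}(A)\bigr)=\tilde f^{-1}\bigl(\rho_m(A)\bigr)$ for $A\subseteq X_m$. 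For each $n$ let $P_n$ be the (finite) set of all morphisms of $\Delta'$ with target $[n]$; for $f\in P_n$ write $[m_f]$ for the source of $f$. I would then set
\[
{}_\natural J_n:=\Bigl\{\,\xi\colon P_n\To\coprod_{k\le n}J_k \ \Bigm|\ \xi(f)\in J_{m_f}\ \text{for all}\ f\in P_n,\ \ \bigcap_{f\in P_n}\tilde f^{-1}\bigl(U^{m_f}_{\xi(f)}\bigr)\ne\emptyset\,\Bigr\},
\]
and for $\xi\in{}_\natural J_n$ put ${}_\natural U^n_\xi:=\bigcap_{f\in P_n}\tilde f^{-1}\bigl(U^{m_f}_{\xi(f)}\bigr)$, a finite intersection of open sets, hence open in $X_n$. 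Since the identity $[n]\To[n]$ lies in $P_n$ one has ${}_\natural U^n_\xi\subseteq U^n_{\xi(\mathrm{id})}$, so ${}_\natural\cU_n:=({}_\natural U^n_\xi)_{\xi\in{}_\natural J_n}$ refines $\cU_n$; and it covers $X_n$, since for $x\in X_n$ one may choose, for each $f\in P_n$, an index $\xi(f)\in J_{m_f}$ with $\tilde f(x)\in U^{m_f}_{\xi(f)}$, producing a $\xi\in{}_\natural J_n$ with $x\in{}_\natural U^n_\xi$.

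Next I would install the pre-simplicial structure on $({}_\natural J_n)_n$: for $g\in\Hom_{\Delta'}([m],[n])$ define $\tilde g\colon{}_\natural J_n\To{}_\natural J_m$ by $(\tilde g\,\xi)(h):=\xi(g\circ h)$ for $h\in P_m$ (this makes sense because $g\circ h\in P_n$ has the same source $[m_h]$ as $h$). For $x\in{}_\natural U^n_\xi$ and $h\in P_m$, functoriality of the face maps gives $\tilde h\bigl(\tilde g(x)\bigr)=\widetilde{g\circ h}(x)\in U^{m_h}_{\xi(g\circ h)}=U^{m_h}_{(\tilde g\xi)(h)}$; intersecting over $h\in P_m$ yields $\tilde g({}_\natural U^n_\xi)\subseteq{}_\natural U^m_{\tilde g\xi}$, which in particular shows ${}_\natural U^m_{\tilde g\xi}\ne\emptyset$ (so $\tilde g\xi\in{}_\natural J_m$) and is exactly the inclusion required of a pre-simplicial cover. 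The composition law $\widetilde{g\circ g'}=\tilde{g'}\circ\tilde g$ on the index sets follows at once from associativity of composition in $\Delta'$, so $({}_\natural J_n)_n$ is a pre-simplicial set.

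Finally I would check Realness. Equip ${}_\natural J_n$ with the involution $\bar\xi(f):=\overline{\xi(f)}$. Using $\rho_n\circ\tilde f^{-1}=\tilde f^{-1}\circ\rho_{m_f}$ together with $\rho_{m_f}\bigl(U^{m_f}_{\xi(f)}\bigr)=U^{m_f}_{\overline{\xi(f)}}$ one computes $\rho_n({}_\natural U^n_\xi)={}_\natural U^n_{\bar\xi}$; in particular ${}_\natural U^n_{\bar\xi}\ne\emptyset$, so $\bar\xi\in{}_\natural J_n$, and clearly $\bar{\bar\xi}=\xi$, whence ${}_\natural\cU_n$ is a \emph{Real} open cover of $(X_n,\rho_n)$. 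Moreover $\overline{\tilde g\,\xi}(h)=\overline{\xi(g\circ h)}=\bar\xi(g\circ h)=(\tilde g\,\bar\xi)(h)$, so the face maps commute with the involutions; hence $({}_\natural J_\bullet,{}^-)$ is a Real pre-simplicial set and ${}_\natural\cU_\bullet:=({}_\natural\cU_n)_{n\in\NN}$ is the desired pre-simplicial Real open cover. There is no serious obstacle: the construction is word-for-word that of \cite{Tu1}, and the only extra content is the bookkeeping that all identifications are simultaneously compatible with the face maps and with the involutions — which, as above, reduces each time either to associativity in $\Delta'$ or to the two standing hypotheses that the $\cU_n$ are Real covers and that the face maps $\tilde f$ are Real.
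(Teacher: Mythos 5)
Your proof is correct and follows essentially the same construction as the paper (which itself reproduces Tu's): indexing the refined cover of $X_n$ by functions assigning to each morphism of $\Delta'$ with target $[n]$ an index of the corresponding cover, taking the intersection of preimages under the face maps, transporting the involution termwise, and defining the pre-simplicial structure by precomposition. The only differences are cosmetic — you restrict the index functions to morphisms with target $[n]$ and discard empty intersections, whereas the paper allows them — and your verifications of the Real and pre-simplicial compatibilities match the paper's.
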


\begin{proof}
For each $n\in \NN$, let $\cP_n=\bigcup_{k=0}^n \cP_n^k$, where $\cP_n^k=\Hom_{\Delta'}([k],[n])$. Let $\cP=\bigcup_n \cP_n$, and let $\Lambda_n$ (or $\Lam_n(J_\bullet)$ if there is a risk of confusion) be the set of maps 
\begin{eqnarray}
\lambda: \cP \To \bigcup_k J_k \ \text{such that} \ \lambda(\cP_n^k)\in J_k, \ \text{for all} \ k.
\end{eqnarray}
It is immediate to see that $\Lam_n$ is non-empty; indeed, for each $k\in \NN$, we fix a map $\overrightarrow{j}^k: [n]\To J_k$ which can be written as $\overrightarrow{j}^k=(j^k_0,...,j^k_n)$. Next, we define $\overrightarrow{j}=(\overrightarrow{j}^k)_{k\in \NN}$. Then the map $\lambda: \cP\To \bigcup_k J_k$ given by $\lambda(\vp):=\overrightarrow{j}\circ \vp$ lies in $\Lam_n$. Moreover, $\Lam_n$ has a Real structure defines as follows: if $\vp \in \cP_n^k$, then we set
\begin{eqnarray}
\bar{\lambda}(\vp):= \overline{\lambda(\vp)}\in J_k
\end{eqnarray}
Now, for all $\lambda\in \Lam_n$, we let
\begin{eqnarray}
U_\lambda^n:= \bigcap_{k\leq n}\bigcap_{\vp\in \cP_n^k}\tilde{\vp}^{-1}(U_{\lambda(\vp)}^k).
\end{eqnarray}
Let $x\in X_n$. For each $k\leq n$ and $\vp\in \cP_n^k$, there is $j_\vp^k\in J_k$ such that $\tilde{\vp}(x)\in U^k_{j_\vp^k}\subset X_k$. Define the map $\lambda_x: \cP\To \bigcup_k J_k$ by $\lambda_x(\vp):=(j_\vp^k)_{k}$. Then, one can see that  $x\in \bigcap_{k\leq n}\bigcap_{\vp\in \cP_n^k}\tilde{\vp}^{-1}(U^k_{\lambda_x(\vp)})=U^n_{\lambda_x}$. Furthermore, $\rho_n(U^n_\lambda)=U^n_{\bar{\lambda}}$; hence, $(U^n_\lambda)_{\lambda \in \Lam_n}$ is a Real open cover of $(X_n,\rho_n)$.  If for any $f\in \Hom_{\Delta'}([m],[n])$, we define a map $\tilde{f}:\Lam_n\To \Lam_m$ by \[(\tilde{f}\lambda)(\vp):= \lambda(f\circ \vp), \ \text{for all} \ \lambda \in \Lam_n, \ \text{and} \ \vp \in \cP_n^k,\]
one sees that $\tilde{f}(U^n_\lambda)\subseteq U^m_{\tilde{f}(\lambda)}$. Thus, ${}_\natural \cU_\bullet=((U^n_\lambda)_{\lambda \in \Lam_n})_{n\in \NN}$ is a pre-simplicial Real open cover of $(X_\bullet,\rho_\bullet)$. 
\end{proof}

In the same way, for $N\in \NN$ and $n\leq N$, we denote by $\Lam_n^N$ the set of all maps $$\lambda: \bigcup_{k\leq n}\Hom_\Delta([k],[n])\To \bigcup_{k\leq n}J_k$$ that satisfy $\lambda(\Hom_\Delta([k],[n]))\subset J_k$, and we set
\[U^n_\lambda:=\bigcap_{k\leq n}\bigcap_{\vp\in \Hom_\Delta([k],[n])}\tilde{\vp}^{-1}(U^n_{\lambda(\vp)}).\]
Then we equip $\Lambda^N_\bullet$ with the Real structure defined in the same fashion, and we give it the $N$-simplicial structure defined as follows: for any $f\in \text{Hom}_{\Delta^N}([m],[n])$, the map $\tilde{f}:\Lam_n^N\To \Lam_m^N$ is given by $(\tilde{f}\lambda)(\vp):=\lambda(f\circ \vp)$. We thus obtain a $N$-simplicial Real cover ${}_{\natural^N}\cU_\bullet=({}_{\natural^N}\cU_n)_{n\in \NN}$ of the $N$-skeleton of $(X_\bullet,\rho_\bullet)$, where ${}_{\natural^N}\cU_n=(U^n_\lambda)_{\lambda \in \Lam^N_n}$.

We endow the collection of Real open covers of $(X_\bullet,\rho_\bullet)$ with the partial pre-order given by the following definition.

\begin{df}~\label{df:refinement}
Let $\cU_\bullet$ and $\cV_\bullet$ be Real open covers of a Real simplicial space $(X_\bullet,\rho_\bullet)$, with $\cU_n=(U^n_j)_{j\in J_n}$ and $\cV_n=(V^n_i)_{i\in I_n}$. We say that $\cV_\bullet$ is \emph{finer} than $\cU_\bullet$ if for each $n\in \NN$, there exists a Real map $\theta_n:(I_n, \ {}^-)\To (J_n, \ {}^-)$ such that $V^n_i\subseteq U^n_{\theta_n(i)}$ for every $i\in I_n$. The Real map $\theta_\bullet=(\theta_n)_{n\in \NN}$ is required to be pre-simplicial (resp. $N$-simplicial) if $\cU_\bullet$ and $\cV_\bullet$ are pre-simplicial (resp. $N$-simplicial).
\end{df}


\subsection{"Real" \v{C}ech cohomology}

\begin{df}[Real local sections]
Let $(\fF,\sigma)$ be an abelian Real (pre-)sheaf over $(X,\rho)$ and let $\cU=(U_j)_{j\in J}$ be a Real open cover of $(X,\rho)$.
We say that a family $\mathsf{s}_j \in \fF(U_j)$ is a \emph{globally Real family} of local sections of $(\fF,\sigma)$ over $\cU$ if for every $j\in J$, $\mathsf{s}_{\bar{j}}$ is the image of $\mathsf{s}_j$ in $\fF(U_{\bar{j}})$ by $\sigma_{U_j}$.

We define $CR_{ss}(\cU,\fF)_{\rho,\sigma}$ to be the  set of all globally Real families of local sections of $(\fF,\sigma)$ relative to $\cU$; \emph{i.e}. 
\[CR_{ss}(\cU,\fF)_{\rho,\sigma}:=\left\{(\mathsf{s}_j)_{j\in J}\subset \prod_{j\in J}\fF(U_j) \ | \ \mathsf{s}_{\bar{j}}=\sigma_{U_j}(\mathsf{s}_j), \ \forall j\in J \right\}.\]
\end{df}

 To avoid irksome notations, we will write $CR_{ss}(\cU,\fF)$ or $CR_{ss}(\cU,\fF)_\sigma$ instead of $CR_{ss}(\cU,\fF)_{\rho,\sigma}$. It is clear that $CR_{ss}(\cU,\fF)$ is an abelian group.\\

Now let $(X_\bullet,\rho_\bullet)$ be a Real simplicial space, and let $\cU_\bullet$ be a pre-simplicial Real open cover of $(X_\bullet,\rho_\bullet)$. Suppose $(\fF^\bullet,\sigma^\bullet)$ is a (pre-simplicial) abelian Real (pre-)sheaf over $(X_\bullet,\rho_\bullet)$.

\begin{df}
We define the complex $CR_{ss}^\ast(\cU_\bullet,\fF^\bullet)_{\rho_\bullet,\sigma^\bullet}$, also denoted by $CR_{ss}^\ast(\cU_\bullet,\fF^\bullet)$ if there is no risk of confusion, by 
\begin{eqnarray}
CR_{ss}^n(\cU_\bullet,\fF^\bullet):= CR_{ss}(\cU_n,\fF^n)_{\rho_n,\sigma^n}, \ \text{for} \ n\in \NN.
\end{eqnarray}
A \emph{Real $n$-cochain} of $(X_\bullet,\rho_\bullet)$ relative to a pre-ssimplicial Real open cover $\cU_\bullet$ with coefficients in $(\fF^\bullet,\sigma^\bullet)$ is an element in $CR_{ss}^n(\cU_\bullet,\fF^\bullet)$.
\end{df}

Let us consider again the maps $\ve_k:[n]\To [n+1]$ defined by~\eqref{eq:epsilon}, for $k=0,...,n+1$. We have Real maps $\tilde{\ve}_k: (J_{n+1},\ {}^-)\To (J_n,\ {}^-)$, $\tilde{\ve}_k: (X_{n+1},\rho_{n+1})\To (X_n,\rho_n)$, and $\tilde{\ve}_k:(\fF^{n+1},\sigma^{n+1})\To (\fF^n,\sigma^n)$; and since $\tilde{\ve}_k(U_j^{n+1})\subseteq U^n_{\tilde{\ve}_k(j)}$ for every $j\in J_{n+1}$, we have a restriction map $$\tilde{\ve}^\ast_k: \fF^n(U^n_{\tilde{\ve}(j)})\To \fF^{n+1}(U_j^{n+1})$$ such that $\sigma^{n+1}_{U_j^{n+1}}\circ \tilde{\ve}^\ast_k=\tilde{\ve}^\ast_k \circ \sigma^n_{U^n_{\tilde{\ve}_k(j)}}$.

\begin{df}
Let $\cU_\bullet$ be a pre-simplicial Real open cover of $(X_\bullet,\rho_\bullet)$. For $n\ge 0$, we define the \emph{differential map} 
\begin{eqnarray}
d^n: CR^n_{ss}(\cU_\bullet,\fF^\bullet)\To CR_{ss}^{n+1}(\cU_\bullet,\fF^\bullet)
\end{eqnarray}
also denoted by $d$, by setting for $c=(c_j)_{j\in J_n}\in CR^n_{ss}(\cU_\bullet,\fF^\bullet)$ and for $j\in J_{n+1}$: 

\begin{equation}~\label{df:differential}
(dc)_j:= \sum_{k=0}^{n+1}(-1)^k\tilde{\ve}_k^\ast (c_{\tilde{\ve}_k(j)}). 
\end{equation}
\end{df}

\begin{rem}
The differential $d$ of~\eqref{df:differential} do maps $CR^n_{ss}(\cU_\bullet,\fF^\bullet)$ to $CR^{n+1}_{ss}(\cU_\bullet,\fF^\bullet)$; indeed, combining the fact that the $\tilde{\ve}_k$ are Real maps and the discussion preceeding the last definition, one has $$(dc)_{\bar{j}}=\sum_{k=0}^{n+1}(-1)^k\tilde{\ve}^\ast_k(c_{\tilde{\ve}_k(\bar{j})})=\sum_{k=0}^{n+1}(-1)^k\tilde{\ve}^\ast_k(\sigma^{n}_{U^n_{\tilde{\ve}_k(j)}}c_{\tilde{\ve}_k(j)})=\sigma_{U^{n+1}_j}((dc)_j).$$
\end{rem}

\begin{lem}
The differential maps $d$ are group homomorphisms that satisfy  $d^n\circ d^{n-1}=0$ for $n\ge 1$.
\end{lem}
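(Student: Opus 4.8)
The plan is to handle the two assertions in turn, the first being immediate and the second the standard "alternating sum of faces squares to zero" computation.

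That each $d^n$ is a homomorphism of abelian groups is built into the construction: for fixed $j\in J_{n+1}$ and each $k$, the restriction map $\tilde{\ve}_k^\ast\colon \fF^n(U^n_{\tilde{\ve}_k(j)})\to \fF^{n+1}(U^{n+1}_j)$ is a morphism of abelian groups because $(\fF^\bullet,\sigma^\bullet)$ is an abelian Real sheaf, so $(dc)_j=\sum_{k=0}^{n+1}(-1)^k\tilde{\ve}_k^\ast(c_{\tilde{\ve}_k(j)})$ depends additively on $c=(c_i)_{i\in J_n}$; that $dc$ again lies in $CR^{n+1}_{ss}(\cU_\bullet,\fF^\bullet)$ was already recorded in the remark just above.

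For $d^n\circ d^{n-1}=0$ with $n\ge 1$, I would fix $c\in CR^{n-1}_{ss}(\cU_\bullet,\fF^\bullet)$ and $j\in J_{n+1}$ and unwind the definition twice, obtaining
\[
(d^n d^{n-1}c)_j=\sum_{k=0}^{n+1}(-1)^k\,\tilde{\ve}_k^\ast\bigl((d^{n-1}c)_{\tilde{\ve}_k(j)}\bigr)
=\sum_{k=0}^{n+1}\sum_{l=0}^{n}(-1)^{k+l}\,\tilde{\ve}_k^\ast\tilde{\ve}_l^\ast\bigl(c_{\tilde{\ve}_l(\tilde{\ve}_k(j))}\bigr),
\]
where the inner $\tilde{\ve}_l$ and $\tilde{\ve}_l^\ast$ are the structure maps at level $n$ of the pre-simplicial Real cover $\cU_\bullet$ and of the Real sheaf $\fF^\bullet$ respectively, and the outer $\tilde{\ve}_k,\tilde{\ve}_k^\ast$ the ones at level $n+1$. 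The only inputs are the cosimplicial identities in $\Delta'$, namely $\ve_k\circ\ve_l=\ve_l\circ\ve_{k-1}$ for $l<k$ for the coface maps of \eqref{eq:epsilon}, together with functoriality: applying the contravariant assignment $[m]\mapsto (J_m,\tilde f)$ turns this into $\tilde{\ve}_l\circ\tilde{\ve}_k=\tilde{\ve}_{k-1}\circ\tilde{\ve}_l$ on index sets for $l<k$, and the relation $\widetilde{f\circ g}{}^\ast=\tilde f^\ast\circ\tilde g^\ast$ built into the notion of a Real sheaf over a simplicial space turns it into $\tilde{\ve}_k^\ast\circ\tilde{\ve}_l^\ast=\tilde{\ve}_l^\ast\circ\tilde{\ve}_{k-1}^\ast$ on sections for $l<k$. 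I would then split the double sum into the part $P$ over $0\le l<k\le n+1$ and the part $Q$ over $0\le k\le l\le n$; on $P$ the two functoriality relations rewrite the $(k,l)$-summand (both its operator $\tilde{\ve}_k^\ast\tilde{\ve}_l^\ast$ and its index $\tilde{\ve}_l(\tilde{\ve}_k(j))$) as the summand carrying the new indices $(k',l')=(l,k-1)$, while the sign $(-1)^{k+l}$ becomes $(-1)^{l'+1+k'}=-(-1)^{k'+l'}$ and the constraint $0\le l<k\le n+1$ becomes exactly $0\le k'\le l'\le n$. Hence $P=-Q$, so $(d^n d^{n-1}c)_j=P+Q=0$ for every $j\in J_{n+1}$, i.e.\ $d^n\circ d^{n-1}=0$.

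The argument is purely formal; the one delicate point is keeping the three contravariant "$\sim$"-operations straight — on the spaces $X_\bullet$, on the index sets $J_\bullet$ of the cover, and on the sections of $\fF^\bullet$ — and checking that the cosimplicial identity of $\Delta'$ is transported correctly through the latter two, so that the reindexing $(k,l)\mapsto(l,k-1)$ carries the $l<k$ half of the double sum onto the $l\ge k$ half with opposite sign. No genuine obstruction is expected.
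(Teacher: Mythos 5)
Your proof is correct and follows essentially the same route as the paper's: expand $(d^n d^{n-1}c)_j$ as a double alternating sum over the faces and cancel it using the cosimplicial identity $\ve_k\circ\ve_l=\ve_l\circ\ve_{k-1}$ (for $l<k$), transported through the pre-simplicial index sets $J_\bullet$ and through the restriction maps via $\widetilde{f\circ g}^\ast=\tilde f^\ast\circ\tilde g^\ast$. Your split of the sum into the ranges $l<k$ and $k\le l$ with the reindexing $(k,l)\mapsto(l,k-1)$ is the standard (and cleaner) bookkeeping for the cancellation that the paper performs by regrouping the terms of the same double sum.
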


\begin{proof}
That for any $n\in \NN$, $d^n$ is a group homomorphism is straightforward. Let $(c_{j'})_{j'\in J_{n-1}}\in CR_{ss}^{n-1}(\cU_\bullet,\fF^\bullet)$. Then, for $j\in J_{n+1}$ one has

\[
(d^nd^{n-1}c)_j  = \sum_{l=0}^{n+1}(-1)^l(\tilde{\ve}^{n+1}_l)^\ast \left(\sum_{k=0}^n(-1)^k (\tilde{\ve}^n_k)^\ast (c_{\tilde{\ve}_k^n\circ \tilde{\ve}^{n+1}_l(j)}) \right) \]

 \[ =  \sum_{l=0}^{n+1}\sum_{k=0}^n (-1)^{l+k}(\tilde{\ve}^{n+1}_l)^\ast \circ (\tilde{\ve}^n_k)^\ast (c_{\tilde{\ve}^n_k\circ \tilde{\ve}^{n+1}_l(j)})   \]
  \[ =  \sum_{p=0}^n \ \sum_{k=0, k\leq 2p}^n (\tilde{\ve}^{n+1}_{2p-k})^\ast (\tilde{\ve}^n_k)^\ast (c_{\tilde{\ve}^n_k \circ \tilde{\ve}^{n+1}_{2p-k}(j)}) \]
   
    \[ - \sum_{p=0}^n \ \sum_{k=0, k\leq 2p+1}^n(\tilde{\ve}^{n+1}_{2p+1-k})^\ast \circ (\tilde{\ve}^n_k)^\ast (c_{\tilde{\ve}^n_k\circ \tilde{\ve}^{n+1}_{2p+1-k}(j)}) \]
     
 \[ =  0, \text{since} \ \ve^{n+1}_{r}\circ \ve^n_q=\ve^{n+1}_{r+1}\circ \ve^n_q, \ \text{for any} \ r, q \leq n.
\]

\end{proof}

We thus can give the following

\begin{df}
A Real $n$-cochain $c$ in the kernel of $d^n$ is called a \emph{Real $n$-cocycle} relative to the pre-simplicial Real open cover $\cU_\bullet$ with coefficients in $(\fF^\bullet,\sigma^\bullet)$; the Real $n$-cocyles form a subgroup $ZR^n_{ss}(\cU_\bullet,\fF^\bullet)$ of $CR_{ss}^n(\cU_\bullet,\fF^\bullet)$. The Real $n$-cochains belonging to the image of $d^{n-1}$ are called \emph{Real $n$-coboundaries} relative to $\cU_\bullet$ and form a subgroup $BR^n_{ss}(\cU_\bullet,\fF^\bullet)$ (since $d^2=0$). 
The $n^{th}$ \emph{Real cohomology group} of the pre-simplicial Real open cover $\cU_\bullet$ with coefficients in $(\fF^\bullet,\sigma^\bullet)$ is  defined by the $n^{th}$ cohomology group of the complex \[... \overset{d^{n-2}}{\To} CR^{n-1}_{ss}(\cU_\bullet,\fF^\bullet) \overset{d^{n-1}}{\To} CR_{ss}^{n}(\cU_\bullet,\fF^\bullet) \overset{d^n}{\To} CR^{n+1}_{ss}(\cU_\bullet,\fF^\bullet) \overset{d^{n+1}}{\To} ... \] 
That is, $$HR^n_{ss}(\cU_\bullet,\fF^\bullet):=\frac{ZR^n_{ss}(\cU_\bullet,\fF^\bullet)}{BR_{ss}^n(\cU_\bullet,\fF^\bullet)}:=\frac{\ker d^n }{\text{\emph{Im}} \  d^{n-1}}.$$
\end{df}

\begin{ex}(Compare with~\cite[Example 4.3]{Tu1}).
Let $(X_\bullet,\rho_\bullet)$ be the constant Real simplicial space associated with a topological Real space $(X,\rho)$; that is $(X_n,\rho_n)=(X,\rho)$ for every $n\ge 0$. Suppose $\cU=:\cU_0=(U^0_j)_{j\in J_0}$ is a Real open cover of $(X,\rho)$. Define $J_n:=J_0^{n+1}$ together with the obvious Real structure. Then $(J_n,\ {}^-)$ is admits a simplicial structure by \[\tilde{f}(j_0,...,j_n):=(j_{f(0)},...,j_{f(m)}), \ \text{for all} \ f\in \Hom_\Delta([m],[n]).\]
Let $U^n_{(j_0,..,j_n)}:=U^0_{j_0}\cap ... \cap U^0_{j_n}$ and $\cU_n=(U^n_j)_{j\in J_n}$. Of course $\cU_n$ is a Real open cover of $(X_n,\rho_n)$, and for any $f\in \Hom_\Delta([m],[n])$ one has $\tilde{f}(U^n_{(j_0,...,j_n)})=U^n_{(j_0,...,j_n)}\subseteq U^0_{f(0)}\cap ... \cap U^0_{f(m)}=U^m_{\tilde{f}(j_0,...,j_n)}$; hence $\cU_\bullet$ is a simplicial Real open cover of $(X_\bullet,\rho_\bullet)$. \\
Let $(\cF,\sigma)$ be an Abelian Real sheaf on $(X,\rho)$ and let $(\fF^n,\sigma^n):=(\fF,\sigma)$ for all $n\ge 0$. Then, $HR^\ast_{ss} (\cU_\bullet,\fF^\bullet)$ can be viewed as the "Real" analogue of the usual (\emph{i.e.}, when all the Real structures are trivial) cohomology group $H^\ast(\cU_0,\fF)$ and is denoted by $HR^\ast(\cU,\fF)$. A Real $0$-cochain is a globally Real family $(\mathsf{s}_j)_{j\in J}$ of local sections. Given such a family, the differential $d^0$ gives: $(d^0\mathsf{s})_{(j_0,j_1)}=\mathsf{s}_{{j_1}_{|U_{j_0j_1}}}-\mathsf{s}_{{j_0}_{|U_{j_0j_1}}}$; it hence defines a Real $0$-cocycle if there exists a Real global section $f \in \Ga(X,\fF)$ such that $\mathsf{s}_j=f_{U_j}$ for all $j\in J$. \\

A Real $1$-coboundary is then a family $(c_{j_0j_1})_{j_0,j_1\in J}$ of sections $c_{j_0j_1}\in \fF(U_{j_0j_1})\cong \Ga(U_{j_0j_1},\cF)$ verifying $c_{\bar{j}_0\bar{j}_1}(\rho(x))=\sigma(c_{j_0j_1}(x))$ for every $x\in U_{j_0j_1}$, and such that there exists a globally Real family $(\mathsf{s}_j)_{j\in J}$ of sections $\mathsf{s}_j\in \Ga(U_j,\cF)$ such that $c_{j_0j_1}=\mathsf{s}_{j_1}-\mathsf{s}_{j_0}$ over all non-empty intersection $U_{j_0j_1}$.\\

Finally, a Real $1$-cochain $c=(c_{j_0j_1})\in CR^1_{ss}(\cU,\fF)$ can be seen as a family of sections $c_{j_0j_1}\in \Ga(U_{j_0j_1},\cF)$ satisfying $c_{\bar{j}_0\bar{j}_1}(\rho(x))=\sigma(c_{j_0j_1}(x))$. Such a cocyle is $1$-cocyle if and only if one has $(dc)_{j_0j_1j_2}=0$ for all $j_0,j_1,j_2\in J$; in other words, $c_{j_0j_1}+c_{j_1j_2}= c_{j_0j_2}$ over all non-empty intersection $U_{j_0j_1j_2}$.
\end{ex}

We can apply Lemma~\ref{lem:pre-smpl_cover} to generalize the definition of the Real cohomology groups relative to pre-simplicial Real open covers to arbitrary Real open covers of $(X_\bullet,\rho_\bullet)$.

\begin{df}
Let $(X_\bullet,\rho_\bullet)$ be a Real (pre-)simplicial space and let $(\fF^\bullet,\sigma^\bullet)\in Ob(\mathsf{Sh}_{\rho_\bullet}(X_\bullet))$. For any Real open cover $\cU_\bullet$ of $(X_\bullet,\fF^\bullet)$, we let
\begin{equation}
CR^\ast(\cU_\bullet,\fF^\bullet):=CR_{ss}^\ast({}_\natural \cU_\bullet,\fF^\bullet),
\end{equation}
and we define the \emph{Real cohomology} groups of $\cU_\bullet$ with coefficients in $(\fF^\bullet,\sigma^\bullet)$ by
\begin{equation}
HR^\ast(\cU_\bullet,\fF^\bullet):=HR^\ast_{ss}({}_\natural \cU_\bullet,\fF^\bullet).
\end{equation}
\end{df}

We head now toward the definition of the \emph{Real \v{C}ech cohomology}; roughly speaking, given an Abelian Real (pre-)sheaf $(\fF^\bullet,\sigma^\bullet)$ over a Real simplicial space $(X_\bullet,\rho_\bullet)$ , we want to define the Real cohomology groups $HR^n(X_\bullet,\fF^\bullet)$ as the inductive limit of the groups $HR^n(\cU_\bullet,\fF^\bullet)$ over some category of Real open covers of $(X_\bullet,\rho_\bullet)$. To do this, we need some preliminaries elements.

\begin{lem}
Let $(X_\bullet,\rho_\bullet)$ and $(\fF^\bullet,\sigma^\bullet)$ be as above. Assume $\cU_\bullet$ and $\cV_\bullet$ are Real open covers of $(X_\bullet,\rho_\bullet)$, with $\cU_n=(U^n_j)_{j\in J_n}$ and $\cV_n=(V^n_i)_{i\in I_n}$. Then all refinements $\theta_\bullet: (I_\bullet, \ {}^-)\To (J_\bullet, \ {}^-)$ induces  group homomorphisms 
\begin{equation}~\label{eq:theta-ast}
\theta_n^\ast: HR^n(\cU_\bullet,\fF^\bullet)\To HR^n(\cV_\bullet,\fF^\bullet).
\end{equation} 
\end{lem}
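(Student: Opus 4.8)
The statement is the standard "refinements induce maps on cohomology" lemma, transported to the Real setting via the pre-simplicialization functor ${}_\natural(-)$ of Lemma~\ref{lem:pre-smpl_cover}. Since $HR^\ast(\cU_\bullet,\fF^\bullet):=HR^\ast_{ss}({}_\natural\cU_\bullet,\fF^\bullet)$ by definition, the plan is to produce, from a refinement $\theta_\bullet:(I_\bullet,{}^-)\To(J_\bullet,{}^-)$ with $V^n_i\subseteq U^n_{\theta_n(i)}$, an induced pre-simplicial refinement ${}_\natural\theta_\bullet:{}_\natural\cV_\bullet\To{}_\natural\cU_\bullet$, show it gives a cochain map $\theta^\sharp:CR^\ast_{ss}({}_\natural\cU_\bullet,\fF^\bullet)\To CR^\ast_{ss}({}_\natural\cV_\bullet,\fF^\bullet)$ commuting with the differentials $d^n$, and check that $\theta^\sharp$ respects the globally-Real condition, so that it descends to the quotients $HR^n$.

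\textbf{Key steps.} First I would unwind the construction of ${}_\natural\cU_\bullet$: recall $\Lam_n(J_\bullet)$ consists of maps $\lambda:\cP\To\bigcup_k J_k$ with $\lambda(\cP_n^k)\subset J_k$, with Real structure $\bar\lambda(\vp)=\overline{\lambda(\vp)}$ and $U^n_\lambda=\bigcap_{k\le n}\bigcap_{\vp\in\cP_n^k}\tilde\vp^{-1}(U^k_{\lambda(\vp)})$. Given $\theta_\bullet$, define ${}_\natural\theta_n:\Lam_n(I_\bullet)\To\Lam_n(J_\bullet)$ by $({}_\natural\theta_n\mu)(\vp):=\theta_k(\mu(\vp))$ for $\vp\in\cP_n^k$. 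One checks: (i) this lands in $\Lam_n(J_\bullet)$; (ii) it is a Real map, because $\theta_k$ is Real, so $({}_\natural\theta_n\bar\mu)(\vp)=\theta_k(\overline{\mu(\vp)})=\overline{\theta_k(\mu(\vp))}=\overline{({}_\natural\theta_n\mu)(\vp)}$; (iii) it is pre-simplicial, using that $\theta_\bullet$ is pre-simplicial and the definition $(\tilde f\lambda)(\vp)=\lambda(f\circ\vp)$; (iv) $V^n_\mu\subseteq U^n_{{}_\natural\theta_n\mu}$, since $V^k_{\mu(\vp)}\subseteq U^k_{\theta_k(\mu(\vp))}$ and one intersects the corresponding $\tilde\vp^{-1}$-preimages. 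Next, for a Real $n$-cochain $c=(c_\lambda)_{\lambda\in\Lam_n(J_\bullet)}$, set $(\theta^\sharp c)_\mu:=\big(c_{{}_\natural\theta_n\mu}\big)|_{V^n_\mu}$, the restriction along $V^n_\mu\hookrightarrow U^n_{{}_\natural\theta_n\mu}$ in the sheaf $\fF^n$. Because the restriction morphisms commute with $\sigma^n_{U}$ (the presheaf axiom $\sigma_V\circ\vp_{V,U}=\vp_{\rho(V),\rho(U)}\circ\sigma_U$) and ${}_\natural\theta_n$ is Real, $\theta^\sharp c$ again satisfies $(\theta^\sharp c)_{\bar\mu}=\sigma^n_{V^n_\mu}\big((\theta^\sharp c)_\mu\big)$, so $\theta^\sharp$ maps $CR^n_{ss}({}_\natural\cU_\bullet,\fF^\bullet)\To CR^n_{ss}({}_\natural\cV_\bullet,\fF^\bullet)$. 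Finally, compatibility with $d^n$ of~\eqref{df:differential} follows from $\tilde\ve_k\circ{}_\natural\theta_{n+1}={}_\natural\theta_n\circ\tilde\ve_k$ (from pre-simpliciality) together with functoriality of the $\tilde\ve_k^\ast$ restriction maps; hence $\theta^\sharp$ is a morphism of cochain complexes, and passing to cohomology gives the desired $\theta_n^\ast:HR^n(\cU_\bullet,\fF^\bullet)\To HR^n(\cV_\bullet,\fF^\bullet)$, which is a group homomorphism since each $\theta^\sharp$ is additive.

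\textbf{Main obstacle.} None of the individual verifications is deep; the only genuine subtlety — and the step I would be most careful about — is that a refinement $\theta_\bullet$ of the original covers $\cU_\bullet,\cV_\bullet$ need not be pre-simplicial a priori, so one must make sure the induced ${}_\natural\theta_\bullet$ between the pre-simplicialized covers $\Lam_\bullet(I_\bullet)$ and $\Lam_\bullet(J_\bullet)$ is automatically pre-simplicial (which it is, by the formula $({}_\natural\theta_n\mu)(\vp)=\theta_k(\mu(\vp))$ and the identity $(\tilde f\mu)(\vp)=\mu(f\circ\vp)$) and that it is Real. Everything else is bookkeeping with the sheaf restriction maps and the commuting squares already recorded in the definition of a Real sheaf over a simplicial space.
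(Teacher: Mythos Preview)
Your approach is essentially the same as the paper's: reduce to the pre-simplicial setting via ${}_\natural(-)$, define $(\theta^\ast c)_i$ as the restriction of $c_{\theta_n(i)}$ to $V^n_i$, and verify it is a Real cochain map. You are in fact more careful than the paper, which simply asserts ``one can assume $\theta_\bullet$ is pre-simplicial'' without building ${}_\natural\theta_\bullet$; note only that your step (iii) momentarily invokes pre-simpliciality of $\theta_\bullet$ itself, but as you correctly observe in your ``Main obstacle'' paragraph, the formula $({}_\natural\theta_n\mu)(\vp)=\theta_k(\mu(\vp))$ makes ${}_\natural\theta_\bullet$ pre-simplicial automatically, with no hypothesis on $\theta_\bullet$ needed.
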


\begin{proof}
In virtue of Lemma~\ref{lem:pre-smpl_cover}, one can assume that $\cU_\bullet$ and $\cV_\bullet$ are pre-simplicial, and so that $\theta_\bullet$ is a pre-simplicial Real map. Define $\theta_n^\ast: CR^n(\cU_\bullet,\fF^\bullet)\To CR^n(\cV_\bullet,\fF^\bullet)$ as follows: for any $c=(c_j)_{j\in J_n}\in CR^n(\cU_\bullet,\fF^\bullet)$, we put 
 \[(\theta^\ast_nc)_i:=c_{_{\theta_n(i)}|V^n_i};\]
i.e. $(\theta_n^\ast c)_i$ is the image of $c_{\theta_n(i)}$ by the canonical restriction $\fF^n(U^n_{\theta_n(i)})\To \fF^n(V^n_i)$. A straightforward calculation shows that this well defines an element in $CR^n(\cV_\bullet,\fF^bullet)$. Moreover, it is clear that $\theta^\ast_n$ is a group homomorphism for any $n$. Moreover, since $\theta_\bullet$ is pre-simplicial, $\tilde{\ve}_k\circ \theta_{n+1}=\theta_n\circ \tilde{\ve}_k$. Then, for $i\in I_{n+1}$, one has
\[(d\theta_n^\ast(c))_i=\sum_{k=0}^{n+1}(-1)^k\tilde{\ve}_k^\ast(c_{_{\theta_n\circ \tilde{\ve}_k(i)}|V^n_{\tilde{\ve}_k(i)}})=\sum_{k=0}^{n+1}(-1)^k\tilde{\ve}^\ast_k(c_{_{\tilde{\ve}_k\circ \theta_{n+1}(i)}})_{|V^{n+1}_i}=(\theta_{n+1}^\ast d(c)), \] then $d^n\circ \theta_n^\ast=\theta^\ast_{n+1}\circ d^n$ for all $n\in \NN$. It turns out that $\theta^\ast_n$ maps $ZR^n(\cU_\bullet,\fF^\bullet)$ into $ZR^n(\cV_\bullet,\fF^\bullet)$ and maps $BR^n(\cU_\bullet,\fF^\bullet)$ into $BR^n(\cV_\bullet,\fF^\bullet)$. Consequently, $\theta^\ast_n$ passes through the quotients: $\theta_n^\ast([c]):=[\theta_n^\ast(c)]$, for $c\in ZR^n(\cU_\bullet,\fF^\bullet)$.
\end{proof}

As noted in~\cite{Tu1}, the map $HR^\ast(\cU_\bullet,\fF^\bullet)\To HR^\ast(\cV_\bullet,\fF^\bullet)$ may depends on the choice of the given refinement.

\begin{df}
Let $(X_\bullet,\rho_\bullet)$ and $(\fF^\bullet,\sigma^\bullet)$ be as previously. Let $\cU_\bullet$ and $\cV_\bullet$ be Real open covers of $(X_\bullet,\rho_\bullet)$. Let $\phi_n, \psi_n: CR^n(\cU_\bullet,\fF^\bullet)\To CR^n(\cV_\bullet,\fF^\bullet)$ be two families of group homomorphisms commuting with $d$. We say that $(\phi_n)_{n\in \NN}$ and $(\psi_n)_{n\in \NN}$ are \emph{equivalent} (resp. \emph{$N$-equivalent}, for a given $N\in \NN$ such that the $N$-keleton of $\cV_\bullet$ admits an $N$-simplicial Real structure) if for all $n\in \NN$ (resp. for all $n \leq N$), there exists a group homomorphism $h^n: CR^n(\cU_\bullet,\fF^\bullet)\To CR^{n-1}(\cV_\bullet,\fF^\bullet)$, with the convention that $CR^{-1}(\cV_\bullet,\fF^\bullet)=\{0\}$ (and $h^{N+1}=h^N$ in case of $N$-equivalence), such that 
\begin{equation}~\label{eq:equiv_families}
\phi_n-\psi_n=d^{n-1}\circ h^n+h^{n+1}\circ d^n, \ \forall n\in \NN \ (\text{resp.} \ \forall n\leq N). 
\end{equation} 
\end{df}

Observe that such $N$-equivalent families $\phi_\bullet$ and $\psi_\bullet$ induces group homomorphisms $$HR^n(\cU_\bullet,\fF^\bullet)\To HR^n(\cV_\bullet,\fF^\bullet),$$ also denoted by $\phi_n$ and $\psi_n$ respectively, and given by $\phi_n([c]):=[\phi_n(c)]$, and $\psi_n([c]):=[\psi_n(c)]$ for all $c\in ZR^n(\cU_\bullet,\fF^\bullet)$. Assume $h^n: CR^n(\cU_\bullet,\fF^\bullet)\To CR^{n-1}(\cV_\bullet,\fF^\bullet)$ is such that~\eqref{eq:equiv_families} holds for all $n\leq N$, then for all $c\in ZR^n(\cU_\bullet,\fF^\bullet)$, one has
\[(\phi_n-\psi_n)([c])=[d^{n-1}(h^nc)]+[h^{n+1}(d^nc)]=0;\]
in other words, $\phi_n$ and $\psi_n$ define the same homomorphism from $HR^n(\cU_\bullet,\fF^\bullet)$ to $HR^n(\cV_\bullet,\fF^\bullet)$ when $n\leq N$.\\
It is clear that ($N$-)equivalence of morphisms $\phi_n: CR^n(\cU_\bullet,\fF^\bullet)\To CR^n(\cV_\bullet,\fF^\bullet)$ is an equivalence relation. We also denote by $\phi_\bullet$ for the ($N$-)class of $\phi_\bullet$.

\begin{df}
Denote by $\mathfrak{N}$ the collection of all Real open covers of $(X_\bullet,\rho_\bullet)$. Let $\cU_\bullet, \ \cV_\bullet \in \mathfrak{N}$. We say that $\cV_\bullet$ is \emph{$h$-finer} than $\cU_\bullet$ if $\cV_\bullet$ is finer than $\cU\bullet$ in the sense of Definition~\ref{df:refinement}, and if there exists $N\in \NN$ such that the $N$-skeleton of $\cV_\bullet$ admits an $N$-simplicial Real strucutre. In this case, we will write $\cU_\bullet \preceq_N \cV_\bullet$ or $\cU_\bullet \preceq_h \cV_\bullet$.
\end{df}

We refer to~\cite[Lemma 4.5]{Tu1}) for the proof of the following

\begin{lem}~\label{lem:N-class}
Let $\cU_\bullet$ and $\cV_\bullet$ be Real open covers of $(X_\bullet,\rho_\bullet)$ such that  $\cU_\bullet \preceq_N \cV_\bullet$. If $\theta_\bullet,\theta_\bullet':(I_\bullet,\ {}^-)\To (J_\bullet,\ {}^-)$ are two arbitrary refinements, then their induced group homomorphisms $\theta_\bullet^\ast$ and $(\theta'_\bullet)^\ast$  are $N$-equivalent. Consequently, there is a canonical morphism $$HR^n(\cU_\bullet,\fF^\bullet)\To HR^n(\cV_\bullet,\fF^\bullet)$$ for each $n\leq N$.
\end{lem}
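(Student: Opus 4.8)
The plan is to prove Lemma~\ref{lem:N-class} by constructing an explicit chain homotopy — a \emph{prism operator} — between the cochain maps $\theta_\bullet^\ast$ and $(\theta'_\bullet)^\ast$, adapting the classical argument for contiguous simplicial maps to the Real pre-simplicial framework of~\cite{Tu1}. First I would use Lemma~\ref{lem:pre-smpl_cover} to replace $\cU_\bullet$ and $\cV_\bullet$ by their associated pre-simplicial Real open covers and arrange that $\theta_\bullet,\theta'_\bullet$ become pre-simplicial Real maps of the index sets. The hypothesis $\cU_\bullet\preceq_N\cV_\bullet$ says precisely that the $N$-skeleton of $\cV_\bullet$ carries an $N$-simplicial Real structure, so the index sets $I_n$ of $\cV_\bullet$ possess face maps $\tilde{\ve}_k$ \emph{and} degeneracy maps $\tilde{\eta}_k$ (compatible with $\ \bar{}\ $) in all degrees $n\leq N$; this is exactly the range of degrees in which the construction below is legitimate. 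The structural input that makes everything work is that both $\theta_n$ and $\theta'_n$ are refinements, so $V^n_i\subseteq U^n_{\theta_n(i)}\cap U^n_{\theta'_n(i)}$ for every $i\in I_n$ — the analogue of contiguity of two simplicial maps.

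Next I would define, for $1\leq n\leq N$, a homomorphism $h^n\colon CR^n(\cU_\bullet,\fF^\bullet)\To CR^{n-1}(\cV_\bullet,\fF^\bullet)$ by the usual alternating sum over an intermediate vertex, schematically
\[(h^nc)_i:=\sum_{k=0}^{n-1}(-1)^k\big(c_{j(i,k)}\big)\big|_{V^{n-1}_i},\]
where, using the face and degeneracy operations of $I_\bullet$ (available in degrees $\leq N$), the index $j(i,k)\in J_n$ is obtained by applying $\theta_\bullet$ to the "first $k+1$ vertices" of $i$ and $\theta'_\bullet$ to the "last $n-k$ vertices"; one sets $h^0=0$ and $h^{N+1}=h^N$ as in the definition of $N$-equivalence. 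The contiguity relation guarantees $V^{n-1}_i\subseteq U^n_{j(i,k)}$, so the restriction makes sense. I would then check that $h^n$ lands in globally Real families: since $\theta_\bullet,\theta'_\bullet$ and the degeneracies are Real, $\overline{j(i,k)}=j(\bar i,k)$, and since the restriction maps of $\fF^\bullet$ intertwine the involutions $\sigma^\bullet$, one gets $(h^nc)_{\bar i}=\sigma^{n-1}_{V^{n-1}_i}\big((h^nc)_i\big)$; hence each $h^n$ is a well-defined group homomorphism.

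The heart of the proof is the homotopy identity
\[\theta_n^\ast-(\theta'_n)^\ast=d^{n-1}\circ h^n+h^{n+1}\circ d^n,\qquad n\leq N,\]
which I would verify by expanding both sides and telescoping, using the pre-simplicial identities~\eqref{eq:face_degen} exactly as in the classical prism computation: the terms $k=0$ and $k=n$ yield $(\theta'_n)^\ast$ and $\theta_n^\ast$ respectively, while the interior terms cancel in pairs. Combined with Step~2 this says $\theta_\bullet^\ast$ and $(\theta'_\bullet)^\ast$ are $N$-equivalent in the sense of~\eqref{eq:equiv_families}. By the remarks following the definition of $N$-equivalence, $N$-equivalent families of cochain maps induce the same homomorphism $HR^n(\cU_\bullet,\fF^\bullet)\To HR^n(\cV_\bullet,\fF^\bullet)$ for every $n\leq N$; applying this to the two refinements shows the induced map is independent of the choice, hence canonical.

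The main obstacle is not the Real structure — that reduces, once one notes $\theta_\bullet,\theta'_\bullet$ are Real, to the naturality of restriction with respect to $\sigma^\bullet$ already recorded before the definition of the differential — but rather the bookkeeping that keeps the prism operator inside the $N$-truncated pre-simplicial cover: one must check that every face and degeneracy operation invoked in defining $h^n$, and in proving the homotopy identity, stays in degrees $\leq N$, which is exactly why the lemma is stated with the $\preceq_N$ hypothesis and only for $n\leq N$. The other delicate point, as always with prism operators, is matching the signs in the telescoping computation.
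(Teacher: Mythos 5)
Your strategy -- a prism-type chain homotopy built from the contiguity relation $V^n_i\subseteq U^n_{\theta_n(i)}\cap U^n_{\theta'_n(i)}$ and the degeneracies supplied by the $\preceq_N$ hypothesis, followed by the remarks after the definition of $N$-equivalence -- is exactly the argument the paper relies on (the paper gives no proof and defers to~\cite[Lemma 4.5]{Tu1}). But there is a genuine gap at the definitional heart of your operator $h^n$: the mixed index $j(i,k)$ is not defined. The index sets $J_n$ are abstract pre-simplicial sets; an element of $J_n$ is not a tuple of vertices, $\theta_m$ and $\theta'_m$ are only defined on $I_m$, and there is no operation assembling ``$\theta$ of the first $k+1$ vertices and $\theta'$ of the last $n-k$ vertices'' into an element of $J_n$. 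Moreover the mixing cannot be avoided: the only elements of $J_n$ naturally produced from $i\in I_{n-1}$ are $\theta_n(\tilde{\eta}_k(i))$ and $\theta'_n(\tilde{\eta}_k(i))$, and already in degree $0$ the required identity $\theta_0^\ast-(\theta'_0)^\ast=h^1\circ d^0$ fails for any such unmixed choice, since for instance $\tilde{\ve}_0(\theta_1(\tilde{\eta}_0(i)))=\theta_0(i)=\tilde{\ve}_1(\theta_1(\tilde{\eta}_0(i)))$ forces the corresponding term of $(d^0c)$ to die after pulling back along $\tilde{\eta}_0$, so it can never produce $c_{\theta_0(i)}-c_{\theta'_0(i)}$ on $V^0_i$.

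The repair -- and this is where the actual content of the lemma sits -- is to work with the indices on which $CR^n(\cU_\bullet,\fF^\bullet)=CR^n_{ss}({}_\natural\cU_\bullet,\fF^\bullet)$ is really built, namely $\Lam_n(J_\bullet)$, whose elements are functions assigning to each nonempty $S\subseteq[n]$ an element of $J_{\#S-1}$. There a mixed index can be specified value by value: use $\theta$ (composed with suitable faces/degeneracies of $I_\bullet$) on subsets of $\{0,\ldots,k\}$, use $\theta'$ on subsets of $\{k,\ldots,n\}$, and make a fixed admissible choice on the straddling subsets, checking in each case that $\tilde{\eta}_k(V^{n-1}_i)$ lands in the corresponding preimage; this uses both the refinement property and the $N$-simplicial compatibility $\tilde{\eta}_k(V^{n-1}_i)\subseteq V^n_{\tilde{\eta}_k(i)}$, which is exactly why the statement is restricted to degrees $n\leq N$. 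Relatedly, your ``$(c_{j(i,k)})|_{V^{n-1}_i}$'' cannot be a mere restriction: $c_{j}$ is a section over an open subset of $X_n$, while the output must live over $V^{n-1}_i\subseteq X_{n-1}$, so the degree shift is effected by the pullback $\tilde{\eta}_k^\ast$ coming from the structure of $(\fF^\bullet,\sigma^\bullet)$ as a Real sheaf over the \emph{simplicial} (not merely pre-simplicial) space. With the homotopy written this way, your telescoping computation, the Real-compatibility of $h^n$ (since $\theta_\bullet,\theta'_\bullet$, the involutions on $\Lam_\bullet$, and the $\sigma^\bullet$ all intertwine), and the passage to cohomology go through as in~\cite{Tu1}; as written, however, the key object of your proof does not typecheck.
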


\begin{ex}
By Lemma~\ref{lem:pre-smpl_cover}, from anyy Real open cover $\cU_\bullet$ of $(X_\bullet,\rho_\bullet)$ and anyy $N\in \NN$, one  can form an $N$-simplicial Real open cover ${}_{\natural^N}\cU_\bullet$ of the $N$-skeleton of $(X_\bullet,\rho_\bullet)$. Next, we define a new Real open cover ${}_\natural\cU_\bullet^N$ by setting
\begin{eqnarray}~\label{eq:U^N}
{}_\natural\cU_n^N := \left\{ \begin{array}{ll}  {}_{\natural^N}\cU_n, & \text{if} \ n\leq N \\ \cU_n, & \text{if} \ n\ge N+1  
\end{array} \right.
\end{eqnarray}
It is clear that the $N$-skeleton of ${}_\natural\cU_\bullet^N$ admits an $N$-simplicial Real structure. Recall that ${}_\natural \cU_\bullet^N$ is indexed by $I_\bullet$, with $I_n=\Lam_n^N$ if $n\leq N$ and $I_n=J_n$ if $n\ge N+1$. Now we get a refinement ${}_N\theta_\bullet: (I_\bullet, \ {}^-)\To (J_\bullet, \ {}^-)$ by setting
\begin{equation}~\label{eq:theta_N}
{}_N\theta_n:= \left\{\begin{array}{ll} 
                             \Lam_n^N\To J_n, \ \lambda \mto \lambda(\Id_{[n]}), & \text{if} \ n\leq N \\
                              \Id: J_n\To J_n, & \text{if} \ n\ge N+1 
 \end{array} \right. \end{equation}
 hence $\cU_\bullet \preceq_N {}_\natural \cU_\bullet^N$ for all $N\in \NN$. In particular, $\cU_\bullet \preceq_0 \cU_\bullet$.
\end{ex}

We deduce from the example above that "$\preceq_h$" is a pre-order in the collection $\mathfrak{N}$. Suppose that $\cU_\bullet \preceq_h \cV_\bullet \preceq_h \cW$ and $K_\bullet \overset{\theta'_\bullet}{\To}I_\bullet \overset{\theta_\bullet}{\To}J_\bullet$ are refinements. Then it is easy to check that the maps $\theta_\bullet^\ast$ and $(\theta'_\bullet)^\ast$ defined by~\eqref{eq:theta-ast} verify the relation $(\theta_n \circ \theta'_n)^\ast=(\theta'_n)^\ast \circ \theta_n^\ast$ for all $n\in \NN$.\\

For $n\in \NN$, we denote by $\mathfrak{N}(n)$ the collection of all elements $\cU_\bullet \in \mathfrak{N}$ such that $\cU_\bullet \preceq_N \cU_\bullet$ for some $N\ge n+1$; \emph{i.e}., $\cU_\bullet \in \mathfrak{N}(n)$ if there is $N\ge n+1$ such that the $N$-skeleton of $\cU_\bullet$ admits an $N$-simplicial Real structure. It is obvious that "$\preceq_h$" is also a preorder in $\mathfrak{N}(n)$. Furthermore, Lemma~\ref{lem:N-class}, states that if $\cU_\bullet \preceq_h \cV_\bullet$ in $\mathfrak{N}(n)$, there is a canonical map $HR^n(\cU_\bullet,\fF^\bullet)\To HR^n(\cV_\bullet,\fF^\bullet)$. It follows that for all $n\in \NN$, the collection $$\left\{HR^n(\cU_\bullet,\fF^\bullet) \ | \ \cU_\bullet \in \mathfrak{N}(n)\right\}$$ is a directed system of groups; this allows us to give the following definition.

\begin{df}
We define the \emph{$n^{th}$ \v{C}ech cohomology group} of $(X_\bullet,\rho_\bullet)$ with coefficients in $(\fF^\bullet,\sigma^\bullet)$ to be the direct limit
\begin{eqnarray}
\check{H}R^n(X_\bullet,\fF^\bullet):=\underset{\cU_\bullet\in \mathfrak{N}(n)}{\varinjlim}HR^n(\cU_\bullet,\fF^\bullet).
\end{eqnarray}
\end{df}

\begin{lem}
For every $\cU_\bullet\in \mathfrak{N}$, pre-simplicial or not, there is a canonical group homomorphism 
\[\theta_{\cU_\bullet}: HR^n(\cU_\bullet,\fF^\bullet)\To \check{H}R^n(X_\bullet,\fF^\bullet),\] for all $n\in \NN$.
\end{lem}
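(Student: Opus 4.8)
The plan is to push a cohomology class over an arbitrary Real open cover into the directed colimit after first refining $\cU_\bullet$ to a cover that already lies in $\mathfrak{N}(n)$. Fix $n\in\NN$ and choose any integer $N\ge n+1$. The Example following Lemma~\ref{lem:N-class} produces the Real open cover ${}_\natural\cU_\bullet^N$ of~\eqref{eq:U^N} together with the refinement ${}_N\theta_\bullet$ of~\eqref{eq:theta_N} witnessing $\cU_\bullet\preceq_N{}_\natural\cU_\bullet^N$. Since the $N$-skeleton of ${}_\natural\cU_\bullet^N$ carries an $N$-simplicial Real structure and $N\ge n+1$, the cover ${}_\natural\cU_\bullet^N$ belongs to $\mathfrak{N}(n)$, so the colimit comes equipped with a coprojection $\iota_N\colon HR^n({}_\natural\cU_\bullet^N,\fF^\bullet)\To\check{H}R^n(X_\bullet,\fF^\bullet)$. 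I then define $\theta_{\cU_\bullet}:=\iota_N\circ{}_N\theta_n^\ast$, where ${}_N\theta_n^\ast$ is the homomorphism~\eqref{eq:theta-ast} induced by the refinement ${}_N\theta_\bullet$. Both factors are group homomorphisms, so $\theta_{\cU_\bullet}$ is one as well.

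The only substantive point is independence of the auxiliary integer $N$, which is where I would spend the effort. Given $n+1\le N\le N'$, I would first check, straight from~\eqref{eq:U^N}, that ${}_\natural\cU_\bullet^{N'}$ is finer than ${}_\natural\cU_\bullet^N$: in degrees $k\le N$ the two covers literally coincide (the set $\Lam^N_k$ does not depend on $N$ once $k\le N$), so the identity works there; in degrees $N<k\le N'$ one uses $\lambda\mapsto\lambda(\Id_{[k]})$ as in~\eqref{eq:theta_N}; and in degrees $k>N'$ one uses the identity again. This gives a refinement $\mu_\bullet$ with $\iota_N=\iota_{N'}\circ\mu_n^\ast$ (the structure map of the directed system is $\mu_n^\ast$ in degree $n$ by Lemma~\ref{lem:N-class}, since $n\le N'$), so it suffices to prove $\mu_n^\ast\circ{}_N\theta_n^\ast={}_{N'}\theta_n^\ast$ on $HR^n(\cU_\bullet,\fF^\bullet)$. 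By the relation $(\theta_n\circ\theta'_n)^\ast=(\theta'_n)^\ast\circ\theta_n^\ast$ recorded before the definition of $\mathfrak{N}(n)$, the left-hand side is $({}_N\theta_n\circ\mu_n)^\ast$; but ${}_N\theta_\bullet\circ\mu_\bullet$ and ${}_{N'}\theta_\bullet$ are both refinements of $\cU_\bullet$ by ${}_\natural\cU_\bullet^{N'}$, and since $\cU_\bullet\preceq_{N'}{}_\natural\cU_\bullet^{N'}$, Lemma~\ref{lem:N-class} shows their induced chain maps are $N'$-equivalent, hence induce the same homomorphism on $HR^k$ for every $k\le N'$, in particular for $k=n$. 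For two arbitrary admissible integers one compares each with their maximum, and independence follows.

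To justify the word "canonical" I would add two consistency checks. First, replacing ${}_N\theta_\bullet$ by any other refinement of $\cU_\bullet$ by ${}_\natural\cU_\bullet^N$ changes nothing, again by Lemma~\ref{lem:N-class}. Second, if $\cU_\bullet$ already lies in $\mathfrak{N}(n)$, then $\cU_\bullet\preceq_h{}_\natural\cU_\bullet^N$ and compatibility of the colimit structure maps gives $\iota_N\circ{}_N\theta_n^\ast$ equal to the tautological coprojection $HR^n(\cU_\bullet,\fF^\bullet)\To\check{H}R^n(X_\bullet,\fF^\bullet)$, so the new notation does not clash with the old. The main obstacle is precisely the bookkeeping in the middle paragraph---identifying ${}_N\theta_\bullet\circ\mu_\bullet$ and ${}_{N'}\theta_\bullet$ as refinements into one and the same cover and then invoking Lemma~\ref{lem:N-class} in the correct range of degrees; everything else is formal.
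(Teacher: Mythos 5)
Your construction is exactly the paper's: the author also defines $\theta_{\cU_\bullet}=p^N_{\cU_\bullet}\circ{}_N\theta_n^\ast$ for some $N\ge n+1$, using the refinement ${}_N\theta_\bullet$ of~\eqref{eq:theta_N} and the coprojection into the colimit. Your additional verification that the result is independent of $N$ (via Lemma~\ref{lem:N-class}) is a welcome check of canonicity that the paper leaves implicit, but the approach is the same.
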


\begin{proof}
For every $\cU_\bullet \in \mathfrak{N}$ (simplicial or not), and for every $n\in \NN$, we define the map $$\theta_{\cU_\bullet}:HR^n(\cU_\bullet,\fF^\bullet)\To \check{H}R^n(X_\bullet,\fF^\bullet)$$ by composing  the canonical homomorphism $${}_N\theta_n^\ast: HR^n(\cU_\bullet,\fF^\bullet)\To HR^n({}_\natural \cU_\bullet^N,\fF^\bullet)$$ with the canonical projection $$p^N_{\cU_\bullet}: HR^n({}_\natural \cU_\bullet^N,\fF^\bullet)\To \check{H}R^n(X_\bullet,\fF^\bullet),$$ for some $N\ge n+1$; \emph{i.e.} $\theta_{\cU_\bullet}=p_{\cU_\bullet}^N \circ {}_N\theta_n^\ast$ (recall that ${}_N\theta_n$ is defined by~\eqref{eq:theta_N}).
\end{proof}

 Let $(\fF^\bullet,\sigma^\bullet)$ and $(\fG^\bullet,\varsigma^\bullet)$ be Abelian Real sheaves on a Real simplicial space $(X_\bullet,\rho_\bullet)$. Suppose that $\phi_\bullet=(\phi_n)_{n\in \NN}: (\fF^\bullet,\sigma^\bullet)\To (\fG^\bullet,\varsigma^\bullet)$ is a morphism of Abelian Real (pre)sheaves, and that $\cU_\bullet$ is a Real open cover of $(X_\bullet,\rho_\bullet)$. Consider the pre-simplicial Real open cover ${}_\natural\cU_\bullet$ associated to $\cU_\bullet$. Then for any $n\in \NN$, and any $\lambda\in \Lam_n$, there is a morphism of Abelian groups
\begin{equation}
\tilde{\phi}_n: \fF^n(U^n_\lambda)\To \fG^n(U^n_\lambda), \mathsf{s}_\lambda \mto \phi_{n|U^n_\lambda}(\mathsf{s}_\lambda),
\end{equation}
satisfying $\varsigma^n_{U^n_\lambda}\circ \tilde{\phi}_n=\tilde{\phi}_n\circ \sigma_{U^n_{\bar{\lambda}}}$. This gives a group homomorphism $$\tilde{\phi}_n:CR^n_{ss}({}_\natural \cU_\bullet,\fF^\bullet)_{\sigma^\bullet}\To CR^n_{ss}({}_\natural\cU_\bullet,\fG^\bullet)_{\varsigma^\bullet}.$$
Moreover, for any $\lambda\in \Lam_{n+1}$ and any $k\in [n+1]$, one has a commutative diagram
\[ 
\xymatrix{ \fF^n(U^n_{\tilde{\ve}_k(\lambda)})\ar[d]_{\tilde{\ve}^\ast_k} \ar[r]^{\phi_{n|U^n_{\tilde{\ve}_k(\lambda)}}} & \fG^n(U^n_{\tilde{\ve}_k(\lambda)}) \ar[d]^{\tilde{\ve}_k^\ast} \\
\fF^{n+1}(U^{n+1}_\lambda) \ar[r]^{\phi_{n+1|U^{n+1}_\lambda}} & \fG^{n+1}(U^{n+1}_\lambda)}
\]
Thus, $d^n\circ \tilde{\phi}_n=\tilde{\phi}_{n+1}\circ d^n$; i.e. one has a commutative diagram
\begin{equation}
\xymatrix{CR^n_{ss}({}_\natural\cU_\bullet,\fF^\bullet)_{\sigma^\bullet}\ar[d]^{\tilde{\phi}_n} \ar[r]^{d^n} & CR^{n+1}_{ss}({}_\natural\cU_\bullet,\fF^\bullet)_{\sigma^\bullet} \ar[d]^{\tilde{\phi}_{n+1}} \\ CR^n_{ss}({}_\natural\cU_\bullet,\fG^\bullet)_{\varsigma^\bullet} \ar[r]^{d^n} & CR^{n+1}_{ss}({}_\natural\cU_\bullet,\fG^\bullet)_{\varsigma^\bullet} }
\end{equation}
that shows that $\phi$ gives rise to a homomorphism of Abelian groups 
\begin{equation}
(\phi_{\cU_\bullet})_\ast: HR^n(\cU_\bullet,\fF^\bullet)_{\sigma^\bullet}\To HR^n(\cU_\bullet,\fG^\bullet)_{\varsigma^\bullet}, \ [c]\mto [\tilde{\phi}_n(c)];
\end{equation}
and therefore a group homomorphism $\phi_\ast: \check{H}R^n(X_\bullet,\fF^\bullet)_{\sigma^\bullet}\To \check{H}R^n(X_\bullet,\fG^\bullet)_{\varsigma^\bullet}$ defined in the obvious way. We thus have shown that $\check{H}R^\ast$ is functorial in the category $\mathsf{Sh}_{\rho_\bullet}(X_\bullet)$.  

\begin{pro}~\label{pro:HR-long-exct-seq}
Suppose $(X_\bullet,\rho_\bullet)$ is a Real simplicial space such that each $X_n$ is paracompact. If 
\[ 0\To (\fF'^\bullet,\sigma'^\bullet)\stackrel{\phi_\bullet'}{\To} (\fF^\bullet,\sigma^\bullet) \stackrel{\phi_\bullet}{\To} (\fF"^\bullet,\sigma"^\bullet)\To 0
\]
is an exact sequence of Real (pre-)sheaves over $(X_\bullet,\rho_\bullet)$, then there is a long exact sequence of Abelian groups 
\[0 \To \check{H}R^0(X_\bullet,\fF'^\bullet) \stackrel{\phi'_\ast}{\To} \check{H}R^0(X_\bullet,\fF^\bullet)\stackrel{\phi_\ast}{\To}\check{H}R^0(X_\bullet,\fF"^\bullet) \stackrel{\partial}{\To} \check{H}R^1(X_\bullet,\fF'^\bullet) \stackrel{\phi'_\ast}{\To} \cdots\]
\end{pro}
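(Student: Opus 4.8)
The plan is to reduce the statement to a standard homological-algebra fact about short exact sequences of complexes, after first upgrading the given short exact sequence of Real sheaves to a short exact sequence of \v Cech cochain complexes for a suitable cofinal family of Real open covers. The key point to be careful about is that exactness of a sequence of sheaves is a \emph{stalkwise} condition, so $\phi_\bullet$ need not be surjective on sections over a fixed open set; paracompactness of each $X_n$ is what will let us correct this after passing to a refinement.

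First I would fix $n$ and a Real open cover $\cU_\bullet\in\mathfrak N(n)$, and consider the sequence of abelian groups
\[
0\To CR^n_{ss}({}_\natural\cU_\bullet,\fF'^\bullet)\stackrel{\tilde\phi'_n}{\To} CR^n_{ss}({}_\natural\cU_\bullet,\fF^\bullet)\stackrel{\tilde\phi_n}{\To} CR^n_{ss}({}_\natural\cU_\bullet,\fF"^\bullet).
\]
Left-exactness here is immediate, since $\fF'^n(U)\To\fF^n(U)$ is injective for every open $U$ and taking globally Real families of local sections is a left-exact operation (it is a fibered product of the groups $\fF'^n(U^n_\lambda)$ along the Real structure maps). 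For surjectivity of $\tilde\phi_n$ we do \emph{not} work with a fixed $\cU_\bullet$: given a globally Real family $(c_\lambda)\in CR^n_{ss}({}_\natural\cU_\bullet,\fF"^\bullet)$, surjectivity of $\fF^n\To\fF"^n$ \emph{on stalks} gives, for each $\lambda$ and each point of $U^n_\lambda$, a local lift; paracompactness of $X_n$ lets us pass to a locally finite refinement on which each $c_\lambda$ lifts to a section of $\fF^n$, and then — replacing the index set by $I'_n\times\{\pm1\}$ as in the Remark after Definition of Real covers — we may arrange the refinement to be Real and the lifts to form a globally Real family, by averaging a lift $s_\lambda$ with $\sigma\bigl(s_{\bar\lambda}\bigr)$ in the (uniquely $2$-divisible, or at least $\sigma$-symmetrizable) sheaf in the standard way; this is exactly where one uses that we are taking a \emph{direct limit} over $\cU_\bullet$. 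So at the level of the directed system one gets short exact sequences of cochain complexes
\[
0\To CR^\bullet_{ss}({}_\natural\cU_\bullet,\fF'^\bullet)\To CR^\bullet_{ss}({}_\natural\cU_\bullet,\fF^\bullet)\To CR^\bullet_{ss}({}_\natural\cU_\bullet,\fF"^\bullet)\To 0
\]
for a cofinal system of covers.

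Next I would invoke the fact that the differentials $d^n$ of Section (the \v Cech differential on $CR^\bullet_{ss}$) commute with the maps $\tilde\phi_n$, $\tilde\phi'_n$ — this was already verified in the commutative square displayed just before Proposition~\ref{pro:HR-long-exct-seq} — so the above is genuinely a short exact sequence of complexes of abelian groups. The Snake Lemma then yields a long exact sequence in cohomology $HR^\ast(\cU_\bullet,\fF'^\bullet)\To HR^\ast(\cU_\bullet,\fF^\bullet)\To HR^\ast(\cU_\bullet,\fF"^\bullet)\To HR^{\ast+1}(\cU_\bullet,\fF'^\bullet)\To\cdots$, with connecting map $\partial$ defined by the usual diagram chase (lift a cocycle in $CR^n_{ss}(\cdot,\fF"^\bullet)$, apply $d$, pull back along $\tilde\phi'$). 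Finally, since a filtered colimit of exact sequences of abelian groups is exact, passing to the direct limit over $\cU_\bullet\in\mathfrak N(n)$ (and checking, routinely, that all the maps $\phi_\ast,\phi'_\ast,\partial$ are compatible with the refinement maps, hence descend to the colimit) produces the asserted long exact sequence in $\check HR^\ast(X_\bullet,-)$, starting from $0\To\check HR^0(X_\bullet,\fF'^\bullet)$ because $CR^{-1}=0$.

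The main obstacle, and the only place the hypotheses really enter, is the surjectivity step: producing, from a Real $n$-cochain with values in $\fF"^\bullet$, a \emph{globally Real} lift with values in $\fF^\bullet$ after refining the cover. One must simultaneously (i) refine enough that stalkwise surjectivity becomes surjectivity on each piece $U^n_\lambda$ — this needs paracompactness of $X_n$ and local finiteness — and (ii) keep the refinement Real and the chosen lifts compatible with the involution, which is handled by the doubling trick $I'\rightsquigarrow I'\times\{\pm1\}$ and by symmetrizing the lift over a set and its conjugate. Once this is in place, everything else is the formal machinery of short exact sequences of complexes and exactness of filtered colimits, exactly as in the non-Real case treated in~\cite{Tu1}.
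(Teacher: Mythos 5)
The paper itself only says the proof is "almost the same as in~\cite[\S4]{Tu1}", so the real question is whether your argument is sound, and there is a genuine gap at the key step. You claim that, after refining, one gets honest short exact sequences of cochain complexes $0\To CR^\bullet_{ss}({}_\natural\cU_\bullet,\fF'^\bullet)\To CR^\bullet_{ss}({}_\natural\cU_\bullet,\fF^\bullet)\To CR^\bullet_{ss}({}_\natural\cU_\bullet,\fF''^\bullet)\To 0$ "for a cofinal system of covers", and you then apply the Snake Lemma cover by cover and pass to the colimit. But stalkwise surjectivity plus paracompactness only lets you lift a \emph{given} cochain with values in $\fF''^\bullet$ after passing to a refinement depending on that cochain; it does not produce any single cover (however fine) on which the cochain-level map is surjective, since that would require every section of $\fF''^n$ over every member $U^n_\lambda$ of the cover to lift to $\fF^n$ --- a condition tied to the vanishing of $HR^1(U^n_\lambda,\fF'^n)$-type obstructions, which cannot be arranged for arbitrary Real sheaves on arbitrary paracompact spaces. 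So the Snake Lemma cannot be invoked cover-wise, and this is exactly the point where the classical proof does something different: either one works with the presheaf quotient $\fH^n(U):=\fF^n(U)/\fF'^n(U)$, which \emph{does} give a short exact sequence of complexes for every cover, and then shows $\check{H}R^\ast(X_\bullet,\fH^\bullet)\cong\check{H}R^\ast(X_\bullet,\fF''^\bullet)$ using paracompactness (vanishing of \v{C}ech cohomology of presheaves with trivial sheafification); or one constructs the connecting map $\partial$ by hand in the colimit, in which case one must also face the facts that the refinement maps $\theta^\ast$ are only canonical up to $N$-equivalence and that $\check{H}R^n$ is a colimit over $\mathfrak{N}(n)$, whose truncation condition depends on $n$ --- issues your "routine compatibility" sentence does not address.

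A secondary, fixable slip: to make the lifts globally Real you propose averaging $s_\lambda$ with $\sigma(s_{\bar\lambda})$, which requires $2$-divisibility and fails precisely for the coefficient sheaves this paper cares about ($\ZZ_2$, $\ZZ^{0,1}$, etc.). No averaging is needed: after the doubling trick the involution on the index set is free, so one chooses lifts on a set of representatives of the index pairs $\{\lambda,\bar\lambda\}$ and \emph{defines} $s_{\bar\lambda}:=\sigma_{U^n_\lambda}(s_\lambda)$, which is again a lift of $c_{\bar\lambda}$ because $\phi_\bullet$ intertwines the Real structures. With that correction, and with the surjectivity step replaced by one of the two repairs above, your outline does follow the intended route of reducing to the argument of~\cite[\S4]{Tu1}.
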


The proof of this proposition is almost the same as in~\cite[\S4]{Tu1}.


\subsection{Comparison with usual groupoid cohomologies}

In this subsection we compare our cohomology with the usual cohomology theory in some special cases, especially with that developed in~\cite{Tu1}. 

\begin{pro}~\label{pro:HR-vs-H}
Suppose $\bfS$ is an Abelian Real group. Let ${}^r\bfS$ be the fixed point subgroup of $\bfS$. Let $(\cG,\rho)$ be a Real groupoid. Then if $\rho$ is trivial , we have 
\[
\check{H}R^\ast(\cG_\bullet,\bfS) = \check{H}^\ast(\cG_\bullet,{}^r\bfS).
\]
In particular, if \ $\bfS$ has no non-trivial fixed point, we have $\check{H}R^\ast(\cG_\bullet,\bfS)=0$.	
\end{pro}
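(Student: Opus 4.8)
The plan is to compute the Real \v{C}ech complex on a convenient cofinal family of Real open covers and then pass to the direct limit. When $\rho=\Id_\cG$ we have $\rho_n=\Id$ on every $\cG_n$, so $(\cG_\bullet,\rho_\bullet)$ is the constant-involution simplicial space; in particular every open cover of $\cG_n$ is invariant and hence becomes a Real open cover once its index set is given the trivial involution. Note also that, $\rho$ being trivial, ${}^r\bfS$ is an ordinary $\cG$-module, so $\check H^\ast(\cG_\bullet,{}^r\bfS)$ is well defined in the sense of~\cite{Tu1}.

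The core observation is the following cochain-level identification. Let $\cU_\bullet$ be a pre-simplicial Real open cover of $(\cG_\bullet,\Id)$ whose index sets $J_\bullet$ carry the trivial involution. Since $\rho_n=\Id$, the Real structure $\sigma^n$ of the coefficient sheaf $\bfS$ acts on a section $\mathsf s\in\fF^n(U)$, $U\subseteq\cG_n$, by postcomposition with the involution of $\bfS$, i.e. $\sigma^n_U(\mathsf s)(x)=\overline{\mathsf s(x)}$. Hence a globally Real family of local sections $(\mathsf s_j)_{j\in J_n}$ satisfies $\mathsf s_j=\mathsf s_{\bar j}=\sigma^n_{U^n_j}(\mathsf s_j)=\overline{\mathsf s_j}$, so each $\mathsf s_j$ is valued in the fixed-point subgroup ${}^r\bfS$. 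This gives a natural isomorphism $CR^n_{ss}(\cU_\bullet,\bfS)\cong C^n(\cU_\bullet,{}^r\bfS)$ onto the usual \v{C}ech $n$-cochains with coefficients in ${}^r\bfS$, and because the Real differential is given by the same alternating sum $\sum_k(-1)^k\tilde\ve_k^\ast$ of restrictions along the face maps, it agrees under this identification with the ordinary \v{C}ech differential. Consequently $HR^n(\cU_\bullet,\bfS)\cong H^n(\cU_\bullet,{}^r\bfS)$ for all such $\cU_\bullet$.

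It then remains to compare the two directed systems: one must check that the Real open covers with trivial index involution, together with the refinement maps ${}_N\theta_\bullet$ of~\eqref{eq:theta_N}, form a cofinal subsystem of $\mathfrak N(n)$ computing the colimit, and that the isomorphisms of the previous paragraph are compatible with the transition maps (both sides being induced by restriction along refinements). Granting this, passing to the limit gives
\[
\check HR^n(\cG_\bullet,\bfS)=\varinjlim_{\cU_\bullet\in\mathfrak N(n)}HR^n(\cU_\bullet,\bfS)\cong\varinjlim_{\cU_\bullet}H^n(\cU_\bullet,{}^r\bfS)=\check H^n(\cG_\bullet,{}^r\bfS),
\]
which is the first assertion; the ``in particular'' clause is then immediate, since ${}^r\bfS=0$ forces $\check H^\ast(\cG_\bullet,0)=0$. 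The main obstacle is precisely this last cofinality/compatibility step: one has to exhibit, Real-equivariantly and compatibly with the (pre-)simplicial structure, refinements of an arbitrary Real open cover by covers carrying the trivial index involution, and verify that the associated comparison maps on $HR^n$ are the canonical ones — the rest of the argument is the routine bookkeeping carried out in~\cite[\S4]{Tu1}.
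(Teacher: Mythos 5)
Your proposal is correct and follows essentially the same route as the paper's own proof: since $\rho$ is trivial one works with Real covers whose index involution is trivial, notes that the Real condition $c_{\bar\lambda}=\sigma(c_\lambda)$ then forces every cochain to take values in the fixed-point subgroup ${}^r\bfS$ (and conversely every ${}^r\bfS$-valued cochain is Real), and passes to the limit over covers. If anything you are more explicit than the paper about the final limit-passage, which the paper compresses into the phrase ``we may take the involution on $J_\bullet$ to be trivial.''
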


Notice that this result generalizes easily to the Real cohomology with coefficients in a Real sheaf induced from a Real $\cG$-module.

\begin{proof}
Let $(c_\lambda)\in ZR^n(\cU_\bullet,\bfS)$. Since $\rho=\Id$, we may take the involution on $J_\bullet$ to be trivial. For every $\overrightarrow{g}\in U^n_\lambda$, we have 
\[
c_\lambda(\overrightarrow{g})=c_\lambda(\overline{\overrightarrow{g}})=\overline{c_\lambda(\overrightarrow{g})} \in {}^r\bfS.
\]
Thus $c_\lambda \in ZR^n(\cU_\bullet,{}^r\bfS)$.

Conversely, we obviously have $\check{H}^n(\cG_\bullet,{}^r\bfS)\subset \check{H}R^n(\cG_\bullet,\bfS)$ since $\rho$ is trivial.	
\end{proof}

\begin{cor}
If $\rho$ and the Real structure of \ $\bfS$  are trivial, then $\check{H}^\ast(\cG_\bullet,\bfS)=\check{H}^\ast(\cG_\bullet,\bfS)$.	
\end{cor}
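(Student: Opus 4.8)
The plan is to read this off directly from Proposition~\ref{pro:HR-vs-H}, so the work is essentially bookkeeping. Since $\rho$ is assumed trivial, that proposition already supplies the isomorphism
\[
\check{H}R^\ast(\cG_\bullet,\bfS)=\check{H}^\ast(\cG_\bullet,{}^r\bfS),
\]
and the only remaining point is to identify the fixed-point subgroup ${}^r\bfS$ with $\bfS$ itself. Recall that ${}^r\bfS=\{s\in\bfS\mid \bar{s}=s\}$, where the bar denotes the involution giving the Real structure on $\bfS$. If that Real structure is the trivial one, i.e.\ the involution equals $\Id_\bfS$, then every element of $\bfS$ is a fixed point, so ${}^r\bfS=\bfS$. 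Substituting into the displayed isomorphism yields $\check{H}R^\ast(\cG_\bullet,\bfS)=\check{H}^\ast(\cG_\bullet,\bfS)$, which is the assertion.

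For transparency it is worth recording the same fact at the level of cochains, which makes clear that no inductive-limit subtlety is hidden. When both $\rho$ and the involution on $\bfS$ are trivial one may take all the index involutions $j\mapsto\bar{j}$ on the covers $\cU_\bullet$ (and hence on the associated pre-simplicial covers ${}_\natural\cU_\bullet$) to be trivial as well; then the defining constraint $\mathsf{s}_{\bar{j}}=\sigma_{U_j}(\mathsf{s}_j)$ for an element of $CR_{ss}(\cU,\fF)$ becomes vacuous, and the differential of~\eqref{df:differential} is precisely the ordinary \v{C}ech differential. Hence $CR^\ast_{ss}({}_\natural\cU_\bullet,\bfS)$ coincides term-by-term, together with its differential, with the \v{C}ech complex $C^\ast({}_\natural\cU_\bullet,\bfS)$ of~\cite{Tu1}; passing to cohomology and then to the directed limit over covers in $\mathfrak{N}(n)$ gives the equality of groups.

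I do not expect any genuine obstacle here: the statement is a formal corollary, and the only thing requiring a moment's attention is that the two directed systems of covers used to form the respective \v{C}ech limits coincide once the Real structures are trivial, which is immediate from the construction of ${}_\natural\cU_\bullet$ and ${}_\natural\cU_\bullet^N$ recalled above.
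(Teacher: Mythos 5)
Your proposal is correct and follows exactly the route the paper intends: the corollary is an immediate consequence of Proposition~\ref{pro:HR-vs-H} once one notes that a trivial involution on $\bfS$ forces ${}^r\bfS=\bfS$, which is precisely what you argue. The additional cochain-level remark is consistent with the proof of that proposition and adds nothing problematic.
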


Let us focus now on the case where $\cG$reduces to a Real space $(X,\tau)$ and $\bfS=\ZZ^{0,1}$. Then $\tau$ induces an action of $\ZZ_2$ on $X$ by $(-1)\cdot x:=\tau(x), (+1)\cdot x:=x$.

\begin{pro}

\begin{itemize}
\item[(i)] $\check{H}R^\ast(X,\ZZ^{0,1})\cong \check{H}^\ast_{(\ZZ_2,-)}(X,\ZZ)$, where the sign "$-$" stands for the $\ZZ_2$-equivariant cohomology with respect to the action of $\ZZ_2$ on $\ZZ$ given by $(-1)\cdot n:=-n, (+1)\cdot n:=n$.
\item[(ii)] $\check{H}^\ast(X,\ZZ)\cong_\QQ \check{H}^\ast_{(\ZZ_2,-)}(X,\ZZ)\oplus \check{H}^\ast_{(\ZZ_2,+)}(X,\ZZ)$, where the sign "$+$" means the trivial $\ZZ_2$-action on $\ZZ$.	
\end{itemize}	
\end{pro}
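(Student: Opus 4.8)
The plan is to handle the two parts by different routes: part (i) by a cover-by-cover identification of cochain complexes, and part (ii) by a rational transfer (averaging) argument.

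For (i), the first point is that a \emph{Real} open cover of the Real space $(X,\tau)$ — an open cover $\cU=(U_j)_{j\in J}$ together with an involution $j\mapsto\bar\jmath$ on $J$ such that $U_{\bar\jmath}=\tau(U_j)$ — is exactly the datum of a $\ZZ_2$-invariant open cover of $X$, that is, of an open cover of the transformation groupoid $X\rtimes\ZZ_2$; moreover the pre-simplicial refinement ${}_\natural\cU_\bullet$ furnished by Lemma~\ref{lem:pre-smpl_cover} is built functorially enough to coincide with the \v{C}ech datum on the nerve of $X\rtimes\ZZ_2$ that computes $\check{H}^\ast_{(\ZZ_2,-)}(X,\ZZ)$. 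The decisive point is that, upon unwinding the definition of a globally Real family of $\ZZ^{0,1}$-valued local sections over ${}_\natural\cU_\bullet$, the reality condition $c_{\overline{\vec\jmath}}(\tau x)=-c_{\vec\jmath}(x)$ is precisely the condition that the family $(c_{\vec\jmath})$ be a $\ZZ_2$-invariant \v{C}ech cochain for the \emph{sign} action of $\ZZ_2$ on the constant sheaf $\ZZ$, and under this dictionary the Real differential~\eqref{df:differential} is carried to the equivariant \v{C}ech differential. This yields, for each Real cover, an isomorphism of complexes $CR^\ast_{ss}({}_\natural\cU_\bullet,\ZZ^{0,1})\cong\bigl(C^\ast(\cU,\underline{\ZZ}_{-})\bigr)^{\ZZ_2}$, natural in $\cU$; passing to the colimit over Real covers (the systems being mutually cofinal) gives $\check{H}R^\ast(X,\ZZ^{0,1})\cong\check{H}^\ast_{(\ZZ_2,-)}(X,\ZZ)$, the colimit on the right being exactly what turns the naive invariant-cochain theory into the genuine (Borel-type) equivariant theory. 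In fact the same identification works for an arbitrary Real abelian group $\bfS$ in place of $\ZZ^{0,1}$, giving $\check{H}R^\ast(X,\bfS)\cong\check{H}^\ast_{\ZZ_2}(X,\bfS)$ for the $\ZZ_2$-action induced by $\tau$ and the coefficient action induced by the Real structure of $\bfS$; part (i) is the case $\bfS=\ZZ^{0,1}$.

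For (ii), I would argue rationally. The involution $\tau$ induces an involution $\tau^\ast$ on $\check{H}^\ast(X;\QQ)=\check{H}^\ast(X,\ZZ)\otimes\QQ$, and since $2$ is invertible the complementary idempotents $\tfrac12(1\pm\tau^\ast)$ split $\check{H}^\ast(X;\QQ)=\check{H}^\ast(X;\QQ)^{+}\oplus\check{H}^\ast(X;\QQ)^{-}$ into the $(+1)$- and $(-1)$-eigenspaces of $\tau^\ast$. It then suffices to identify the two summands with the rational equivariant groups. For this I would invoke the transfer $\mathrm{tr}$ attached to the $\ZZ_2$-action together with the restriction $r$ from equivariant to ordinary cohomology: $\mathrm{tr}\circ r$ is multiplication by $2$ on $\check{H}^\ast_{\ZZ_2}(X;\QQ)$, while $r\circ\mathrm{tr}$ is the norm $1+\tau^\ast$ on $\check{H}^\ast(X;\QQ)$; since $r$ factors through the $\tau^\ast$-invariants, it follows that $r$ identifies $\check{H}^\ast_{(\ZZ_2,+)}(X,\ZZ)\otimes\QQ$ with $\check{H}^\ast(X;\QQ)^{+}$. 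Running the identical argument with coefficients twisted by the sign representation identifies $\check{H}^\ast_{(\ZZ_2,-)}(X,\ZZ)\otimes\QQ$ with $\{v:\tau^\ast v=-v\}=\check{H}^\ast(X;\QQ)^{-}$. Adding the two isomorphisms gives the asserted rational splitting; here one also uses that the direct limit defining \v{C}ech cohomology commutes with $-\otimes\QQ$, so $\check{H}^\ast_{(\ZZ_2,\pm)}(\cdot,\ZZ)\otimes\QQ=\check{H}^\ast_{(\ZZ_2,\pm)}(\cdot,\QQ)$.

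The main obstacle in all of this is the colimit step in (i): there is no deep idea beyond the cochain-level dictionary, but one must carefully match the combinatorics of the $(J_\bullet,{}^-)$-indexed pre-simplicial Real covers of the constant simplicial space $X_\bullet$ with the nerve-level covers of $X\rtimes\ZZ_2$, and verify that the two directed systems of covers are mutually cofinal, so that they have the same limit; the paracompactness hypothesis lurking behind Proposition~\ref{pro:HR-long-exct-seq} is what makes both this comparison and the transfer used in (ii) legitimate. Granting (i), part (ii) is then purely formal.
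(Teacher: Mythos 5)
Your proof of (i) is essentially the paper's: the paper's argument is exactly your cochain-level dictionary --- the reality condition $c_{\bar j_0\ldots\bar j_n}(\tau(x))=-c_{j_0\ldots j_n}(x)$ \emph{is} the $\ZZ_2$-equivariance condition for the sign action on $\ZZ$, with the converse declared easy --- and your more careful matching of Real covers with invariant covers plus the colimit step just fills in that outline. One caveat: the equivariant theory meant here is the naive invariant-cochain \v{C}ech theory, not a Borel-type theory; your parenthetical that the colimit ``turns the naive invariant-cochain theory into the genuine (Borel-type) equivariant theory'' is not correct (for the trivial involution on a point the left-hand side vanishes by Proposition~\ref{pro:HR-vs-H}, whereas Borel cohomology with sign coefficients has $2$-torsion in odd degrees), but nothing in your argument actually uses this gloss, and the statement of (ii) is rational precisely to avoid such torsion. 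For (ii) you take a somewhat different route: you split $\check{H}^\ast(X;\QQ)$ by the idempotents $\tfrac12(1\pm\tau^\ast)$ and identify the two eigenspaces with the rational equivariant groups via restriction and transfer, whereas the paper defines the involution $\tilde{\tau}(c):=-\tau^\ast c$ on $\check{H}^n(X,\ZZ)$, applies Lemma~\ref{lem:decom-Real-group} (decomposition of an abelian group with involution after inverting $2$), and identifies the real part with $\check{H}R^n(X,\ZZ^{0,1})$ --- hence with $\check{H}^n_{(\ZZ_2,-)}(X,\ZZ)$ by part (i) --- and the imaginary part with $\check{H}^n_{(\ZZ_2,+)}(X,\ZZ)$. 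The two arguments are the same averaging over $\ZZ_2$ in different clothing: your transfer formalism is heavier but makes explicit the cocycle-level averaging that the paper dismisses as ``straightforward,'' while the paper's version is more elementary and stays entirely inside its own framework of Real abelian groups.
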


\begin{proof}
\begin{itemize}
	\item[(i)] Let $c\in \check{H}R^n(X,\ZZ^{0,1})$ be represented on the Real open cover $(U_j)$ of $X$. Then $c_{\bar{j}_0...\bar{j}_n}(\tau(x))=-c_{j_0...j_n}(x)$ implies $\tau^\ast c_{j_0...j_n}(x)=-c_{j_0...j_n}(x), \forall x\in X$; in other words, $c$ is $\ZZ_2$-equivariant with respect to the $\ZZ_2$-action "$-$" on $\ZZ$. The converse is easy to check. 
	\item[(ii)] We define the involution $\tilde{\tau}$ on $\check{H}^n(X,\ZZ)$ by $\tilde{\tau}(c):=-\tau^\ast c$. Then it is straightforward that the Real part ${}^r \check{H}^n(X,\ZZ)\cong \check{H}R^n(X,\ZZ^{0,1})$, while the imaginary part ${}^\cI \check{H}^n(X,\ZZ)$ is exactly $\check{H}^n_{(\ZZ_2,+)}(X,\ZZ)$.	
	\end{itemize}	
\end{proof}


\subsection{The group $\check{H}R^0$}

We shall recall the notations of~\cite[Section 4]{Tu1} that we will use throughout the rest of the section. Let $\cU_\bullet$ be a Real open cover of a Real simplicial space $(X_\bullet,\rho_\bullet)$ and let ${}_\natural \cU_\bullet$ be its associated pre-simplicial Real open cover. Recall that any $\vp\in \cP_n^k$ is represented by its image in $[n]$; \emph{i.e.} $\vp=\{\vp(0),...,\vp(k)\}$. Then $\cP_n$ is nothing but the collection of all non empty subsets of $[n]$. Henceforth, any subset $S=\{i_0,...,i_k\}\subseteq [n]$, with $i_0\leq...\leq i_k$, designates the maps $\vp\in \cP_n^k$ such that $\vp(0)=i_0,...,\vp(k)=i_k$.

\begin{nota}~\label{notation-lambda}
With the above observations, any element $\lambda\in \Lam_n$ is represented by a $(2^{n+1}-1)-tuple$ $(\lambda_S)_{\emptyset \neq S \subseteq [n]}$, where the subsets $S$ are ordered first by cardinality, then by lexicographic order; \emph{i.e.} $$S\in \left\{ \{0\},...,\{n\},\{0,1\},...,\{0,n\},\{1,2\},...,\{1,n\},..., \{0,1,2\},....\{0,1,n\}, ...,\{0,...,n\}\right\},$$ and $\lambda_S:=\lambda(S)$. For instance, any element $\lambda\in \Lam_1$ is represented by a triple $(\lambda_0,\lambda_1,\lambda_{01})$, with $\lambda_0=\lambda(\{0\}), \ \lambda_1=\lambda(\{1\})$ and $\lambda_{01}=\lambda(\{0,1\})$.
\end{nota}

Recall that if $(\fF^\bullet,\sigma^\bullet)$ is an abelian Real sheaf over $(X_\bullet,\rho_\bullet)$, we are given two "restriction" maps on the space of global Real sections $\tilde{\ve}_0^\ast, \tilde{\ve}_1^\ast: \fF^0(X_0)_{\sigma^0}\To\fF^1(X_1)_{\sigma^1}$. Let us set
\begin{eqnarray*} 
\Ga_{\text{inv}}(\fF^\bullet)_{\sigma^\bullet}:=\ker\left(\xymatrix{ \fF^0(X_0)_{\sigma^0} \dar[r]^{\tilde{\ve}_0^\ast}_{\tilde{\ve}_1^\ast} & \fF^1(X_1)_{\sigma^1} }\right)=\left\{ \mathsf{s}\in \fF^0(X_0)_{\sigma^0} \ | \ \tilde{\ve}_0^\ast(\mathsf{s})=\tilde{\ve}_1^\ast(\mathsf{s}) \right\}.
\end{eqnarray*}

\begin{pro}(~\cite[Proposition 5.1]{Tu1})
Let $(\fF^\bullet,\sigma^\bullet)$ be an abelian Real sheaf over $(X_\bullet,\rho_\bullet)$ and let $\cU_\bullet$ be a Real open cover of $(X_\bullet,\rho_\bullet)$. Then
\begin{equation}
\check{H}R^0(X_\bullet,\fF^\bullet)_{\sigma^\bullet}\cong HR^0(\cU_\bullet,\fF^\bullet)_{\sigma^\bullet}\cong \Ga_{\text{inv}}(\fF^\bullet)_{\sigma^\bullet}.
\end{equation}
\end{pro}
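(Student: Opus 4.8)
The plan is to compute $\check{H}R^0$ directly from the definitions, exactly mirroring the classical argument of~\cite[Proposition 5.1]{Tu1} but keeping track of the Real structures. Recall that $\check{H}R^0(X_\bullet,\fF^\bullet)=\varinjlim_{\cU_\bullet}HR^0(\cU_\bullet,\fF^\bullet)$, and $HR^0(\cU_\bullet,\fF^\bullet)=ZR^0_{ss}({}_\natural\cU_\bullet,\fF^\bullet)=\ker(d^0:CR^0_{ss}({}_\natural\cU_\bullet,\fF^\bullet)\To CR^1_{ss}({}_\natural\cU_\bullet,\fF^\bullet))$ since there are no $0$-coboundaries. So the first task is to identify the degree-zero cocycle condition with a gluing condition, and the second is to show this gluing is insensitive to the choice of cover, so that passing to the colimit changes nothing.

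First I would unwind $ZR^0_{ss}({}_\natural\cU_\bullet,\fF^\bullet)$. A Real $0$-cochain is a globally Real family $(c_\lambda)_{\lambda\in\Lam_0}$ with $c_\lambda\in\fF^0(U^0_\lambda)$ and $c_{\bar\lambda}=\sigma^0_{U^0_\lambda}(c_\lambda)$. Using the face maps $\tilde\ve_0,\tilde\ve_1:[0]\To[1]$ and the formula~\eqref{df:differential}, the cocycle condition $(d^0c)_\mu=0$ for $\mu\in\Lam_1$ reads $\tilde\ve_1^\ast(c_{\tilde\ve_1(\mu)})=\tilde\ve_0^\ast(c_{\tilde\ve_0(\mu)})$ in $\fF^1(U^1_\mu)$. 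Exactly as in~\cite{Tu1}, for a suitably fine cover this says precisely that the local sections $c_\lambda$ agree on overlaps \emph{after} the simplicial restriction $\tilde\ve_0^\ast,\tilde\ve_1^\ast$ to $X_1$, hence glue to a single section $\mathsf{s}\in\fF^0(X_0)$; the globally-Real condition on the $c_\lambda$ forces $\sigma^0_{X_0}(\mathsf{s})=\mathsf{s}$, i.e. $\mathsf{s}$ is a Real global section, and the cocycle identity becomes exactly $\tilde\ve_0^\ast(\mathsf{s})=\tilde\ve_1^\ast(\mathsf{s})$, which is membership in $\Ga_{\text{inv}}(\fF^\bullet)_{\sigma^\bullet}$. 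Conversely any $\mathsf{s}\in\Ga_{\text{inv}}(\fF^\bullet)_{\sigma^\bullet}$ restricts to a globally Real family of local $0$-cocycles on any cover. This gives a natural isomorphism $HR^0(\cU_\bullet,\fF^\bullet)_{\sigma^\bullet}\cong\Ga_{\text{inv}}(\fF^\bullet)_{\sigma^\bullet}$.

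Next I would observe that the isomorphism just constructed is compatible with refinement maps $\theta^\ast_0:HR^0(\cU_\bullet,\fF^\bullet)\To HR^0(\cV_\bullet,\fF^\bullet)$: restricting a global section to $\cU_\bullet$ and then to $\cV_\bullet$ is the same as restricting it to $\cV_\bullet$ directly, since all the maps in sight are $\fF^0$-restriction maps and the sheaf axioms identify compatible families with global sections. Hence the directed system $\{HR^0(\cU_\bullet,\fF^\bullet)\mid\cU_\bullet\in\mathfrak N(0)\}$ is constant up to canonical isomorphism, its colimit is $\Ga_{\text{inv}}(\fF^\bullet)_{\sigma^\bullet}$, and the canonical map $HR^0(\cU_\bullet,\fF^\bullet)\To\check{H}R^0(X_\bullet,\fF^\bullet)$ is an isomorphism for every $\cU_\bullet$. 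Putting the two isomorphisms together yields the stated chain $\check{H}R^0(X_\bullet,\fF^\bullet)_{\sigma^\bullet}\cong HR^0(\cU_\bullet,\fF^\bullet)_{\sigma^\bullet}\cong\Ga_{\text{inv}}(\fF^\bullet)_{\sigma^\bullet}$.

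The only real subtlety — and the step I expect to need the most care — is bookkeeping with the index-set data of ${}_\natural\cU_\bullet$: the indices $\lambda\in\Lam_0$, $\mu\in\Lam_1$ and the maps $\tilde\ve_k$ on them from Lemma~\ref{lem:pre-smpl_cover}, together with checking that the globally-Real condition $c_{\bar\lambda}=\sigma^0_{U^0_\lambda}(c_\lambda)$ passes through the gluing to give $\sigma^0_{X_0}(\mathsf{s})=\mathsf{s}$. This is entirely formal once one adopts the convention of Notation~\ref{notation-lambda} identifying an element of $\Lam_n$ with its tuple of values on subsets of $[n]$, so in the write-up I would simply cite~\cite[Proposition 5.1]{Tu1} for the underlying (non-Real) gluing argument and add the short remark that the involutions match up.
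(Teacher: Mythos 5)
Your proposal is correct and follows essentially the same route as the paper: unwind the degree-zero cocycle condition over the pre-simplicial cover, glue the globally Real family into a Real global section of $\fF^0$ via the sheaf axioms, and note that the identification is compatible with refinement maps so the directed system is constant and the colimit is again $\Ga_{\text{inv}}(\fF^\bullet)_{\sigma^\bullet}$. The only cosmetic difference is that the paper makes the key descent step explicit — applying the degeneracy $\tilde{\eta}_0^\ast$ to $\tilde{\ve}_0^\ast(\mathsf{s}_{\lambda_1})=\tilde{\ve}_1^\ast(\mathsf{s}_{\lambda_0})$ to get $\mathsf{s}_{\lambda_0}=\mathsf{s}_{\lambda_1}$ on $U^0_{\lambda_0}\cap U^0_{\lambda_1}$ — a mechanism that works for an arbitrary Real open cover, so your ``suitably fine cover'' caveat is unnecessary.
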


\begin{proof}
One identifies $\Lam_0$ with $J_0$. Note that $\cP_1=\{\ve_0^1,\ve_1^1,\Id_{[1]}\}$, and that for any $\lambda=(\lambda_0,\lambda_1,\lambda_{01})$ in $\Lam_1$ one has $\tilde{\ve}_0(\lambda)=\lambda(\ve_0)=\lambda_1$, $\tilde{\ve}_1(\lambda)=\lambda(\ve_1)=\lambda_0$. We thus have $U^1_\lambda=U^1_{\lambda_{01}}\cap \tilde{\ve}_0^{-1}(U^0_{\lambda_1})\cap \tilde{\ve}^{-1}_1(U^0_{\lambda_0})$. Now, let $(\mathsf{s}_{\lambda_0})_{\lambda_0\in J_0}\in ZR^0(\cU_\bullet,\fF^\bullet)_{\sigma^\bullet}$. Then
\begin{equation}~\label{eq:0-cocycl1}
0=(d\mathsf{s})_{(\lambda_0,\lambda_1,\lambda_{01})}=\tilde{\ve}_0^\ast(\mathsf{s}_{\lambda_1})-\tilde{\ve}_1^\ast(\mathsf{s}_{\lambda_0}), \ \text{on} \ U^1_\lambda,
\end{equation}

Therefore, $\tilde{\ve}_0^\ast(\mathsf{s}_{\lambda_1})=\tilde{\ve}^\ast_1(\mathsf{s}_{\lambda_0})$ on $\tilde{\ve}^{-1}_0(U^0_{\lambda_1})\cap \tilde{\ve}^{-1}_1(U^0_{\lambda_0})$, and $\tilde{\ve}_0^\ast(\mathsf{s}_{\bar{\lambda}_1})=\tilde{\ve}^\ast_1(\mathsf{s}_{\bar{\lambda}_0})$ on $\tilde{\ve}^{-1}_0(U^0_{\bar{\lambda}_1})\cap \tilde{\ve}^{-1}_1(U^0_{\bar{\lambda}_0})$, for all $\lambda_0,\lambda_1\in J_0$. Applying $\tilde{\eta}^\ast_0$ to both sides of the above identity, we get that $\mathsf{s}_{\lambda_0}=\mathsf{s}_{\lambda_1}$ and $\mathsf{s}_{\bar{\lambda}_0}=\mathsf{s}_{\bar{\lambda}_1}$; in other words, $\mathsf{s}_{\lambda_0}=\mathsf{s}_{\lambda_1}$ on $U^0_{\lambda_0}\cap U^0_{\lambda_1}$ for all $\lambda_0,\lambda_0\in J_0$. Since $(\fF^0,\sigma^0)$ is a Real sheaf on $(X_0,\rho_0)$, there exists a global Real sections $\mathsf{s}\in \fF^0(X_0)_{\sigma^0}$ such that $\mathsf{s}_{U^0_{\lambda_0}}=\mathsf{s}_{\lambda_0}$ for all $\lambda_0\in J_0$. Now, equation~\eqref{eq:0-cocycl1} is equivalent to $\tilde{\ve}_0^\ast(\mathsf{s})=\tilde{\ve}^\ast_1(\mathsf{s})$; \emph{i.e.}, $\mathsf{s}\in \Ga_{\text{inv}}(\fF^\bullet)_{\sigma^\bullet}$ and this ends the proof.
\end{proof}


\subsection{$\check{H}R^1$ and the Real Picard group}~\label{section:HR1}

Let us consider the same data as in the previous subsection. Let $\cU_\bullet$ be a Real open cover of $(X_\bullet,\rho_\bullet)$. For $\lambda=(\lambda_0,\lambda_1,\lambda_2,\lambda_{01},\lambda_{02},\lambda_{12},\lambda_{012})\in \Lam_2$, one has 
\begin{equation}~\label{eq:U^2_lambda}
U^2_\lambda=\tilde{\vp}_{00}^{-1}(U^0_{\lambda_0})\cap \tilde{\vp}^{-1}_{01}(U^0_{\lambda_1})\cap \tilde{\vp}^{-1}_{02}(U^0_{\lambda_2})\cap \tilde{\ve}^{-1}_2(U^1_{\lambda_{01}})\cap \tilde{\ve}^{-1}_1(U^1_{\lambda_{02}})\cap \tilde{\ve}_0^{-1}(U^1_{\lambda_{12}})\cap U^2_{\lambda_{012}},
\end{equation}
where $\vp_{00}=\ve^2_1\circ \ve^1_1, \vp_{01}=\ve^2_0\circ \ve^1_0$ and $\vp_{02}=\ve^2_1\circ \ve^1_0$. \\
Let $c=(c_{\lambda})_{\lambda \in \Lam_1}\in ZR^1(\cU_\bullet,\fF^\bullet)_{\sigma^\bullet}$. Then
\begin{eqnarray}
0=(dc)_{\lambda_0 \lambda_1\lambda_2\lambda_{01}\lambda_{02}\lambda_{12}\lambda_{012}}=\tilde{\ve}^\ast_0 c_{\lambda_1\lambda_2\lambda_{12}}-\tilde{\ve}^\ast_1 c_{\lambda_0\lambda_2\lambda_{02}}+\tilde{\ve}^\ast_2 c_{\lambda_0\lambda_1\lambda_{02}}, \ \text{on} \ U^2_\lambda,
\end{eqnarray}~\label{eq:1-cocycl1}
and of course we get a similar identities for $(dc)_{\bar{\lambda}_0\bar{\lambda}_1\bar{\lambda}_2\bar{\lambda}_{01}\bar{\lambda}_{02}\bar{\lambda}_{12}\bar{\lambda}_{012}}$ on $U^2_{\bar{\lambda}}$.
Now applying $\tilde{\eta}_1^\ast$ to~\eqref{eq:1-cocycl1}, we obtain 
\[c_{\lambda_0\lambda_1\lambda_{01}}=c_{\lambda_0\lambda_1\lambda_{02}}-c_{\lambda_1\lambda_2\lambda_{12}}\]
on $\tilde{\ve}^{-1}_1(U^0_{\lambda_0})\cap \tilde{\ve}^{-1}_0(U^0_{\lambda_1})\cap \tilde{\ve}^{-1}_0(U^0_{\lambda_2})\cap U^1_{\lambda_{01}}\cap U^1_{\lambda_{02}}\cap U^1_{\lambda_{12}}\cap \tilde{\eta}^{-1}_1(U^2_{\lambda_{012}})$, which means that for any $\lambda_0,\lambda_1,\lambda_{01}\in J_0$, $\mathsf{s}_{\lambda_0\lambda_1\lambda_{01}}$ does not depends on the choice of $\lambda_{01}$. Therefore, there exists a Real family $(f_{\lambda_0\lambda_1})\in \prod_{\lambda_0,\lambda_1\in\Lam_0} \fF^1\left(\tilde{\ve}^{-1}_1(U^0_{\lambda_0})\cap \tilde{\ve}^{-1}_0(U^0_{\lambda_1})\right)$ such that $f_{\lambda_0\lambda_1|U^1_{\lambda_0\lambda_1\lambda_{01}}}=c_{\lambda_0\lambda_1\lambda_{01}}$ for any $(\lambda_0,\lambda_1,\lambda_{01})\in \Lam_1$. Now, the cocycle relation~\eqref{eq:1-cocycl1} becomes
\begin{equation}~\label{eq:1-cocycl2}
\tilde{\ve}^\ast_0 f_{\lambda_1\lambda_2}-\tilde{\ve}^\ast_1 f_{\lambda_0\lambda_2}+\tilde{\ve}^\ast_2 f_{\lambda_0\lambda_1}
\end{equation}
on $U^1_{\lambda_0\lambda_1\lambda_{01}}\cap U^1_{\lambda_{02}}\cap U^1_{\lambda_{12}}$.\\

Let $(\cG,\rho)$ be a locally compact Hausdorff Real groupoid. We are interested in the $1^{st}$ Real \v{C}ech cohomology group of $(\cG_\bullet,\rho_\bullet)$ with coefficients in the Abelian Real sheaf $(\cS^\bullet,\sigma^\bullet)=(\bfS,\sigma)$ over $(\cG_\bullet,\rho_\bullet)$ associated to the Real $\cG$-module $(\Gpdo \times \bfS, \rho\times \ {}^-)$, where $(\bfS,\ {}^-)$ is an Abelian group endowed with the trivial $\cG$-action. Note that in this case, for any pre-simplicial Real open cover $\cU_\bullet\in \mathfrak{N}(n)$ of $(\cG_\bullet,\rho_\bullet)$, elements of the group $CR^n(\cU_\bullet,\cS^\bullet)$ are of the form $(c_\lambda)_{\lambda \in \Lam_n}$, where $c_\lambda\in \Ga(U_\lambda^n,\bfS)$ are such that $c_{\bar{\lambda}}(\rho_n(\overrightarrow{g}))=\overline{c_\lambda(\overrightarrow{g})}\in \bfS$ for any $\overrightarrow{g}\in U^n_\lambda\subset \cG_n$.

\begin{pro}~\label{pro:HR1-H1}
With the above notations, the Real \v{C}ech cohomology group $\check{H}R^1(\cG_\bullet,\bfS)$ is isomorphic to the group $\Hom_{\RG}(\cG,\bfS)$ of isomorphism classes of Real generalized homomorphisms $(\cG,\rho)\To (\bfS,\ {}^-)$.
\end{pro}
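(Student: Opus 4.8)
The plan is to construct mutually inverse isomorphisms of abelian groups between $\check{H}R^1(\cG_\bullet,\bfS)$ and $\Hom_{\RG}(\cG,\bfS)$, by reading a Real $1$-cocycle as the transition data of a Real strict homomorphism into $\bfS$ and then invoking the identification $\RG\cong\RG_\Omega$. First I would reduce to a convenient class of covers: for $n=1$ the Real open covers of $(\cG_\bullet,\rho_\bullet)$ that are induced, via the face maps $\cG_k\To\cG_0$, from a Real open cover $\cU=(U_j)_{j\in J}$ of $\Gpdo=\cG_0$ should be $h$-cofinal in $\mathfrak{N}(1)$ — the Real analogue of the corresponding fact in~\cite{Tu1} — so that $\check{H}R^1(\cG_\bullet,\bfS)=\varinjlim_{\cU}HR^1(\cU_\bullet,\bfS)$ with $\cU$ ranging over Real open covers of $\Gpdo$.

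\textbf{From a cocycle to a Real generalized homomorphism.} Fix such a $\cU$ and a cocycle $c=(c_\lambda)_{\lambda\in\Lam_1}\in ZR^1({}_\natural\cU_\bullet,\bfS)$. I would apply the reduction performed just before the statement — applying the degeneracy $\tilde{\eta}_1^\ast$ to the cocycle identity~\eqref{eq:1-cocycl1} — to obtain a globally Real family $(f_{j_0j_1})$ of continuous maps
\[
f_{j_0j_1}\colon\ \{g\in\cG\mid r(g)\in U_{j_0},\ s(g)\in U_{j_1}\}\ \To\ \bfS
\]
satisfying~\eqref{eq:1-cocycl2}, i.e.\ $f_{j_0j_1}(g_1)\,f_{j_1j_2}(g_2)=f_{j_0j_2}(g_1g_2)$ for $(g_1,g_2)\in\cG^{(2)}$, together with the Real condition $f_{\bar j_0\bar j_1}(\bar g)=\overline{f_{j_0j_1}(g)}$. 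Since $\bfS$ is the coefficient group, forcing the grading to be $0$, this is exactly a Real strict homomorphism $f_\cU\colon\cG[\cU]\To\bfS$, $(j_0,g,j_1)\mapsto f_{j_0j_1}(g)$; feeding $(\cU,f_\cU)$ through the inverse $\mathsf{Z}\colon\RG_\Omega\To\RG$ of the isomorphism of categories $\mathsf{F}$ produces a Real generalized homomorphism $\mathsf{Z}(\cU,f_\cU)=Z_{f_\cU}\circ Z_{\iota_\cU}^{-1}\colon\cG\To\bfS$. Changing $c$ by a coboundary $d^0u$, with $u=(u_j)$ a globally Real $0$-cochain ($u_{\bar j}=\bar u_j$), replaces $f_{j_0j_1}(g)$ by $u_{j_0}(r(g))\,f_{j_0j_1}(g)\,u_{j_1}(s(g))^{-1}$, so by the Lemma characterizing when two Real strict homomorphisms induce isomorphic Real generalized homomorphisms the class of $\mathsf{Z}(\cU,f_\cU)$ in $\Hom_{\RG}(\cG,\bfS)$ is unchanged; compatibility with refinements then yields a homomorphism $\Theta\colon\check{H}R^1(\cG_\bullet,\bfS)\To\Hom_{\RG}(\cG,\bfS)$.

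\textbf{From a Real generalized homomorphism to a cocycle.} Conversely, given $(Z,\tau)\colon(\cG,\rho)\To(\bfS,\ {}^-)$, I would use that the moment map $\fr\colon Z\To\Gpdo$ is a locally trivial principal Real $\bfS$-bundle carrying a commuting Real $\cG$-action, and choose — by the Lemma that any such bundle admits a globally Real family of local sections — a globally Real family $(\mathsf{s}_j\colon U_j\To Z)$ over a Real open cover $\cU$ of $\Gpdo$. For $g$ with $r(g)\in U_{j_0}$ and $s(g)\in U_{j_1}$, the element $f_{j_0j_1}(g)\in\bfS$ determined by $g\cdot\mathsf{s}_{j_1}(s(g))=\mathsf{s}_{j_0}(r(g))\cdot f_{j_0j_1}(g)$ satisfies the cocycle relation by associativity of the actions, and, since the sections are globally Real and the actions are Real, the Real condition $f_{\bar j_0\bar j_1}(\bar g)=\overline{f_{j_0j_1}(g)}$. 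Reversing the reduction of the previous step then produces a Real $1$-cocycle whose class in $\check{H}R^1(\cG_\bullet,\bfS)$ is independent of the chosen sections and of $\cU$ (a different globally Real family changes $(f_{j_0j_1})$ by a gauge transformation; refinement passes to the limit). I would then check directly that this assignment is inverse to $\Theta$, so $\Theta$ is bijective.

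\textbf{Group structures and the main difficulty.} Finally I would check that $\Theta$ is additive: $\Hom_{\RG}(\cG,\bfS)$ is an abelian group under the Baer-type product $P_1\times_{\Gpdo}P_2/\bfS$ of principal Real $\bfS$-bundles-with-$\cG$-action — with neutral element the class of the trivial strict homomorphism $\cG\To\bfS$ and inverse the conjugate bundle — and the transition functions of $P_1\times_{\Gpdo}P_2/\bfS$ are $f^{(1)}_{j_0j_1}f^{(2)}_{j_0j_1}$, which matches addition of $1$-cochains; hence $\Theta$ is an isomorphism of abelian groups. I expect the main obstacle to be the systematic bookkeeping of the Real structures: one must use \emph{globally Real} families of local sections throughout, so that every intermediate identification ($c\leftrightarrow f_\cU\leftrightarrow Z$) is genuinely $\rho$-equivariant, and one must verify in the Real setting both the gauge-equivalence lemma for strict homomorphisms and the cofinality/inductive-limit argument over $\mathfrak{N}(1)$ that passes from cover-level statements to $\check{H}R^1(\cG_\bullet,\bfS)$ itself; the non-Real content is entirely parallel to~\cite{Tu1}.
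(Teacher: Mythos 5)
Your proposal is correct and follows essentially the same route as the paper: in both, the map to cohomology comes from choosing a globally Real family of local sections of the principal Real $\bfS$-bundle $\fr\colon Z\To\Gpdo$ and extracting the transition cocycle $g\cdot\mathsf{s}_{j_1}(s(g))=\mathsf{s}_{j_0}(r(g))\cdot c_{j_0j_1}(g)$, while the inverse assembles a Real generalized homomorphism from a $1$-cocycle, with coboundaries matching gauge changes of sections. Your packaging of the inverse through $(\cU,f_\cU)$ and the functor $\mathsf{Z}\colon\RG_\Omega\To\RG$ yields the same object as the paper's explicit $Z=\coprod_{\lambda_0}U_{\lambda_0}\times\bfS$ with the $\cG$-action twisted by the cocycle, so the difference is only presentational.
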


\begin{proof}
The operations in $\Hom_{\RG}(\cG,S)$ are defined as follows. If $(Z,\tau), (Z',\tau'):(\cG,\rho)\To (S,\ {}^-)$ are Real generalized homomorphisms, their sum is 
\begin{equation}~\label{eq:sum-S-bdl}
(Z,\tau)+(Z',\tau'):=Z\times_{\Gpdo}Z'/_{\sim}
\end{equation}
where $(z,z')\sim (z\cdot t^{-1},z'\cdot t)$ for all $t\in \bfS$, together with the obvious Real structure $\tau\times \tau'$. The inverse of $(Z,\tau)$ is $(Z^{-1},\tau)$, where $Z^{-1}$ is $Z$ as a topological space, and if $\flat: Z\hookrightarrow Z^{-1}$ is the identity map, then the $S$-action on $Z^{-1}$ is defined by $\flat(z)\cdot t:=\flat(z\cdot t^{-1})$ and the $\cG$-action is defined as follows: $(g,\flat(z))\in \cG\ltimes Z^{-1}$ if and only if $(g,z)\in \cG\ltimes Z$, in which case we set $g\cdot \flat(z):=\flat(g\cdot z)$. Finally, the Real structure on $Z^{-1}$ is $\tau(\flat(z)):=\flat(\tau(z))$. Then we define the sum in $\Hom_{\RG}(\cG,S)$ by $[Z,\tau]+[Z',\tau']:=[(Z,\tau)+(Z',\tau')]$, and we put $[Z,\tau]^{-1}:=[(Z^{-1},\tau)]$. It is not hard to check that subject to these operations, $\Hom_{\RG}(\cG,S)$ is an Abelian group.\

Now, suppose we are given a Real open cover $\cU_0=(U^0_j)_{j\in J_0}$ of $(\Gpdo,\rho)$ trivializing the Real generalized homomorphism $(Z,\tau):(\cG,\rho)\To (S,\ {}^-)$. Let $(\mathsf{s}_j)_{j\in J_0}$ be a Real family of local sections of the $S$-principal Real bundle $\fr: (Z,\tau)\To(\Gpdo,\rho)$. Form a pre-simplicial Real open cover $\cU_\bullet$ of the Real simplicial space $(\cG_\bullet,\rho_\bullet)$ by setting $J_n:=J_0^{n+1}$, $\cU_n:=(U^n_{(j_0,...,j_n)})_{(j_0,...,j_n)\in J_n}$, where
\begin{equation}~\label{eq:G-cover}
U^n_{(j_0,...,j_n)}:=\left\{(g_1,...,g_n)\in \cG_n \ | \ r(g_1)\in U^0_{j_0},...,r(g_n)\in U^0_{j_{n-1}}, s(g_n)\in U^0_{j_n}\right\}.
\end{equation}
Then, for all $g\in U^1_{(j_0,j_1)}$, $\fr(g\cdot\mathsf{s}_{j_1}(s(g)))=r(g)=\fr(\mathsf{s}_{j_0}(r(g)))$; hence, there exists a unique element $c_{j_0j_1}(g)\in S$ such that $g\cdot \mathsf{s}_{j_1}(s(g))=\mathsf{s}_{j_0}(r(g))\cdot c_{j_0j_1}(g)$. We then obtain a family of continuous functions $c_{j_0j_1}:U^1_{(j_0,j_1)}\To S$ such that 
\begin{equation}
g\cdot\mathsf{s}_{j_1}(s(g))=\mathsf{s}_{j_0}(r(g))\cdot c_{j_0j_1}(g), \ \forall g\in U^1_{(j_0,j_1)}.
\end{equation}
Furthermore, notice that $U^1_{(j_0,j_1)}=\tilde{\ve}^{-1}_0(U^0_{j_1})\cap \tilde{\ve}^{-1}_1(U^0_{j_0})$. Let $(g_1,g_2)\in U^2_{(j_0,j_1,j_2)}$. Then
\begin{align*}
(g_1g_2)\cdot \mathsf{s}_{j_2}(s(g_2)) & =  g_1\cdot\mathsf{s}_{j_1}(r(g_2))\cdot c_{j_1j_2}(g_2) = g_1\cdot\mathsf{s}_{j_1}(s(g_1))\cdot c_{j_1j_2}(g_2) \\
 & = \mathsf{s}_{j_0}(r(g_1))\cdot c_{j_0j_1}(g_1)\cdot c_{j_1j_2}(g_2);
\end{align*}
hence $c_{j_0j_2}(g_1g_2)=c_{j_0j_1}(g_1)\cdot c_{j_1j_2}(g_2)$. In other words, $$\tilde{\ve}^\ast_0 c_{\tilde{\ve}_0(j_0,j_1,j_2)}\cdot(\tilde{\ve}^\ast_1 c_{\tilde{\ve}_1(j_0,j_1,j_2)})^{-1}\cdot\tilde{\ve}^\ast_2 c_{\tilde{\ve}_2(j_0,j_1,j_2)}=1$$ over all $U^2_{(j_0,j_1,j_2)}$. Moreover, we clearly have $c_{\bar{j}_0\bar{j}_1}(\rho(g))=\overline{c_{j_0j_1}(g)}\in S$. This gives us a Real $1$-cocycle $(c_{j_0j_1})_{(j_0,j_1)\in J_1}\in ZR^1(\cU_\bullet,\cS^\bullet)$. \

Suppose $f:(Z,\tau)\To (Z',\tau')$ is an isomorphism of Real generalized morphisms (see chapter 2). Up to a refinement, we can choose $\cU_0$ in such a way that we have two Real families $(\mathsf{s}_j)_{j\in J_0}, \ (\mathsf{s}')_{j\in J_0}$ of local sections of the Real projections $\fr: (Z,\tau)\To (X,\rho)$ and $\fr':(Z',\tau')\To (X,\rho)$ respectively. Since for all $j\in J_0$ and $x\in U_j$, $\fr'(f_{U_j}(\mathsf{s}_j)(x))=\fr(\mathsf{s}_j(x))=x=\fr'(\mathsf{s}_j'(x))$, there exists a unique element $\vp_j(x)\in S$ such that $\mathsf{s}_j'(x)=f_{U_j}(\mathsf{s}_j(x))\cdot\vp_j(x)$, and this gives a Real family of continuous functions $\vp_j:U_j\To S$. It follows that if $c=(c_{j_0j_1})$ and $c'=(c_{j_0j_1}')$ are the Real $1$-cocycle associated to $(Z,\tau)$ and $(Z',\tau')$ respectively. Then, over $U^1_{(j_0,j_1)}$, one has
\[
g\cdot f_{U_{j_1}}(\mathsf{s}_{j_1}(s(g)))\cdot \vp_{j_1} = f_{U_{j_0}}(\mathsf{s}_{j_0}(r(g)))\cdot\vp_{j_0}(r(g))\cdot c_{j_0j_1}'(g);
\]
But, since $f$ is $\cG$-$S$-equivariant, we get
\[
f_{U_{j_0}(\mathsf{s}_{j_0}(r(g)))}\cdot c_{j_0j_1}(g)\cdot\vp_{j_1}(s(g))=f_{U_{j_0}}(\mathsf{s}_{j_0}(r(g)))\cdot\vp_{j_0}(r(g))\cdot c_{j_0j_1}'(g);
\]
thus $c_{j_0j_1}'(g)\cdot c_{j_0j_1}^{-1}(g)=\vp_{j_1}(s(g))\cdot \vp_{j_0}(r(g))^{-1}$, or $(c'\cdot c^{-1})_{(j_0,j_1)}=\tilde{\ve}_0^\ast \vp_{\tilde{\ve}_0(j_0,j_1)}\cdot \tilde{\ve}_1^\ast \vp_{\tilde{\ve}_1(j_0,j_1)}^{-1}$ for all $(j_0,j_1)\in J_1$. This shows that $c'.c^{-1}\in BR^1(\cU_\bullet,\bfS)$. We then deduce a well defined group homomorphism
\begin{equation}
c_1: \Hom_{\RG}(\cG,\bfS)\To \check{H}R^1(\cG_\bullet,\bfS), \ c_1([Z,\tau]):=[c_{j_0j_1}]\in HR^1(\cU_\bullet,\bfS),
\end{equation}
where $\cU_\bullet$ is the Real open cover defined from any Real local trivialization of $(Z,\tau)$.\\

Conversely, given a Real \v{C}ech $1$-cocycle $c=(c_{\lambda_0\lambda_1})$ over a pre-simplicial Real open cover $\cU_\bullet \in \mathfrak{N}(1)$, we let $Z:=\coprod_{\lambda_0\in \Lam_0}U_{\lambda_0}\times \bfS$, together with the Real structure $\nu$ defined by $\nu(x,t):=(\rho(x),\bar{t})$, and equipped with the Real $\cG$-action $g\cdot(s(g),t):=(r(g),c_{\lambda_0\lambda_1}(g)\cdot t)$ for any $g\in U^1_{\lambda_0\lambda_1\lambda_{01}}, \ t\in S$, and the obvious Real $S$-action. It is easy to see that the canonical projections define a Real generalized morphism $(Z,\nu):(\cG,\rho)\To (\bfS,\ {}^-)$. One can check that if $[c]=[c']$ then $(Z,\tau)\cong (Z',\tau')$ by working backwards.
\end{proof}

\begin{rem}~\label{rem:non-abelian}
Suppose that $(\bfS,\sigma)$ is a non-abelian Real group. Then we still can talk about \v{C}ech Real $1$-cocycles on $(\cG_\bullet,\rho_\bullet)$ with coefficients on the non-Abelian Real sheaf $(\cS^\bullet,\sigma^\bullet)$, and then form in the same way $\check{H}R^1(\cG_\bullet,\cS^\bullet)$ as a set. However, there is no reason for $\check{H}R^1(\cG_\bullet,\bfS)$ to be an Abelian group, it is not even a group since the sum of a Real $1$-cocycle is not necessarily a Real $1$-cocycle. Nevertheless, the result above remains valid in the sense that there is a bijection between the set $\Hom_{\RG}(\cG,\bfS)$ of isomorphism classes of generalized Real morphism $(\cG,\rho)\To (\bfS,\sigma)$ and the set $\check{H}R^1(\cG_\bullet,\bfS)$.
\end{rem}

A particular example of Proposition~\ref{pro:HR1-H1} is when $S=\uc$ together with the complex conjugation as Real structure; in this case, the associated Real sheaf is denoted by $\uc$ as mentioned earlier. It is well known that the \emph{Picard} group $\operatorname{Pic}(X)$ of a locally compact topological space $X$ is isomorphic to the $1^{st}$ sheaf cohomology group $H^1(X,\underline{\uc}_X)$ (see for instance~\cite[chap.2]{Bry}). In the Real case, we shall introduce the Real Picard group $\PicR(\cG)$ of a Real groupoid, and we will apply Proposition~\ref{pro:HR1-H1} to get an analogous result.

\begin{df}[Real line $\cG$-bundle]
\begin{enumerate}
\item By a \emph{Real line $\cG$-bundle} we mean a Real $\cG$-space $(\cL,\nu)$, and a continuous surjective Real map $\pi: (\cL,\nu)\To (\Gpdo,\rho)$ such that $\pi: \cL\To \Gpdo$ is a  complex vector bundle of rank $1$, and such that for every $x\in \Gpdo$, the induced isomorphism $\nu_x: \cL_x\To \cL_{\rho(x)}$ is $\CC$-anti-linear in the sense that $\nu_x(v\cdot z)=\nu_x(v)\cdot\bar{z}$.\\

\item A homomorphism from a Real line $\cG$-bundle $(\cL,\nu)$  to a Real line $\cG$-bundle $(\cL',\nu')$ is a homormophism of complex vector bundles $\phi: \cL\To \cL'$ intertwining the Real structures and which is $\cG$-equivariant; \emph{i.e.} $\phi(g\cdot v)=g\cdot\phi(v)$ for any $(g,v)\in \cG\ltimes \cL$.\\

\item We say that a Real line $\cG$-bundle $(\cL,\nu)$ is \emph{locally trivial} if there exists a Real open cover $\cU$ of $(\Gpdo,\rho)$, and a family of isomorphisms of complex vector bundles $\vp_j: U_j\times \CC \To \cL_{|U_j}$ such that 

\begin{itemize}
	\item $\vp_{\bar{j}}(\rho(x),\bar{z})=\nu_{_{U_j}}(\vp_j(x,z))$ for all $x\in U_j$ and $(x,z)\in U_j\times \CC$,
	\item if $r(g)\in U_{j_0}$ and $s(g)\in U_{j_1}$, then one has $g.\vp_{j_1}(s(g),z)=\vp_{j_0}(r(g),z)$.
\end{itemize}
	
\end{enumerate}
\end{df}

\begin{ex}
The trivial action $\cG$ on $\Gpdo\times \CC$ (\emph{i.e.} $g\cdot(s(g),z):=(r(g),z)$) is Real; moreover, the canonical projection $\Gpdo\times \CC\To \Gpdo$ defines a Real line $\cG$-bundle that we call \emph{trivial}.
\end{ex}

\begin{df}[Real hermitian $\cG$-metric]
Let $(\cL,\nu)$ be a locally trivial Real line $\cG$-bundle. A \emph{Real hermitian $\cG$-metric} on $(\cL,\nu)$ is a continuous function $\textrm{h}: \cL \To \RR_+ $ such that
\begin{itemize}
\item $\textrm{h}(\nu(v))=\textrm{h}(v)$, and $\textrm{h}(v\cdot z)=\textrm{h}(v)\cdot|z|^2$, for all $v\in \cL, \ z\in \CC$;
\item $\textrm{h}(g\cdot v)=\textrm{h}(v)$, for all $(g,v)\in \cG\ltimes \cL$, and
\item $\textrm{h}(v) > 0$ whenever $v\in \cL^+:=\cL \smallsetminus \oldstylenums{0}$, where $\oldstylenums{0} : \Gpdo \hookrightarrow \cL$ is the zero-section. 
\end{itemize}
If such $\textrm{h}$ exists, $(\cL,\nu,\textrm{h})$ is called a \emph{hermitian Real line $\cG$-bundle} (we will often omit the metric).
\end{df}

\begin{df}[The Real Picard group]
The \emph{Real Picard group} of $(\cG,\rho)$ is defined as the set of isomorphism classes of locally trivial hermitian Real line $\cG$-bundles. This "group" is denoted by $\PicR(\cG)$.
\end{df}

\begin{thm}~\label{thm:Rpic}(compare with~\cite[Theorem 2.1.8]{Bry}).
Let $(\cG,\rho)$ be a locally compact Hausdorff Real groupoid. Then $\PicR(\cG)$ is an Abelian group. Furthermore,
\[\PicR(\cG)\cong \check{H}R^1(\cG_\bullet,\uc).\] 
\end{thm}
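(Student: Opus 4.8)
The plan is to prove the theorem in two stages: first establish that $\PicR(\cG)$ is an abelian group, and then construct the isomorphism with $\check{H}R^1(\cG_\bullet,\uc)$ by exhibiting mutually inverse maps to and from $\Hom_{\RG}(\cG,\uc)$ and invoking Proposition~\ref{pro:HR1-H1}.

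\textbf{Step 1: The group structure on $\PicR(\cG)$.} Given two locally trivial hermitian Real line $\cG$-bundles $(\cL,\nu,\mathrm{h})$ and $(\cL',\nu',\mathrm{h}')$, I would define their sum to be the fibrewise tensor product $\cL\otimes_{\Gpdo}\cL'$, equipped with the Real structure $\nu\otimes\nu'$ (which is again $\CC$-antilinear on fibres since the product of two conjugations is a conjugation), the diagonal $\cG$-action $g\cdot(v\otimes v'):=(g\cdot v)\otimes(g\cdot v')$, and the metric $\mathrm{h}\otimes\mathrm{h}'$. Local triviality of the tensor product follows by taking a common Real refinement of the two trivializing covers and tensoring the local frames. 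The zero element is the class of the trivial Real line $\cG$-bundle $\Gpdo\times\CC$. The inverse of $(\cL,\nu,\mathrm{h})$ is its conjugate-dual $\overline{\cL}^{\,\ast}$: using the hermitian metric $\mathrm{h}$ one gets a $\CC$-antilinear bundle isomorphism $\cL\cong\cL^\ast$, so $\cL\otimes_{\Gpdo}\overline{\cL}^{\,\ast}\cong\Gpdo\times\CC$ canonically and $\cG$-equivariantly, compatibly with Real structures. Associativity and commutativity are inherited from the tensor product. This is all routine and I would state it compactly.

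\textbf{Step 2: The isomorphism.} The key observation is that a hermitian Real line $\cG$-bundle is essentially the same data as a Real $\uc$-principal bundle over $\Gpdo$ with compatible $\cG$-action, i.e. a Real generalized homomorphism $(\cG,\rho)\To(\uc,\bar{\ \ })$ in the sense of Definition~\ref{gen-hom}. Concretely, to $(\cL,\nu,\mathrm{h})$ I associate the unit-sphere bundle $Z:=\cL^+/\RR_+^\times$ — equivalently $\{v\in\cL\mid \mathrm{h}(v)=1\}$ — with $\fs=\fr=\pi$, the restricted Real structure $\tau:=\nu|_Z$ (well defined since $\mathrm{h}(\nu(v))=\mathrm{h}(v)$), the $\cG$-action restricted from $\cL$ (well defined since $\mathrm{h}(g\cdot v)=\mathrm{h}(v)$), and the obvious $\uc$-action by scalar multiplication. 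Local triviality of $\cL$ plus the hermitian structure makes $\pi:Z\To\Gpdo$ a locally trivial $\uc$-principal Real bundle, and the local-triviality compatibility in the definition of Real line $\cG$-bundle is exactly the condition making this a Real generalized homomorphism. Conversely, from a Real generalized homomorphism $(Z,\tau):(\cG,\rho)\To(\uc,\bar{\ \ })$ I form the associated complex line bundle $\cL:=Z\times_{\uc}\CC=(Z\times\CC)/_{(z\lambda,w)\sim(z,\lambda w)}$, with $\nu[z,w]:=[\tau(z),\bar{w}]$, the induced $\cG$-action, and the metric $\mathrm{h}[z,w]:=|w|^2$; one checks each axiom directly. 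These two assignments are mutually inverse up to isomorphism and are additive: the sum of Real line bundles via tensor product corresponds to the sum \eqref{eq:sum-S-bdl} of Real generalized homomorphisms, because $(Z\otimes Z')\times_{\uc}\CC\cong(Z\times_{\Gpdo}Z'/{\sim})\times_\uc\CC$ fibrewise. Hence $\PicR(\cG)\cong\Hom_{\RG}(\cG,\uc)$ as abelian groups, and composing with the isomorphism $\Hom_{\RG}(\cG,\uc)\cong\check{H}R^1(\cG_\bullet,\uc)$ of Proposition~\ref{pro:HR1-H1} finishes the proof.

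\textbf{Main obstacle.} The routine verifications are numerous but genuinely straightforward; the one point requiring care is checking that all constructions respect the Real structures coherently — in particular that the conjugate-dual inverse in Step 1 yields a Real (not merely complex) trivialization, and that the sphere-bundle/line-bundle correspondence in Step 2 is compatible with the involutions on both sides (the condition $\vp_{\bar j}(\rho(x),\bar z)=\nu_{U_j}(\vp_j(x,z))$ must match the globally-Real-section condition $\mathsf{s}_{\bar j}=\sigma_{U_j}(\mathsf{s}_j)$ used in Proposition~\ref{pro:HR1-H1}). I expect the bookkeeping of Real structures under tensor products and associated-bundle constructions to be the only place where an error could slip in, so I would carry out those checks explicitly while treating the purely algebraic parts (associativity, the cocycle manipulations) as inherited from the non-Real case and from Proposition~\ref{pro:HR1-H1}.
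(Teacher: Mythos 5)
Your proposal is correct and follows essentially the same route as the paper: the paper also passes through the unit-circle bundle $\cL^1=\{v\in\cL\mid \mathrm{h}(v)=1\}$ to get a Real generalized homomorphism $(\cG,\rho)\To(\uc,\bar{\phantom{z}})$, uses the associated bundle $\tilde{\cL}\times_{\uc}\CC$ for the converse, and then invokes Proposition~\ref{pro:HR1-H1}. Your Step 1 and the additivity check merely spell out details the paper leaves implicit ("clearly an isomorphism of Abelian groups"), so there is no substantive difference.
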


\begin{proof}
Associated to any hermitian Real line $\cG$-bundle $\pi:(\cL,\nu)\To (\Gpdo,\rho)$, there is a Real generalized morphism $(\cL^1,\nu):(\cG,\rho)\To (\uc,\ {}^-)$ obtained by setting
\begin{equation}
\cL^1:=\left\{v\in \cL \ | \ \textrm{h}(v)=1 \right\}.
\end{equation}
$\pi: (\cL^1,\nu)\To (\Gpdo,\rho)$ is indeed an $\uc$-principal Real bundle, and $\cL^1$ is invariant under the action of $\cG$. Hence $(\cL^1,\nu)$ is indeed a Real generalized morphism. Conversely, if $(\tilde{\cL},\tilde{\nu}):(\cG,\rho)\To (\uc,\ {}^-)$ is a Real generalized morphism, define $\cL:=\tilde{\cL}\times_{\uc}\CC$, where $\uc$ acts by multiplication on $\CC$; $\nu(v,z):=(\tilde{\nu}(v),\bar{z})$, $g\cdot (v,z):=(g\cdot v,z)$ for $(g,v)\in \cG\ltimes \tilde{\cL}$, and $\textrm{h}(v,z):=|z|^2$. Then $(\cL,\nu,\textrm{h})$ is a hermitian Real line $\cG$-bundle. Moreover, it is not hard to check that if $(\cL,\nu,\textrm{h})$ and $(\cL',\nu',\textrm{h}')$ are isomorphic hermitian Real line $\cG$-bundles, then their associated Real generalized homomorphisms $(\cL^1,\nu)$ and $((\cL')^1,\nu')$ are isomorphic. We then have a map
\begin{equation}
\PicR(\cG)\To H^1(\cG,\uc)_{\rho}, \ [(\cL,\nu,\textrm{h})]\mto [\cL^1,\nu]
\end{equation}
which is clearly an isomorphism of Abelian groups. Now, applying Proposition~\ref{pro:HR1-H1}, we get the desired result. 
\end{proof}


\subsection{$\check{H}R^2$ and Ungraded Real extensions}

Let us consider the subgroup $\Rext^+(\Ga,\bfS)$ of ungraded Real $\bfS$-twists of the Real groupoid $\Ga$; \emph{i.e.} $(\wGa,\del)\in \Rext^+(\Ga,\bfS)$ if $\del=0$. Similarly, we define the subgroup $\wRExt^+(\cG,S)$ of $\wRExt(\cG,\bfS)$ of ungraded Real $\bfS$-central extensions over $\cG$. Elements of $\wRExt^+(\cG,\bfS)$ will then be denoted by pairs of the form $(\wGa,\Ga)$.

Let $\cT=\xymatrix{\bfS \ar[r] & \tilde{\cG}\ar[r]^{\pi} & \cG[\cU_0]}\in \Rext^+(\cG[\cU_0],\bfS)$ be an ungraded Real $S$-twist, for a fixed Real open cover $\cU_0=(U^0_j)_{j\in J_0}$. Consider again the pre-simplicial Real open cover $\cU_\bullet$ of $(\cG_\bullet,\rho_\bullet)$ defined by~\eqref{eq:G-cover}. Recall that the groupoid $\cG[\cU_0]$ is defined by $$\cG[\cU_0]=\left\{(j_0,g,j_1)\in J_0\times \cG\times J_0 \ | \ g\in U^1_{(j_0,j_1)}\right\}.$$
 
 Suppose that the $\bfS$-principal Real bundle $\pi: (\tilde{\cG},\tilde{\rho})\To (\cG[\cU_0],\rho)$ admits a Real family of local continuous sections $\mathsf{s}_{j_0j_1}$ relative to the Real open cover $\cV_1$ of $(\cG[\cU_0],\rho)$ given by $\cV_1=(V^1_{(j_0,j_1)})_{(j_0,j_1)\in J_1}$, where $$V^1_{(j_0,j_1)}:=\{j_0\}\times U^1_{(j_0,j_1)}\times \{j_1\}.$$ 
Then, for any $(g_1,g_2)\in U^2_{(j_0,j_1,j_2)}$, we have that
\begin{align*}
\pi(\mathsf{s}_{j_0j_1}(j_0,g_1,j_1)\cdot\mathsf{s}_{j_1j_2}(j_1,g_2,j_2)) & = \pi(\mathsf{s}_{j_0j_1}(j_0,g_1,j_1))\cdot\pi(\mathsf{s}_{j_1j_2}(j_1,g_2,j_2)) \\ & =(j_0,g_1g_2,j_2)=\pi(\mathsf{s}_{j_0j_2}(j_0,g_1g_2,j_2));
\end{align*} 
thus, there exists a unique element $\omega_{(j_0,j_1,j_2)}(g_1,g_2)\in S$ such that 
\begin{equation}~\label{eq:2-cocyc}
\mathsf{s}_{j_0j_2}(j_0,g_1g_2,j_2)=\omega_{(j_0,j_1,j_2)}(g_1,g_2)\cdot\mathsf{s}_{j_0j_1}(j_0,g_1,j_1).\mathsf{s}_{j_1j_2}(j_1,g_2,j_2).
\end{equation}
This provides a family of continuous functions $\omega_{(j_0,j_1,j_2)}:U^2_{(j_0,j_1,j_2)}\To \bfS$ determined by~\eqref{eq:2-cocyc} and that verifies clearly $\omega_{(\bar{j}_0,\bar{j}_1,\bar{j}_2)}(\rho(g_1),\rho(g_2))=\overline{\omega_{(j_0,j_1,j_2)}(g_1,g_2)},\forall (g_1,g_2)\in U^2_{(j_0,j_1,j_2)}\subset \cG_2$. It is straightforward that the family $(\omega_{(j_0,j_1,j_2)})$ verifies the cocycle condition; hence we obtain a Real \v{C}ech $2$-cocycle 
\begin{equation}~\label{eq:df-omega-T}
\omega(\cT):= (\omega_{(j_0,j_1,j_2)})_{(j_0,j_1,j_2)\in J_2}\in ZR^2(\cU_\bullet,\bfS)
\end{equation}
associated to $\cT$.

 In fact, this construction generalizes for arbitrary Real open cover $\cU_\bullet$ of $(\cG_\bullet,\rho_\bullet)$.

\begin{lem}[Compare Proposition 5.6 in~\cite{Tu1}]
Let $(\cG,\rho)$ be a topological Real groupoid. Given a Real open cover $\cU_\bullet$ of $(\cG_\bullet,\rho_\bullet)$, let $\Rext_{\cU}^+(\cG[\cU_0],\bfS)$ denote the subgroup of all twists $\xymatrix{\bfS \ar[r] & \tilde{\cG} \ar[r]^{\pi} & \cG[\cU_0]}\in \Rext^+(\cG[\cU_0],\bfS)$ such that $\pi$ admits a Real family of local continuous sections \ $\mathsf{s}_{\lambda}: \{\lambda_0\}\times U_{\lambda}\times \{\lambda_1\}\To \tilde{\cG}$ relative to the Real open cover $$\cV_1:=(\{\lambda_0\}\times U^1_{(\lambda_0,\lambda_1,\lambda_{01})}\times \{\lambda_1\})_{(\lambda_0,\lambda_1,\lambda_{01})\in \Lam_1}$$ of $(\cG[\cU_0],\rho)$. Then the canonical map 
\begin{equation}~\label{eq1:RExt+-vs-H2}
\Rext_{\cU}^+(\cG[\cU_0],\bfS)\To HR^2(\cU_\bullet,\bfS), \ [\cT]\mto [\omega(\cT)],
\end{equation}
 is a group isomorphism.
\end{lem}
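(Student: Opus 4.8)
The plan is to follow the proof of Proposition 5.6 in~\cite{Tu1}, the only additional work being the bookkeeping forced by the index involutions $j\mapsto\bar{j}$ and the reality constraints on the cochains. Four things must be checked: that $[\cT]\mapsto[\omega(\cT)]$ is well defined — independent of the chosen globally Real family of local sections, invariant under $\bfS$-equivariant Real isomorphisms of twists, and descending to the quotient defining $\Rext_{\cU}^+$; that it is a group homomorphism; that it is injective; and that it is surjective.

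\textbf{Well-definedness and additivity.} If $\mathsf{s}_\lambda,\mathsf{s}'_\lambda$ are two globally Real families of local sections of $\pi:\tilde{\cG}\To\cG[\cU_0]$ relative to $\cV_1$, then, $\pi$ being $\bfS$-principal, there is a unique globally Real $1$-cochain $b=(b_\lambda)$ with $\mathsf{s}'_\lambda=b_\lambda\mathsf{s}_\lambda$; feeding this into~\eqref{eq:2-cocyc} and using centrality of $\bfS$ gives $\omega'(\cT)=\omega(\cT)\cdot db$, so $[\omega'(\cT)]=[\omega(\cT)]$ in $HR^2(\cU_\bullet,\bfS)$. An $\bfS$-equivariant Real isomorphism of twists over $\cG[\cU_0]$ transports one globally Real family of sections to another, so isomorphism invariance reduces to this; and for a twist of the form $(r^\ast\Lam\otimes\overline{s^\ast\Lam},0)$, globally Real local sections of $\Lam$ induce sections whose associated cocycle is visibly $d$ of the $1$-cochain recording the transitions of $\Lam$, hence a coboundary — so the map descends to $\Rext_{\cU}^+(\cG[\cU_0],\bfS)$. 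For additivity, since $\del_i=0$ the Baer sum $\tilde{\cG}_1\hat{\otimes}\tilde{\cG}_2$ has no sign in its partial product and admits the globally Real sections $[\mathsf{s}^1_\lambda,\mathsf{s}^2_\lambda]$, whence $\omega(\cT_1\hat{\otimes}\cT_2)=\omega(\cT_1)\cdot\omega(\cT_2)$ by~\eqref{eq:2-cocyc}.

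\textbf{Injectivity.} Suppose $[\omega(\cT)]=0$, so $\omega(\cT)=db$ for a globally Real $1$-cochain $b=(b_\lambda)$. Replacing $\mathsf{s}_\lambda$ by $b_\lambda^{-1}\mathsf{s}_\lambda$ — still a globally Real family — trivializes the right-hand side of~\eqref{eq:2-cocyc}, so the new sections agree on overlaps and glue to a Real strict homomorphism $\sigma:\cG[\cU_0]\To\tilde{\cG}$ with $\pi\circ\sigma=\Id$. Since $\del=0$, Proposition~\ref{pro:crit-triv-twist} shows $\cT$ is strictly trivial, a fortiori $[\cT]=0$.

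\textbf{Surjectivity.} Let $\omega=(\omega_\lambda)_{\lambda\in\Lam_2}\in ZR^2(\cU_\bullet,\bfS)$. Carrying out the $2$-cochain analogue of the degeneracy substitutions performed in Section~\ref{section:HR1} for $1$-cocycles — the chain ending at~\eqref{eq:1-cocycl2} — the cocycle and reality identities force $\omega_\lambda$ to depend, on the relevant overlaps, only on the vertex labels $(\lambda_0,\lambda_1,\lambda_2)$, producing a globally Real family $\omega_{(j_0,j_1,j_2)}:U^2_{(j_0,j_1,j_2)}\To\bfS$ (notation of~\eqref{eq:G-cover}) satisfying the cocycle identity. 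Setting
\[
\tilde{\cG}:=\coprod_{(j_0,j_1)\in J_1}\{j_0\}\times U^1_{(j_0,j_1)}\times\{j_1\}\times\bfS
\]
over $\cG[\cU_0]$, with $\pi(j_0,g,j_1,t):=(j_0,g,j_1)$, $\bfS$ acting on the last coordinate, Real structure $\overline{(j_0,g,j_1,t)}:=(\bar{j}_0,\rho(g),\bar{j}_1,\bar{t})$, and product
\[
(j_0,g_1,j_1,t_1)\cdot(j_1,g_2,j_2,t_2):=\bigl(j_0,g_1g_2,j_2,\;t_1t_2\,\omega_{(j_0,j_1,j_2)}(g_1,g_2)\bigr),
\]
the $2$-cocycle identity gives associativity, reality of $\omega$ makes the bar a strict $2$-periodic homeomorphism, and $(\tilde{\cG},0)$ is an ungraded Real $\bfS$-twist of $\cG[\cU_0]$ in $\Rext_{\cU}^+(\cG[\cU_0],\bfS)$ with canonical globally Real sections $(j_0,g,j_1)\mapsto(j_0,g,j_1,1)$, for which $\omega(\cT)=\omega$. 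The main obstacle is precisely this last reduction: one must unwind the definition of the pre-simplicial refinement ${}_\natural\cU_\bullet$ and push the $2$-cocycle through the several degeneracy substitutions while tracking the index involution, so that the descended $\omega_{(j_0,j_1,j_2)}$ comes out genuinely globally Real; the rest is the Real analogue of routine verifications in~\cite{Tu1}.
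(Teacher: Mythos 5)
Your first block (independence of the choice of globally Real sections, invariance under $\bfS$-equivariant isomorphisms, additivity via the Baer sum) matches the paper's argument. The genuine gap is in your surjectivity step, and it sits exactly where you yourself flag "the main obstacle": you assert that the cocycle and reality identities force a cocycle $(\omega_\lambda)_{\lambda\in\Lam_2}$ to depend, on overlaps, only on the vertex labels $(\lambda_0,\lambda_1,\lambda_2)$, hence to descend to a globally Real family $\omega_{(j_0,j_1,j_2)}$ on the cover~\eqref{eq:G-cover}. That is the $2$-cocycle analogue of the reduction done for $1$-cocycles at the beginning of Section~\ref{section:HR1}, but it does not carry over: applying the degeneracies $\tilde{\eta}_i^\ast$ to the identity $d\omega=0$ on $\cG_3$ only shows that changing one of the labels $\lambda_{01},\lambda_{02},\lambda_{12},\lambda_{012}$ changes $\omega_\lambda$ by terms involving $\omega$ evaluated at degenerate pairs such as $(r(g),g)$, $(g,s(g))$ or at units, and these values need not vanish since \v{C}ech cocycles here are not normalized. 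So the vertex-reduced family you need does not exist in general unless you first correct $\omega$ by a (globally Real, same-cover) coboundary, and producing that correction is precisely the content you skip. The paper's construction is designed to avoid any such reduction: it builds $\wGa^\omega$ as a quotient of $\coprod_{\mu_{01}\in I_1}\{(t,g,\mu_{01})\}$, gluing the pieces with different edge labels by the relation $(t,g,\mu_{12})\sim\bigl(\frc_{\mu_{01}\mu_{01}\mu_{01}}(r(g),r(g))^{-1}\, t\,\frc_{\mu_{01}\mu_{02}\mu_{12}}(r(g),g),\,g,\,\mu_{02}\bigr)$, i.e.\ exactly by those degenerate values of $\omega$; your plain disjoint union over $J_1$ with product twisted by $\omega_{(j_0,j_1,j_2)}$ only makes sense after the missing reduction.

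The same issue leaks into your injectivity argument: after replacing $\mathsf{s}_\lambda$ by $b_\lambda^{-1}\mathsf{s}_\lambda$ the sections become multiplicative on the sets of $\cV_1$, but to glue them into a single strict Real section $\sigma:\cG[\cU_0]\To\wGa$ you must know that sections carrying different edge labels $\lambda_{01}$ agree on overlaps; taking $g_2$ a unit in the multiplicativity relation shows they differ by the values of the sections at units, which are not identities without a further normalization that you have not supplied. A smaller point: in your descent to the quotient by twists $r^\ast\Lam\otimes\overline{s^\ast\Lam}$ you assume $\Lam$ itself admits globally Real local sections over the $U_j$, whereas membership in $\Rext^+_{\cU}$ only gives sections of the tensor twist relative to $\cV_1$; this needs an argument (the paper, for its part, only treats strict isomorphisms at this stage).
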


\begin{proof}
First of all, we shall prove that $\Rext_\cU^+(\cG[\cU_0],\bfS)$ is a subgroup of $\Rext^+(\cG[\cU_0],\bfS)$. Let $$\cT=(\xymatrix{\bfS \ar[r] & \tilde{\cG} \ar[r]^\pi & \cG[\cU_0]}), \ \cT'=(\xymatrix{S \ar[r] & \tilde{\cG}' \ar[r]^{\pi'} & \cG[\cU_0]})$$ be representatives in $\Rext_\cU^+(\cG[\cU_0],\bfS)$. Then their tensor product (cf.~\eqref{eq:Baer}) is 
$$\cT\hat{\otimes}\cT':=(\xymatrix{\bfS \ar[r] & \tilde{\cG}\hat{\otimes}\tilde{\cG}' \ar[r]^{\pi} & \cG[\cU_0]},0),$$ 
where $\tilde{\cG}\hat{\otimes}\tilde{\cG}'=\tilde{\cG} \times_{\cG[\cU_0]}\tilde{\cG}'/\bfS$. Let $f_\lambda:\{\lambda_0\}\times U^1_\lambda \times \{\lambda_1\}\To \tilde{\cG}$ and $f'_\lambda:\{\lambda_0\} \times U^1_\lambda \times \{ \lambda_1\}\To \tilde{\cG}'$ be Real families of continuous local sections of $\pi$ and $\pi'$ respectively. Then we get a Real family of continuous local sections $\mathsf{s}_\lambda:\{\lambda_0\}\times U^1_\lambda \times \{ \lambda_1\} \To \tilde{\cG}\hat{\otimes}\tilde{\cG}'$ for $\pi$ by setting $$\mathsf{s}_\lambda(\lambda_0,g,\lambda_1):=\left[(f_\lambda(\lambda_0,g,\lambda_1),f'_\lambda(\lambda_0,g,\lambda_1))\right],$$
which implies that $\cT\hat{\otimes}\cT'\in \Rext_\cU^+(\cG[\cU_0],\bfS$.

Now let $\cT$ be an (ungraded) Real twist of $(\cG[\cU_0],\rho)$ such that $\pi$ verifies the condition of the lemma. Assume that $\cT'$ is any Real twist of $(\cG[\cU_0],\rho)$ isomorphic to $\cT$. Let $f:\tilde{\cG}\To \tilde{\cG}'$ be a Real $\bfS$-equivariant isomorphism that makes the following diagram 
\begin{equation}~\label{eq:diag1}
\xymatrix{\tilde{\cG}\ar[r]^\pi \ar[d]^f & \cG[\cU_0] \\ \tilde{\cG}' \ar[ur]^{\pi'} & }
\end{equation}
commute. 
Thus, given a Real family $\mathsf{s}_\lambda:\{\lambda_0\}\times U^1_\lambda \times \{\lambda_1\}\To \tilde{\cG}$, the maps $f\circ \mathsf{s}_\lambda:\{\lambda_0\}\times U^1_\lambda \times \{\lambda_1\}\To \tilde{\cG}'$ define a Real family of local continuous sections for $\pi'$; hence the class $[\cT]\in \Rext_\cU^+(\cG[\cU_0],\uc)$.\\

Suppose we are given a representative $$\cT=\xymatrix{\bfS \ar[r] & \tilde{\cG} \ar[r]^\pi & \cG[\cU_0] }$$
in $\Rext_{\cU}^+(\cG[\cU_0],\bfS)$. Recall that for $(\lambda_0,\lambda_1,\lambda_{01})\in \Lam_1$, $U^1_{\lambda_0\lambda_1\lambda_{01}}=U^1_{\lambda_{01}}\cap r^{-1}(U^0_{\lambda_0})\cap s^{-1}(U^0_{\lambda_1})$, and for any $\lambda=(\lambda_0,\lambda_1,\lambda_2\lambda_{01},\lambda_{02},\lambda_{12},\lambda_{012})\in \Lam_2$, we have from~\eqref{eq:U^2_lambda} that \[U^2_{\lambda}=\tilde{\ve}_1^{-1}\circ r^{-1}(U^0_{\lambda_0})\cap \tilde{\ve}_0^{-1}\circ s^{-1}(U^0_{\lambda_1})\cap \tilde{\ve}_1^{-1}\circ s^{-1}(U^0_{\lambda_2})\cap \tilde{\ve}_2^{-1}(U^1_{\lambda_{01}})\cap \tilde{\ve}^{-1}(U^1_{\lambda_{02}})\cap \tilde{\ve}_0^{-1}(U^1_{\lambda_{12}})\cap U^2_{\lambda_{012}}.\]
Then, for all $(g_1,g_2)\in U^2_\lambda$, one has 
\begin{itemize}
\item $g_1g_2=\tilde{\ve}_1(g_1,g_2)\in r^{-1}(U^0_{\lambda_0})\cap s^{-1}(U^0_{\lambda_2})\cap U^1_{\lambda_{02}} =U^1_{\lambda_0\lambda_2\lambda_{02}}$, 
\item $g_1=\tilde{\ve}_2(g_1,g_2)\in U^1_{\lambda_{01}}$, $g_2=\tilde{\ve}_0(g_1,g_2)\in s^{-1}(U^0_{\lambda_1})\cap U^1_{\lambda_{12}}$; and hence $$g_1\in r^{-1}(U^0_{\lambda_0})\cap s^{-1}(U^0_{\lambda_1})\cap U^1_{\lambda_{01}}=U^1_{\lambda_0\lambda_1\lambda_{01}},\ \text{and} $$ $$  g_2\in r^{-1}(U^0_{\lambda_1})\cap s^{-1}(U^0_{\lambda_2})\cap U^1_{\lambda_{12}}=U^1_{\lambda_1\lambda_2\lambda_{12}}.$$ 
\end{itemize}
Then as in the discussion before the lemma (cf.~\eqref{eq:df-omega-T}), there exists a Real family of functions $\omega_{\lambda}: U^2_\lambda \To \uc$ such that
\begin{equation}~\label{eq:omega_twist}
\mathsf{s}_{\lambda_0\lambda_2\lambda_{02}}(\lambda_0,g_1g_2,\lambda_{2})= \omega_\lambda(g_1,g_2)\cdot\mathsf{s}_{\lambda_0\lambda_1\lambda_{01}}(\lambda_0,g_1,\lambda_{1})\cdot\mathsf{s}_{\lambda_1\lambda_2\lambda_{12}}(\lambda_1,g_2,\lambda_{2})
\end{equation}
and $\omega_{\bar{\lambda}}(\rho(g_1),\rho(g_2))=\overline{\omega_\lambda(g_1,g_2)}$, for all $(g_1,g_2)\in U^2_{\lambda_0\lambda_1\lambda_2\lambda_{01}\lambda_{02}\lambda_{12}\lambda_{012}}$. Moreover, it is easy to verify by a routine calculation that $(\omega_\lambda)_{\lambda\in \Lam_2}$ verify the cocycle condition on $$U^3_{\lambda_0\lambda_1\lambda_2\lambda_3\lambda_{01}\lambda_{02}\lambda_{03}\lambda_{12}\lambda_{13}\lambda_{23}\lambda_{0123}}\subset \cG_2;$$ therefore, we have constructed a Real \v{C}ech $2$-cocyle $(\omega_{\lambda})_{\lambda\in \Lam_2}\in ZR^2(\cU_\bullet,\bfS)$ associated to $\cT$. \\

Assume that $(\tilde{\mathsf{s}}_\lambda)_{\lambda \in \Lam_2}$ is another Real family of continuous local sections of $\pi$, and that $(\tilde{\omega}_\lambda)_{\lambda \in \Lam_2}\in ZR^2(\cU_\bullet,\bfS)$ is its associated Real \v{C}ech $2$-cocycle. Then for any $(\lambda_0,\lambda_1,\lambda_{01})\in \Lam_1$ and $g\in U^1_{\lambda_0\lambda_1\lambda_{01}}$, there exists a unique $c_{\lambda_0\lambda_1\lambda_{01}}(g)\in \bfS$ such that 
\begin{equation}~\label{eq:1-cocycl-twist}
\tilde{\mathsf{s}}_{\lambda_0\lambda_1\lambda_{01}}(g)=c_{\lambda_0\lambda_1\lambda_{01}}(g)\cdot\mathsf{s}_{\lambda_0\lambda_1\lambda_{01}}(g),
\end{equation}
 where we abusively write, for instance, $\mathsf{s}_{\lambda_0\lambda_1\lambda_{01}}(g)$ for $\mathsf{s}_{\lambda_0\lambda_1\lambda_{01}}(\lambda_0,g,\lambda_{1})$. Since $(\tilde{\mathsf{s}}_{\lambda_0\lambda_1\lambda_{01}})$ and $\mathsf{s}_{\lambda_0\lambda_1\lambda_{01}}$ are Real families, we have that $$c_{\bar{\lambda}_0\bar{\lambda}_1\bar{\lambda}_{01}}(\rho(g))=\overline{c_{\lambda_0\lambda_1\lambda_{01}}(g)} \  \text{for all} \  g\in U^1_{\lambda_0\lambda_1\lambda_{01}}.$$ It turns out that the $c_{\lambda_0\lambda_1\lambda_{01}}$'s define an element in $CR^1(\cU_\bullet,\bfS)$. Moreover, for $\lambda \in \Lam_2$ as previously, and for $(g_1,g_2)\in U^2_\lambda$, we obtain from~\eqref{eq:omega_twist} and~\eqref{eq:1-cocycl-twist}
\[
\mathsf{s}_{_{\lambda_0\lambda_2\lambda_{02}}}(g_1g_2) = c_{_{\lambda_0\lambda_2\lambda_{02}}}(g_1g_2)^{-1}\cdot c_{_{\lambda_0\lambda_1\lambda_{01}}}(g_1)\cdot c_{_{\lambda_1\lambda_2\lambda_{12}}}(g_2)\cdot\tilde{\omega}_\lambda (g_1,g_2)\cdot\mathsf{s}_{_{\lambda_0\lambda_1\lambda_{01}}}(g_1)\cdot\mathsf{s}_{_{\lambda_1\lambda_2\lambda_{12}}}(g_2);
\]
and \[(\omega_\lambda\cdot\tilde{\omega}_\lambda^{-1})(g_1,g_2)=c_{\lambda_0\lambda_2\lambda_{02}}(g_1g_2)^{-1}.c_{\lambda_0\lambda_1\lambda_{01}}(g_1)\cdot c_{\lambda_1\lambda_2\lambda_{12}}(g_2) = (dc)_\lambda(g_1,g_2);
\]
hence $((\omega\cdot\tilde{\omega}^{-1})_\lambda)_{\lambda\in \Lam_2}\in BR^2(\cU_\bullet,\uc)$. In other words, the class in $HR^2(\cU_\bullet,\bfS)$ of the Real $2$-cocycle $(\omega_\lambda)$ does not depend on the choice of the Real family of local sections of $\pi$.\

We want now to check that the map~\eqref{eq1:RExt+-vs-H2} is well defined. To do so, suppose that $\cT$ and $\cT'$ are equivalent in $\Rext_\cU(\cG[\cU_0],\bfS)$, and that $(\mathsf{s}_{\lambda_0\lambda_1\lambda_{01}})$ and $\mathsf{s}'_{\lambda_0\lambda_1\lambda_{01}}$ are Real family of local continuous sections of $\pi$ and $\pi'$. Let us keep the diagram~\eqref{eq:diag1}. Let $(\omega_\lambda)_{\lambda\in \Lam_2}$ and $(\omega'_\lambda)_{\lambda\in \Lam_2}$ be the associated Real $2$-cocycles in $ZR^2(\cU_\bullet,\bfS)$ of $\cT$ and $\cT'$  respectively. Then we define an element $(b_{\lambda_0\lambda_1\lambda_{01}})\in CR^1(\cU_\bullet,\bfS)$ as follows: for any $g\in U^1_{\lambda_0\lambda_1\lambda_{01}}$, $b_{\lambda_0\lambda_1\lambda_{01}}(g)$ is the unique element of $\bfS$ such that 
\begin{equation}
\mathsf{s}'_{\lambda_0\lambda_1\lambda_{01}}(g)=b_{\lambda_0\lambda_1\lambda_{01}}(g)\cdot f\circ \mathsf{s}_{\lambda_0\lambda_1\lambda_{01}}(g).
\end{equation}
This is well defined since $\pi'(\mathsf{s}'_{\lambda_0\lambda_1\lambda_{01}}(g))=\pi(\mathsf{s}_{\lambda_0\lambda_1\lambda_{01}}(g))=\pi'(f\circ \mathsf{s}_{\lambda_0\lambda_1\lambda_{01}}(g))$. Furthermore, the functions $f\circ \mathsf{s}_{\lambda_0\lambda_1\lambda_{01}}, (\lambda_0,\lambda_1,\lambda_{01})\in \Lam_1$,  defines a globally Real family of local continuous sections of $\pi$. Then, for all $\lambda\in \Lam_2$ and all $(g_1,g_2)\in U^2_{\lambda}$, we can write \[f\circ \mathsf{s}_{\lambda_0\lambda_2\lambda_{02}}(g_1g_2)=\omega_\lambda(g_1,g_2)\cdot f\circ \mathsf{s}_{\lambda_0\lambda_1\lambda_{01}}(g_1)\cdot f\circ \mathsf{s}_{\lambda_1\lambda_2\lambda_{12}}(g_2),\] up to a multiplication of $\omega_\lambda$ by a Real $2$-coboundary. It then follows that 
\[\omega_\lambda(g_1,g_2)\cdot\omega'_{\lambda}(g_1,g_2)^{-1}=b_{\lambda_0\lambda_2\lambda_{02}}(g_1g_2)^{-1}\cdot b_{\lambda_0\lambda_1\lambda_{01}}(g_1)\cdot b_{\lambda_1\lambda_2\lambda_{12}}(g_2)=(db)_{\lambda}(g_1,g_2).\] 
Consequently, $(\omega_\lambda)_{\lambda\in \Lam_2}$ depends only on the class of $\cT$ in $\Rext_\cU(\cG[\cU_0],\bfS)$. The fact that $(\del_{\lambda_0\lambda_1\lambda_{01}})$ also depends only on the class of $\cT$ is straightforward. We then have proved that any element $[\cT]$ in $\Rext_\cU(\cG[\cU_0],\bfS)$ determines a unique cohomology class
\begin{equation}
[\omega(\cT)]\in HR^2(\cU_\bullet,\bfS).
\end{equation}

Conversely, given a pair $(\omega_\lambda)_{\lambda\in \Lam_2}\in ZR^2(\cU_\bullet,\bfS)$, we want to construct an ungraded Real extension of $(\cG[\cU_0],\rho)$ which is in $\Rext_\cU^+(\cG[\cU_0],\bfS)$. For this we proceed as in the proof of Proposition 5.6 in~\cite{Tu1}. For $\lambda\in \Lam_2$, put 
\begin{eqnarray*}
	\mu_{01}:= (\lambda_0,\lambda_{01},\lambda_1),\\
	\mu_{02}:= (\lambda_0,\lambda_{02},\lambda_2),\\
	\mu_{12}:= (\lambda_1,\lambda_{12},\lambda_2).
\end{eqnarray*}
Let $\frc_{\mu_{01}\mu_{02}\mu_{12}}:=\omega_\lambda$. We have $\cV_1=(V^1_{\mu_{01}})_{i\in I_1}$, where $I_1$ consists of triples $\mu_{01}=(\lambda_0,\lambda_{01},\lambda_1)$ and $V^1_{\mu_{01}}:=\{\lambda_0\}\times U^1_{\lambda_0\lambda_1\lambda_{01}}\times\{\lambda_1\}$. $I_1$ is equipped with the obvious involution, so that $\cV_1$ is a Real open cover of $\cG[\cU_0]$. We set 
\[\wGa^\omega:= \coprod_{\mu_{01}\in I_1}\{(t,g,\mu_{01})\mid t\in \bfS, g\in V^1_{\mu_{01}}\} /\sim,\]
subject to the product law
\[
[t_1,g_1,\mu_{01}]\cdot [t_1,g_2,\mu_{12}] = [t_1\cdot t_2\cdot \frc_{\mu_{01}\mu_{02}\mu_{12}}(g_1,g_2),g_1g_2,\mu_{02}],
\]

where 

\begin{equation}
 (t,g,\mu_{12}) \sim (\frc_{\mu_{01}\mu_{01}\mu_{01}}(r(g),r(g))^{-1}\cdot t\cdot\frc_{\mu_{01}\mu_{02}\mu_{12}}(r(g),g),g,\mu_{02}).	
 \end{equation}
 The projection $\pi:\wGa^\omega\To \cG[\cU_0]$ is defined by $\pi([t,g,\mu_{01}]):=g$, and the Real structure is 
 $$\overline{[t,g,\mu_{01}]}:=[\bar{t},\rho(g),\overline{\mu_{01}}].$$ 
 It is straightforward to see that these operations give $\wGa^\omega$ the structure of ungraded Real $\bfS$-twist of $\cG[\cU_0]$; what is more, the maps $\mathsf{s}_{\mu_{01}}:V^1_{\mu_{01}}\To \wGa^\omega$ defined by $\mathsf{s}_{\mu_{01}}(g):=[0,g,\mu_{01}]$ are a Real family of continuous sections of $\pi$, so that the Real extension 
 \[\cT=\xymatrix{\bfS\ar[r] & \wGa^\omega \ar[r]^\pi& \cG[\cU_0]}\]
 is in $\Rext_{\cU}^+(\cG[\cU_0],\bfS)$. It is also clear that $[\omega(\cT)]=[\omega]$.
\end{proof}

\begin{cor}
We have $\wRExt^+(\cG,\bfS)\cong \check{H}R^2(\cG_\bullet,\bfS)$.	
\end{cor}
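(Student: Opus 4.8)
The plan is to pass to the inductive limit in the Lemma just proven, using Proposition~\ref{pro:RExt_vs_RTw} on the left and the definition of $\check{H}R^2$ on the right. First I would record that the isomorphism of Proposition~\ref{pro:RExt_vs_RTw} restricts to the ungraded subgroups, so that
\[
\wRExt^+(\cG,\bfS)\;\cong\;\underset{\cU_0}{\varinjlim}\,\Rext^+(\cG[\cU_0],\bfS),
\]
the limit being taken over Real open covers $\cU_0$ of $\Gpdo$ ordered by refinement. Indeed, in the direction $\EE=(\wGa,\Ga,\delta,P)\mapsto\cT_\EE=(f_\cV^\ast\wGa,\cG[\cV],\delta\circ f_\cV)$ the grading $\delta\circ f_\cV$ is the zero map whenever $\delta$ is, while the inverse $\cT=(\tilde{\cG},\cG[\cU],\delta)\mapsto(\tilde{\cG},\cG[\cU],\delta,Z_{\iota_\cU})$ leaves the grading unchanged; hence both arrows carry ungraded objects to ungraded objects.

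Next, fix $\cU_0=(U^0_j)_{j\in J_0}$ and let $\cU_\bullet$ be the simplicial Real open cover of $(\cG_\bullet,\rho_\bullet)$ attached to $\cU_0$ by~\eqref{eq:G-cover}, with $J_n=J_0^{n+1}$ carrying the simplicial Real structure $\tilde{f}(j_0,\dots,j_n)=(j_{f(0)},\dots,j_{f(m)})$; in particular $\cU_\bullet\in\mathfrak{N}(n)$ for every $n$. The Lemma just proven gives a group isomorphism $\Rext^+_{\cU}(\cG[\cU_0],\bfS)\cong HR^2(\cU_\bullet,\bfS)$, $[\cT]\mapsto[\omega(\cT)]$, and inspection of formula~\eqref{eq:omega_twist} shows it commutes with the refinement homomorphisms on the two sides (refining $\cU_0$ restricts the globally Real local sections $\mathsf{s}_\lambda$ and correspondingly restricts the cocycles $\omega_\lambda$). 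Passing to the limit over $\cU_0$ therefore yields
\[
\underset{\cU_0}{\varinjlim}\,\Rext^+_{\cU}(\cG[\cU_0],\bfS)\;\cong\;\underset{\cU_0}{\varinjlim}\,HR^2(\cU_\bullet,\bfS).
\]

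It remains to identify the two sides of this last isomorphism with $\wRExt^+(\cG,\bfS)$ and $\check{H}R^2(\cG_\bullet,\bfS)$, which is a pair of cofinality statements. On the left: given any $\cU_0$ and any $\cT\in\Rext^+(\cG[\cU_0],\bfS)$, after refining $\cU_0$ to a sufficiently fine $\cU_0'$ the total space of the pull-back of $\cT$ to $\cG[\cU_0']$ is $\bfS$-principally trivial over each $V^1_{\lambda}$ — because an $\bfS$-principal Real bundle trivializes over small enough opens, the trivializations being made globally Real by the doubling construction used for globally Real families of local sections — so the pull-back lies in $\Rext^+_{\cU'}(\cG[\cU_0'],\bfS)$; hence the subgroups $\Rext^+_{\cU}(\cG[\cU_0],\bfS)$ are cofinal in the system $\{\Rext^+(\cG[\cU_0],\bfS)\}$ and $\varinjlim_{\cU_0}\Rext^+_{\cU}(\cG[\cU_0],\bfS)=\wRExt^+(\cG,\bfS)$. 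On the right: by the comparison argument of~\cite[\S5]{Tu1} (the Real adaptation of the proof that groupoid $\check{H}^2$ is computed by covers coming from the unit space), the Real open covers of the form~\eqref{eq:G-cover} form a cofinal family in $\mathfrak{N}(2)$ for the computation of $\check{H}R^2(\cG_\bullet,\bfS)$, so that $\varinjlim_{\cU_0}HR^2(\cU_\bullet,\bfS)=\check{H}R^2(\cG_\bullet,\bfS)$. Chaining the three displayed isomorphisms gives $\wRExt^+(\cG,\bfS)\cong\check{H}R^2(\cG_\bullet,\bfS)$. The main obstacle is precisely this last step: verifying the two cofinality claims, and in particular that the combinatorially simple covers~\eqref{eq:G-cover} suffice to compute $\check{H}R^2$ — everything else being a formal consequence of the Lemma and Proposition~\ref{pro:RExt_vs_RTw}.
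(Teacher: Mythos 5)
Your overall strategy is the intended one: restrict Proposition~\ref{pro:RExt_vs_RTw} to the ungraded subgroups and pass to the inductive limit in the preceding Lemma; the first step (both directions of that isomorphism preserve the trivial grading) is fine. The gap is in the final step, and it is not a technicality: both of your cofinality claims fail once you restrict to covers of the form~\eqref{eq:G-cover}. For such a cover the level-one sets are $U^1_{(j_0,j_1)}=r^{-1}(U^0_{j_0})\cap s^{-1}(U^0_{j_1})$, and refining $\cU_0$ shrinks them only in the direction of the unit space, never in the arrow direction; so ``an $\bfS$-principal Real bundle trivializes over small enough opens'' does not apply. Already for a group (one unit, so $U^1_{(j_0,j_1)}=\cG$ for every choice of $\cU_0$), a topologically nontrivial central extension such as $\ZZ_2\to SU(2)\to SO(3)$ (trivial involutions, trivial grading) admits no continuous section over $V^1_\lambda$ however $\cU_0$ is refined, hence never lies in your $\Rext_{\cU}^+(\cG[\cU_0],\bfS)$, although it does represent a class of $\wRExt^+(\cG,\bfS)$. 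Symmetrically, the covers~\eqref{eq:G-cover} are not cofinal in $\mathfrak{N}(2)$: they cannot refine an arbitrary cover of $\cG_1$ (for a compact group the only level-one set available is $\cG$ itself), so $\varinjlim_{\cU_0}HR^2(\cU_\bullet,\bfS)$ taken over them computes a ``continuous-cochain'' cohomology which in general differs from $\check{H}R^2(\cG_\bullet,\bfS)$. The appeal to~\cite[\S 5]{Tu1} does not close this gap, since Tu's comparison is carried out over arbitrary covers of the simplicial space, not over covers pulled back from the unit space.

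The repair is to use the Lemma in the generality in which it is stated: $\cU_\bullet$ is an arbitrary Real open cover of $(\cG_\bullet,\rho_\bullet)$, and the sets $U^1_{\lambda_0\lambda_1\lambda_{01}}=U^1_{\lambda_{01}}\cap r^{-1}(U^0_{\lambda_0})\cap s^{-1}(U^0_{\lambda_1})$ contain the factor $U^1_{\lambda_{01}}$ coming from an arbitrary cover $\cU_1$ of $\cG$, which may be taken as fine as you wish. Taking the colimit over all $\cU_\bullet\in\mathfrak{N}(2)$, the right-hand sides give $\check{H}R^2(\cG_\bullet,\bfS)$ by definition, while on the left the section condition becomes vacuous in the limit: given a twist $\pi:\tilde{\cG}\To\cG[\cU_0]$, the earlier lemma on globally Real families of local sections provides Real sections over a Real cover of $\cG[\cU_0]$ by basic open sets $\{j_0\}\times W\times\{j_1\}$, and choosing $\cU_1$ subordinate to the corresponding sets $W$ places the twist in $\Rext_{\cU}^+(\cG[\cU_0],\bfS)$; hence the colimit of these subgroups is $\varinjlim_{\cU_0}\Rext^+(\cG[\cU_0],\bfS)\cong\wRExt^+(\cG,\bfS)$ by the ungraded form of Proposition~\ref{pro:RExt_vs_RTw}. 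Your observation that $\cT\mapsto\omega(\cT)$ is compatible with refinements is then the remaining (routine) ingredient, now applied to refinements of full simplicial covers rather than only of $\cU_0$.
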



\subsection{The cup-product $\check{H}R^1(\cdot,\ZZ_2)\times \check{H}R^1(\cdot,\ZZ_2)\to \check{H}R^2(\cdot,\uc)$}

Let $\del,\del'\in \check{H}R^1(\cG_\bullet,\ZZ_2)$, and let $L$ and $L'$ be representatives of their corresponding classes in $\Hom_{\RG}(\cG,\ZZ_2)$ (cf. Proposition~\ref{pro:HR1-H1}). Then by viewing $\ZZ_2=\{\mp1\}$ as a Real subgroup of $\uc$ (identifying $-1$ with $(-1,0)$ and $+1$ with $(1,0)$),  we define the tensor product $r^\ast L\otimes \overline{s^\ast L'} \To \cG$, and and using the same reasoning as in Example~\ref{ex:triv-twist2}, we see that this is clearly a Real $\ZZ_2$-principal bundle; thus we have an ungraded Real $\ZZ_2$-central extension 

\[\ZZ_2\To r^\ast L\otimes \overline{s^\ast L'} \To \cG .\] 

Therefore, we get an ungraded Real $\uc$-central extension $(L\smile L',\cG)$ given by
 
\begin{equation}~\label{eq:df-cup-prod-HR}
L\smile L':= (r^\ast L\otimes \overline{s^\ast L'})\times_{\ZZ_2}\uc,
\end{equation}
together with the evident Real structure and Real $\uc$-action.

\begin{df}
We define the cup product 
\[\smile \ \colon \check{H}R^1(\cG_\bullet,\ZZ_2)\times \check{H}R^1(\cG_\bullet,\ZZ_2) \To \check{H}R^2(\cG_\bullet,\uc)\]
by

\[\del\smile \del':= \omega(L\smile L'),\]
where $L\smile L'$ is determined by equation~\eqref{eq:df-cup-prod-HR}.	
\end{df}

\begin{lem}
The cup product $\smile$ defined above is a well defined bilinear map; \emph{i.e.} 
\[(\del_1+\del_2)\smile(\del_1'+\del_2')=\del_1\smile \del_1'+\del_1\smile\del_2' +\del_2\smile\del_1'+\del_2\smile\del_2'.\] 	
\end{lem}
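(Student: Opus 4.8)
The plan is to reduce the claimed bilinearity to the additivity of the two operations that define the cup product: the tensor product of Real $\ZZ_2$-principal bundles (equivalently, the group law in $\check{H}R^1(\cG_\bullet,\ZZ_2)\cong\Hom_{\RG}(\cG,\ZZ_2)$) and the passage $\cT\mapsto\omega(\cT)$ from ungraded Real $\bfS$-central extensions to Real $2$-cocycles, which we already know from the lemma comparing $\Rext_{\cU}^+$ with $HR^2$ to be a group isomorphism, hence additive. So the whole statement is ``$\omega$ is a homomorphism'' composed with ``$\smile$ at the level of bundles is biadditive.''

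First I would fix representatives $L_1,L_2$ of $\del_1,\del_2$ and $L_1',L_2'$ of $\del_1',\del_2'$ in $\Hom_{\RG}(\cG,\ZZ_2)$, using Proposition~\ref{pro:HR1-H1}, and recall that the sum in $\Hom_{\RG}(\cG,\ZZ_2)$ is given by the fibered product over $\Gpdo$ modulo the anti-diagonal $\ZZ_2$-action as in~\eqref{eq:sum-S-bdl}. The key algebraic step is then to establish, for the bundle-level operation $L\smile L'=(r^\ast L\otimes\overline{s^\ast L'})\times_{\ZZ_2}\uc$, the isomorphisms of ungraded Real $\uc$-central extensions of $\cG$
\begin{align}
(L_1+L_2)\smile L' &\cong (L_1\smile L')\hat\otimes(L_2\smile L'),\\
L\smile(L_1'+L_2') &\cong (L\smile L_1')\hat\otimes(L\smile L_2').
\end{align}
These follow from the natural distributivity of $\otimes$ over the Baer sum of $\bfS$-principal bundles: $r^\ast(L_1+L_2)\cong r^\ast L_1\otimes r^\ast L_2$ (pullback commutes with tensor product) and $\overline{s^\ast L'}$ distributes across it, after which one contracts the $\ZZ_2$-extension structures via the group multiplication $\ZZ_2\times\ZZ_2\to\ZZ_2$ inside $\uc$; all the maps are manifestly Real and $\cG$-equivariant, so one only has to check the obvious identifications respect the involutions, which is routine. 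I would phrase this as a short remark that $\smile$ on bundles is ``biadditive up to canonical Real isomorphism,'' citing Example~\ref{ex:triv-twist2} and the Baer-sum construction~\eqref{eq:Baer}.

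With those isomorphisms in hand, the conclusion is immediate: applying $\omega(\cdot)$, which by the lemma identifying $\Rext_{\cU}^+(\cG[\cU_0],\bfS)$ with $HR^2(\cU_\bullet,\bfS)$ (and passing to the inductive limit over $\cU$) is a group homomorphism $\wRExt^+(\cG,\uc)\xrightarrow{\cong}\check{H}R^2(\cG_\bullet,\uc)$, one gets $\omega((L_1+L_2)\smile(L_1'+L_2'))=\omega(L_1\smile L_1')+\omega(L_1\smile L_2')+\omega(L_2\smile L_1')+\omega(L_2\smile L_2')$, which is exactly the asserted identity. The main obstacle, such as it is, is purely bookkeeping: making the distributivity isomorphism for Baer sums of Real $\ZZ_2$-bundles precise and checking it is compatible with the Real structures and the $\ZZ_2\hookrightarrow\uc$ inclusion on the nose (sign conventions in the Baer product~\eqref{eq:Baer} are harmless here since the grading $\del=0$ throughout, so no Koszul signs intervene). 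I expect no genuine difficulty, only the need to be careful that ``$+$'' in $\check{H}R^1(\cG_\bullet,\ZZ_2)$, the bundle-level $\hat\otimes$, and ``$+$'' in $\check{H}R^2(\cG_\bullet,\uc)$ are matched under the isomorphisms of Proposition~\ref{pro:HR1-H1} and the preceding corollary.
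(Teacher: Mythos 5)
Your argument is correct and takes essentially the same route as the paper: the paper's proof likewise realizes $\del_i$ and $\del_i'$ by generalized homomorphisms, notes that sums of classes correspond to sums of bundles, and reduces the identity to the bilinearity of $r^\ast L\otimes\overline{s^\ast L'}$ with respect to the sum in $\Hom_{\RG}(\cG,\ZZ_2)$, additivity of the passage to $\check{H}R^2(\cG_\bullet,\uc)$ via $\omega$ being supplied by the preceding lemma. You simply spell out the distributivity isomorphisms and the use of the $\omega$-homomorphism that the paper leaves as "easy to check."
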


\begin{proof}
If $\del_i$ is realized by the generalized Real homomorphism $L_i:\cG\To \ZZ_2$, then $\del_1+\del_2$ is realized by $L_1+L_2$. The result follows from the easy to check bilinearity of the tensor product $r^\ast L\otimes \overline{s^\ast L'}$ with respect to the sum in $\Hom_{\RG}(\cG,\ZZ_2)$.
\end{proof}


\subsection{Cohomological picture of the group $\wRExt(\cG,\uc)$}~\label{sect:HR2-HR1-vs-Ext}

Let $\cT=(\tilde{\cG},\del)\in \Rext(\cG[\cU_0],\uc)$, where as usual $\cU_0$ is a Real open cover of $\Gpdo$. Let $\cU_\bullet$ be the pre-simplicial Real open cover of $(\cG_\bullet,\rho_\bullet)$ defined as in~\eqref{eq:G-cover}.

 Define a continuous map $\del_{j_0j_1}: U^1_{(j_0,j_1)}\To \ZZ_2$ over all $U^1_{(j_0,j_1)}\in \cU_1$ by $\del_{j_0j_1}(g):=\del(j_0,g,j_1)$. Then, over all $U^2_{(j_0,j_1,j_2)}$, we have that $\del_{j_0j_2}(g_1g_2)=\del((j_0,g_1,j_1)\cdot(j_1,g_2,j_2))=\del_{j_0j_1}(g_1)\cdot\del_{j_1j_2}(g_2)$. Moreover, since $\del$ is a Real morphism, we have that $\del_{\bar{j}_0\bar{j}_1}(\rho(g))=\del_{j_0j_1}(g)$; hence $\cT$ determines a Real \v{C}ech $1$-cocycle 
\begin{equation}~\label{eq:del_twist}
\del(\cT):=(\del_{j_0j_1})_{(j_0,j_1)\in J_1}\in ZR^1(\cU_\bullet,\ZZ_2),
\end{equation}
Then,~\eqref{eq:del_twist} gives a Real \v{C}ech $1$-cocycle $(\del_{\lambda_0\lambda_1\lambda_{01}})\in ZR^1(\cU_\bullet,\ZZ_2)$ defined by $\del_{\lambda_0\lambda_1\lambda_{01}}(g):=\del(\lambda_0,g,\lambda_{1})$ for any $g\in U^1_{\lambda_0\lambda_1\lambda_{01}}$; this  does make sense, for we know from Section~\ref{section:HR1} that Real \v{C}ech $1$-cocycles do not depend on $\lambda_{01}$. 

If $\cT'$ is another Rg $\uc$-central extension over $\cG$, we may suppose it is represented by a Rg $\uc$-twisted $(\tilde{\cG}',\del')$ of $\cG[\cU_0]$. Then by definition of the grading of $\cT\hat{\otimes}\cT'$, we have $\del(\cT\hat{\otimes}\cT')=\del(\cT)+\del(\cT')$.

\begin{thm}[Compare Proposition 2.13~\cite{FHT}]~\label{thm:ExtR-vs-Coho}
Let $(\cG,\rho)$ be a locally compact Hausdorff Real groupoid.
There is a set-theoretic split-exact sequence 
\begin{equation}~\label{eq:seq-Ext+-vs-Ext-HR1}
	0\To\check{H}R^2(\cG_\bullet,\uc) \hookrightarrow \wRExt(\cG,\uc) \stackrel{\del}{\To} \check{H}R^1(\cG_\bullet,\ZZ_2) \To 0
\end{equation}
so that we have a canonical group isomorphism
\begin{equation}
dd\colon \wRExt(\cG,\uc)\cong \check{H}R^1(\cG_\bullet,\ZZ_2)\ltimes \check{H}R^2(\cG_\bullet,\cS^1),
\end{equation}
where the semi-direct product $\check{H}R^1(\cG_\bullet,\ZZ_2)\ltimes \check{H}R^2(\cG_\bullet,\uc)$ is defined by the operation 
\[(\del,\omega)+(\del',\omega'):=(\del+\del',(\del\smile \del')\cdot \omega\cdot\omega').\]
The image of a Real graded extension $\EE$ by $dd$ is called the \emph{Dixmier-Douady class} of \ $\EE$.
\end{thm}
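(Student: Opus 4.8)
The plan is to build the exact sequence~\eqref{eq:seq-Ext+-vs-Ext-HR1} by combining the cohomological identifications already established in the previous subsections, and then to upgrade it to the semi-direct-product isomorphism by exhibiting an explicit set-theoretic splitting coming from the cup product. First I would recall that we have the subgroup inclusion $\wRExt^+(\cG,\uc)\hookrightarrow \wRExt(\cG,\uc)$ of ungraded Real $\uc$-central extensions, and that by the Corollary following the last Lemma, $\wRExt^+(\cG,\uc)\cong \check HR^2(\cG_\bullet,\uc)$; this gives the left-hand injection. For the surjection $\del$, I would use the construction already sketched just before the theorem: given a representative $\cT=(\tilde\cG,\del)$ over some $\cG[\cU_0]$, the family $\del_{j_0j_1}(g):=\del(j_0,g,j_1)$ defines a Real \v{C}ech $1$-cocycle in $ZR^1(\cU_\bullet,\ZZ_2)$ whose class in $\check HR^1(\cG_\bullet,\ZZ_2)$ depends only on the Morita class of $\EE$; and by Proposition~\ref{pro:RExt_vs_RTw} together with Corollary~\ref{cor:functor_RExt} this descends to a well-defined group homomorphism $\del\colon\wRExt(\cG,\uc)\To\check HR^1(\cG_\bullet,\ZZ_2)$, additive because $\del(\cT\hat\otimes\cT')=\del(\cT)+\del(\cT')$ as noted above. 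Exactness at the middle is then the statement that $\ker\del$ consists exactly of the classes that can be represented with trivial grading: one direction is immediate, and for the other I would observe that if $\del(\EE)$ is a Real $1$-coboundary, then after passing to a finer Real open cover (Lemma~\ref{lem:pre-smpl_cover}, Lemma~\ref{lem:N-class}) one can modify the local sections to kill the grading, producing an isomorphic representative in $\wRExt^+(\cG,\uc)$.

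Next I would construct the splitting. Given $\del\in\check HR^1(\cG_\bullet,\ZZ_2)\cong\Hom_{\RG}(\cG,\ZZ_2)$ (Proposition~\ref{pro:HR1-H1}), represented by a generalized Real homomorphism $L\colon\cG\To\ZZ_2$, I would send it to the Real graded $\uc$-central extension determined by the grading $\del$ itself together with the trivial twist part — concretely, the section $\del\mto[\cG\times\uc,\cG,\del,\cG]$, using that any Real morphism $\cG\To\ZZ_2$ is in particular a grading in the sense of the Real graded $\bfS$-twist definition. That this is a set-theoretic section of $\del$ is clear since the induced $1$-cocycle of $(\cG\times\uc,\cG,\del,\cG)$ is precisely $\del$. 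Then the map $dd$ is defined by $\EE\mto(\del(\EE),\omega(\EE\hat\otimes s(\del(\EE))^{\op}))$, i.e. subtract off the chosen lift of the grading and read off the resulting ungraded class via the Corollary above; bijectivity follows from the exactness of the sequence. The remaining point is to identify the group law transported to $\check HR^1\times\check HR^2$: here the cross term $(\del\smile\del')$ appears precisely because the chosen splitting $\del\mapsto(\cG\times\uc,\cG,\del,\cG)$ is not additive — the failure $s(\del)\hat\otimes s(\del')\not\cong s(\del+\del')$ is measured by the ungraded extension $\ZZ_2\To r^\ast L\otimes\overline{s^\ast L'}\To\cG$, whose image in $\check HR^2(\cG_\bullet,\uc)$ is exactly $\del\smile\del'$ by the definition of the cup product in the previous subsection.

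The verification that this failure term equals the cup product is where the real work lies. I would compute the Baer sum $s(\del)\hat\otimes s(\del')$ explicitly using the formula for $\hat\otimes$ and the sign $(-1)^{\del_2(\g_2)\del_1(\g'_1)}$ in the product; comparing with $s(\del+\del')$, the discrepancy is governed by a Real $\uc$-principal bundle over $\cG$ built from $L$ and $L'$, which one recognizes as $r^\ast L\otimes\overline{s^\ast L'}$ viewed inside $\uc$ via $\ZZ_2=\{\pm1\}\subset\uc$, exactly as in~\eqref{eq:df-cup-prod-HR}. Extracting the associated $2$-cocycle $\omega(L\smile L')$ via the Lemma comparing $\Rext^+_\cU$ and $HR^2$ then matches the definition $\del\smile\del'$. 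The main obstacle, and the one I would be most careful about, is bookkeeping the Koszul signs consistently across three layers — the Baer-sum sign, the conventions in $\wGa^{\op}$, and the identification $\ZZ_2\hookrightarrow\uc$ — so that the cross term comes out as $(\del\smile\del')\cdot\omega\cdot\omega'$ and not its inverse or a shifted version; the cited analogue is~\cite[Proposition 2.13]{FHT}, which I would follow closely for the sign conventions. Finally I would note that split-exactness of~\eqref{eq:seq-Ext+-vs-Ext-HR1} together with the computed twisted group law is precisely the assertion $\wRExt(\cG,\uc)\cong\check HR^1(\cG_\bullet,\ZZ_2)\ltimes\check HR^2(\cG_\bullet,\uc)$, completing the proof.
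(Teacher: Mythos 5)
Your proposal is correct and, at the level of structure, it is the same argument as the paper's (much terser) proof: the injection is the inclusion $\wRExt^+(\cG,\uc)\subset\wRExt(\cG,\uc)$ combined with the identification $\wRExt^+(\cG,\uc)\cong\check{H}R^2(\cG_\bullet,\uc)$, the map $\del$ is the one constructed just before the theorem, and the twisted group law is read off from the Baer sum. The one place where you deviate, and where your write-up has a small but fixable imprecision, is the set-theoretic section. You propose $\del\mto[\cG\times\uc,\cG,\del,\cG]$, "using that any Real morphism $\cG\To\ZZ_2$ is in particular a grading"; but a class in $\check{H}R^1(\cG_\bullet,\ZZ_2)$ corresponds, via Proposition~\ref{pro:HR1-H1}, only to a \emph{generalized} morphism $L:\cG\To\ZZ_2$, whereas the grading in the definition of a Real graded twist must be a \emph{strict} homomorphism. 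So your section is not defined on $\cG$ itself: you must first pass to a cover groupoid $\cG[\cU_0]$ on which the \v{C}ech cocycle $(\ve_{j_0j_1})$ does define a strict morphism $\del'(j_0,g,j_1)=\ve_{j_0j_1}(g)$ (this is Proposition~\ref{pro:gen_strict}), and take the graded extension over $\cG[\cU_0]$ together with the canonical Morita equivalence back to $\cG$. This is exactly what the paper does, choosing as section $L\smile L=\bigl((r^\ast L\otimes\overline{s^\ast L})\times_{\ZZ_2}\uc,\cG[\cU_0],\del'\bigr)$; and since $L$ carries a $\cG$-action, Example~\ref{ex:triv-twist2} trivializes $r^\ast L\otimes\overline{s^\ast L}$, so the paper's section is isomorphic to your "trivial twist graded by $\del$" once the latter is set up on $\cG[\cU_0]$ --- the two splittings coincide. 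On the remaining point, namely that the failure of additivity of the section is measured by $\del\smile\del'$ with the Baer-sum Koszul sign matching~\eqref{eq:df-cup-prod-HR}, your plan is in fact more detailed than the paper, which merely asserts that "the operation law comes from the definition of the sum in $\wRExt(\cG,\uc)$"; your proposed cocycle computation is the right way to substantiate that sentence, and the commutativity of $\wRExt(\cG,\uc)$ guarantees that the symmetry ambiguity $\del\smile\del'$ versus $\del'\smile\del$ you worry about is harmless.
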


\begin{proof}
The first arrow is the canonical inclusion $\wRExt^+(\cG,\uc)\subset \wRExt(\cG,\uc)$, and hence is injective. The exactness of the sequence~\eqref{eq:seq-Ext+-vs-Ext-HR1} is obvious, by definition of $\delta$ and $\wRExt^+(\cG,\uc)$. 

The map $\delta$ is well defined; indeed, if $\cT\sim \cT'$ in $\Rext(\cG[\cU_0],\uc)$, they differ from a twist coming from an element of $\PicR(\cG[\cU_0])$, and hence by construction of $\delta$, one has $\delta(\cT)=\delta(\cT')$. Moreover, $\delta$ is surjective, for if $L\in \Hom_{\RG}(\cG,\ZZ_2)$ represents the Real $1$-cocycle $(\ve_{j_0j_1}) \in ZR^1(\cU_\bullet,\ZZ_2)$, then $L\smile L$ is graded as follows:
$$L\smile L:=(\uc \To (r^\ast L\otimes \overline{s^\ast L})\times_{\ZZ_2}\uc\To \cG[\cU_0],\del'),$$
 where 
\[
\del'((j_0,\g,j_1)):=\ve_{j_0j_1}(\g).
\]
We see that $\del(L\smile L)=\ve$. 
Finally, note that the operation law comes from the definition of the sum in $\wRExt(\cG,\uc)$.
\end{proof}


\subsection{The proper case}

In this subsection, we are interested in some particular Abelian Real sheaves on $(\cG_\bullet,\rho_\bullet)$, where $(\cG,\rho)$ is a proper groupoid. More precisely, we aim to generalize a result by Crainic (see~\cite[Proposition 1]{Crai}) stating that for a proper Lie groupoid $\cG$, and "\emph{representation}" $E$ of $\cG$ (~\cite[1.2]{Crai}), the \emph{differentiable} cohomology $H_d^n(\cG,E)=0$ for all $n\ge 1$. Let us first introduce some few notions and properties.

\begin{df}[Real Haar measure]
Let $(\cG,\rho)$ be a locally compact Real groupoid, and let $\{\mu^x\}_{x\in \Gpdo}$ be a (left) \emph{Haar system} for $\cG$ (cf.~\cite[\S.2]{Ren}). Define a new family  $\{\mu_\rho^x\}_{x\in \Gpdo}$ of measures $\mu_\rho^x$, with support $\cG^x$ for all $x\in \Gpdo$, defined by 
\begin{equation}
\mu_\rho^x(C):=\mu^{\rho(x)}(\rho(C)), \ \text{for all measurable subset} \ C\subset \cG^x.
\end{equation}
We say that $\{\mu^x\}_{x\in \Gpdo}$ is \emph{Real} if 
\begin{equation}
\mu^x=\mu_\rho^x, \ \forall x\in \Gpdo.
\end{equation}
\end{df}

\begin{lem}
Any Haar system for $\cG$ gives rise to a Real one.
\end{lem}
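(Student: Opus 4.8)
The plan is to produce a Real Haar system by symmetrizing a given one via the Real structure $\rho$. Suppose $\{\mu^x\}_{x\in\Gpdo}$ is a left Haar system for $\cG$; by definition this means each $\mu^x$ is a positive Radon measure with support $\cG^x$, the family is continuous (i.e.\ $x\mapsto \int_{\cG} f\,d\mu^x$ is continuous for $f\in C_c(\cG)$), and it is left-invariant: $\int_{\cG^{r(g)}} f(gh)\,d\mu^{s(g)}(h)=\int_{\cG^{r(g)}} f(h)\,d\mu^{r(g)}(h)$. First I would introduce the ``conjugate'' family $\{\mu_\rho^x\}$ already defined in the statement, $\mu_\rho^x(C):=\mu^{\rho(x)}(\rho(C))$ for measurable $C\subset\cG^x$, equivalently $\int_{\cG^x} f\,d\mu_\rho^x = \int_{\cG^{\rho(x)}} (f\circ\rho)\,d\mu^{\rho(x)}$; here one uses that $\rho$ is a homeomorphism mapping $\cG^x$ onto $\cG^{\rho(x)}$ (since $\rho$ is a strict homomorphism, $r(\rho(g))=\rho(r(g))$).

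The key steps are: (1) Check that $\{\mu_\rho^x\}$ is itself a left Haar system for $\cG$. Continuity follows because $f\mapsto f\circ\rho$ is a continuous self-map of $C_c(\cG)$ and composition of continuous maps is continuous. The support of $\mu_\rho^x$ is $\rho^{-1}(\operatorname{supp}\mu^{\rho(x)})=\rho^{-1}(\cG^{\rho(x)})=\cG^x$ since $\rho$ is $2$-periodic. Left-invariance is the substitution $h\mapsto\rho(h)$ together with $\rho(gh)=\rho(g)\rho(h)$ and $\rho(\cG^{s(g)})=\cG^{s(\rho(g))}$: one computes $\int f(gh)\,d\mu_\rho^{s(g)}(h)=\int f(\rho(g)k)\,d\mu^{s(\rho(g))}(k)$ wait---more carefully, $\int_{\cG^{r(g)}} f(gh)\,d\mu_\rho^{s(g)}(h)=\int_{\cG^{\rho(s(g))}} f(g\rho(k))\,d\mu^{\rho(s(g))}(k)$, and since $\rho^2=\id$ write $g=\rho(\rho(g))$ so $g\rho(k)=\rho(\rho(g)k)$, giving $\int (f\circ\rho)(\rho(g)k)\,d\mu^{r(\rho(g))}(k)=\int(f\circ\rho)(k)\,d\mu^{r(\rho(g))}(k)=\int_{\cG^{r(g)}}f(\rho(k'))\,d\mu_\rho^{\cdots}$, which is $\int f\,d\mu_\rho^{r(g)}$ after one more substitution. (2) Observe that the set of left Haar systems is closed under convex combinations: if $\{\nu_1^x\}$ and $\{\nu_2^x\}$ are left Haar systems, so is $\{\frac12(\nu_1^x+\nu_2^x)\}$ (positivity, support = union of supports = $\cG^x$, continuity and left-invariance are linear conditions). (3) Set $\tilde\mu^x:=\frac12(\mu^x+\mu_\rho^x)$. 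Then $\tilde\mu^x$ is a left Haar system by (1) and (2), and it is Real: applying the conjugation operation again, $(\tilde\mu)_\rho^x=\frac12((\mu)_\rho^x+(\mu_\rho)_\rho^x)=\frac12(\mu_\rho^x+\mu^x)=\tilde\mu^x$, where $(\mu_\rho)_\rho^x=\mu^x$ follows from $\rho^2=\id$. This gives the claimed Real Haar system.

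I do not anticipate a serious obstacle; the only point demanding care is the left-invariance computation in step (1), where one must track the domains of the measures under $\rho$ and use both $\rho(gh)=\rho(g)\rho(h)$ and $\rho^2=\id$ at the right moments. A minor point worth stating explicitly is that $\rho$ restricts to a homeomorphism $\cG^x\xrightarrow{\ \sim\ }\cG^{\rho(x)}$, which is immediate from $\rho$ being a strict homeomorphism of the groupoid, so the change-of-variables formula $\int_{\cG^x}(f\circ\rho)\,d\mu_\rho^x=\int_{\cG^{\rho(x)}}f\,d\mu^{\rho(x)}$ is valid. With these in hand the proof is a short verification.
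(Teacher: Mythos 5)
Your proof is correct and follows essentially the same route as the paper: form the conjugate system $\{\mu_\rho^x\}$ and symmetrize, $\tilde\mu^x=\tfrac12(\mu^x+\mu_\rho^x)$, using $\rho^2=\Id$ to conclude $(\tilde\mu)_\rho^x=\tilde\mu^x$. The only difference is that you spell out the verification that $\{\mu_\rho^x\}$ is itself a left Haar system (support, continuity, left-invariance), which the paper treats as clear.
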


\begin{proof}
Assume $\{\mu^x\}$ is a Haar system for $\cG$. For every $x\in \Gpdo$, we set
\begin{eqnarray}
	\tilde{\mu}^x:=\frac{1}{2}(\mu^x+\mu_\rho^x).	
\end{eqnarray}	
It is clear that $\{\tilde{\mu}^x\}_{x\in \Gpdo}$ is a Haar system for $\cG$; measurable subsets for $\tilde{\mu}^x$ being exactly those for $\mu^x$. Moreover, one has 
\begin{align*}
\tilde{\mu}_\rho^x=\frac{1}{2}\left(\mu^{\rho(x)}\circ \rho +\mu_\rho^{\rho(x)}\circ \rho\right)=\frac{1}{2}\left(\mu_\rho^x+\mu^x\right)=\tilde{\mu}^x, \ \forall x\in \Gpdo.
\end{align*}
\end{proof}

\begin{rem}
From the lemma above, we will always assume Haar systems for $\cG$ to be Real. 
\end{rem}

In what follows, the Real group $\KK$ is either the additive group $\RR$ equipped with the Real structure $t\mto \bar{t}:=-t$, or the additive group $\CC$ equipped with the complex conjugation  $z\mto \bar{z}$ as Real structure.

\begin{df}
Let $(\cG,\rho)$ be a locally compact Real groupoid. A \emph{Real representation} of $(\cG,\rho)$ is a locally trivial Real $\KK$-vector bundle $\pi: (E,\nu)\To (\Gpdo,\rho)$ endowed with a (left) continuous Real $\cG$-action; that is a Real open cover $(U_j)$ of $(\Gpdo,\rho)$  and isomorphisms $\phi_j: U_j\times \KK^r\To E_{|U_j}$ such that $\nu(\phi_j(x,(a_1,\ldots,a_r)))=\phi_{\bar{j}}(\rho(x),(\bar{a}_1,\ldots,\bar{a}_r)), \ \forall x\in U_j, (a_1,...,a_r)\in \KK^r$, and
\begin{itemize}
\item $\forall x\in \Gpdo$, the induced isomorphism $\nu_x: E_x\To E_{\rho(x)}$ is $\KK$-antilinear: 
$$\nu_x(\xi\cdot a)=\nu_x(\xi)\cdot\bar{a}, \ \forall \xi\in E_x, a\in \KK;$$ 
\item $\forall g\in \cG$, the isomorphism $E_{s(g)}\To E_{r(g)}$, induced by the $\cG$-action, is linear. 
\end{itemize}
\end{df}

Note that such a Real representation $(E,\nu)$ can be viewed as a Real $\cG$-module in the following way: $E$ is the groupoid $\xymatrix{E \dar[r] & \Gpdo}$ with $r_E(\xi)=s_E(\xi):=\pi(\xi)$ for every $\xi\in E$, for any $x\in \Gpdo$, $E_x=E^x=E_x^x$ is isomorphic to the group $\KK$, then the product in $E$ is defined by the sum on the fibres. The Real sheaf on $(\cG_\bullet,\rho_\bullet)$ associated to the Real $\cG$-module $(E,\nu)$ will be denoted $(E^\bullet,\nu^\bullet)$.

\begin{rem}~\label{rem:Real-repr-p-q}
More generally, we may define a Real representation of of type $\RR^{p,q}$ as a locally trivial real vector bundle $E\To \Gpdo$ of rank $p+q$, together with a Real structure $\nu:E\To E$, and a Real $\cG$-action on $E$ with respect to the projection map, such that locally, the Real space $(E,\nu)$ identifies with $\RR^{p,q}$; that is there is a Real open cover $(U_j)$ of $\Gpdo$ and commutative diagrams
\[
\xymatrix{U_j\times \RR^{p,q} \ar[r]^{\phi_j} \ar[d]^{\rho\times bar} & E_{|U_j} \ar[d]^{\nu} \\ U_{\bar{j}}\times \RR^{p,q}\ar[r]^{\phi_{\bar{j}}} & E_{|U_{\bar{j}}}}
\]
where $bar:\RR^{p,q}\To \RR^{p,q}$ is the Real structure defined in the first section.
\end{rem}

\begin{df}~\label{df:proper} (~\cite[Definition 2.20]{TLX})
A locally compact Real groupoid $(\cG,\rho)$ is said to be \emph{proper} if any of the following equivalent conditions is satisfied:
\begin{itemize}
\item[(i)] the Real map $(s,r): \cG \To \Gpdo\times \Gpdo$ is proper;
\item[(ii)] for every $K\subset \Gpdo$ compact, $\cG^K_K$ is compact.
\end{itemize}
\end{df}

Proper Real groupoids can be characterized by the following (we refer to Propositions 6.10 and 6.11 in~\cite{Tu2} for a proof) 

\begin{pro}~\label{pro:proper-cutoff}
Let $(\cG,\rho)$ be a locally compact Real groupoid with a Haar system $\{\mu^x\}_{x\in \Gpdo}$. Then $(\cG,\rho)$ is proper if and only it admits a \emph{cutoff} Real function; that is, a function $x: \Gpdo \To \RR_+$ such that
\begin{itemize}
\item[(i)] $\forall x\in \Gpdo$, $c(\rho(x))=c(x)$;
\item[(ii)] $\forall x\in \Gpdo$, $\int_{\cG^x} c(s(g))d\mu^x(g)=1$;
\item[(iii)] the map $r: \text{supp}(\mathsf{c}\circ s)\To \Gpdo$ is proper; \emph{i.e.} for every $K\subset \Gpdo$ compact, $\supp(c)\cap s(\cG^K)$ is compact.
\end{itemize} 	
\end{pro}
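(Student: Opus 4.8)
\textbf{Proof plan for Proposition~\ref{pro:proper-cutoff}.}

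The plan is to mimic the argument for the non-Real case in~\cite{Tu2}, but to symmetrize everything with respect to the Real structure at the very end. First I would recall the direction that does not require any symmetrization: if a function $c:\Gpdo\To\RR_+$ satisfying (i), (ii), (iii) exists, then properness of $\cG$ follows by the standard argument (for $K\subset\Gpdo$ compact, one shows $\cG^K_K$ is contained in a compact set built from $\supp(c)\cap s(\cG^K)$ together with $\mu$-estimates), so it suffices to produce such a $c$ assuming $\cG$ proper. Here the only new point is that the Real condition (i) has simply been added to the list, so I would first prove the existence of \emph{some} cutoff function $c_0$ (not necessarily Real) exactly as in~\cite[Prop.~6.10--6.11]{Tu2}: using properness of $(s,r)$, choose a locally finite cover and a partition of unity to build a function $\varphi\geq 0$ on $\Gpdo$ with $r:\supp(\varphi\circ s)\To\Gpdo$ proper and $\int_{\cG^x}\varphi(s(g))\,d\mu^x(g)>0$ for all $x$, then normalize $c_0(x):=\varphi(x)/\psi(x)$ where $\psi(x):=\int_{\cG^x}\varphi(s(g))\,d\mu^x(g)$; this $c_0$ satisfies (ii) and (iii).

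Next I would symmetrize. Since the Haar system $\{\mu^x\}$ is assumed Real (by the running convention established after the lemma on Real Haar systems), one has the invariance $\int_{\cG^x}f(g)\,d\mu^x(g)=\int_{\cG^{\rho(x)}}f(\rho(g))\,d\mu^{\rho(x)}(g)$ for all integrable $f$. Define
\begin{equation}
c(x):=\tfrac12\bigl(c_0(x)+c_0(\rho(x))\bigr).
\end{equation}
Then (i) is immediate from $\rho^2=\Id$. For (ii), I compute
\begin{equation}
\int_{\cG^x}c(s(g))\,d\mu^x(g)=\tfrac12\int_{\cG^x}c_0(s(g))\,d\mu^x(g)+\tfrac12\int_{\cG^x}c_0(\rho(s(g)))\,d\mu^x(g),
\end{equation}
and the first integral is $1$ by construction of $c_0$, while for the second I use $\rho(s(g))=s(\rho(g))$ together with the Realness of the Haar system to rewrite it as $\int_{\cG^{\rho(x)}}c_0(s(h))\,d\mu^{\rho(x)}(h)=1$; hence the sum is $1$. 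For (iii), note $\supp(c)\subseteq\supp(c_0)\cup\rho(\supp(c_0))$, and since $\rho$ is a homeomorphism and $r\circ\rho=\rho\circ s$ (combined with the fact that $\rho$ preserves compact sets), properness of $r$ on $\supp(c_0\circ s)$ transfers to $\supp(c\circ s)$; concretely, for $K$ compact, $\supp(c)\cap s(\cG^K)$ is contained in $\bigl(\supp(c_0)\cap s(\cG^K)\bigr)\cup\rho\bigl(\supp(c_0)\cap s(\cG^{\rho(K)})\bigr)$, a union of two compact sets.

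The main obstacle is not conceptual but bookkeeping: one must be careful that the Realness of the Haar system is exactly the statement $\mu^x=\mu^x_\rho$, i.e. $\mu^x(C)=\mu^{\rho(x)}(\rho(C))$, and deduce from it the change-of-variables identity $\int f\,d\mu^x=\int (f\circ\rho)\,d\mu^{\rho(x)}$ used in the verification of (ii); this is a routine measure-theoretic translation but it is the one place where the hypothesis genuinely enters. Everything else (existence of $c_0$, the "cutoff $\Rightarrow$ proper" direction) is quoted verbatim from~\cite{Tu2}, so I would keep those parts brief and refer the reader there, devoting the written proof almost entirely to the symmetrization step above.
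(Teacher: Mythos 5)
Your proposal is correct and is essentially the argument the paper intends: the paper itself gives no written proof but defers to Propositions 6.10 and 6.11 of~\cite{Tu2}, and your symmetrization $c(x):=\tfrac12\bigl(c_0(x)+c_0(\rho(x))\bigr)$ using the Realness of the Haar system is exactly the same averaging device the paper uses in the preceding lemma to turn an arbitrary Haar system into a Real one. One small slip: since $\rho$ is a strict homomorphism, the relevant identities are $r\circ\rho=\rho\circ r$ and $s\circ\rho=\rho\circ s$ (not $r\circ\rho=\rho\circ s$ as written), but the concrete containment you give when verifying (iii) uses the correct relations, so nothing breaks.
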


\medskip

\begin{thm}~\label{thm:HR_vs_proper}
Suppose $(\cG,\rho)$ is a locally compact proper Real groupoid with a Haar system. Then, for any Real  representation $(E,\nu)$ of $(\cG,\rho)$, we have 
\[\check{H}R^n(\cG_\bullet,E^\bullet)=0, \ \forall n\ge 1.\] 
\end{thm}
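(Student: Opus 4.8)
The plan is to mimic Crainic's argument (\cite[Proposition 1]{Crai}), adapted to the topological setting and carrying along the Real structures, by constructing an explicit contracting homotopy on the \v{C}ech complex built from a cutoff function. First I would reduce to a convenient model for the cohomology. By Proposition~\ref{pro:proper-cutoff}, properness gives a \emph{Real} cutoff function $c:\Gpdo\To\RR_+$ satisfying $c(\rho(x))=c(x)$ and $\int_{\cG^x}c(s(g))\,d\mu^x(g)=1$. The key point is that $\check{H}R^n(\cG_\bullet,E^\bullet)$ can be computed, in the proper case, via a complex of \emph{global} (rather than merely local) sections: because $\cG$ is proper one may (after passing to a suitable Real open cover and using a Real partition of unity subordinate to it, which exists since the $\cG_n$ are paracompact) replace the \v{C}ech complex $CR^\ast(\cU_\bullet,E^\bullet)$ by the complex $\Gamma R(\cG^{(\bullet)},E)$ whose $n$-cochains are globally Real continuous maps $\omega:\cG^{(n)}\To E$ with $\omega(g_1,\ldots,g_n)\in E_{r(g_1)}$, Real in the sense $\omega(\rho(g_1),\ldots,\rho(g_n))=\nu(\omega(g_1,\ldots,g_n))$, with the usual groupoid differential
\[
(d\omega)(g_1,\ldots,g_{n+1})=g_1\cdot\omega(g_2,\ldots,g_{n+1})+\sum_{i=1}^{n}(-1)^i\omega(\ldots,g_ig_{i+1},\ldots)+(-1)^{n+1}\omega(g_1,\ldots,g_n).
\]
Establishing this reduction — identifying the direct limit of the $HR^n(\cU_\bullet,E^\bullet)$ with $H^\ast$ of this global complex — is the first technical step; it rests on Proposition~\ref{pro:HR-long-exct-seq} together with the fact that the sheaves $E^\bullet$ are fine on each paracompact $\cG_n$ (Real partitions of unity are obtained by symmetrizing an ordinary one, just as in Lemma preceding Remark on Haar systems).

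Next I would write down the contracting homotopy. For $n\ge 1$ define $h^n:\Gamma R(\cG^{(n)},E)\To\Gamma R(\cG^{(n-1)},E)$ by integrating out the first variable against the cutoff:
\[
(h^n\omega)(g_1,\ldots,g_{n-1}):=\int_{\cG^{r(g_1)}}c(s(g))\;g\cdot\omega\big(g^{-1},g^{-1}g_1,g_2,\ldots,g_{n-1}\big)\,d\mu^{r(g_1)}(g),
\]
with the convention that for $n=1$, $(h^1\omega)(x):=\int_{\cG^{x}}c(s(g))\,g\cdot\omega(g^{-1})\,d\mu^{x}(g)$. The integrand is compactly supported in $g$ because $c$ has the properness property (iii) of Proposition~\ref{pro:proper-cutoff}, so the integral converges and depends continuously on the parameters; the output lands in $E_{r(g_1)}$ since $g\cdot E_{r(g^{-1})}=g\cdot E_{s(g)}=E_{r(g)}=E_{r(g_1)}$ when $r(g)=r(g_1)$. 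The crucial verification is that $h^n$ is \emph{Real}, i.e. maps globally Real cochains to globally Real cochains: this uses that the Haar system is Real ($\mu^{\rho(x)}\circ\rho=\mu^x$), that $c$ is Real, that $\nu$ is $\KK$-antilinear and $\cG$-equivariant (so $\nu(g\cdot\xi)=\rho(g)\cdot\nu(\xi)$), and that the substitution $g\mapsto\rho(g)$ is a measure-preserving bijection of $\cG^{r(g_1)}$ onto $\cG^{\rho(r(g_1))}$. Granting these, a direct (if tedious) computation of $dh^n+h^{n+1}d$ — carried out exactly as in Crainic's proof, using the defining normalization $\int c(s(g))\,d\mu=1$ to collapse the telescoping sum — yields $dh^n+h^{n+1}d=\Id$ on $\Gamma R(\cG^{(n)},E)$ for all $n\ge 1$, whence $\check{H}R^n(\cG_\bullet,E^\bullet)=0$ for $n\ge 1$.

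I expect the main obstacle to be the first step rather than the homotopy formula itself: namely, making rigorous the passage from the inductive limit of \v{C}ech cohomologies over arbitrary Real open covers to the global groupoid-cochain complex on which the cutoff homotopy lives. In Crainic's smooth setting one works directly with the differentiable complex, but here one must justify that $\check{H}R^\ast(\cG_\bullet,E^\bullet)$ is computed by continuous global cochains — this requires a Real analogue of the fact that, for paracompact simplicial spaces, fine-sheaf resolutions compute \v{C}ech cohomology, plus care that the "Real" constraint is preserved at each stage (one symmetrizes partitions of unity $\psi_j$ into $\tfrac12(\psi_j+\overline{\psi_{\bar j}\circ\rho})$ to get Real ones, and symmetrizes the resulting homotopies). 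A secondary, purely bookkeeping difficulty is checking continuity and the correct fibrewise behaviour of $h^n\omega$ near points where the support of $c\circ s$ meets the fibre nontrivially; this is where property (iii) of the cutoff is used, and it is routine once the properness input is in place. Once these points are settled, the vanishing is immediate from $dh+hd=\Id$.
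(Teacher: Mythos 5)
Your second step (the Crainic-style homotopy $h^n\omega=\int c(s(g))\,g\cdot\omega(g^{-1},g^{-1}g_1,\ldots)\,d\mu$, together with the Real-compatibility checks using the Real Haar system, the Real cutoff and the antilinearity of $\nu$) is sound, and it is essentially the same mechanism the paper uses. The genuine gap is your first step: you compute $\check{H}R^\ast(\cG_\bullet,E^\bullet)$ by a complex of \emph{global} continuous Real cochains $\omega:\cG^{(n)}\To E$, but this identification is exactly what is not available. The groups $\check{H}R^n(\cG_\bullet,E^\bullet)$ are defined as a direct limit over Real open covers of the groups $HR^n(\cU_\bullet,E^\bullet)$, and the complex $CR^\ast_{ss}({}_\natural\cU_\bullet,E^\bullet)$ is \emph{not} the total complex of a \v{C}ech--simplicial double complex: the \v{C}ech direction is folded into the index sets $\Lam_n$ and the single differential is the alternating sum of the $\tilde{\ve}_k^\ast$. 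So the standard ``fine sheaves kill the \v{C}ech direction, leaving the global simplicial complex'' argument does not apply off the shelf; a comparison theorem of that kind is nowhere established in the paper (Proposition~\ref{pro:HR-long-exct-seq} only gives the long exact sequence), and proving it would be a substantial piece of work in its own right. Moreover your reduction needs Real partitions of unity subordinate to \emph{arbitrary} covers of every $\cG_n$, i.e.\ paracompactness of all the spaces $\cG_n$, which is not among the hypotheses of the theorem (the groupoid is only assumed locally compact Hausdorff and proper).

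The paper sidesteps this entirely by building the contracting homotopy at the level of an arbitrary Real open cover $\cU_\bullet$: properness and property (iii) of the cutoff imply that for $(g_1,\ldots,g_n)\in U^n_\lambda$ the set of composable $(g_1,\ldots,g_n,\g)$ with $\g\in\supp(c\circ s)$ is contained in a \emph{finite} union of cover sets $U^{n+1}_{\lambda^l}$, and a partition of unity subordinate to this finite cover of a compact set (which exists without any paracompactness hypothesis) is inserted into the integral so that Crainic's formula makes sense for cochains that are only locally defined. One then checks $h^n\circ d^n+d^{n-1}\circ h^{n-1}=\Id$ on $CR^n_{ss}(\cU_\bullet,E^\bullet)$ for every cover and passes to the direct limit. (A cosmetic difference: the paper integrates over an extra variable appended on the right, over $\cG^{s(g_n)}$, rather than over the first variable as in your formula; either convention works.) To repair your proposal you would either have to prove the global-cochain comparison theorem under extra hypotheses, or localize your homotopy with partitions of unity as the paper does.
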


\medskip

To prove this result, we shall recall fundamentals of vector-valued integration exposed, for instance, in~\cite[Appendix B.1]{Wil1}, and then adapt them to the case when we deal with Real structures. Let $X$ be a locally compact Hausdorff space, and let $B$ be a separable Banach space. Let $\mu$ be a Radon measure on $X$. Then measurable functions $f:X\To B$ are defined as usual, and such function is \emph{integrable} if \[\|f\|_1:=\int_X\|f(x)\|d\mu(x) < \infty . \]
The collection of all $B$-valued integrable functions on $X$ is denoted by $\cL^1(X,B)$, and the set of equivalence classes of functions in $\cL^1(X,B)$ is a Banach space denoted by $L^1(X,B)$ (~\cite[Proposition B.31]{Wil1}). Furthermore, $\cC_c(X,B)$ is dense in $L^1(X,B)$. The $B$-valued integration of elements of $L^1(X,B)$ is defined as a linear map $I:\cC_c(X,B)\To B$ given by 
\begin{eqnarray}~\label{eq:integral}
I(f):=\int_Xf(x)d\mu(x), \ \text{and} \ \|I(f)\|\leq \|f\|_1.
\end{eqnarray}
Moreover, this integral is characterized by the following

\begin{pro}~\label{pro:integral}(cf. Proposition B.34~\cite{Wil1})
Let $\mu$ be a Radon measure on $X$, and let $B$ be a Banach space. Then, the integral is characterized by 
\begin{itemize}
\item[(a)] for all $f\in \cC_c(X,B)$ and $\vp\in B^\ast$, \[\vp\left(\int_Xf(x)d\mu(x)\right)=\int_X\vp(f(x))d\mu(x);\]
\item[(b)] if $L: B\To B'$ is any bounded linear map between two Banach spaces, than 
\[L\left(\int_Xf(x)d\mu(x)\right)=\int_X L(f(x))d\mu(x).\]
\end{itemize}
\end{pro}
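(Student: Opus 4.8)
The plan is to reduce both assertions to the \emph{elementary functions}, that is, to the finite sums of functions of the form $g\cdot b$ with $g\in\cC_c(X)$ scalar-valued and $b\in B$, on which the vector-valued integral takes, by its construction in~\cite[Appendix B.1]{Wil1}, the value $I(g\cdot b)=\bigl(\int_X g\,d\mu\bigr)b$. First I would verify (a) on such an elementary $f=g\cdot b$: for $\vp\in B^\ast$ one has $\vp(I(g\cdot b))=\bigl(\int_X g\,d\mu\bigr)\vp(b)=\int_X g(x)\vp(b)\,d\mu(x)=\int_X\vp(f(x))\,d\mu(x)$, which is exactly (a), and both sides are additive in $f$. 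The point is then that both sides of (a) are $\|\cdot\|_1$-continuous linear functionals of $f$: the left-hand side is $\vp\circ I$, bounded by $\|\vp\|\,\|f\|_1$ since $\|I(f)\|\le\|f\|_1$, while the right-hand side $f\mapsto\int_X\vp(f(x))\,d\mu(x)$ is bounded by $\|\vp\|\,\|f\|_1$ as well. Hence, once I know the elementary functions are $\|\cdot\|_1$-dense in $\cC_c(X,B)$, identity (a) propagates from elementary $f$ to all $f\in\cC_c(X,B)$ by continuity.

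The density statement is the one genuinely technical step. Given $f\in\cC_c(X,B)$ with compact support $K$ and $\varepsilon>0$, I would use continuity of $f$ and compactness of $K$ to cover $K$ by finitely many relatively compact open sets $U_1,\dots,U_n$, all contained in a fixed compact neighbourhood $K'$ of $K$, together with points $x_i\in U_i$ for which $\|f(x)-f(x_i)\|<\varepsilon$ whenever $x\in U_i$. Taking a partition of unity $g_1,\dots,g_n\in\cC_c(X)$ subordinate to this cover with $\supp g_i\subset U_i$ and $\sum_i g_i=1$ on $K$, the elementary function $\tilde f=\sum_{i=1}^n g_i\cdot f(x_i)$ is supported in $K'$ and satisfies $\|f(x)-\tilde f(x)\|<\varepsilon$ for every $x$. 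Since $\mu$ is Radon we have $\mu(K')<\infty$, so $\|f-\tilde f\|_1\le\varepsilon\,\mu(K')$, which gives the required density.

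For the characterization, suppose $b\in B$ also satisfies $\vp(b)=\int_X\vp(f(x))\,d\mu(x)$ for every $\vp\in B^\ast$; then $\vp(b-I(f))=0$ for all $\vp$, and since the Hahn--Banach theorem guarantees that $B^\ast$ separates the points of $B$, this forces $b=I(f)$. Thus (a) determines $I(f)$ uniquely, which is precisely what ``characterized'' means. Finally, (b) follows from (a) with no further computation: for a bounded linear $L\colon B\To B'$ one has $L\circ f\in\cC_c(X,B')$ (because $L$ is continuous, so the right-hand integral in (b) is defined), and for any $\psi\in(B')^\ast$ the functional $\psi\circ L$ lies in $B^\ast$, whence applying (a) first in $B$ and then in $B'$ yields $\psi(L(I(f)))=\int_X(\psi\circ L)(f(x))\,d\mu(x)=\int_X\psi(L(f(x)))\,d\mu(x)=\psi\bigl(\int_X L(f(x))\,d\mu(x)\bigr)$; as this holds for all $\psi$, Hahn--Banach again gives $L(I(f))=\int_X L(f(x))\,d\mu(x)$. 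I expect the main obstacle to be the density step; the rest is a routine continuity-plus-Hahn--Banach argument.
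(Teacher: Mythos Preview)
The paper does not supply its own proof of this proposition: it is quoted verbatim from~\cite[Proposition~B.34]{Wil1} and used as a black box to derive Lemma~\ref{lem:Real_integral}. Your argument is the standard one and is correct; in particular your density step via a partition of unity subordinate to a finite cover of $\supp f$ is exactly what is needed, and the passage from (a) to (b) by precomposing with $\psi\circ L\in B^\ast$ and invoking Hahn--Banach is clean.
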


Now suppose $(X,\rho)$ is a locally compact Hausdorff Real space, $\mu$ is a Real Radon measure; \emph{i.e.} $\mu(\rho(C))=\rho(C)$ for every measurable set $C\subset X$. Let $(B,\varsigma)$ be a separable Real Banach space. Then from the above, we deduce the

\begin{lem}~\label{lem:Real_integral}
Let $\cC_c(X,B)$ be equipped with the Real structure denoted by $\tilde{\rho}: \cC_c(X,B)\To \cC_c(X,B)$, and given by $\rho(f)(x):=\varsigma(f(\rho(x)))$. Then, under the above assumption, the integral $\int:\cC_c(X,B)\To B$ is Real, in that it commutes with the Real structures $\varsigma$ and $\tilde{\rho}$; i.e 
\begin{eqnarray}
\int_X \varsigma(f(\rho(x)))d\mu(x)=\varsigma\left(\int_X f(x)d\mu(x)\right), \forall f\in \cC_c(X,B).
\end{eqnarray}
\end{lem}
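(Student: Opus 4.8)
The plan is to deduce the statement directly from the characterization of the vector-valued integral given in Proposition~\ref{pro:integral}, applied to the bounded linear map $\varsigma$. First I would observe that since $(B,\varsigma)$ is a Real Banach space, $\varsigma:B\To B$ is a ($\RR$-linear, or conjugate-linear in the complex case, but in either case bounded as a map of the underlying real Banach spaces) bounded linear involution; in particular it is a bounded linear map to which part (b) of Proposition~\ref{pro:integral} applies. Next I would check that $\tilde{\rho}$ is well defined on $\cC_c(X,B)$: if $f\in \cC_c(X,B)$ then $x\mapsto \varsigma(f(\rho(x)))$ is continuous (composition of continuous maps), compactly supported (its support is $\rho(\supp f)$, which is compact since $\rho$ is a homeomorphism), and $\tilde{\rho}^2=\Id$ since $\varsigma^2=\Id_B$ and $\rho^2=\Id_X$; so $\tilde{\rho}$ is a genuine Real structure on $\cC_c(X,B)$.

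The core computation is then a two-line chain. Fix $f\in\cC_c(X,B)$. Applying part (b) of Proposition~\ref{pro:integral} with $L=\varsigma$ gives
\begin{eqnarray*}
\varsigma\left(\int_X f(x)\,d\mu(x)\right)=\int_X \varsigma(f(x))\,d\mu(x).
\end{eqnarray*}
Now I perform the change of variables $x\mapsto \rho(x)$ in the right-hand integral, using that $\mu$ is Real (i.e. $\rho_\ast\mu=\mu$, which is exactly the hypothesis $\mu(\rho(C))=\mu(C)$ for all measurable $C$): for any $g\in\cC_c(X,B)$ one has $\int_X g(\rho(x))\,d\mu(x)=\int_X g(y)\,d\mu(y)$, and applying this with $g=\varsigma\circ f$ yields $\int_X \varsigma(f(\rho(x)))\,d\mu(x)=\int_X \varsigma(f(x))\,d\mu(x)$. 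Combining the two displays gives $\int_X \varsigma(f(\rho(x)))\,d\mu(x)=\varsigma\left(\int_X f(x)\,d\mu(x)\right)$, which is precisely the asserted identity.

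The only genuinely delicate point is the change-of-variables step: one has to be sure that $\rho$-invariance of $\mu$ in the stated form ($\mu(\rho(C))=\mu(C)$ on measurable sets) does upgrade to invariance of the integral of $B$-valued functions, $\int g\circ\rho\,d\mu=\int g\,d\mu$. This is routine but should be spelled out: it holds for $B$-valued simple functions by the set-level hypothesis and $\RR$-linearity of the integral, and then extends to all of $\cC_c(X,B)\subset L^1(X,B)$ by the density of simple functions (equivalently, of $\cC_c$) in $L^1$ and the norm estimate $\|I(f)\|\le\|f\|_1$ from~\eqref{eq:integral} together with the fact that $f\mapsto f\circ\rho$ is an isometry of $L^1(X,B)$ (again by $\rho_\ast\mu=\mu$). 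Alternatively, and more in the spirit of the excerpt, one can avoid discussing measurability of $B$-valued functions altogether by applying part (a) of Proposition~\ref{pro:integral}: for every $\vp\in B^\ast$ one has $\vp\big(\int_X\varsigma(f(\rho(x)))\,d\mu(x)\big)=\int_X(\vp\circ\varsigma)(f(\rho(x)))\,d\mu(x)$, reducing the whole identity to the scalar-valued change of variables $\int_X h(\rho(x))\,d\mu(x)=\int_X h(x)\,d\mu(x)$ for $h=(\vp\circ\varsigma)\circ f\in\cC_c(X,\KK)$, which is the classical statement for Radon measures; since functionals separate points of $B$, the vector identity follows. I would present the second route, as it keeps the argument self-contained and short.
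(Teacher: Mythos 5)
Your argument is correct, and its overall mechanism (reduce to scalar identities through functionals, then use $\rho$-invariance of $\mu$ and the fact that $B^\ast$ separates points) is the same as the paper's; the genuine difference is how the antilinearity of $\varsigma$ is handled. You commute $\varsigma$ with the integral by applying Proposition~\ref{pro:integral}(b) to $L=\varsigma$, which you justify by viewing $\varsigma$ as a bounded linear map of the underlying \emph{real} Banach space; this is legitimate because the vector-valued integral only depends on the real structure, but it does stretch the quoted proposition slightly beyond its literal ($\CC$-linear) scope, and the realification remark you added is exactly what makes it work. The paper instead never applies (b) to $\varsigma$: it defines the conjugated functional $\bar{\vp}(b):=\overline{\vp(\varsigma(b))}\in B^\ast$, applies part (a) to $\bar\vp$, and takes complex conjugates to obtain $\vp\left(\varsigma\left(\int_X f\,d\mu\right)\right)=\int_X\vp(\varsigma(f(x)))\,d\mu(x)$, thereby staying entirely within $\CC$-linear functional calculus; the remaining step is, as in your ``second route,'' part (a) applied to $\tilde\rho(f)$ plus the scalar change of variables coming from $\mu$ being Real, and separation of points. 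So your proof buys a slightly shorter core computation at the cost of the realification caveat, while the paper's $\bar\vp$ trick is designed precisely to avoid that caveat; also note that in your second route the phrase ``reducing the whole identity to the scalar change of variables'' should be read as reducing only the invariance $\int_X g(\rho(x))\,d\mu(x)=\int_X g(x)\,d\mu(x)$ for $g=\varsigma\circ f$ — the commutation of $\varsigma$ with the integral still has to come from your first display (or from the paper's $\bar\vp$ argument), so keep both steps explicit when you write it up.
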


\begin{proof}
For any $\vp\in B^\ast$, define $\bar{\vp}\in B^\ast$ by $\bar{\vp}(b):=\overline{\vp(\varsigma(b))}$. Then, from Proposition~\ref{pro:integral} (a) and the definition of $\bar{\vp}$, one has 
\begin{align*}
\overline{\vp\left(\varsigma\left(\int_X f(x)d\mu(x)\right)\right)}=\int_X \overline{\vp(\varsigma(f(x)))}d\mu(x)=\overline{\int_X \vp(\varsigma(f(x)))d\mu(x)}.
\end{align*}
Thus,
\begin{align*}
\vp\left(\varsigma\left(\int_Xf(x)d\mu(x)\right)\right)=\int_X\vp(\varsigma(f(x)))d\mu(x).
\end{align*}
Again from (b) of Proposition~\ref{pro:integral} and from the fact that $\mu$ is Real, we then get
\begin{align*}
\vp\left(\varsigma\left(\int_Xf(x)d\mu(x)\right)\right)=\vp\left(\int_X\varsigma(f(\rho(x)))d\mu(x)\right), \forall \vp\in B^\ast,
\end{align*}
and the result holds.
\end{proof}

Let us investigate the case of a Real groupoid $(\cG,\rho)$ together with a Real representation $(E,\nu)$. Let $\mu=\{\mu^x\}_{x\in \Gpdo}$ be a Real Haar system for $(\cG,\rho)$. For any $x\in \Gpdo$, we can apply~\eqref{eq:integral} to $E_x$ and get the integral $\int_{\cG^x}: \cC_c(\cG^x,E_x)\To E_x$. Further, it is very easy to check that 
\begin{eqnarray}~\label{eq:G_integral}
\nu_x\left(\int_{\cG^x}f(\g)d\mu^x(\g)\right)=\int_{\cG^{\rho(x)}}\nu_x(f(\rho(\g)))d\mu^{\rho(x)}(\g), \ \forall f\in \cC_c(\cG^x,E_x).
\end{eqnarray}

\begin{proof}[Proof of Theorem~\ref{thm:HR_vs_proper}]
Fix a Real Haar system $\{\mu^x\}_{x\in \Gpdo}$ for $(\cG,\rho)$ and a cutoff Real function $c:\Gpdo \To \RR_+$. Let $\cU_\bullet$ be a Real open cover of $(\cG_\bullet,\rho_\bullet)$. Let $\lambda:=(\lambda_0,\lambda_1,\ldots,\lambda_{01\ldots n})\in \Lam_n$ and $U^n_\lambda \in {}_\natural \cU_n$. Denote by $\Lam_{n+1|\lambda}$ the subset of $\Lam_{n+1}$ consisting of those $\tilde{\lambda}\in \Lam_{n+1}$ such that $\tilde{\lambda}(S)=\lambda_{S}$ for all $\emptyset \neq S\subseteq [n]$. Then, if for any $x\in U^0_{\lambda_n}$, we denote 
\[(U^n_\lambda \star \cG^x)\cap \supp(\mathsf{c}\circ s):=\{(g_1,\ldots,g_n,\g)\in U^n_\lambda \times (\cG^x\cap \text{supp}(\mathsf{c}\circ s)) \ | \ s(g_n)=r(\g)=x\},\]
we have that 
\begin{eqnarray}~\label{eq:1st-union}
(U^n_\lambda \star \cG^{x})\cap \supp(\mathsf{c}\circ s)\subset \bigcup_{\tilde{\lambda}\in \Lam_{n+1|\lambda}}U^{n+1}_{\tilde{\lambda}}.
\end{eqnarray}
Notice that for $\tilde{\lambda}$ running over $\Lam_{n+1|\lambda}$, only its images $\tilde{\lambda}_S \in \Lam_{\#S-1}$, for $S \subseteq [n+1]$ containing $n+1$, are led to vary. On the other hand, since $\cG^x\cap \supp(\mathsf{c}\circ s)$ is compact in $\cG$ (by (iii) of Proposition~\ref{pro:proper-cutoff}), the union~\eqref{eq:1st-union} is finite. In particular, for every $S\in S(n+1):=\{S\subseteq [n+1] \ | \ n+1\in S\neq \emptyset \}$, where elements of $S(n+1)$ are ranged in cardinality and in lexicographic order, there is $\tilde{\lambda}_S^{l_S}\in \Lam_{\#S-1}$, $l_S=0,\ldots,m_S$, such that 
\begin{eqnarray}
(U^n_\lambda\star \cG^x)\cap \supp(\mathsf{c}\circ s) \subset \bigcup_{l=(l_S)_{S\in S(n+1)}} U^{n+1}_{\lambda^l},
\end{eqnarray}
where for any $l=(l_S)_{S\in S(n+1)}\in \NN^{2^{n+1}}$ written as $$l=\left(l_{\{n+1\}},l_{\{0,n+1\}},l_{\{1,n+1\}},\ldots,l_{\{n,n+1\}},\ldots,l_{\{1,\ldots,n+1\}},l_{\{0,1,\ldots,n+1\}}\right),$$ the element $\lambda^l\in \Lam_{n+1|\lambda}$ is given by the following 
\begin{eqnarray}
\left\{ \begin{array}{ll}
\lambda^l(S):=\lambda_S, & \text{for any} \ S\subseteq [n];\\
\lambda^l(S):=\lambda^{l_S}_S, & \text{for any} \ S\in S(n+1). 
\end{array}
\right.
\end{eqnarray}
Now for each $S\in S(n+1)$, $\ve^{n+1}_S=:\ve_S: [\#S-1]\To [n+1]$ denotes the unique morphism in $\Hom_{\Delta'}([\#S-1],[n+1])$ whose range is exactly $S$. It is then clear that 
\begin{eqnarray}
\tilde{\ve}_S((U^n_\lambda \star \cG^x)\cap \supp(\mathsf{c}\circ s))\subset \bigcup_{l_S}U^{\#S-1}_{\lambda^{l_S}_S}, \ \forall S\in S(n+1).
\end{eqnarray}
Next, choose for every $S\in S(n+1)$, a partition of unity \[\vp_{\lambda^{l_S}_S}: \tilde{\ve}_S((U^n_\lambda \star \cG^x)\cap \supp(\mathsf{c}\circ s))\To \RR_+\] subordinate to the open covering $\left(U^{\# \tiny{S}-1}_{\lambda^{l_S}_S}\right)_{l_S=0}^{m_S}$.

For all $n\ge 1$, we define the map 
$h^n: CR^{n+1}_{ss}(\cU_\bullet,E^\bullet)\To CR^n_{ss}(\cU_\bullet,E^\bullet)$ by 

\begin{eqnarray}~\label{eq:h^n} 
(h^n f)_\lambda (g_1,\ldots,g_n):=  (-1)^{n+1}\int_{\cG^{s(g_n)}}\sum_{l=(l_S)_{S\in S(n+1)}}f_{\lambda^l}(g_1,\ldots,g_n,\g)\cdot \prod_{S\in S(n+1)} \nonumber \\ \ \ \ \ \ \ \ \ \ \ \ \ \ \ \ \ \ \ \ \ \ \ \ \ \ \ \ \ \ \ \ \ \ \ \ \ \ \prod_{l_S}\vp_{\lambda^{l_S}_S}(\tilde{\ve}_S(g_1,\ldots,g_n,\g))\cdot c(s(\g))d\mu^{s(g_n)}(\g).
\end{eqnarray}
Notice that $$(U^n_{\bar{\lambda}}\star \cG^{\rho(x)})\cap \supp(\mathsf{c}\circ s \circ \rho)\subset \bigcup_{l=(l_S)_{S\in S(n+1)}}U^{n+1}_{\bar{\lambda}^l},$$ where the $\bar{\lambda}^l \ $' s are defined in the obvious way. Hence, we get a partition of unity of $\tilde{\ve}_S((U^n_{\bar{\lambda}}\star \cG^{\rho(x)})\cap \supp(\mathsf{c}\circ s \circ \rho))$ subordinate to the open covering $\left(U^{\#S-1}_{\bar{\lambda}^{l_S}_S}\right)_{l_S=0}^{m_S}$ by setting $\vp_{\bar{\lambda}^{l_S}_S}(\tilde{\ve}_S(\rho(g_1),\ldots,\rho(g_n))):=\vp_{\lambda^{l_S}_S}(\tilde{\ve}_S(g_1,\ldots,g_n))$. Next, using (~\ref{eq:G_integral}), it is straightforward that \[(h^nf)_{\bar{\lambda}}(\rho(g_1),\ldots,\rho(g_n))=\nu_{|U^n_\lambda}\circ (h^nf)_\lambda(g_1,\ldots,g_n),\]
which means that $((h^nf)_\lambda)_{\lambda\in \Lam_n}\in CR^{n}_{ss}(\cU_\bullet,E^\bullet)$. 

Assume now that $(f_\lambda)_{\lambda\in \Lam_n}\in CR_{ss}^n(\cU_\bullet,E^\bullet)$. Then, for every $U^n_\lambda \in {}_\natural \cU_n$ and $(g_1,\ldots,g_n)\in U^n_\lambda$, one has
\begin{eqnarray}~\label{eq:hd}
(h^nd^nf)_\lambda(g_1,\ldots,g_n)  = (-1)^{n+1}\int_{\cG^{s(g_n)}}\sum_{(l_S)_{S\in S(n+1)}}(d^nf)_{\lambda^l}(g_1,\ldots,g_n,\g)\cdot\prod_{S\in S(n+1)} \nonumber \\ 
 \ \ \ \ \ \ \ \ \prod_{l_S}\vp_{\lambda^{l_S}_S}(\tilde{\ve}^{n+1}_S(g_1,\ldots,g_n,\g))\cdot c(s(\g))d\mu^{s(g_n)}(\g) \nonumber \\ 
 = f_\lambda(g_1,\ldots,g_n)-A_\lambda(g_1,\ldots,g_n),
\end{eqnarray}
where 
\begin{align*}
A_\lambda(g_1,\ldots,g_n):=(-1)^n \sum_{k=0}^n (-1)^k\int_{\cG^{s(g_n)}}\sum_{(l_S)_{S\in S(n+1)}}f_{\tilde{\ve}^{n+1}_k(\lambda^l)}(\tilde{\ve}^{n+1}_k(g_1,\ldots,g_n,\g))\cdot\prod_{S\in S(n+1)} \\
\prod_{l_S}\vp_{\lambda^{l_S}_S}(\tilde{\ve}^{n+1}_S(g_1,\ldots,g_n,\g))\cdot c(s(\g))d\mu^{s(g_n)}(\g).
\end{align*}
We want to show that
\begin{eqnarray}
A_\lambda(g_1,\ldots,g_n)= (d^{n-1}h^{n-1}f)_\lambda(g_1,\ldots,g_n).
\end{eqnarray}
One has
\begin{align}~\label{eq:dh}
(d^{n-1}h^{n-1}f)_\lambda(g_1,\ldots,g_n)
= (-1)^n \sum_{k=0}^{n-1}\int_{\cG^{s(g_n)}} \sum_{r_k:=(r_{k,T})_{T\in S(n)}}f_{\tilde{\ve}^n_k(\lambda)^{r_k}}(\tilde{\ve}^n_k(g_1,\ldots,g_n),\g)\cdot \prod_{T\in S(n)} \nonumber \\
\prod_{r_{k,T}}\vp_{\tilde{\ve}^n_k(\lambda)^{r_{k,T}}_T}(\tilde{\ve}^n_T(\tilde{\ve}^n_k(g_1,\ldots,g_n),\g))\cdot c(s(\g))d\mu^{s(g_n)}(\g) \nonumber \\
+ \int_{\cG^{s(g_{n-1})}}\sum_{r_n:=(r_{n,T})_{T\in S(n)}}f_{\tilde{\ve}^n_n(\lambda)^{r_n}}(g_1,\ldots,g_{n-1},\g)\cdot \prod_{T\in S(n)} \nonumber \\
\prod_{r_{n,T}}\vp_{\tilde{\ve}^n_n(\lambda)^{r_{n,T}}_T}(\tilde{\ve}^n_T(g_1,\ldots,g_{n-1},\g))\cdot c(s(\g))d\mu^{s(g_{n-1})}(\g) \nonumber \\
= B_\lambda(g_1,\ldots,g_n) + C_\lambda(g_1,\ldots,g_n).
\end{align}
Notice that by the left-invariance of $\{\mu^x\}_{x\in \Gpdo}$, the second integral $C_\lambda$ in the right hand side of (~\ref{eq:dh}) can be written as
\begin{align}
C_\lambda(g_1,\ldots,g_n)= \int_{\cG^{s(g_n)}}\int_{(r_{n,T})_{T\in S(n)}}f_{\tilde{\ve}^n_n(\lambda)^{r_n}}(g_1,\ldots,g_{n-1},g_n\g)\cdot \prod_{T\in S(n)} \nonumber \\ 
\prod_{r_{n,T}}\vp_{\tilde{\ve}^n_n(\lambda)^{r_{n,T}}_T}(\tilde{\ve}^n_T(g_1,\ldots,g_{n-1},g_n\g))\cdot c(s(\g))d\mu^{s(g_{n-1})}(\g) \nonumber \\
= \int_{\cG^{s(g_n)}} \sum_{(r_{n,T})_{T\in S(n)}}f_{\tilde{\ve}^n_n(\lambda)^{r_n}}(\tilde{\ve}^{n+1}_n(g_1,\ldots,g_n,\g))\cdot \prod_{T\in S(n)} \nonumber \\
\prod_{r_{n,T}}\vp_{\tilde{\ve}^n_n(\lambda)^{r_{n,T}}_T}(\tilde{\ve}^n_T(\tilde{\ve}^{n+1}_n(g_1,\ldots,g_{n-1},g_n,\g)))\cdot c(s(\g))d\mu^{s(g_{n-1})}(\g).
\end{align}
On the other hand, for any $k=0,\ldots,n-1$, one has $(\tilde{\ve}^n_k(g_1,\ldots,g_n),\g)=\tilde{\ve}^{n+1}_k(g_1,\ldots,g_n,\g)$; hence 
\begin{align*}
B_\lambda(g_1,\ldots,g_n)=(-1)^n\sum_{k=0}^{n-1}(-1)^k\int_{\cG^{s(g_n)}}\sum_{(r_{k,T})_{T\in S(n)}}f_{\tilde{\ve}_k^n(\lambda)^{r_k}}(\tilde{\ve}^{n+1}_k(g_1,\ldots,g_n,\g))\cdot \prod_{T\in S(n)} \\
\prod_{r_{k,T}}\vp_{\tilde{\ve}^n_k(\lambda)^{r_{k,T}}_T}(\tilde{\ve}^n_T(\tilde{\ve}^{n+1}_k(g_1,\ldots,g_n,\g)))\cdot c(s(\g))d\mu^{s(g_{n-1})}(\g).
\end{align*}
Thus, (~\ref{eq:dh}) becomes
\begin{align}~\label{eq:dh1}
(d^{n-1}h^{n-1}f)_\lambda (g_1,\ldots,g_n)=(-1)^n\sum_{k=0}^n(-1)^k\int_{\cG^{s(g_n)}}\sum_{(r_{k,T})_{T\in S(n)}}f_{\tilde{\ve}^n_k(\lambda)^{r_k}}(\tilde{\ve}^{n+1}_k(g_1,\ldots,g_n,\g)).  \nonumber \\
\prod_{T\in S(n)}\prod_{r_{k,T}}\vp_{\tilde{\ve}^n_k(\lambda)^{r_{k,T}}_T}(\tilde{\ve}^n_T(\tilde{\ve}^{n+1}_k(g_1,\ldots,g_n,\g)))\cdot c(s(\g))d\mu^{s(g_{n-1})}(\g).
\end{align}
Now, for any $k=0,\ldots,n$, $r_k=(r_{k,T})_{T\in S(n)}$, let $\g\in \cG^{s(g_n)}$ such that $\tilde{\ve}^{n+1}_k(g_1,\ldots,g_n,\g)\in U^n_{\tilde{\ve}^n_k(\lambda)^{r_k}}$. Then, there exists $l=(l_S)_{S\in S(n+1)}$ such that $(g_1,\ldots,g_n,\g)\in U^{n+1}_{\lambda^l}$, so that \[\tilde{\ve}^{n+1}_k(g_1,\ldots,g_n,\g)\in U^n_{\tilde{\ve}^n_k(\lambda)^{r_k}} \bigcup U^n_{\tilde{\ve}^{n+1}_k(\lambda^l)}.\]
One can then suppose that for any $k\in [n]$ and any family $r_k=(r_{k,T})_{T\in S(n)}$, there exists a family $l=(l_S)_{S\in S(n+1)}$ such that $\tilde{\ve}^n_k(\lambda)^{r_k}=\tilde{\ve}^{n+1}_k(\lambda^l)$. Moreover, in virtue to the identities~\eqref{eq:face_degen}, it is straightforward that for each $k\in [n]$ and any $T\in S(n)$, there exists a unique $S\in S(n+1)$ such that $\ve^{n+1}_S=\ve^{n+1}_k\circ \ve^n_T$, so that $\tilde{\ve}^{n+1}_S=\tilde{\ve}^n_T\circ \tilde{\ve}^{n+1}_k$. Therefore, we obtain from~\eqref{eq:dh1} that
\begin{align}
(d^{n-1}h^{n-1}f)_\lambda(g_1,\ldots,g_n)=(-1)^n\sum_{k=0}^n(-1)^k\int_{\cG^{s(g_n)}}\sum_{(l_S)_{S\in S(n+1)}}f_{\tilde{\ve}^{n+1}(\lambda^l)}(\tilde{\ve}_k^{n+1}(g_1,\ldots,g_n,\g)). \nonumber \\
\prod_{S\in S(n+1)}\prod_{l_S}\vp_{\lambda^{l_S}_S}(\tilde{\ve}^{n+1}_S(g_1,\ldots,g_n,\g)).c(s(\g))d\mu^{s(g_n)}(\g) \nonumber \\
= A_\lambda(g_1,\ldots,g_n).
\end{align}
Combining with (~\ref{eq:hd}), we thus have shown that
\begin{eqnarray}
h^n\circ d^n +d^{n-1}\circ h^{n-1}=\Id_{CR^n_{ss}(\cU_\bullet,E^\bullet)}, \ \forall n\ge 1;
\end{eqnarray}
\emph{i.e.} $h^\star$ defines a contraction of $CR^\star_{ss}(\cU_\bullet,E^\bullet)$ for any Real open cover $\cU_\bullet$ of $(\cG_\bullet,\rho_\bullet)$ and this ends our proof.
\end{proof}

\begin{rem}
It is straightforward, using the same arguments, that Theorem~\ref{thm:HR_vs_proper} remains true for a Real representation of type $\RR^{p,q}$ (cf. Remark~\ref{rem:Real-repr-p-q}).	
\end{rem}

\begin{cor}
Let $\cG$ be a proper groupoid. Let $E\To \Gpdo$ be a representation of $\cG$ in the sense of Crainic~\cite{Crai}; that is, a real $\cG$-equivariant vector bundle of rank $p$. Then $\check{H}^n(\cG_\bullet,E^\bullet)=0, \forall n\geq 1$.	
\end{cor}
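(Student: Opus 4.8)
The plan is to deduce the Corollary directly from Theorem~\ref{thm:HR_vs_proper} by viewing the ordinary ({\v C}ech/simplicial) cohomology $\check{H}^\ast(\cG_\bullet,E^\bullet)$ as a special instance of the Real {\v C}ech cohomology with trivial Real structure. First I would observe that any proper topological groupoid $\cG$ (in the usual sense of~\cite{TLX}, Definition~\ref{df:proper}) can be regarded as a Real groupoid $(\cG,\Id)$ with the identity as Real structure; this is trivially $2$-periodic and strict, so Definition~\ref{df:proper} applies verbatim, and properness of $(\cG,\Id)$ is exactly properness of $\cG$. Likewise a Crainic representation $E\to\Gpdo$, being a real $\cG$-equivariant vector bundle of rank $p$, becomes a Real representation $(E,\Id)$ of $(\cG,\Id)$ of type $\RR^{p,0}$ in the sense of Remark~\ref{rem:Real-repr-p-q}: all the $\KK$-antilinearity conditions degenerate to ordinary linearity when the Real structures are identities, and the locally trivial Real vector-bundle data is precisely the locally trivial real vector-bundle data. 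Hence $(E,\Id)$ gives rise to a Real $\cG$-module and an associated sheaf $(E^\bullet,\Id)$ on $(\cG_\bullet,\rho_\bullet)=(\cG_\bullet,\Id_\bullet)$.

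Next I would invoke the comparison result already available in the excerpt: by Proposition~\ref{pro:HR-vs-H} (and the remark immediately following it, which notes the statement extends to Real sheaves induced from a Real $\cG$-module), when the Real structure is trivial one has
\[
\check{H}R^n(\cG_\bullet,E^\bullet)=\check{H}^n(\cG_\bullet,{}^rE^\bullet),
\]
and since the Real structure on $E$ is the identity, the fixed-point subsheaf ${}^rE^\bullet$ is all of $E^\bullet$. Therefore $\check{H}R^n(\cG_\bullet,E^\bullet)\cong \check{H}^n(\cG_\bullet,E^\bullet)$ for all $n$. Combining this with Theorem~\ref{thm:HR_vs_proper} (valid for Real representations of type $\RR^{p,q}$ by the preceding remark, in particular for type $\RR^{p,0}$), which asserts $\check{H}R^n(\cG_\bullet,E^\bullet)=0$ for all $n\ge 1$ when $\cG$ is proper and carries a Haar system, yields $\check{H}^n(\cG_\bullet,E^\bullet)=0$ for all $n\ge 1$, which is the desired statement.

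The one genuine point needing care — the step I expect to be the main (albeit minor) obstacle — is the existence of a Haar system, which is a hypothesis of Theorem~\ref{thm:HR_vs_proper} but is not mentioned in the Corollary. For Lie groupoids a smooth Haar system always exists, so in Crainic's original setting this is automatic; for a general proper \emph{topological} groupoid one should either restrict to the Lie (or locally compact with Haar system) case, or note that the contracting homotopy $h^\bullet$ constructed in the proof of Theorem~\ref{thm:HR_vs_proper} only uses the cutoff function and the Haar system on the fibres, both of which are available under the standing assumptions of that subsection. I would simply record that we assume, as throughout the subsection, that $\cG$ admits a Haar system (automatic in the Lie case), and then the chain of identifications above is entirely formal. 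No further computation is required: the real content is entirely in Theorem~\ref{thm:HR_vs_proper}, and the Corollary is just its de-complexification.
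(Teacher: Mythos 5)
Your argument is correct, but it reaches the conclusion by a different reduction than the paper. You keep $E$ itself, equip it with the identity involution, regard it as a Real representation of type $\RR^{p,0}$ of $(\cG,\Id)$, and then rely on the remark following Theorem~\ref{thm:HR_vs_proper} (the extension to type $\RR^{p,q}$) together with Proposition~\ref{pro:HR-vs-H} and its corollary to identify $\check{H}R^n(\cG_\bullet,E^\bullet)$ with $\check{H}^n(\cG_\bullet,E^\bullet)$; since all involutions are trivial, the Real cochain condition is vacuous and the contracting homotopy of the theorem specializes to the classical one, so this is sound. The paper instead uses a doubling trick: it forms $F:=E\oplus E$ with $\nu(e_1,e_2)=(e_1,-e_2)$, a Real representation of type $\RR^{p,p}$ (in effect the complexification of $E$ with the standard conjugation), applies Theorem~\ref{thm:HR_vs_proper} to $F$, and then uses the discussion after Proposition~\ref{pro:HR-vs-H} to identify $\check{H}R^n(\cG_\bullet,F^\bullet)$ with $\check{H}^n(\cG_\bullet,{}^rF^\bullet)$ and observes ${}^rF^\bullet=E^\bullet$. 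The trade-off: the paper's route only invokes the theorem for a coefficient type already covered by the original definition of Real representation (where the fibrewise involution is genuinely antilinear), at the cost of the auxiliary bundle $F$ and the fixed-point identification, whereas your route is more direct but leans on the $\RR^{p,q}$ extension stated only as a remark. Your explicit handling of the Haar-system hypothesis (implicit in the paper's corollary, automatic in the Lie setting) is a point in your favour rather than a gap.
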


\begin{proof}
Let $\cG$ be endowed with the trivial Real structure. Form the Real representation $(F,\nu)$ of type $\RR^{p,p}$ of $(\cG,\Id)$ by $F:=E\oplus E$ endowed with the diagonal $\cG$-action and the Real structure $\nu(e_1,e_2):=(e_1,-e_2)$. Then by Theorem~\ref{thm:HR_vs_proper}, we have $\check{H}R^n(\cG_\bullet,F^\bullet)=0$ for all $n\geq 1$. But since the Real structure is trivial, we have $\check{H}R^n(\cG_\bullet,F^\bullet)=\check{H}^n(\cG_\bullet,{}^rF^\bullet)$, thanks to the discussion following Proposition~\ref{pro:HR-vs-H}. Moreover, we obviously have ${}^rF^\bullet =E^\bullet$.
\end{proof}

\medskip 

\subsection*{Acknowledgements}
This work was done during my PhD at the universities of Metz and Paderborn, while I was suported by the German Research Foundation (DFG) via the IRTG 1133 "\emph{Geometry and Analysis of Symmetries}". I am greatly indebted to my supervisor Jean-Louis Tu, and I am grateful to Camille Laurent for many helpful comments and suggestions on an earlier version of this manuscript.


\end{document}